\documentclass[12pt,a4paper,reqno]{amsart}
\usepackage[all]{xy} 
\usepackage{amsmath}
\usepackage{amsfonts}
\usepackage{amssymb}
\usepackage{mathrsfs}

\usepackage{tensor}
\usepackage{cancel}

\usepackage{xcolor}
\usepackage{graphicx}

\usepackage{amsmath,calligra,mathrsfs}

\newcommand{\scHom}{\mathscr{H}\text{\kern -3pt {\calligra\large om}}}
\newcommand{\scExt}{\mathscr{E}\text{\kern -3pt {\calligra\large xt}}}

\DeclareMathAlphabet{\mathbbold}{U}{bbold}{m}{n}

\DeclareFontEncoding{OT2}{}{} 
\newcommand{\textcyr}[1]{%
 {\fontencoding{OT2}\fontfamily{wncyr}\fontseries{m}\fontshape{n}
 \selectfont #1}}
\newcommand{\Sha}{{\!\be\lbe\mbox{\textcyr{Sh}}}}

\theoremstyle{plain}
\newtheorem{theorem}{Theorem}[subsection]
\newtheorem*{myassumption}{Assumption}
\newtheorem{remark}[theorem]{{\textrm{Remark}}}
\newtheorem{lemma}[theorem]{Lemma}
\newtheorem{corollary}[theorem]{Corollary}
\newtheorem{myproposition}[theorem]{Proposition}
\newtheorem{definition}[theorem]{Definition}

\def\le{\kern 0.03em}

\def\C{{\mathbb C}}

\def\F{{\mathbb F}}

\def\N{{\mathcal N}}
\def\O{{\mathcal O}}

\def\Q{{\mathbb Q}}

\def\Z{{\mathbb Z}}

\def\e{\kern 0.08em}
\def\be{\kern -.1em}
\def\lbe{\kern -.025em}

\DeclareMathOperator{\coh}{H}
\DeclareMathOperator{\Frob}{Frob}
\DeclareMathOperator{\Gal}{Gal}

 \DeclareMathOperator{\Nm}{N}

\DeclareMathOperator{\Hom}{Hom} 
 
\DeclareMathOperator{\ord}{ord} \DeclareMathOperator{\coker}{coker}

\begin{document}
\title[$p$-adic $L$-functions]{$p$-adic $L$-functions for elliptic curves over global function fields}
\author{Ki-Seng Tan}
\address{Department of Mathematics\\
National Taiwan University\\
Taipei 10764, Taiwan}
\email{tan@math.ntu.edu.tw}

\begin{abstract} We introduce a $p$-adic $L$-function $\mathscr L_{A/L}$ associated to each ordinary elliptic curve $A$ over a global function field $K$ of characteristic $p$ together with a $\mathbb{Z}_{p}^{d}$-extension $L/K$, $d=0$ allowed, unramified outside a finite set of places where $A$ has ordinary (good ordinary or multiplicative) reductions. This $\mathscr L_{A/L}$ is characterized by its interpolation of the
special values of twisted Hasse-Weil $L$-functions.
We show that it satisfies the desired functional equation, specialization formula, and restriction formula in connection with the characteristic ideal of the dual $p^\infty$-Selmer group of $A/L$. The Iwasawa main conjecture having $\mathscr{L}_{A / L}$ as the analytic side is proven in several cases. In the $d\geq 3$ case, 
the conjecture holds for $A/L$ if and only if it holds for all intermediate $\Z_p^2$-extensions $L'/K$ belonging to
a given non-empty Zariski open subset of the Grassmannian $\mathrm{Gr}(d-2,d)(\Z_p)$.

Recently, subject to a technical $\mu$-invariant hypothesis, if $A/K$ has semistable reduction everywhere,
the Iwasawa main conjecture is proven for $A$ over $L$ \cite{ttt26}.

\end{abstract}

\maketitle

\section{Introduction}\label{s:int} 
Let $A$ be an abelian variety defined over a global field $K$ and $L/K$ a $\mathbb{Z}_{p}^{d}$-extension, $d=0$ allowed, unramified outside a finite set of places where $A$ has ordinary (good ordinary or multiplicative) reductions. The dual $p^{\infty}$-Selmer group $X_L$ of $A / L$ is known to be finitely generated over the Iwasawa algebra
$\Lambda:=\Lambda_\Gamma:=
\mathbb{Z}_{p}[[\Gamma]]$, where $\Gamma:=\Gal(L / K)$
(see \cite[Proposition 1.1]{tan13} and \cite[Corollary 4.1.3]{ttt23}), so the algebraic side of the Iwasawa main conjecture, that is, 
the characteristic ideal $\mathrm{CH}_{\Lambda}(X_{L})$,  is defined.

In this article, we consider the case where $K$ is a global function field of characteristic $p$ and $A/K$ an ordinary elliptic curve. In view of the modularity of $A/K$ (or of its quadratic twist, when $A/K$ is a constant curve), it is more or less known that a good candidate of the analytic side for the Iwasawa main conjecture does exist. Our first aim is to establish the explicit definition of such a $p$-adic $L$-function $\mathscr L_{A/L}$. It turns out that  $\mathscr L_{A/L}$ is defined as an element in $\Q_p\Lambda$ and is
characterized by its interpolation of the special values of twisted Hasse-Weil $L$-functions. 
Thus, in this article the Iwasawa main conjecture for $A/L$, or $A/L/K$ to emphasis the role of $L/K$, 
means 
\begin{equation}\label{e:imc}
\mathscr L_{A/L}\in\Lambda,\;\text{and in}\;\Lambda,\; \text{the ideals}\;  \mathrm{CH}_{\Lambda}(X_{L})=(\mathscr L_{A/L}).
\end{equation}

Next, we investigate properties of the principal fractional ideal $\mathscr L_{A/L}\cdot\Lambda$,
and show that it and $\mathrm{CH}_\Lambda(X_L)$ both satisfy the same types of formulae:
the algebraic functional equation, the specialization formula and the restriction formula.
We also check in several cases that the Iwasawa main conjecture \eqref{e:imc} holds.
As an application, we show that if $A/K$ has semistable reduction everywhere, then as expected,
$\mathscr L_{A/L}\in \Lambda$, and has the same $\mu$-invariant as that of $X_L$.

 Finally, we reduce the proof of \eqref{e:imc} to the $d=2$ cases, namely, if $d\geq 3$, 
the conjecture holds for $A/L$ if and only if it holds for all $\Z_p^2$-extensions $L'/K$ belonging to a given non-empty Zariski open subset of the corresponding Grassmannian $\mathrm{Gr}(d-2,d)(\Z_p)$.

Recently, the Iwasawa main conjecture \eqref{e:imc} is proven, subject to a technical $\mu$-invariant hypothesis, as long as  $A/K$ has semistable reduction everywhere \cite{ttt26}.

Below, the main results of \S 2 $\sim$ \S 6, are respectively described  in the corresponding
subsections: \S1.1 $\sim$ \S1.5. 

\subsection{The specialization formula of $\mathrm{CH}_\Lambda(X_L)$}\label{su:chxl}
Section 2 is dedicated to modifying the existing specialization formula given in \cite[Theorem 1]{tan13}
for $\mathrm{CH}_\Lambda(X_L)$, which only treats the special case
when $L/K$ is also unramified at each non-split multiplicative place (a place at which $A/K$ has non-split multiplicative reduction).

In our setting, $L/K$ is assumed to be unramified only outside a finite set of ordinary places. The main result, Proposition \ref{p:aspf}, says that if $L'/K$ is an intermediate $\Z_p^e$-extension of $L/K$, $d>e\geq 0$, $\Gamma'=\Gal(L'/K)$, and $p^L_{L'}:\Lambda_\Gamma\longrightarrow \Lambda_{\Gamma'}$
denotes the $\Z_p$-algebra homomorphism induced by the projection $\Gamma\longrightarrow \Gamma'$,
then in $\Lambda_{\Gamma'}$
\begin{equation*}\label{e:aspe1}
\varrho_{L/L^{\prime} }\cdot p^L_{ L'}(\mathrm{CH}_{\Lambda}(X_{L}))=\vartheta_{L / L^{\prime}} \cdot \mathrm{CH}_{\Lambda^{\prime}}(X_{L^{\prime}}).
\end{equation*}
The correcting factors $\varrho_{L/L'}$ and $\vartheta_{L/L'}:=\prod_{\text{all}\;v}\vartheta_{L/L',v}$ are respectively defined in Definition \ref{d:varrho} and Definition
\ref{d:vartheta}.

The proof of Proposition \ref{p:aspf} reduces to the $d=e+1$ case, and by following the results of \cite{tan13}, it  basically relies on investigating the local cohomology groups over each non-split multiplicative place $v$:
$$\mathcal W_v^i:=\bigoplus_{x\mid v} \coh^i(L_x/L'_x, A(L_x))^\vee,\; i=1,2,$$
the direct sum over all places of $L'$ sitting over $v$, where $M^\vee$ denotes the Pontryagin dual of $M$.
At such place $v$, by Lemma \ref{l:locch}, $\mathcal W^2_v=0$ and over $\Lambda_{\Gamma'}$, the characteristic ideal of $\mathcal W_v^1$ equals $\vartheta_{L/L',v}$.

\subsection{The $p$-adic $L$-functions}\label{su:lfun}
In \S 3, we define the $p$-adic $L$-function.
For each effective divisor $D$ of $K$, let $W_D$ denote the corresponding Weil group and let $\tilde W_D$ denote its profinite completion (see \S\ref{su:notation}). Theorem \ref{t:theta} summarizes results in \cite{tan93} on the modular element proposed by Mazur \cite{maz87}, saying that for each $D$ there is associated an element
$\Theta_{D}\in \frac{1}{p^\aleph} \Z[W_D]$, which satisfies the interpolation formula and the transition formulae as $D$ varies. Here, $\aleph$ is a non-negative integer independent of $D$.

Let $S$ denote the ramification locus of $L/K$ and put 
$$\tilde W_S:=\varprojlim_{\mathrm{Supp}(D)=S} \tilde W_D,\quad 
\tilde\Lambda_S:=\Z_p[[\tilde W_S]]=\varprojlim_{\mathrm{Supp}(D)=S} \Z_p[[\tilde W_D]].$$

Following the method in \cite{mtt86}, we derive from $\Theta_D$ the modified element $\tilde{\mathscr L}_D$
that has desirable transition formulae (see Lemma \ref{l:thetacomp}) such that the inverse limit 
$$\tilde{\mathscr L}_{A,S}:=\varprojlim_{\mathrm{Supp}(D)=S}\tilde{\mathscr L}_D$$
exists as an element of $ \frac{1}{p^\aleph}\tilde\Lambda_S\subset \Q_p\tilde\Lambda_S$.

Let $p_L:\Q_p\tilde\Lambda_S\longrightarrow \Q_p\Lambda$ be the $\Q_p$-algebra homomorphism induced from the projection $\tilde W_S\longrightarrow \Gamma$, and define 
\begin{equation}\label{e:hatL}
\hat{\mathscr{L}}_{A / L}= \begin{cases}
p_{L}(\tilde{\mathscr{L}}_{A, S}), & \text { if } L \neq K; \\ 
q^{\frac{\deg(\Delta)}{12}+\kappa-1} \cdot L_{A}( 1), & \text { if } L=K.
\end{cases}
\end{equation}
Here $q$ denotes the order of the constant field $\F_q$ of $K$, $\kappa$ and $\Delta:=\Delta_{A/K}$
the genus of $K$ and the global discriminant of $A/K$, while $L_{A}(s)$ denotes the Hasse-Weil $L$-function associated to $A/K$.

 As the $p$-adic $L$-function introduced in \cite{mtt86} has an ``extra order of vanishing", so is our 
 $\hat{\mathscr L}_{A/L}$. In Proposition \ref{p:additional},  this extra order of vanishing is expressed as the divisibility of $\hat{\mathscr L}_{A/L}$ by the element $\dag_{A/L}\in\Lambda$ defined in Definition \ref{d:gimel}. 
 We define
\begin{equation}\label{e:l}
\mathscr{L}_{A / L}=\mathrm{t}_{A / L} \cdot \nabla_{A / L} \cdot \dag_{A / L}^{-1} \cdot \hat{\mathscr{L}}_{A / L}. 
\end{equation}
Here the fudge factors $\mathrm{t}_{A/L}$ and $\nabla_{A/L}$ are respectively defined in \S\ref{ss:mathrmt} and Definition \ref{d:nabla}.
If it is needed to refer to $K$ as well, we shall denote $\mathscr{L}_{A / L / K}:=\mathscr{L}_{A / L}$.

Let $I_\Gamma\subset\Lambda$ denote the augmentation ideal and let $s_L$ denote the number of split multiplicative places of $K$ ramified over $L$.
In \S\ref{ss:vanishing}, we shall show that $\hat{\mathscr L}_{A/L}\in \Q_pI_\Gamma^{s_L}$ (Lemma \ref{l:vanishing}). A Mazur-Tate-Teitelbaum type formula is given in Proposition \ref{p:mtt} under the assumption of the 
Iwasawa main conjecture for every $\Z_p$-extension of $K$ unramified outside a finite set of ordinary places.

\subsection{Basic Properties of $\mathscr L_{A/L}$}\label{su:padicL}
In \S\ref{s:padicL}, we discuss basic properties of $\mathscr L_{A/L}$, afterwards we check the Iwasawa main conjecture for several special cases.

\subsubsection{The interpolation formula}\label{ss:intpol}
Lemma \ref{l:intL} asserts that for each continuous character $\omega:\Gamma\longrightarrow \mu_{p^\infty}$, satisfying $\omega(\dag_{A / L})\not=0$,
$$\omega(\mathscr{L}_{A / L})=\mathrm{t}_{A / L} \cdot \omega(\nabla_{A / L}) \cdot \omega(\dag_{A / L})^{-1} \cdot \alpha_{D_{\omega}}^{-1} \cdot \tau_{\omega} \cdot q^{\frac{\deg(\Delta)}{12}+\kappa-1} \cdot \Xi_{S, \omega} \cdot L_{A}(\omega, 1).$$
The definitions of  
$\alpha_{D_{\omega}}$, $\tau_{\omega}$, 
$ \Xi_{S, \omega}$, and $L_{A}(\omega, 1)$ can be found
respectively in 
\S\ref{ss:elts} (for $D_\omega$, see \S\ref{su:notation}), \S\ref{ss:gauss}, 
\S\ref{ss:inttildeL}, and \S\ref{ss:thw}. The formula determines $\mathscr L_{A/L}$, because if $\mathscr F_{A/L}$ has the same interpolation as that of $\mathscr L_{A/L}$, then $\dag_{A/L}\cdot (\mathscr L_{A/L}-\mathscr F_{A/L})=0$,
but $\dag_{A/L}\not=0$.

\subsubsection{The functional equation}\label{ss:funeq}
Consider the $\Z_p$-algebra involution
$$^\sharp:\Lambda\longrightarrow \Lambda, \; \xi\mapsto \xi^\sharp,$$ 
given by $\gamma\mapsto\gamma^{-1}$, for $\gamma\in\Gamma$.
The characteristic ideal $\mathrm{CH}_{\Lambda}(X_{L})$ satisfies the algebraic functional equation (see [LLTT18, Theorem 2])
\begin{equation}\label{e:afel}
\mathrm{CH}_\Lambda(X_L)^\sharp=\mathrm{CH}_\Lambda(X_L).
\end{equation}
Extend $^\sharp$ to an involution of $\Q_p\Lambda$.
In Proposition \ref{p:fel}, we show that $\mathscr{L}_{A / L}$ satisfies the desired functional equation 
$$\mathscr{L}_{A / L}^{\sharp}\cdot\Lambda=\mathscr{L}_{A / L}\cdot\Lambda.
$$
\subsubsection{The specialization formula of $\mathscr L_{A/L}$}\label{ss:specialization}
The specialization formula of $\mathscr L_{A/L}$ is given in Proposition \ref{p:spl} saying that
if $p_{L^{\prime}}^{L}(\dag_{A / L}) \neq 0$, then as principal fractional ideal of $\Lambda'$,
\begin{equation*}\label{e:spe1}
 p_{L^{\prime}}^{L}(\mathscr{L}_{A / L})\cdot\varrho_{L / L^{\prime}} = \mathscr{L}_{A / L^{\prime}}\cdot\vartheta_{L / L^{\prime}} .
\end{equation*}

Suppose  $p^L_{L'}(\dag_{A/L})\cdot\varrho_{L/L'}\cdot \vartheta_{L/L'}\not=0$ (which is the case when $L'=K_\infty^{(p)}$, the unramified $\Z_p$-extension of $K$, or $L=L'K_\infty^{(p)}$, see Corollary \ref{c:1}) and
$\mathscr L_{A/L}\in\Lambda_\Gamma$.  By the specialization formula, one proves directly that the Iwasawa main conjecture for $A/L$
implies that of $A/L'$ (Lemma \ref{l:1}). For the opposite direction, 
if one of $(\mathscr L_{A/L})$ and $\mathrm{CH}_{\Lambda}(X_L)$ divides the other and $X_{L'}$ is torsion, then the main conjecture of $A/L'$ implies that for $A/L$ (see Lemma \ref{l:imcsp}). 

The specialization formula of $\mathrm{CH}_{\Lambda}(X_L)$
proven in \cite{tan13} actually holds for abelian varieties over global fields, thus, if under such circumstance,
an analytic side is proposed, it seems necessary to check if it satisfies the specialization formula of the same type.

\subsubsection{The restriction formulae}\label{ss:rest}
While the specialization formula handles the transfer of information between $L/K$ and $L'/K$, the restriction formula handles that between $L/K$ and $L/K'$, for a finite intermediate extension $K'/K$ of $L/K$.

Denote $\Phi=\Gal(L/K')$, $\Lambda_\Phi:=\Z_p[[\Phi]]$.
Choose a complete set of representatives $C \subset \Gamma$ of $\Gamma / \Phi$, so as to have the coset decomposition
$$
\Gamma=\bigsqcup_{\sigma \in C} \Phi \cdot \sigma.
$$
For each $f \in \Lambda_{\Gamma}$, write $f=\sum_{\sigma \in C} f_{\sigma} \cdot \sigma$, $f_{\sigma} \in \Lambda_{\Phi}$. View the dual group $\widehat{\Gamma / \Phi}$ as a subgroup of $\hat{\Gamma}$ consisting of characters trivial on $\Phi$. For each $\chi \in \widehat{\Gamma / \Phi}$, define
$$
f_{\chi}:=\sum_{\sigma \in C} f_{\sigma} \cdot \chi(\sigma) \cdot \sigma,
$$
which is independent of the choice of $C$. Define the restriction of $f$ to $\Phi$ to be 
$$
f_{\Phi}^{\Gamma}:=\prod_{\chi \in \widehat{\Gamma /\Phi}} f_\chi.$$

In \S\ref{su:rest}, we prove that $f_{\Phi}^{\Gamma} \in \Lambda_{\Phi}$, thus for a principal ideal $I=(f)\subset\Lambda_\Gamma$, we can define $I_\Phi^\Gamma:=(f_\Phi^\Gamma)$. Extend the restriction map to
$\Q_p\Lambda_\Gamma\longrightarrow \Q_p\Lambda_\Phi$.

The restriction formulae, given in Proposition \ref{p:restriction}, says that
\begin{equation*}
\mathrm{CH}_{\Lambda_{\Phi}}(X_{L})=\mathrm{CH}_{\Lambda_{\Gamma}}(X_{L})_{\Phi}^{\Gamma},
\end{equation*}
and as principal fractional ideals
\begin{equation*}
\mathscr{L}_{A / L / {K}^{\prime}}\cdot\Lambda_\Phi={\mathscr{L}_{A / {L} / {K}}}_{\Phi}^{\Gamma}\cdot\Lambda_\Phi.
\end{equation*}
It follows directly that if \eqref{e:imc} holds for $A/L/K$ then it also holds for $A/L/K'$, see Corollary \ref{c:restriction}.

\subsubsection{Special cases}\label{ss:imcspec}

Based on previously known results, the Iwasawa main conjecture \eqref{e:imc} is proven in several cases: 
\begin{enumerate}
\item[(I)] $L=K$ (Theorem \ref{t:l=k}),  
\item[(II)] $A / K$ is a constant elliptic curve (Theorem \ref{t:constant}), 
\item[(III)] $A / K$ has semistable reduction everywhere  and $L $ is the unramified $\mathbb{Z}_{p}$-extension $K_\infty^{(p)}$ of $K$ (Theorem \ref{t:constantfield}).
\end{enumerate}

\subsection{The valuations associated to characters}\label{su:app} 
Let $\mathsf{v}_{p}$ be the unique valuation on $\overline{\mathbb{Q}}_{p}$ with $\mathsf{v}_{p}(p)=1$, 
$\mathsf{v}_{p}(0)=+\infty$. For $\omega \in \hat{\Gamma}$, $\xi \in \mathbb{Q}_{p}\cdot \Lambda$, define
$$
\mathsf{v}_{\omega, L}(\xi)=\mathsf{v}_{p}(\omega(\xi)),
$$
For $I=(\xi)$, define
$$
\mathsf{v}_{\omega, L}(I)=\mathsf{v}_{\omega, L}(\xi).
$$
This is well-defined: replacing $\xi$ by $u\xi$ with $u\in\Lambda_\Gamma^\times$ multiplies $\omega(\xi)$ by the $p$-adic unit $\omega(u)$, leaving $\mathsf v_p(\omega(\xi))$ unchanged.

Recall that in the Noetherian topology of Monsky \cite[\S 1]{monsky},
a subset $Y\subset \hat\Gamma$ is closed if and only if it is a finite union of subsets each 
consisting of characters $\omega$ satisfying a finite set of equations $\omega(\tau_j)=\zeta_j$, $\tau_j\in\Gamma$, $\zeta_j\in\mu_{p^\infty}$, $j=1,...,m$, (\cite[Definition 1,2]{monsky}). 

\begin{myproposition}\label{p:val} Suppose $A / K$ has semistable reduction everywhere. There exists a proper Monsky closed set 
$\mathsf Z\subset  \hat{\Gamma}$, such that 
if $\omega\not\in\mathsf Z$, then
$$
\mathsf{v}_{\omega, L}(\mathscr{L}_{A / L})=\mathsf{v}_{\omega, L}(\mathrm{CH}_{\Lambda}(X_{L})).
$$
\end{myproposition}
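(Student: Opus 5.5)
Fix a generator $\theta$ of $\mathrm{CH}_\Lambda(X_L)$ and write $\mathscr L:=\mathscr L_{A/L}$. The plan is to compare $\mathsf v_{\omega,L}(\mathscr L)$ and $\mathsf v_{\omega,L}(\theta)$ by reducing, for each character $\omega$, to the $\Z_p$-extension $L_\omega$ cut out by $\omega$. Every $\omega\in\hat\Gamma$ factors through $\Gal(L_\omega/K)\simeq\Z_p$ for some intermediate $\Z_p$-extension $L_\omega\subset L$. If $p^L_{L_\omega}(\dag_{A/L})\neq 0$, then Proposition \ref{p:spl} and Proposition \ref{p:aspf}, applied with $L'=L_\omega$, give
$$\mathsf v_{\omega,L}(\mathscr L)+\mathsf v_{\omega,L_\omega}(\varrho_{L/L_\omega})=\mathsf v_{\omega,L_\omega}(\mathscr L_{A/L_\omega})+\mathsf v_{\omega,L_\omega}(\vartheta_{L/L_\omega}),$$
and the identical relation with $\theta$ and $\mathrm{CH}_{\Lambda'}(X_{L_\omega})$ in place of $\mathscr L$ and $\mathscr L_{A/L_\omega}$. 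Whenever the correcting terms $\omega(\varrho_{L/L_\omega})$ and $\omega(\vartheta_{L/L_\omega})$ are finite and nonzero, the statement for $\omega$ is therefore equivalent to $\mathsf v_\omega(\mathscr L_{A/L_\omega})=\mathsf v_\omega(\mathrm{CH}(X_{L_\omega}))$.

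This one-variable route still needs the main conjecture for all $L_\omega$, which is not available. I would therefore switch to a different reduction, via the unramified direction, where case (III) (Theorem \ref{t:constantfield}) supplies the main conjecture for $K^{(p)}_\infty$. Enlarging $L$ to $LK^{(p)}_\infty$ is harmless: by Lemma \ref{l:1} and the specialization formulas, the claim for the larger extension implies it for $L$, since characters of $\Gamma$ are characters of the bigger group trivial on a factor, and Monsky closed sets pull back. Also use the restriction formula (Proposition \ref{p:restriction}) to change the base field $K$ to finite subextensions $K_n\subset K^{(p)}_\infty L$. For $\omega$ of finite order, choose $K'$ so that $\omega$ restricted to $\Phi=\Gal(L/K')$ is trivial on a large subgroup. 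The defining product $f^\Gamma_\Phi=\prod_\chi f_\chi$ then gives
$$\mathsf v_{1,L/K'}(f^\Gamma_\Phi)=\sum_{\chi}\mathsf v_{\omega\chi,L}(f),$$
so the valuations at the trivial character over the varying base $K'$ control sums of valuations over $\omega\cdot\widehat{\Gamma/\Phi}$. At the trivial character, i.e.\ case (I) (Theorem \ref{t:l=k}) over $K'$, together with the $\mu$-equality stated in the introduction (the application deduced there: $\mathscr L_{A/L}\in\Lambda$ with the same $\mu$-invariant as $X_L$), one obtains equality of these sums.

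The key input is a Monsky-type structure theorem, \cite[Theorem]{monsky}: for a nonzero $f\in\Lambda$, the function $\omega\mapsto\mathsf v_p(\omega(f))-\mu(f)\cdot(\text{normalization})$ is, outside a proper Monsky closed set, given by a simple formula determined by the "linear" factors of $f$. Concretely, write $f=p^\mu\prod g_i\cdot h$, where the $g_i$ are cyclotomic-like factors $\tau-\zeta$ and $h$ is not divisible by any such factor. Then the zero locus and the non-generic valuations of $h$ lie in a proper Monsky closed set, off which $\mathsf v_p(\omega(h))$ is determined by $h$'s image modulo $p$. Take $\mathsf Z$ to be the union of the exceptional sets for $\mathscr L$ and $\theta$, the vanishing loci of $p^L_{L'}(\dag_{A/L})$ and of the correcting factors $\varrho$, $\vartheta$ (Monsky closed, as zero sets of elements of $\Lambda$), and the characters with $\omega(\dag_{A/L})=0$. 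Off $\mathsf Z$, both $\mathsf v_\omega(\mathscr L)$ and $\mathsf v_\omega(\theta)$ are given by the generic formula. The equality of $\mu$-invariants, together with the equal averaged values coming from the restriction formula at trivial characters over the $K_n$, then forces the two generic formulas to agree.

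\textbf{Main obstacle.} I expect the main difficulty in the last step: showing that matching the $\mu$-invariants and the averaged values over cosets pins down $\mathsf v_\omega$ generically. Generic valuations on $\hat\Gamma$ depend on more than $\mu$ when $\Gamma$ has rank $\geq 2$ (for instance, on the degree of $h$ modulo $p$ in a suitable variable). I would first try to control this via the functional equation (Proposition \ref{p:fel}) and the $\mu$-equality. Failing that, I would use the one-variable reduction above in the direction $K^{(p)}_\infty$ for the unramified twist of $\omega$, together with Lemma \ref{l:imcsp}, to upgrade the $K^{(p)}_\infty$ case (III).
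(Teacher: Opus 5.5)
There is a genuine gap: the step that should prove the equality is never supplied, and the inputs lined up for it are either circular or vacuous.

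\textbf{Circularity and a missing theorem.} The $\mu$-equality of Proposition \ref{p:int} is proved in the paper \emph{from} Proposition \ref{p:val}, via Monsky's result on $\Omega_{\xi_0}$, so you cannot use it here. The ``Monsky-type structure theorem'' giving a generic formula for $\mathsf v_{\omega,L}$ in terms of linear factors is not a result you can cite. Monsky gives closedness of zero sets and of $\Omega_f$, and your own ``main obstacle'' concedes this step is open.

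\textbf{The coset sums carry no information.} After enlarging $L$ so that $K^{(p)}_\infty\subset L$, every split multiplicative $v\in S$ has decomposition group of rank $\geq 2$ over any finite $K'\subset L$. By Definition \ref{d:vartheta}(c) (cf.\ Lemma \ref{l:zero}), $\vartheta_{L/K'}=0$ for the specialization to $K'$ itself. Propositions \ref{p:aspf} and \ref{p:spl} then force both $\mathscr L_{A/L/K'}$ and $\mathrm{CH}_{\Lambda_\Phi}(X_L)$ to vanish at the trivial character. So your sums are $\infty=\infty$, and Theorem \ref{t:l=k} over $K'$ never enters. These are exactly the exceptional zeros of \S\ref{ss:vanishing}, and they occur whenever $S$ contains a split multiplicative place. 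Also, the passage from $LK^{(p)}_\infty$ back to $L$ is not ``harmless via Lemma \ref{l:1}''. It needs Lemma \ref{l:red}(3) and costs exactly the zero set of $\vartheta_{LK^{(p)}_\infty/L}$, which is the paper's $\mathsf Z$.

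\textbf{The missing idea.} No generic structure is needed, only a difference and a better specialization.
\begin{itemize}
\item \emph{Isolating $\omega$.} For $\omega$ of order $p^n$ put $\Phi'=\ker\omega$, $\Phi''=\ker\omega^p$, $K'=L^{\Phi'}$, $K''=L^{\Phi''}$. Then $f^\Gamma_{\Phi'}/f^\Gamma_{\Phi''}=\prod_{i\in(\Z/p^n\Z)^*}f_{\omega^i}$. Since the $\omega^i$ are Galois conjugate, $\mathsf v_{\omega,L}(f)=\frac{1}{p^n-p^{n-1}}\mathsf v_p\bigl(\prod_i\omega^i(f)\bigr)$ (Lemma \ref{l:prod}). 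This isolates $\omega$ directly and involves no trivial character, hence no forced zero.
\item \emph{Comparing $\mathscr L_{A/L}$ and $\mathrm{CH}_\Lambda(X_L)$.} The paper restricts to $K'$ and $K''$ (Proposition \ref{p:restriction}). It then specializes not to $K'$, $K''$ but to $K'^{(p)}_\infty$, $K''^{(p)}_\infty$. There the correcting factors are nonzero (Corollary \ref{c:1}), and the main conjecture holds by Theorem \ref{t:constantfield} applied over the base fields $K'$ and $K''$ ($A$ stays semistable). So both specializations agree up to units, the ratios agree, and only then is the trivial character applied.
\item \emph{Making the two specializations comparable.} The paper first reduces, by Lemma \ref{l:red}, to $L=L'K^{(p)}_\infty$ with $L'/K$ a $\Z_p$-extension through $K'$. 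This gives $\Gal(K'^{(p)}_\infty/K')\simeq\Gal(K''^{(p)}_\infty/K'')$. The case $K'\subset K^{(p)}_\infty$ is handled directly by Lemma \ref{l:red}(1).
\end{itemize}
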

The proposition is proven in \S\ref{ss:pfpval} based on \S\ref{ss:imcspec}(III) as well as direct consequences of 
the specialization and the restriction formulae, see Lemma \ref{l:red} and Lemma \ref{l:prod}.
From it, we can deduce the following proposition which is proven in \S\ref{ss:int}.

The $\mu$-invariant of a non-zero $\xi\in \Q_p\Lambda_\Gamma$ is defined to be the exponent $\mu(\xi)$
such that $\xi=p^{\mu(\xi)}\cdot \xi_0$,  where $\xi_0$ belongs to $\Lambda_\Gamma$, not divisible by $p$. Define
$\mu(0)=+\infty$.
If the ideal $I=(\xi)$, define $\mu(I):=\mu(\xi)$. For a finitely generated $\Lambda_\Gamma$-module $M$, define
$\mu(M):=\mu(\mathrm{CH}_{\Lambda_\Gamma}(M))$.

\begin{myproposition}\label{p:int}
If $A/K$ has semistable reduction everywhere, then $\mathscr{L}_{A / L} \in \Lambda$ and $\mu(\mathscr{L}_{A / L})=\mu(X_L)$.
\end{myproposition}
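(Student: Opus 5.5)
Write $\mathscr L:=\mathscr L_{A/L}\in\Q_p\Lambda$. The plan is to deduce Proposition \ref{p:int} from Proposition \ref{p:val} and a density statement for Monsky closed sets: a nonzero element of $\Q_p\Lambda$ is determined, as far as its $\mu$-invariant goes, by the valuations of its values at characters outside any proper closed set.

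First, $\mathscr L\neq 0$. Since $X_L$ is torsion here (semistable case, e.g. via \S\ref{ss:imcspec}(III) and the specialization formula), $\mathrm{CH}_\Lambda(X_L)=(\eta)$ with $\eta\neq0$. The zero locus of $\omega(\eta)$ in $\hat\Gamma$ is a proper Monsky closed set by Monsky's theory. Since $\hat\Gamma$ is not a finite union of proper closed sets, some $\omega\notin\mathsf Z$ has $\omega(\eta)\neq 0$. Proposition \ref{p:val} then gives $\mathsf v_p(\omega(\mathscr L))<\infty$, so $\mathscr L\neq0$.

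Next, write $\mathscr L=p^{m}\xi_0$ with $m=\mu(\mathscr L)\in\Z$ and $\xi_0\in\Lambda$ not divisible by $p$, and similarly $\eta=p^{n}\eta_0$ with $n=\mu(X_L)\ge0$. The key lemma I would prove is this: for $\xi_0\in\Lambda$ with $p\nmid\xi_0$ and any proper Monsky closed set $\mathsf Z$, the set of $\omega\notin\mathsf Z$ with $\mathsf v_p(\omega(\xi_0))<\epsilon$ is nonempty for every $\epsilon>0$. The approach is to reduce modulo $p$, where $\bar\xi_0\neq0$ in $\F_p[[\Gamma]]\cong\F_p[[t_1,\dots,t_d]]$. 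For $\omega$ of large order $p^k$, $\mathsf v_p(\omega(\xi_0))$ is controlled by the $t$-adic order of $\bar\xi_0$ evaluated at $t_i=\omega(\gamma_i)-1$, each of valuation about $1/p^{k-1}(p-1)$. Choose $\omega$ generic so that the lowest homogeneous form of $\bar\xi_0$ does not cancel. This genericity condition is a Zariski open condition on the reductions of the $\omega(\gamma_i)$, and I expect it to be compatible with avoiding $\mathsf Z$ because $\mathsf Z$ is contained in a finite union of translated subtori. This lemma is the main obstacle, and I would model it on Monsky's arguments in \cite{monsky}.

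Applying the lemma simultaneously to $\xi_0$ and $\eta_0$, using the product $\xi_0\eta_0$, which is also not divisible by $p$, gives characters $\omega\notin\mathsf Z$ with
\[
\mathsf v_{\omega,L}(\mathscr L)=m+\mathsf v_p(\omega(\xi_0))\quad\text{and}\quad \mathsf v_{\omega,L}(\eta)=n+\mathsf v_p(\omega(\eta_0)),
\]
where both error terms are less than $\epsilon$. Proposition \ref{p:val} equates the left-hand sides, so $|m-n|<2\epsilon$. Since $\epsilon$ is arbitrary and $m,n$ are integers, $m=n\ge0$. Hence $\mathscr L=p^{n}\xi_0\in\Lambda$ and $\mu(\mathscr L)=\mu(X_L)$.
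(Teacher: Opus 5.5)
Your strategy is the same as the paper's: use Proposition \ref{p:val} to compare valuations at characters outside $\mathsf Z$, and use the fact that proper Monsky closed sets cannot cover $\hat\Gamma$. However, your first step rests on a false claim. $X_L$ need \emph{not} be $\Lambda$-torsion when $A/K$ is semistable.

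Already for $d=0$ ($L=K$, $\Lambda=\Z_p$), $X_K$ is non-torsion whenever $A(K)$ has positive rank. In general, \S\ref{ss:imcspec}(III) together with Proposition \ref{p:aspf} gives torsionness only when $K_\infty^{(p)}\subset L$. Specializing from $LK_\infty^{(p)}$ down to $L$ can send $\mathrm{CH}(X_{LK_\infty^{(p)}})$ to $0$. Proposition \ref{p:nontorsion} is exactly about this non-torsion case.

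In that case $\mu(X_L)=+\infty$, so the statement requires $\mathscr L_{A/L}=0$, and your argument never proves this. The repair uses the same tools:
\begin{itemize}
\item If $\mathrm{CH}_\Lambda(X_L)=0$, Proposition \ref{p:val} gives $\omega(\mathscr L_{A/L})=0$ for all $\omega\notin\mathsf Z$.
\item Choose $k$ with $p^k\mathscr L_{A/L}\in\Lambda$. Then $\diamondsuit_{p^k\mathscr L_{A/L}}\cup\mathsf Z=\hat\Gamma$.
\item By \eqref{e:union} and \cite[Theorem 2.6]{monsky}, this forces $p^k\mathscr L_{A/L}=0$.
\end{itemize}
Your argument for the other direction ($\eta\neq0\Rightarrow\mathscr L_{A/L}\neq0$) is fine. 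The paper makes exactly this two-case split at the end of its proof.

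Second, the ``key lemma'' you call the main obstacle is stronger than needed and does not have to be proved from scratch. Since $m,n\in\Z$ and the error terms $\mathsf v_p(\omega(\xi_0))$, $\mathsf v_p(\omega(\eta_0))$ are non-negative, it suffices to find one $\omega\notin\mathsf Z$ with both error terms $<1$. Then $|m-n|<1$, so $m=n$.

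That is precisely \cite[Theorem 2.3]{monsky}, which the paper quotes: $\Omega_f=\{\omega\mid \mathsf{v}_{\omega,L}(f)\ge 1\}$ lies in a proper closed set whenever $p\nmid f$. Combined with \eqref{e:union}, this gives some $\omega\notin\Omega_{\xi_0}\cup\Omega_{\eta_0}\cup\mathsf Z$.

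The paper phrases the same point by contradiction: if $\mu>m$, then $\Omega_{\xi_0}\cup\mathsf Z=\hat\Gamma$, and symmetrically for $\mu<m$. Your heuristic via lowest homogeneous forms of $\bar\xi_0$ and avoiding ``translated subtori'' is not a proof as written. Replace it with this citation.
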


The following corollary of Proposition \ref{p:int} can be viewed as a special case of \eqref{e:imc}.

\begin{myproposition}\label{p:nontorsion} Suppose $A/K$ has semistable reduction everywhere. The dual Selmer group $X_{L}$ is non-torsion over $\Lambda$ if and only if $\mathscr{L}_{A / L/K}=0$.
\end{myproposition}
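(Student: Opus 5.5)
We deduce this from Proposition~\ref{p:int} together with the fact that $\mu$-invariants detect vanishing. By definition, $\mathrm{CH}_\Lambda(X_L)=0$ exactly when $X_L$ is not $\Lambda$-torsion. Moreover $\mu(M)=\mu(\mathrm{CH}_\Lambda(M))$, and $\mu(0)=+\infty$. So $\mu(X_L)=+\infty$ if and only if $X_L$ is non-torsion.

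Indeed, if $X_L$ is torsion, then $\mathrm{CH}_\Lambda(X_L)$ is a non-zero principal ideal $(\xi)$. Since $\Lambda=\Z_p[[\Z_p^d]]\cong\Z_p[[T_1,\dots,T_d]]$ is a Noetherian UFD, we can write $\xi=p^m\xi_0$ with $m<\infty$ and $p\nmid\xi_0$. Hence $\mu(X_L)<\infty$.

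Similarly, for $\xi\in\Lambda$ we have $\mu(\xi)=+\infty$ if and only if $\xi=0$. For non-zero $\xi$, the $p$-adic valuation of $\xi$ in the UFD $\Lambda$ is finite, since $\bigcap_n p^n\Lambda=0$.

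Under the semistability hypothesis, Proposition~\ref{p:int} gives $\mathscr L_{A/L}\in\Lambda$ and $\mu(\mathscr L_{A/L})=\mu(X_L)$. Combining the two observations, $\mathscr L_{A/L}=0$ iff $\mu(\mathscr L_{A/L})=+\infty$ iff $\mu(X_L)=+\infty$ iff $X_L$ is non-torsion.

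The $d=0$ case needs a separate check. There $\Lambda=\Z_p$, and the same argument applies verbatim: $X_L$ is non-torsion iff it has positive $\Z_p$-rank iff $\mathrm{CH}_{\Z_p}(X_L)=0$. Alternatively, this case is part of Theorem~\ref{t:l=k}.

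The only point requiring care is the convention. We must make sure that the equality of $\mu$-invariants in Proposition~\ref{p:int} is meant to include the value $+\infty$, i.e.\ that it holds even when one side vanishes. If the proof of Proposition~\ref{p:int} only establishes equality for non-zero elements, I would instead argue via Proposition~\ref{p:val}, which is stated for all $\omega$ outside a proper Monsky closed set $\mathsf Z$:

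\begin{itemize}
\item If $\mathscr L_{A/L}=0$, then $\mathsf v_{\omega,L}(\mathrm{CH}_\Lambda(X_L))=+\infty$ for all $\omega\notin\mathsf Z$. Since a non-zero element of $\Lambda$ cannot vanish at all characters off a proper Monsky closed set (by Monsky's results \cite{monsky}), this forces $\mathrm{CH}_\Lambda(X_L)=0$.
\item Conversely, if $\mathrm{CH}_\Lambda(X_L)=0$, the same reasoning forces $\mathscr L_{A/L}=0$.
\end{itemize}

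This fallback is the step I would verify most carefully: that the zero locus of a non-zero element of $\Q_p\Lambda$ is a proper Monsky closed set, hence cannot contain the complement of $\mathsf Z$.
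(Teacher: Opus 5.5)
Your argument is correct and follows the paper's route: the paper presents this statement as an immediate corollary of Proposition \ref{p:int}, read with the convention $\mu(0)=+\infty$. Your worry about that convention is unfounded. The paper's proof of Proposition \ref{p:int} ends by treating the vanishing case with precisely your fallback argument, namely Proposition \ref{p:val}, the fact that $\diamondsuit_f$ is a proper Monsky closed set for $f\neq 0$, and \eqref{e:union}.
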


 \subsection{More applications}\label{su:gen} 
 Let $\mho(e, \Gamma)$ denote  the set of all intermediate $\Z_p^e$-extensions $L'/K$ of $L/K$.
 Write $\Gamma$ additively and identify $\mho(e,\Gamma)$ with the Grassmannian $\mathrm{Gr}(d-e,d)(\Z_p)=\mathrm{Gr}(d-e,d)(\Q_p)$, by assigning $L'/K$ to the $d-e$ dimensional subspace $\Q_p\Gal(L/L')\subset \Q_p\Gamma$, and by assigning to a $d-e$ dimensional subspace
$V\subset \Q_p\Gamma$ the fixed field of $V\cap\Gamma$. Then we endow $\mho(e,\Gamma)$ with the
Zariski topology of $\mathrm{Gr}(d-e,d)(\Z_p)$.

 
 \begin{myproposition}\label{p:e} Suppose $d> e\geq 2$. The Iwasawa main conjecture holds over $L$
 if and only if there is a non-empty Zariski open subset $O \subset \mho(e,\Gamma)$ such that
 the conjecture holds for all $\Z_p^e$-extensions belonging to $O$.
 \end{myproposition}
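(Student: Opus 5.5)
The plan is to treat the two directions separately, using the specialization formulas for both sides (Proposition \ref{p:aspf} and Proposition \ref{p:spl}) together with a genericity statement on $\mho(e,\Gamma)$.

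\textbf{Reduction to a genericity statement.} For a nonzero $f\in\Q_p\Lambda_\Gamma$, I would first show that the set of $L'\in\mho(e,\Gamma)$ with $p^L_{L'}(f)\neq 0$ contains a non-empty Zariski open subset. After scaling I may take $f\in\Lambda_\Gamma$ and write $f=p^{\mu}f_0$ with $f_0\not\equiv 0 \bmod p$. Then $\Lambda_\Gamma/p\cong\F_p[[t_1,\dots,t_d]]$, and choosing coordinates adapted to a complement of $\Gal(L/L')$ reduces this to the statement that a nonzero power series has nonzero restriction to a generic $\Z_p$-rational linear subspace. I would prove that by looking at the lowest-degree homogeneous form of $f_0$: it is a nonzero polynomial, and its non-vanishing on the subspace is a Zariski open condition on the Grassmannian. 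This open condition is non-empty because $\Z_p$-points are Zariski dense.

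I would apply this to $\dag_{A/L}$, to $\varrho_{L/L'}$ and to $\vartheta_{L/L'}$. For the last two I would inspect Definitions \ref{d:varrho} and \ref{d:vartheta}; they are products of local factors attached to the finitely many bad or ramified places, and each should vanish only on a Zariski closed locus. Applied also to a generator of $\mathrm{CH}_\Lambda(X_L)$, this gives a non-empty open $O_0\subset\mho(e,\Gamma)$ on which all the correcting factors are nonzero. On $O_0$ the specialization formula also shows that $X_{L'}$ is torsion whenever $X_L$ is.

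\textbf{The direction ``only if''.} By Lemma \ref{l:1}, the main conjecture for $A/L$ implies it for every $L'\in O_0$, since the hypotheses $\mathscr L_{A/L}\in\Lambda$ and nonvanishing of the factors hold there. So I would take $O=O_0$.

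\textbf{The direction ``if''.} Suppose the conjecture holds on $O$; replace $O$ by $O\cap O_0$, which is still non-empty since the Grassmannian is irreducible. The argument then has three steps.

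\emph{Degenerate case.} If $\mathscr L_{A/L}=0$, then every specialization vanishes, so $\mathscr L_{A/L'}=0$ and $X_{L'}$ is non-torsion for $L'\in O$. Hence $X_L$ is non-torsion, by the torsion statement above, and both sides are $0$. The reverse situation is handled symmetrically.

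\emph{Integrality.} Write $\mathscr L_{A/L}=p^{-a}f_0$ with $f_0\in\Lambda$. From $\mathscr L_{A/L'}\in\Lambda'$ and the specialization formula, $p^L_{L'}(f_0)\equiv 0\bmod p^{a}$ on a dense set of $L'$. The mod-$p$ genericity above, applied to $f_0 \bmod p$ and iterated, then forces $f_0\equiv 0 \bmod p^a$, so $\mathscr L_{A/L}\in\Lambda$.

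\emph{Comparison of ideals.} Let $g$ generate $\mathrm{CH}_\Lambda(X_L)$. Since $\Lambda$ is a UFD, I can write $\mathscr L_{A/L}=h f_1$ and $g=hg_1$ with $f_1,g_1$ coprime. Specializing both formulas, the correcting factors cancel, because they are nonzero and equal on the two sides. The conjecture for $L'$ then gives $(p^L_{L'}(f_1))=(p^L_{L'}(g_1))$ in $\Lambda'$.

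\textbf{Main obstacle.} The key claim, and the point where $e\ge 2$ is used, is a Bertini-type statement. If $f_1,g_1$ are coprime, then $(f_1,g_1)$ has height $2$ in the $(d+1)$-dimensional regular local ring $\Lambda$. For generic $L'$, its image should still have height $2$ in $\Lambda'$, which has dimension $e+1\ge 3$. Granting this, $p(f_1)$ and $p(g_1)$ are coprime and generate the same ideal, so both are units. Units of the local ring $\Lambda'$ are detected modulo the maximal ideal, which specialization preserves; hence $f_1$ and $g_1$ are units and $(\mathscr L_{A/L})=\mathrm{CH}_\Lambda(X_L)$.

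To prove the height statement, I would first try a generic-hyperplane argument one codimension at a time. I would show that for a height-$2$ ideal $J$, the locus of $\Z_p$-lines $\langle\gamma\rangle$ for which $\gamma-1$ lies in an associated prime of $\Lambda/J$ (or of $\Lambda/(J,\gamma-1)$ after reduction) is Zariski closed and proper. Then I would induct from $d$ down to $e$, keeping dimension $\ge 3$ so that height $2$ survives each cut. If making this uniform over the Grassmannian proves delicate, the fallback is Lemma \ref{l:imcsp}: I would instead prove only a divisibility between $(\mathscr L_{A/L})$ and $\mathrm{CH}_\Lambda(X_L)$, and feed that into Lemma \ref{l:imcsp}.
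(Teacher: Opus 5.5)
Your route differs from the paper's, and its outer parts are fine: the ``only if'' direction via Lemmas \ref{l:1} and \ref{l:ui}, the degenerate case, and cancelling $\varrho_{L/L'}$, $\vartheta_{L/L'}$, $p^L_{L'}(h)$ on a non-empty open set. For comparison, the paper gets integrality from character valuations and Monsky's theorems (Lemma \ref{l:countp}, Corollary \ref{c:countp}) and reduces to $d=e+1$. It then Weierstrass-prepares $\xi_0,\eta_0$ in $t_1$ for one plain $\Z_p$-extension $L'$, and concludes by uniqueness of distinguished polynomials along the infinitely many $L''\supset L'$ in $O$.

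However, there are two genuine gaps. The first is that the mod-$p$ genericity you need for integrality is argued incorrectly. Modulo $p$, $p^L_{L'}$ is not a restriction to a linear subspace in the $t_i$. For $e=1$, $t_i\mapsto(1+t)^{a_i}-1$, whose linear term $\bar a_i t$ vanishes when $p\mid a_i$. So the lowest form of $\bar f_0$ only sees the reduction of $L'$ modulo $p$. That is not a Zariski-open condition on $\mathrm{Gr}(d-e,d)(\Q_p)$, and it can fail for every $L'$: the product of all $\F_p$-rational linear forms in $t_1,\dots,t_d$ is non-zero but vanishes on every proper $\F_p$-rational subspace. (For mere non-vanishing of $p^L_{L'}(f)$, the lowest-form argument over $\Q_p$ as in \S\ref{ss:vanishing} is fine.) The statement you need is true: it is Lemma \ref{l:normalb} (which uses Monsky) together with Lemma \ref{l:ij}(2). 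For $d=e+1$ it is also elementary: in the UFD $\F_p[[\Gamma]]$, the elements $\tau-1$ with $\tau$ primitive in distinct lines are pairwise non-associate primes, so a non-zero $\bar f_0$ is killed by only finitely many $p^L_{L'}$.

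The second gap is the crux, that coprimality of $f_1,g_1$ survives generic specialization, which you only grant. You leave open how iterated cuts assemble into a Zariski-open subset of $\mho(e,\Gamma)$, since the bad locus at each stage depends on the previous choice. Your fallback through Lemma \ref{l:imcsp} is circular: it presupposes a divisibility between $(\mathscr L_{A/L})$ and $\mathrm{CH}_\Lambda(X_L)$, which is half of the main conjecture.

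To close it, first reduce to $d=e+1$ using the open-set bookkeeping of \S\ref{su:pfgen} (Lemmas \ref{l:ij}(2), \ref{l:digamma}). Establish the conjecture successively on non-empty open subsets of $\mho(e+1,\Gamma),\mho(e+2,\Gamma),\dots$ by running your argument for $A/L''$ with the specialization formulas for $L''/L'$. Then one cut suffices:
\begin{itemize}
\item If $f_1,g_1$ are non-units, each minimal prime $P$ of $(f_1,g_1)$ has height $2$, so $\dim\Lambda/P=d-1\ge2$.
\item The closed subgroup $H_P=\{\gamma\in\Gamma:\gamma-1\in P\}$ is therefore not open; otherwise $\Lambda/P$ would be a quotient of $\Z_p[\Gamma/\Gamma^{p^m}]$, of dimension at most $1$.
\item Let $\tau$ generate $\Gal(L/L')$. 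Since $\Lambda$ is Cohen--Macaulay, a common prime factor of $p^L_{L'}(f_1)$ and $p^L_{L'}(g_1)$ pulls back to a height-$2$ prime containing $f_1,g_1,\tau-1$. That prime is one of the $P$, and $\tau\in H_P$.
\end{itemize}
So the bad $L'$ lie in finitely many proper linear subspaces $\mathbb P(\Q_pH_P)$ of $\mho(d-1,\Gamma)\cong\mathbb P^{d-1}$. With these repairs your argument goes through. It trades Weierstrass uniqueness for a height argument, and $e\ge2$ enters through $\dim\Lambda/P\ge2$ rather than through the infinitude of $\{L''\supset L'\}$.
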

 The proof relies on the specialization formulae and is completed in \S\ref{su:specialcase}. 
We intentionally divide it into three steps, because the approaches in the first two steps are solely group ring theoretical, and might possibly be useful in other situation. See Proposition \ref{p:gen} and Proposition \ref{p:ge}.

In contrast to this proposition, for $e=1$, there are counterexamples.
For instance, take $f=t_1^2-\l \cdot t_2^2$, 
$g=t_1^2-(\l+p^3)\cdot t_2^2$, where $\l\in\Z$ but $\l$ not a square in $\Z_p$, $t_i=\sigma_i-1$,
and $\sigma_1,\sigma_2\in\Gamma$ extendable to a $\Z_p$-basis of $\Gamma$.
It is clear that $f$ and $g$ are relatively prime, while for every $L'/K\in\mho(1,\Gamma)$, in $\Lambda_{\Gamma'}$
the ideals
$$(p^L_{L'}(f))=(p^L_{L'}(g)).$$

\subsection{Notation}\label{su:notation} 
Let $\mathbb{F}_{q}$ and $\kappa$ respectively denote the {\em{constant field}} and the {\em{genus}} of $K$. As usual, for each place $v$ of $K$, let $K_{v}$, $\mathcal{O}_{v}$, and $\pi_{v}$ denote the completion of $K$ at $v$, the ring of integers of $K_{v}$, and a prime element. Denote $\mathbb{F}_{v}=\mathcal{O}_{v} / \pi_{v} \mathcal{O}_{v}$, $q_{v}=|\mathbb{F}_{v}|$. Let $\Frob_{q} \in \Gal(\overline{\mathbb{F}}_{q} / \mathbb{F}_{q})$ be the Frobenius substitution sending $x$ to $x^{q}$ and write $\Frob_{v}$ for $\Frob_{q_v}$. By abuse of notation, 
we also let them denote their images in $\Gal(\F_{q^{p^\infty}}/F_q)$. 

Write $\Gamma:=\Gal(L / K)$ and $\Lambda:=\Lambda_{\Gamma}:=\mathbb{Z}_{p}[[\Gamma]]$. Let $I_\Gamma$ denote the {\em{augmentation ideal}}, the kernel of $p^L_K$.
 For a finitely generated $\Lambda$-module $\mathrm{m}$, let $\mathrm{CH}_{\Lambda}(\mathrm{m}) \subset \Lambda$ denote the associated {\em{characteristic ideal}}, in particular, $\mathrm{CH}_{\Lambda}(\mathrm{m})=0$ if and only if $\mathrm{m}$ is non-torsion. Let $\Gamma_{v} \subset \Gamma$ denote the {\em{decomposition subgroup}}. If $M / K$ is an abelian extension unramified at $v$, let $[v]_{M} \in \Gal(M / K)$ denote the Frobenius element at $v$. For a divisor $D$ of $K$ such that every $v \in \mathrm{Supp}(D)$ is unramified over $M / K$, put $[D]_{M}=\prod_{v}[v]_{M}^{\ord_{v}(D)}$.

Call a place $v$ respectively good, bad, ordinary, multiplicative, split multiplicative, or non-split multiplicative, if $A$ has the corresponding type of reduction at $v$. Let $N=N_{A/K}$ and $\Delta_{A/K}$ denote the {\em{arithmetic conductor}} and the {\em{global discriminant}} of $A / K$. If $v \notin N$, let $\bar{A}$ denote the {\em{reduction}} of $A$ at $v$. Let $X_{L}$ denote the Pontryagin dual of the $p^{\infty}$-Selmer group of $A / L$.

Let $\mathbb{A}_{K}$ and $\mathbb{A}_{K}^{*}$ denote the ring of adeles and the group of ideles of $K$. 
For each effective divisor $D$ of $K$ define
$$
U_{D}:=\prod_{v \notin \mathrm{Supp}(D)} \mathcal{O}_{v}^{*} \cdot \prod_{v \in \mathrm{Supp}(D)} 1+\pi_{v}^{\ord_{v}(D)} \mathcal{O}_{v}
$$
which is an open subgroup of $\mathbb{A}_{K}^{*}$. Endow the quotient topology to the Weil group
$$
W_{D}:=K^{*} \backslash \mathbb{A}_{K}^{*} / U_{D}.
$$

For a given quasi-character $\omega: W_{D} \longrightarrow \mathbb{C}^{*}$, let $D_{\omega}$ denote the conductor. Let $\omega_{s}$, $s \in \mathbb{C}$, be the quasi-character of $W_{D}$, sending $x$ to $q^{-s \operatorname{deg}(x)}$. For a divisor $D^{\prime}=\sum_{v} D_{v}^{\prime}$, such that $\mathrm{Supp}(D^{\prime}) \cap \mathrm{Supp}(D)=\emptyset$, write $[D^{\prime}]_{D}$ for $[D^{\prime}]_{K^{D}}$, where $K^{D}$ denotes the ray class field associated to $D$. Thus, $[D^{\prime}]_{D}$ is the image of $\prod_{v} \pi_{v}^{\ord_{v}(D^{\prime})}$ under the natural map
$$
\bigoplus_{v} K_{v}^{*} \longrightarrow \mathbb{A}_{K}^{*} \longrightarrow W_{D}\longrightarrow \Gal(K^D/K),
$$
where the rightmost map is by the Artin reciprocity law, so that $\Gal(K^D/K)$ can be identified with the profinite completion $\tilde W_D$ of $W_D$. 

For a finite set $T$ of places of $K$, let $T_{m}$ (resp. $T_{o}$ ) denote the subset consisting of multiplicative (resp. good ordinary) places in $T$. The Galois group of the maximal abelian extension $\tilde K_T/K$ unramified outside $T$ is identified with the projective limit
$$
\tilde{W}_{T}:=\varprojlim _{\mathrm{Supp}(D)=T} \tilde W_{D}
$$
Endow the group $\mu_{p^{\infty}}$ with the discrete topology.
For a continuous character 
$$\omega: \tilde{W}_{T} \longrightarrow \mu_{p^\infty},$$ 
let $D_{\omega}$ denote the
global Artin conductor.
Since $\omega$ factors through $\tilde{W}_{T}\longrightarrow \tilde W_{D_\omega}$, 
it has a finite image: $\mu_{m}$, for some $m$. We shall fix an embedding
$$
\mu_{\infty} \longrightarrow \mathbb{C}^{*}
$$
such that $\omega$ can be viewed as a quasi-character of $\tilde{W}_{T}$, $\tilde W_{D}$, or $W_{D}$ (and the two definitions of $D_\omega$ coincide), $D \succeq D_{\omega}$. Put
$$
\tilde{\Lambda}_{T}:=\varprojlim _{\mathrm{Supp}(D)=T} \Z_p[[\tilde W_{D}]]
$$
and write $\mathbb{Q}_{p}\cdot \tilde{\Lambda}_{T}:=\mathbb{Q}_{p} \otimes_{\Z_p} \tilde{\Lambda}_{T}$. Also, we fix an embedding
$$
\mu_{\infty} \longrightarrow \overline{\mathbb{Q}}_{p}^{*},
$$
and extend $\omega$ to a continuous $\mathbb{Q}_{p}$-algebra homomorphism $\omega: \mathbb{Q}_{p} \tilde{\Lambda}_{T} \longrightarrow \mathbb{Q}_{p}(\mu_{m})$.

Let $S$ denote the {\em{ramification locus}} of $L / K$. In general, if $L^{\prime} / K$ is an intermediate  (not necessary multuple $\Z_p$) extension of $L / K$, denote the ramification locus $S^{\prime}$, write $\Gamma^{\prime}=\operatorname{Gal}(L^{\prime} / K), \Lambda^{\prime}=\Z_p[[\Gamma^{\prime}]]$, and let $p_{L^{\prime}}: \mathbb{Q}_{p} \tilde{\Lambda}_{S^{\prime}} \longrightarrow \mathbb{Q}_{p} \Lambda^{\prime}$ be the continuous $\mathbb{Q}_{p}$-algebra homomorphism induced from the surjection $\tilde{W}_{S^{\prime}} \longrightarrow \Gamma^{\prime}$. Let $\hat{\Gamma}^{\prime}$ denote the dual group of $\Gamma^{\prime}$, which consists of continuous characters $\omega: \Gamma^{\prime} \longrightarrow \mu_{p^{\infty}}$. We identify each $\omega \in \hat{\Gamma}^{\prime}$ with a continuous character on $\tilde{W}_{S^{\prime}}$, and also extend $\omega$ to a continuous $\mathbb{Q}_{p}$-algebra homomorphism $\mathbb{Q}_{p} \Lambda^{\prime} \longrightarrow \mathbb{Q}_{p}(\mu_{p^{m}})$. Let $p_{L^{\prime}}^{L}: \mathbb{Q}_{p} \Lambda \longrightarrow \mathbb{Q}_{p} \Lambda^{\prime}$ be the $\mathbb{Q}_{p}$-algebra homomorphism induced from $\Gamma \longrightarrow \Gamma^{\prime}$.

Let $A_{p^{m}}$ denote the kernel of the multiplication by $p^{m}$ on $A$, as a commutative group scheme over $K$, put $A_{p^{\infty}}:=\bigcup_{m=1}^{\infty} A_{p^{m}}$. For a $p$-primary abelian group $M$, write $M_{\text {div }}$ for its $p$-divisible part. Let $M^{\vee}$ denote the Pontryagin dual of $M$, whenever it is defined.

\section{The specialization formula of $\mathrm{CH}_\Lambda(X_L)$}\label{s:chx}
Let $L'/K$ be an intermediate $Z_p^e$-extension of $L/K$, $d>e\geq 0$. Denote $\Psi:=\Gal(L/L')$.
The specialization formula relates $p^L_{L'}(\mathrm{CH}_{\Lambda_\Gamma}(X_L))$ and $\mathrm{CH}_{\Lambda'}(X_{L'})$, and also involvs correcting factors.

\subsection{Correcting factors}\label{su:correct}
Recall that for each elliptic curve $\mathsf A$ defined over a finite field $\mathsf k$
of order $\mathsf q$, there are associated conjugate Weil $\mathsf q$-numbers $\alpha$ and $\beta$ in $\overline{\mathbb{Q}}$ such that $\alpha\beta=\mathsf q$ and $|A(\mathsf k)|=1-(\alpha+\beta)+\mathsf q$.
Furthermore, if $\mathsf A/\mathsf k$ is ordinary, then the imaginary quadratic field $\Q(\alpha,\beta)$ splits at
$p$, so we can
choose the embedding of it into $\Q_p$ such that $\alpha\in\mathbb{Z}_{p}^{*}$.

If $\mathsf A$ is the reduction of $A$ at $v$, write $\alpha_v$, $\beta_v$ for $\alpha$, $\beta$.

\subsubsection{The factor $\nabla_{A / L}$}\label{ss:nabla}Suppose $A / K$ is a constant elliptic curve defined over $\mathbb{F}_{q}$, and let $\alpha$ and $\beta$ be the associated Weil $q$-numbers.
Since $A/K$ is ordinary, $\alpha\in\mathbb{Z}_{p}^{*}$,
we put
$$
\nabla_{A}=(1-\alpha^{-1} \Frob_{q})(1-\alpha^{-1} \operatorname{Frob}_{q}^{-1}) \in \mathbb{Z}_{p}[[\mathrm{Gal}(
\F_{q^{p^\infty}} / \mathbb{F}_{q})]].
$$
Note that $S=\emptyset$, if and only if $L$ is the unramified $\Z_p$-extension of $K$, namely,
$$
L=K_{\infty}^{(p)}=\mathbb{F}_{q^{p^\infty}} K.
$$
In this case, we identify $\Gamma$ with $\Gal(\mathbb{F}_{q^{p^\infty}} / \mathbb{F}_{q})$
so that $\nabla_A\in\Lambda$.
\begin{definition}\label{d:nabla} Define the element
$$
\nabla_{A / L}= \begin{cases}\nabla_{A}, & \text { if } A / K \text { is a constant elliptic curve and } L=K_{\infty}^{(p)} \\ 1, & \text { otherwise. }\end{cases}
$$
Write $\nabla_{A / L / K}$ for $\nabla_{A / L}$, if needed.
\end{definition}

\subsubsection{The number $\mathrm{t}_{A / L}$}\label{ss:mathrmt} 
Define the natural number
\begin{equation}\label{e:t}
\mathrm{t}_{A / L}= 
\begin{cases}|A_{p^\infty}(K)|^{2}, & \text { if } L=K; \\ 
1, & \text {otherwise. }
\end{cases}
\end{equation}

\subsubsection{The factor $\varrho_{L/L'}$}\label{ss:varrho}
\begin{definition}\label{d:varrho} 
Define the principal ideal
$$
\varrho_{L / L^{\prime}}:=\begin{cases}
\Lambda', & \text { if } e \geq 2 ; \\
\nabla_{A / L^{\prime}}\cdot\Lambda', & \text { if } e=1 ; \\
|A_{p^{\infty}}(K)|^{2}\cdot\Lambda', & \text { if } e=0, L \neq K_{\infty}^{(p)} ; \\
( |A_{p^{\infty}}(K)|^{2} /|A_{p^{\infty}}(K) \cap A_{p^{\infty}}(L)_{\text {div }}|^{2})\cdot\Lambda', & \text { if } e=0, L=K_{\infty}^{(p)} .
\end{cases}
$$
\end{definition}
The factor $\varrho_{L / L^{\prime}}$ is generated by a natural number if $e\not=1$.
Note that if $A/K$ is a constant curve, then $\alpha$, being a Weil $q$-number, is not a root of $1$, so in general, for every $\omega\in\hat\Gamma'$, 
\begin{equation}\label{e:varrho}
\omega(\varrho_{L / L^{\prime}} )\not=0, \text{ and in particular, } \varrho_{L / L^{\prime}}\not\subset I_{\Gamma'}.
\end{equation}

\begin{lemma}\label{l:varrho} We have 
$$
\varrho_{L / L^{\prime}}=p_{L^{\prime}}^{L}(\mathrm{t}_{A / L}^{-1} \cdot \nabla_{A / L}^{-1}) \cdot \mathrm{t}_{A / L^{\prime}} \cdot \nabla_{A / L^{\prime}}\cdot\Lambda' .
$$
\end{lemma}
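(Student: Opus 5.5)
This is a case-by-case check against the definitions of $\mathrm{t}$ (equation \eqref{e:t}), $\nabla$ (Definition \ref{d:nabla}) and $\varrho$ (Definition \ref{d:varrho}). Since $d>e\ge 0$, we have $L\neq K$, so $\mathrm{t}_{A/L}=1$. Also $L\neq K^{(p)}_\infty$ unless $d=1$, and then necessarily $e=0$. The right-hand side therefore reduces to
$$p^L_{L'}(\nabla_{A/L}^{-1})\cdot \mathrm{t}_{A/L'}\cdot\nabla_{A/L'}\cdot\Lambda'.$$
Note that $\nabla_{A/L}$ is invertible in $\Q_p\Lambda$ whenever it is not $1$. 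Indeed, $\alpha$ is not a root of unity, so $\nabla_A$ has nonzero values at every character. More directly, $1-\alpha^{-1}\Frob_q$ and its conjugate become units of $\Lambda$ after scaling if $\alpha$ is not a principal unit, and in general they are nonzero divisors. Hence the expression makes sense as a principal fractional ideal. I will split according to $e$.

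\textbf{Case $e\ge 2$.} Then $L'\ne K$ and $L'\neq K^{(p)}_\infty$, and also $L\ne K^{(p)}_\infty$, so all factors are $1$. The right-hand side is $\Lambda'=\varrho_{L/L'}$.

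\textbf{Case $e=1$.} Here $\mathrm{t}_{A/L'}=1$, $\nabla_{A/L}=1$ since $d\ge2$, and the right-hand side is $\nabla_{A/L'}\Lambda'=\varrho_{L/L'}$.

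\textbf{Case $e=0$.} Here $L'=K$, $\Lambda'=\Z_p$, $\mathrm{t}_{A/K}=|A_{p^\infty}(K)|^2$ and $\nabla_{A/K}=1$. If $L\neq K^{(p)}_\infty$, then $\nabla_{A/L}=1$ and we get $|A_{p^\infty}(K)|^2\Z_p$, as required.

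The only substantive subcase is $L=K_\infty^{(p)}$. If $A$ is non-constant, then $\nabla_{A/L}=1$, and we must show $A_{p^\infty}(K)\cap A_{p^\infty}(L)_{\rm div}=0$. This holds because $A_{p^\infty}(K^{(p)}_\infty)$ is finite for a non-isotrivial ordinary curve; for an isotrivial non-constant curve one argues via the twist. I expect this finiteness to be the main point; it can be quoted from \cite{tan13} or from the standard fact that $\F_{q^{p^\infty}}K$-rational $p$-power torsion is finite unless $A$ is constant.

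If $A$ is constant, then $p^L_K(\nabla_A)=(1-\alpha^{-1})^2$. Since $|A(\F_q)|=(1-\alpha)(1-\beta)$ and $\beta$ has positive valuation, we get $\mathsf v_p((1-\alpha^{-1})^2)=2\mathsf v_p(|A(\F_q)|)=2\mathsf v_p(|A_{p^\infty}(K)|)$. Now $A_{p^\infty}(L)=A_{p^\infty}(\F_{q^{p^\infty}})$ is the divisible group $\Q_p/\Z_p$ given by the étale part. Hence $A_{p^\infty}(K)\subset A_{p^\infty}(L)_{\rm div}$, and $\varrho_{L/K}=\Z_p$. This matches $|A_{p^\infty}(K)|^2(1-\alpha^{-1})^{-2}\Z_p=\Z_p$, which completes the check.
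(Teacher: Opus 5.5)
Your case analysis matches the paper's proof. For $e\ge 1$, and for $e=0$ with $L\neq K_\infty^{(p)}$, every factor is read off from the definitions. The one difference is the subcase $e=0$, $L=K_\infty^{(p)}$. There the paper simply quotes the identity $(p^L_K(\nabla_{A/L}))=(|A_{p^\infty}(K)\cap A_{p^\infty}(L)_{\mathrm{div}}|^2)$ from \cite[Lemma 1.2]{tan13}. You instead verify it by hand: for non-constant $A$ through finiteness of $A_{p^\infty}(K_\infty^{(p)})$, which is also the last assertion of Lemma \ref{l:tatemodule}; for constant $A$ through the valuation count $\mathsf v_p((1-\alpha^{-1})^2)=2\mathsf v_p(|A(\F_q)|)$. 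That is a good self-contained replacement for the citation.

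One step of the constant-curve check is false as stated: $A_{p^\infty}(\F_{q^{p^\infty}})$ is not always $\Q_p/\Z_p$. Frobenius acts on the \'etale Tate module by $\alpha$. So $\Gal(\bar\F_q/\F_{q^{p^\infty}})$, the prime-to-$p$ part of $\Gal(\bar\F_q/\F_q)$, acts on $\Q_p/\Z_p$ through the Teichm\"uller component of $\alpha$ in $\mu_{p-1}$. For odd $p$ with $\alpha\not\equiv 1\pmod p$ this action is nontrivial, so $A_{p^\infty}(L)=0$; compare the end of the proof of Lemma \ref{l:tatemodule}.

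Your conclusion still holds, and the repair is one line. $A_{p^\infty}(L)$ is either $0$ or all of $\Q_p/\Z_p$, so it is divisible in every case. Alternatively, when $\alpha\not\equiv1\pmod p$ you have $\mathsf v_p(|A(\F_q)|)=\mathsf v_p(1-\alpha)=0$, so $A_{p^\infty}(K)=0$ and both sides equal $\Z_p$.

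Your preliminary claim that $\nabla_{A/L}$ is invertible in $\Q_p\Lambda$ is also wrong when $\alpha\equiv1\pmod p$. In that case $1-\alpha^{-1}\Frob_q=-\alpha^{-1}\bigl((\Frob_q-1)-(\alpha-1)\bigr)$ is a unit times a degree-one distinguished polynomial, which is not a unit of $\Q_p\Lambda$. You do not need the claim. All that matters is $p^L_K(\nabla_{A/L})=(1-\alpha^{-1})^2\neq 0$, which holds because $\alpha$ is not a root of unity. Then $p^L_{L'}(\nabla_{A/L}^{-1})$ can be read as $p^L_{L'}(\nabla_{A/L})^{-1}\in\Q_p$.
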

\begin{proof} If $e \geq 2$, then $\nabla_{A / L}=\mathrm{t}_{A / L}=\nabla_{A / L^{\prime}}=\mathrm{t}_{A / L^{\prime}}=1$. Also,
$\nabla_{A / L}=$ $\mathrm{t}_{A / L}=\mathrm{t}_{A / L^{\prime}}=1$, if $e=1$. 
If $e=0$ and $L=K_\infty^{(p)}$, use the fact (see \cite[Lemma 1.2]{tan13}) that in $\Z_p$, the ideals
$(p^L_{L'}(\nabla_{A/L}))=(|A_{p^{\infty}}(K) \cap A_{p^{\infty}}(L)_{div }|^2)$. 
\end{proof}

\subsubsection{The factor $\vartheta_{L/L'}$}\label{ss:vartheta}
If $v \nmid N$, put $\lambda_{v}=\alpha_{v}+\beta_{v}$, it is the trace of the Frobenius endomorphism $\mathrm{F}_{v}$ of the reduction elliptic curve $\bar{A}$. 
As usual, for $v  \mid N$, denote

$$
\lambda_{v}= \begin{cases}
1, & \text { if } \mathrm{A} \text { has split multiplicative reduction; } \\ 
-1, & \text { if } \mathrm{A} \text { has non-split multiplicative reduction; } \\ 0, & \text { if } \mathrm{A} \text { has additive reduction. }\end{cases}
$$
If $A / K$ is non-constant and $f_{A}$ is the corresponding cuspidal modular function, then $\lambda_{v}$ is the eigenvalue of $f_{A}$ under the Hecke operator $T_{v}$ [De70, We71]. 
If $A / K$ is a constant elliptic curve and $\alpha,\beta$ are the 
Weil $q$-numbers in \S\ref{ss:nabla}, then $A$ has good ordinary reduction at every $v$ and 
$$
\alpha_{v}=\alpha^{\operatorname{deg}(v)}, \beta_{v}=\beta^{\operatorname{deg}(v)}. 
$$

Let $S_{1}^{\prime}$ be the subset consisting of all $v \in S^{\prime}$ such that $\Gamma_{v}^{\prime} \simeq \mathbb{Z}_{p}$. For each $v \in S_{1}^{\prime}$, choose a topological generator $\sigma_{v}^{\prime}$ of $\Gamma_{v}^{\prime}$. The ideal $(1\pm\sigma_v')\cdot\Lambda'$ is independent of the choice of $\sigma_v'$. 
Let $m_v$ denote the order of the component group of $A/K_v$.

\begin{definition}\label{d:vartheta} Define $\vartheta_{L / L^{\prime}}=\prod_{v} \vartheta_{L / L^{\prime}, v}$, where 
each
$\vartheta_{L / L^{\prime}, v}$ is a principal ideal  in $\Lambda^{\prime}$, defined as follows:
\begin{enumerate}
\item[(a)] Suppose $v \notin S$. If $\Psi_{v} \neq 0$, then $\vartheta_{L / L^{\prime}, v}:=m_{v}\cdot\Lambda'$; otherwise, $\vartheta_{L / L^{\prime}, v}=\Lambda'$.

\item[(b)] Suppose $v$ is a good ordinary place in $S$. If $v$ is unramified over $L^{\prime} / K$, then
$$
\vartheta_{L / L^{\prime}, v}:=(1-\alpha_{v}^{-1}[v]_{L^{\prime}})(1-\alpha_{v}^{-1}[v]_{L^{\prime}}^{-1})\cdot\Lambda' ;
$$
otherwise, $\vartheta_{L / L^{\prime}, v}:=\Lambda'$.

\item[(c)] Suppose $v$ is a split multiplicative place in $S$. Let $Q_{v}$ be the local Tate period and let 
$\overline{\{Q_{v}\}}$ denote the closed subgroup generated by the image of $Q_{v}$ under the local reciprocity law map $K_{v}^{*} \longrightarrow \Gamma_{v}$. Then\footnote{We take this opportunity to make the erratum that in [Tan14, Definition1.3(c)], the condition in the 3rd item should be replaced by ``if 
$\Psi_{v} \simeq \Z_{p}^{f}$, $f \geq 1$, and $\Gamma_{v}^{\prime}$ is topologically generated by $\sigma$ ".}$$
\vartheta_{L / L^{\prime}, v}:= 
\begin{cases}
0, & \text { if } \Psi_{v} \simeq \mathbb{Z}_{p}^{f}, f \geq 2, \text { and } \Gamma_{v}^{\prime}=0 ; \\ 
(1-\sigma_{v}^{\prime})\cdot\Lambda', & \text { if } \Psi_{v} \simeq \mathbb{Z}_{p}^{f}, f \geq 1, v \in S_{1}^{\prime} ; 
\\ (1-[v]_{L^{\prime}})\cdot\Lambda', & \text { if } \Psi_{v} \simeq \mathbb{Z}_{p}^{f}, f \geq 1, \Gamma_{v}^{\prime} \simeq \mathbb{Z}_{p}, v \notin S^{\prime} ; 
\\ \mathfrak{w}_{v} \cdot\Lambda', & \text { if } \Psi_{v} \simeq \mathbb{Z}_{p}, \Gamma_{v}^{\prime}=0, \text { and }(\mathfrak{w}_{v})=\mathrm{CH}_{\mathbb{Z}_{p}}(\Gamma_{v} / \overline{\{Q_{v}\}}) ; \\ \Lambda', & \text { otherwise. }
\end{cases}
$$

\item[(d)] Suppose $v$ is a non-split multiplicative place in $S$. Then
$$
\vartheta_{L / L^{\prime}, v}:= 
\begin{cases}
2 m_{v}\cdot\Lambda', & \text { if } \Gamma_{v}^{\prime}=0, \Psi_{v} \simeq \mathbb{Z}_{p}^{f}, f \geq 2, 
\\ 
2 m_v\cdot\Lambda', & \text{ if }  \Gamma_{v}^{\prime}=0, \Psi_{v} \simeq \mathbb{Z}_{p}, \mathbb{F}_{q_{v}^{2}} \not \subset L_{v} ; \\
m_{v}\cdot\Lambda', & \text { if } \Gamma_{v}^{\prime}=0, \Psi_{v} \simeq \mathbb{Z}_{p}, \mathbb{F}_{q_{v}^{2}} \subset L_{v} ; 
\\ (1+[v]_{L^{\prime}})\cdot\Lambda', & \text { if } \Gamma_{v}^{\prime}\simeq \Z_p, v\not\in S', \\ 
(1+\sigma_{v}^{\prime})\cdot\Lambda', & \text { if } v \in S_{1}^{\prime}, \Psi_{v} \neq 0, \mathbb{F}_{q_{v}^{2}} \subset L_{v}^{\prime} ; \\ \Lambda', & \text { otherwise. }
\end{cases}
$$
\end{enumerate}
\end{definition}

\subsection{The specialization formula}\label{su:aspf} 
\begin{myproposition}\label{p:aspf} If $L^{\prime} / K$ is an intermediate $\mathbb{Z}_{p}^{e}$-extension of $L / K, d>e \geq 0$, then as ideals in $\Lambda'$,
\begin{equation}\label{e:chspf}
\varrho_{L / L'} \cdot p^L_{ L^{\prime}}(\mathrm{CH}_{\Lambda}(X_{L}))=\vartheta_{L / L^{\prime}} \cdot \mathrm{CH}_{\Lambda^{\prime}}(X_{L^{\prime}}).
\end{equation}
\end{myproposition}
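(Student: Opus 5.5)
We reduce first to the case $d=e+1$. Choose a chain $L=L_d\supset L_{d-1}\supset\cdots\supset L_e=L'$ of intermediate $\Z_p^i$-extensions. Granting \eqref{e:chspf} for each consecutive step, we apply $p^{L_{i}}_{L'}$ to the formula for $L_{i+1}/L_i$ and multiply the results together. This requires two compatibilities:
\[
p^{L_{i}}_{L'}(\varrho_{L_{i+1}/L_i})\prod_i\varrho=\varrho_{L/L'}\prod(\text{units}),
\qquad
\prod_i p^{L_i}_{L'}(\vartheta_{L_{i+1}/L_i})=\vartheta_{L/L'}.
\]
For $\varrho$ this is immediate from Lemma \ref{l:varrho}, since the telescoping product of the factors $\mathrm t\nabla$ collapses. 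For $\vartheta$ the identity has to be checked place by place from Definition \ref{d:vartheta}, tracking how $\Psi_v$ and $\Gamma'_v$ change along the chain. Since a specialization of a zero ideal is zero, the chain is best chosen generically, so that at each step the local conditions change one rank at a time. If $X_{L}$ is non-torsion, both sides can be shown to vanish compatibly, so we may assume torsion wherever a division is needed.

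For $d=e+1$, so that $\Psi\simeq\Z_p$ with generator $\psi$, we follow \cite[Theorem 1]{tan13}. Compare the Selmer groups through the control diagram
\[
0\to \mathrm{Sel}(L')\to \mathrm{Sel}(L)^{\Psi}\to \ \text{(local terms)},
\]
together with $\coh^1(\Psi,A_{p^\infty}(L))$ and the local kernels
\[
\bigoplus_{x\mid v}\coh^1(L_x/L'_x,A(L_x)).
\]
The snake lemma and multiplicativity of characteristic ideals, combined with the standard fact
\[
\mathrm{CH}_{\Lambda'}\bigl(X_L/(\psi-1)\bigr)\cdot\mathrm{CH}_{\Lambda'}\bigl(X_L[\psi-1]\bigr)^{-1}=p^L_{L'}(\mathrm{CH}_\Lambda(X_L)),
\]
express $p^L_{L'}(\mathrm{CH}_\Lambda(X_L))/\mathrm{CH}_{\Lambda'}(X_{L'})$ as an alternating product of characteristic ideals of the global error terms and the local $\mathcal W_v^1,\mathcal W_v^2$. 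The global error terms, arising from $A_{p^\infty}(L)$ and its $\Psi$-coinvariants, contribute exactly $\varrho_{L/L'}$; this is where $\nabla_{A/L'}$ enters for $e=1$ and $|A_{p^\infty}(K)|^2$ enters for $e=0$. The local terms at good ordinary, unramified and split multiplicative places are computed verbatim as in \cite{tan13}. These are the places where the hypothesis of \cite{tan13} was not needed, and they produce cases (a)–(c), incorporating the footnoted erratum.

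The new ingredient, and the main obstacle, is the non-split multiplicative places $v$ that are ramified in $L/K$. Here we invoke Lemma \ref{l:locch}: it gives $\mathcal W_v^2=0$ and $\mathrm{CH}_{\Lambda'}(\mathcal W^1_v)=\vartheta_{L/L',v}$. To apply it we must identify the local conditions in the Selmer group at such $v$, using the Tate parametrization over the unramified quadratic extension. The cases in (d) arise according to whether $\mathbb F_{q_v^2}\subset L_v$, which decides whether the twist becomes split higher in the tower, and according to the ranks of $\Gamma'_v$ and $\Psi_v$. We then verify that the vanishing $\mathcal W_v^2=0$ makes the local sequence exact on the right, so that these contributions enter the alternating product with the correct sign. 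Combining the global and local contributions yields \eqref{e:chspf}.
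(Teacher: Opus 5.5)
Your outline is the paper's argument. You reduce to $d=e+1$ along a chain, using Lemma \ref{l:varrho} for $\varrho$ and the placewise $\vartheta$-compatibility of Lemma \ref{l:spvartheta}; the Appendix proves that compatibility for every intermediate field, so no generic choice of chain is needed. You then run the proof of \cite[Theorem 1]{tan13} with Lemma \ref{l:locch} as the new local input at non-split multiplicative places of $S$. The one step you assert rather than justify is that the global error terms contribute exactly $\varrho_{L/L'}$. The paper gets this by identifying $\varrho_{L/L'}$ with the factor $\tilde\varrho_{L/L'}$ of \cite{tan13} (Corollary \ref{c:varrho}), which for $e=1$ rests on Lemma \ref{l:tatemodule}: $A_{p^\infty}(L')$ is finite unless $A$ is constant and $L'=K_\infty^{(p)}$.
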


If $S$ contains no non-split multiplicative places, it turns out that the proposition is actually a consequence of \cite[Theorem 1]{tan13}. To see this, we have to compare the local factor $\vartheta_v$ defined in \cite[definition 1.7]{tan13} with our $\vartheta_{L/L',v}$, and the global factor defined in \cite[Definition 1.3]{tan13},
which we denote by $\tilde\varrho_{L/L'}$, with our $\varrho_{L/L'}$. That $\vartheta_v=(\vartheta_{L/L',v})$, for $v\in S$, good ordinary or split multiplicative, or $v\not\in S$, can be checked directly.

\subsubsection{The factor $\tilde\varrho_{L/L'}$}\label{ss:global} For a field $M\supset K$,
let $\mathrm{T}_{p} A_{p^{\infty}}(M)$ denote the Tate module of $A_{p^{\infty}}(M)$.

\begin{lemma}\label{l:tatemodule} For a $\mathbb{Z}_{p}^{\mathsf f}$-extension $\mathsf L/ K$, over
$\mathsf\Lambda:=\mathbb{Z}_{p}[[\Gal(\mathsf{L} / K)]]$,
the Tate module $\mathrm{T}_{p} A_{p^{\infty}}(\mathsf {L})$ satisfies
$$
\mathrm{CH}_{\mathsf\Lambda}(\mathrm{T}_{p} A_{p^{\infty}}(\mathsf{L})) \cdot \mathrm{CH}_{\mathsf\Lambda}(\mathrm{T}_{p} A_{p^{\infty}}(\mathsf{L}))^{\sharp}=(\nabla_{A/\mathsf{L}}).
$$
If $\mathsf{f}=1$, then $A_{p^{\infty}}(\mathsf{L})$ is finite, unless $A / K$ is a constant curve and $\mathsf{L}=K_{\infty}^{(p)}$.
\end{lemma}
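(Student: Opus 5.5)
We first determine the torsion group $A_{p^\infty}(\mathsf L)$, and then compute the characteristic ideal of its Tate module directly.

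\emph{Step 1: $A_{p^\infty}(\mathsf L)$ is small unless $A$ is constant.} Since $A$ is ordinary in characteristic $p$, the \'etale part of $A_{p^\infty}(\bar K)$ is a rank-one $\Q_p/\Z_p$, on which $\Gal(\bar K/K)$ acts through a character $\chi$. We split into two cases according to whether $A/K$ is isotrivial or not.
\begin{enumerate}
\item[(i)] $A/K$ is non-constant. Then $A_{p^\infty}(\mathsf L)$ is finite. Indeed, if it were infinite, the image of $\chi$ restricted to $\Gal(\bar K/\mathsf L)$ would be finite. Then $A_{p^\infty}$ would become rational over a finite extension of the abelian pro-$p$ extension $\mathsf L$. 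By the standard argument (e.g.\ the Igusa tower being geometrically nontrivial for non-isotrivial ordinary curves, or the $p$-adic unit root being transcendental in the Hasse invariant), this forces $j(A)$ to be a $p^n$-th power for all $n$, hence constant. If $A$ is isotrivial but non-constant, it is a twist, and one handles it via the twisting character, which has prime-to-$p$ order.
\item[(ii)] $A/K$ is constant. Then $\chi$ factors through $\Gal(\bar\F_q/\F_q)$ with $\chi(\Frob_q)=\alpha$ in $\Z_p^*$ (with the embedding chosen so that $\alpha$ is the unit root). Hence $A_{p^\infty}(\mathsf L)$ is infinite exactly when $\mathsf L\supset K_\infty^{(p)}$ and $\alpha$ restricted to $\Gal(\F_{q^{p^\infty}}/\F_q)$ is trivial modulo torsion. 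This requires $\alpha\equiv$ root of unity modulo the relevant group, i.e.\ the Frobenius of $\Gal(\F_{q^{p^\infty}}/\F_q)$ acts on the Tate module by $\alpha$ times a character of finite order.
\end{enumerate}
Taking $\mathsf f=1$ in case (ii) gives the second assertion.

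\emph{Step 2: the identity of characteristic ideals.} If $\mathrm T_pA_{p^\infty}(\mathsf L)=0$ or is finite, both sides are the unit ideal, since $\nabla_{A/\mathsf L}=1$ unless $A$ is constant and $\mathsf L=K_\infty^{(p)}$. For $\mathsf f\ge2$, the Tate module is at most $\Z_p$. A finitely generated $\Z_p$-module is pseudo-null over $\mathsf\Lambda$ when $\mathsf f\ge2$, so its characteristic ideal is $\mathsf\Lambda$, matching $\nabla_{A/\mathsf L}=1$ in that case.

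The remaining case is $\mathsf f=1$, $A$ constant, and $\mathsf L=K_\infty^{(p)}$, with $\Gamma=\Gal(\F_{q^{p^\infty}}/\F_q)$ and generator $\Frob_q$. Here I expect the Tate module to be either $0$ or $\Z_p(\alpha\epsilon)$, where $\epsilon$ is a finite-order correction. On this module $\Frob_q$ acts by multiplication by $\alpha$ (more precisely, by its $\Gamma$-component). The characteristic ideal should therefore be $(1-\alpha^{-1}\Frob_q)$, and its $\sharp$-conjugate is $(1-\alpha^{-1}\Frob_q^{-1})$; their product is $\nabla_A$. The main obstacle is the case where $\alpha$ does not lie in the image $1+p\Z_p$-type subgroup, so that the Tate module is $0$. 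In that case one must check that $1-\alpha^{-1}\Frob_q$ is a unit in $\mathsf\Lambda$. This holds because its augmentation $1-\alpha^{-1}$ is a $p$-adic unit exactly when $\alpha\not\equiv1 \pmod p$; if $\alpha\equiv1\pmod p$, the action of $\Gal(\bar\F_q/\F_q)$ on the $p$-power torsion factors through the pro-$p$ quotient and the Tate module is indeed $\Z_p$.

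I would verify this dichotomy by decomposing $\Z_p^*=\mu_{p-1}\times(1+p\Z_p)$. I would take care with $p=2$ and with the case $\F_q$ already containing part of the $\Z_p$-tower.
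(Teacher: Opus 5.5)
Most of your plan agrees with the paper. Your constant-curve analysis is the paper's argument: the dichotomy $\alpha\equiv1\pmod p$ (i.e.\ $K(A_p(\bar K))=K$) or not gives either $\mathrm T_pA_{p^\infty}(\mathsf L)\cong\Z_p$ with $\Frob_q$ acting by $\alpha$, or $0$ with $1-\alpha^{-1}\Frob_q\in\mathsf\Lambda^*$. Pseudo-nullity for $\mathsf f\ge2$ is also the same. Your $j$-invariant argument is an acceptable substitute for the paper's citation in the non-isotrivial case, once made precise: a point of order $p^n$ in $A(\mathsf L)$ gives an \'etale isogeny whose dual is purely inseparable, so $j(A)\in\mathsf L^{p^n}\cap K=K^{p^n}$ because $\mathsf L/K$ is separable, and $\bigcap_nK^{p^n}=\F_q$.

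\textbf{The gap: isotrivial but non-constant curves.} This case is exactly where the paper's proof does its work, and you dismiss it with ``one handles it via the twisting character, which has prime-to-$p$ order.'' That is not an argument, and the stated reason is false for $p=2$. An ordinary curve in characteristic $2$ has $j\neq0$, so $\mathrm{Aut}(B)=\{\pm1\}$, and the twists are by Artin--Schreier quadratic characters, of order $p$.

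Your claim (i), asserted for every $\mathsf L$, then actually fails. Let $\chi_A,\chi_B$ be the characters of $\Gal(\bar K^s/K)$ on the \'etale $\Q_p/\Z_p$, so that $\chi_A=\chi_B\cdot\varphi$ for the twisting character $\varphi$ (viewed in $\Z_p^*$ via the faithful action of $\mathrm{Aut}(B)$ on the \'etale Tate module). For $p=2$, take $\varphi$ a geometric Artin--Schreier character with fixed field $F$, and $\mathsf L=K_\infty^{(2)}F_\infty$ with $F_\infty\supset F$ a $\Z_2$-extension. Since $\Z_2^*$ is pro-$2$, $\chi_B$ factors through $\Gal(K_\infty^{(2)}/K)$. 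Hence $\chi_A$ is trivial on $\Gal(\bar K^s/\mathsf L)$, and $A_{2^\infty}(\mathsf L)$ is infinite although $A$ is non-constant and $\mathsf f=2$. This is harmless for the characteristic ideal, but it shows that finiteness cannot come from the order of $\varphi$; it must use $\mathsf f=1$.

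\textbf{A correct argument.} Suppose $\chi_A$ is trivial on $G_{\mathsf L}:=\Gal(\bar K^s/\mathsf L)$. Then $\chi_B=\varphi^{-1}$ on $G_{\mathsf L}$ has finite image. Since $\chi_B$ factors through $\Gal(\bar\F_q/\F_q)$ with $\Frob_q\mapsto\alpha$ not a root of unity, the image of $G_{\mathsf L}$ in the $\Z_p$-factor of $\Gal(\bar\F_q/\F_q)$ must be trivial. So $K_\infty^{(p)}\subset\mathsf L$, hence $\mathsf L=K_\infty^{(p)}$ because $\mathsf f=1$. Then $G_{\mathsf L}\supset\Gal(\bar K^s/\bar\F_qK)$, on which $\chi_B$ is trivial, so $\varphi$ is trivial there too. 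Thus $\varphi$ factors through $\Gal(\bar\F_qK/K)$ and $A$ is constant.

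The paper reaches the same conclusion differently. It first obtains $\mathsf L=K_\infty^{(p)}$ by comparing $\Z_p$-extensions of $MF$, where $M=K(B[p])$. It then rules out a geometric twist: a $\sigma$ acting trivially on $B_{p^\infty}$ but with $\varphi(\sigma)\neq\mathrm{id}$ would make the nontrivial automorphism $\varphi(\sigma)$ fix the infinite group $A_{p^\infty}(\mathsf L)$, which is impossible.
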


\begin{proof} If $\mathsf{f}=0$, then the lemma is trivial. By \cite[Theorem 4.2]{blv09}, if $A / K$ is non-isotrivial, then $A_{p^{\infty}}(\bar{K}^{s})$ is finite, and hence $\mathrm{T}_{p} A_{p^{\infty}}(\mathsf{L})=0$, so the lemma follows. Moreover, as a $\Z_p$-module $\mathrm{T}_{p} A_{p^{\infty}}(\mathsf{L}) \simeq \mathbb{Z}_{p}$, or 0 , so
it is pseudo-null over $\mathsf\Lambda$ unless $\mathsf{f}=1$. 

Suppose $\mathsf f=1$, $A / K$ is isotrivial and $A_{p^\infty}(\mathsf L)$ is infinite. Then $A$ is a twist of a constant curve $B / \F_{q}$. All endomorphisms of $B$ are defined over $\F_{q}$ \cite[Theorem 2(c)]{tat66},
and $A$ is the twist by some 
$\varphi \in \Hom(\Gal (\bar{K}^{s} / K), \mathrm{Aut}(B))$. Let $F$ denote the fixed field of $\ker(\varphi)$.

Put $M:=K(B[p])$, so that $K(B_{p^\infty}(\bar K))=K_{\infty}^{(p)}M$.
Since over $MF$, $A=B$, so $K_\infty^{(p)}MF$ is the unique $\Z_p$-extension
of $MF$ containing $K(A_{p^\infty}(\bar K))$, hence $\mathsf LMF=K_\infty^{(p)}MF$. Since $MF$ is a finite abelian extension of
$K$, it follows that $\mathsf L=K_\infty^{(p)}$.



We claim that $F \subset M$. Since $M \subset \overline{\mathbb{F}}_{q}K$, this implies $A / K$ is a constant curve. If $F \not \subset M$, then there exists some $\sigma \in \Gal(\bar{K}^{s} / K)$ such that $\sigma|_{M}=$ $i d$, but $\varphi(\sigma) \neq i d$. By the twist, we know that $A_{p^{\infty}}(M\mathsf L)=A_{p^{\infty}}(\mathsf{L}) \simeq \mathbb{Q}_{p} / \mathbb{Z}_{p}$, is a subset of the fixed points of $\varphi(\sigma)$ acting on 
$B(FM\mathsf L)$. But a non-identity automorphism has only finite amount of fixed points, a contradiction.

Finally, if $A / K$ is a constant curve and $\mathsf{L}=K_{\infty}^{(p)}$, then $K(A_{p^{\infty}}(\bar K))=K(A_p(\bar K))\mathsf{L}$, where $K(A_p(\bar K))/K$ is a finite cyclic extension of degree dividing $p-1$. If $K(A_p(\bar K))=K$, then the lemma follows, since $A_{p^{\infty}}(\mathsf{L})=A_{p^{\infty}}(\overline{\F})$, and hence as an ideal in $\Lambda$,
$$
\mathrm{CH}_{\Lambda}(\mathrm{T}_{p} A_{p^{\infty}}(\mathrm{L}))=(\Frob_{q}-\alpha)=(1-\alpha^{-1} \Frob_{q}).
$$

If $K(A_p(\bar K))/ K$ is a nontrivial extension, then $A_{p^{\infty}}(\mathsf{L})$ must be trivial, so 
$$\mathrm{CH}_{\Lambda}(\mathrm{T}_{p} A_{p^{\infty}}(\mathsf{L}))=\Lambda.$$
But in this case, $\alpha \not\equiv 1{\pmod p}$. 
Hence

$$
1-\alpha^{-1} \Frob_{q}=(1-\alpha^{-1})-\alpha^{-1}(\Frob_{q}-1) \in \Lambda^{*}.
$$
\end{proof}

When $S$ contains a non-split multiplicative place, \cite[Definition 1.3]{tan13} still can be used to define
an ideal, also denoted by $\tilde{\varrho}_{L / L^{\prime}}$.
\begin{corollary}\label{c:varrho} As ideals in $\Lambda^{\prime}, \tilde{\varrho}_{L / L^{\prime}}=(\varrho_{L / L^{\prime}})$.
\end{corollary}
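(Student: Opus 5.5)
The plan is to recall the definition of $\tilde\varrho_{L/L'}$ from \cite[Definition 1.3]{tan13} and compare it with Definition \ref{d:varrho} case by case in $e$. As I recall it, $\tilde\varrho_{L/L'}$ is built from the Tate-module data of $A_{p^\infty}(L')$ and $A_{p^\infty}(L)$. Concretely, it is a product of
\[
\mathrm{CH}_{\Lambda'}(\mathrm{T}_pA_{p^\infty}(L'))\cdot\mathrm{CH}_{\Lambda'}(\mathrm{T}_pA_{p^\infty}(L'))^\sharp
\]
with a correction involving $A_{p^\infty}(L')$ when $e=0$. In the $e=0$ case the correction is $|A_{p^\infty}(K)|^2$, divided by the relevant divisible part when $L=K_\infty^{(p)}$. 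This definition only uses torsion of $A$ over $L$ and $L'$, and never the reduction type at the places of $S$. Hence it still makes sense when $S$ contains non-split multiplicative places, and the comparison reduces to computing these torsion-theoretic quantities.

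\textbf{Case $e\ge2$.} Here $\mathrm{T}_pA_{p^\infty}(L')$ is $0$ or $\Z_p$, hence pseudo-null over $\Lambda'$, as in the proof of Lemma \ref{l:tatemodule}. Its characteristic ideal is therefore $\Lambda'$, giving $\tilde\varrho_{L/L'}=\Lambda'=\varrho_{L/L'}$.

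\textbf{Case $e=1$.} Apply Lemma \ref{l:tatemodule} with $\mathsf L=L'$ and $\mathsf f=1$. It gives $\mathrm{CH}_{\Lambda'}(\mathrm{T}_p)\cdot\mathrm{CH}_{\Lambda'}(\mathrm{T}_p)^\sharp=(\nabla_{A/L'})$, which is exactly the $e=1$ entry of Definition \ref{d:varrho}. Any remaining finite torsion term in \cite{tan13} contributes only a pseudo-null module, since $A_{p^\infty}(L')$ is finite unless $A$ is constant and $L'=K_\infty^{(p)}$.

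\textbf{Case $e=0$.} Then $\Lambda'=\Z_p$, and $\tilde\varrho_{L/L'}$ is generated by $|A_{p^\infty}(K)|^2$, possibly divided by the order of the image of $A_{p^\infty}(L)_{\mathrm{div}}$ in $A_{p^\infty}(K)$, squared. If $L\neq K_\infty^{(p)}$, I would show that $A_{p^\infty}(K)\cap A_{p^\infty}(L)_{\mathrm{div}}=0$. By \cite[Theorem 4.2]{blv09}, in the non-isotrivial case $A_{p^\infty}(\bar K^s)$ is finite, so the divisible part vanishes. In the isotrivial case, the argument in the proof of Lemma \ref{l:tatemodule} shows that an infinite $A_{p^\infty}(L)$ forces $A$ to be constant. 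Its $p^\infty$-torsion is then defined over $K(A_p(\bar K))K_\infty^{(p)}$, and its intersection with the pro-$p$ field $L$ lies in $K_\infty^{(p)}$. So $A_{p^\infty}(L)_{\mathrm{div}}\subset A_{p^\infty}(K_\infty^{(p)}\cap L)$, and I would argue this is finite unless $K_\infty^{(p)}\subset L$. When $K_\infty^{(p)}\subset L$ but $L\neq K_\infty^{(p)}$, the corresponding factor in \cite{tan13} is, I expect, defined so as to give $|A_{p^\infty}(K)|^2$, matching Definition \ref{d:varrho}. The case $L=K_\infty^{(p)}$ matches Definition \ref{d:varrho} verbatim, and is consistent with Lemma \ref{l:varrho} via \cite[Lemma 1.2]{tan13}.

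\textbf{Main obstacle.} The hard part is the $e=0$ bookkeeping: pinning down exactly how \cite[Definition 1.3]{tan13} handles the divisible part when $L\supsetneq K_\infty^{(p)}$, and checking that it agrees with the middle case of Definition \ref{d:varrho}. I would first try to show directly that the factor in \cite{tan13} depends only on $A_{p^\infty}(K)\cap A_{p^\infty}(L)_{\mathrm{div}}$. I would then use the twist argument above to verify that this intersection is trivial whenever $L\neq K_\infty^{(p)}$.
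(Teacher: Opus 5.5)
Your route matches the paper's proof: compare Definition \ref{d:varrho} with \cite[Definition 1.3]{tan13} separately for $e\ge2$, $e=1$ and $e=0$, using Lemma \ref{l:tatemodule} for $e=1$. Your $e\ge 2$ and $e=1$ cases are fine. The gap is in $e=0$, exactly where you locate the difficulty. Your last paragraph proposes to verify that $A_{p^\infty}(K)\cap A_{p^\infty}(L)_{\mathrm{div}}=0$ for every $L\neq K_\infty^{(p)}$, and this is false.

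For a counterexample, take $A/K$ constant with $A_{p^\infty}(K)\neq0$ (equivalently $\alpha\equiv1\pmod p$) and $L\supsetneq K_\infty^{(p)}$, say $L=K_\infty^{(p)}L_1$ with $L_1/K$ a ramified $\Z_p$-extension. This is admissible because every place is good ordinary. By the proof of Lemma \ref{l:tatemodule}, $A_{p^\infty}(K_\infty^{(p)})=A_{p^\infty}(\overline{\F}_q)\simeq\Q_p/\Z_p$. Hence $A_{p^\infty}(L)$ is divisible, and the intersection is all of $A_{p^\infty}(K)\neq 0$. Your twist/pro-$p$ argument is correct only when $A_{p^\infty}(L)$ is finite (for instance $A$ non-isotrivial, or $K_\infty^{(p)}\not\subset L$). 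It cannot reach the case $K_\infty^{(p)}\subsetneq L$.

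In that subcase, the quotient $|A_{p^\infty}(K)|^2/|A_{p^\infty}(K)\cap A_{p^\infty}(L)_{\mathrm{div}}|^2$ equals $1$, while Definition \ref{d:varrho} gives $|A_{p^\infty}(K)|^2$. The latter is the correct value. Chain through $K\subset K_\infty^{(p)}\subset L$ by Lemma \ref{l:spvartheta} (steps with $e\ge 2$ contribute nothing). This gives $\varrho_{K_\infty^{(p)}/K}\cdot p^{K_\infty^{(p)}}_K(\nabla_A)$. Here the first factor is $1$, and the second generates $(|A_{p^\infty}(K)|^2)$ by \cite[Lemma 1.2]{tan13}. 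So agreement in this case cannot be derived from any vanishing statement. You have to confirm from the actual text of \cite[Definition 1.3]{tan13} that it assigns $|A_{p^\infty}(K)|^2$ when $K_\infty^{(p)}\subsetneq L$. That is the hedged ``I expect'' in your $e=0$ paragraph, which your final paragraph then drops. Note that the paper's own one-line $e=0$ comparison is also written with the uniform quotient formula. Read literally, it passes over this same subcase, so the point has to be settled from the source rather than derived.
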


\begin{proof} Comparing Definition \ref{d:varrho} and \cite[Definition 1.3]{tan13}, we see that
if $e \geq 2$, then $\varrho_{L / L^{\prime}}=\tilde{\varrho}_{L / L^{\prime}}=\Lambda'$; 
if $e=1$, then by Lemma \ref{l:tatemodule}, $\varrho_{L / L^{\prime}}=(\nabla_{A / L^{\prime}})=\tilde{\varrho}_{L / L^{\prime}}$; if $e=0$, 
then $
\varrho_{L / L^{\prime}}=(\left|A_{p^{\infty}}(K)|^{2} /|A_{p^{\infty}}(K) \cap A_{p^{\infty}}(L)_{d i v}\right|^{2})\cdot \Lambda'=\tilde{\varrho}_{L / L^{\prime}}$.

\end{proof}

\subsubsection{The proof of Proposition \ref{p:aspf}}\label{ss:pfaspf}

As in \cite{tan13}, the proof of the proposition can be simplified by setting a sequence 
$$
L^{\prime} \subset L^{\prime \prime} \subset \cdots \subset L^{(i)} \subset L^{(d-e+1)}=L
$$
with $\Gal(L^{(i)} / K) \simeq \mathbb{Z}_{p}^{e-1+i}$. Because
$$
p^L_{L^{\prime}}=p^{L^{\prime \prime}}_{L^{\prime}} \circ \cdots \circ p^{L^{(i+1)} }_{ L^{(i)}} \circ \cdots \circ p^L_{ L^{(d-e)}}.
$$
\begin{lemma}\label{l:spvartheta}
Let $L^{(i)}, i=1, \ldots, d-e+1$, be as above. Then in $\Lambda'$, the ideals
$$\vartheta_{L / L^{\prime}}=\prod_{i=1}^{d-e} p^{L^{(i)}}_{ L^{\prime}}(\vartheta_{L^{(i+1)} / L^{(i)}}),$$
and 
$$
\varrho_{L / L^{\prime}}=\prod_{i=1}^{d-e} p^{L^{(i)}}_{ L^{\prime}}(\varrho_{L^{(i+1)} / L^{(i)}}). 
$$
\end{lemma}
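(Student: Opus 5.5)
The plan is to prove both identities by induction on the length $d-e$ of the tower. The key tool is a transitivity statement for a single intermediate step: for $L'\subset L''\subset L$ with $L''/K$ a $\Z_p^{e+1}$-extension,
\begin{equation*}
\vartheta_{L/L'}=p^{L''}_{L'}(\vartheta_{L/L''})\cdot\vartheta_{L''/L'},\qquad
\varrho_{L/L'}=p^{L''}_{L'}(\varrho_{L/L''})\cdot\varrho_{L''/L'}.
\end{equation*}
Iterating this along $L'\subset L''\subset\cdots\subset L$, and using $p^{L^{(i+1)}}_{L'}=p^{L^{(i)}}_{L'}\circ p^{L^{(i+1)}}_{L^{(i)}}$, gives the stated products.

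For $\varrho$ I would use Lemma \ref{l:varrho}, which turns the claim into a telescoping product. The lemma gives $\varrho_{L^{(i+1)}/L^{(i)}}=p^{L^{(i+1)}}_{L^{(i)}}(\mathrm{t}^{-1}\nabla^{-1})_{L^{(i+1)}}\cdot(\mathrm{t}\nabla)_{L^{(i)}}\cdot\Lambda_{L^{(i)}}$. Applying $p^{L^{(i)}}_{L'}$ and multiplying over $i$, the intermediate factors cancel, leaving $p^L_{L'}(\mathrm{t}_{A/L}^{-1}\nabla_{A/L}^{-1})\,\mathrm{t}_{A/L'}\nabla_{A/L'}\Lambda'=\varrho_{L/L'}$. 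This needs a little care only because the factors in Lemma \ref{l:varrho} are fractional. However, each $\nabla$ and $\mathrm{t}$ is nonzero and its image under projection is nonzero, by \eqref{e:varrho} and the fact that $\alpha$ is not a root of unity. So the computation is valid in the ring of fractions.

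For $\vartheta$ the identity is local, so it suffices to prove the two-step transitivity for each place $v$ separately. Set $\Psi=\Gal(L/L')$, $\Psi_1=\Gal(L/L'')$ and $\Psi_2=\Gal(L''/L')$; then $\Psi_v$ is an extension of $(\Psi_2)_v$ by $(\Psi_1)_v$. I would split into cases by reduction type, following Definition \ref{d:vartheta}.

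\emph{Case (a), $v\notin S$.} Here $\Psi_v$, $(\Psi_1)_v$ and $(\Psi_2)_v$ are generated by Frobenius, so they are either $0$ or $\Z_p$. At most one of $(\Psi_1)_v$, $(\Psi_2)_v$ is nonzero, and it is nonzero exactly when $\Psi_v\neq0$. Hence exactly one factor $m_v$ appears on each side.

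\emph{Case (b), $v$ good ordinary in $S$.} The factor is nontrivial only at the step where $v$ passes from ramified to unramified. The Frobenius $[v]_{L''}$ projects to $[v]_{L'}$, so the ideal generated by $(1-\alpha_v^{-1}[v])(1-\alpha_v^{-1}[v]^{-1})$ is compatible with projection.

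\emph{Cases (c) and (d), multiplicative places.} Here I would track how the ranks of $\Gamma_v$, $\Gamma''_v$ and $\Gamma'_v$ drop, and whether $v$ stays in $S$ or $S_1$. I expect this to be the main obstacle, because of the number of subcases.
\begin{enumerate}
\item[(i)] If the first step already brings the rank to $1$, then the later step, which kills $\Gamma''_v\simeq\Z_p$, contributes the relevant factor, for example $\mathfrak w_v$ or $(1\pm\sigma_v)$. Its projection must match the factor in the one-step definition. For this I would use that $\sigma''_v$ maps to a generator of $\Gamma'_v$, and that $(1-[v])$ arises when $v$ becomes unramified.
\item[(ii)] In the split case with $f\geq2$ and $\Gamma'_v=0$, the definition gives $\vartheta=0$. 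I would check that some step in the chain produces a factor $(1-\sigma''_v)$ whose projection under $p^{L''}_{L'}$ is $0$, since $\sigma''_v$ maps to $1$.
\item[(iii)] In the non-split case, $2m_v$ versus $m_v$ must be reconciled according to whether $\F_{q_v^2}\subset L_v$. For this I would use $(1+\sigma)\mapsto 2$ when $\sigma\mapsto1$, and the fact that the extension $\F_{q_v^2}$ is detected at the step where $L''_v$ contains it.
\end{enumerate}
I would work through these subcases one by one, checking in each that the product of the two-step factors reproduces the single-step factor.
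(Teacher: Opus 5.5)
Your treatment of $\varrho_{L/L'}$ is the paper's argument: Lemma \ref{l:varrho} makes the product telescope.

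For $\vartheta_{L/L'}$ your route is genuinely different, and it does work. The direct two-step identity $\vartheta_{L/L',v}=\vartheta_{L''/L',v}\cdot p^{L''}_{L'}(\vartheta_{L/L'',v})$ holds in every configuration of $(\mathrm{rk}\,\Gamma_v,\mathrm{rk}\,\Gamma''_v,\mathrm{rk}\,\Gamma'_v)$ together with the ramification and residue-field flags. The paper never enumerates two-step configurations. It first proves the one-step identity of Lemma \ref{l:locsp}, $\Xi_{L/L',v}\cdot\dag_{A/L',v}\cdot\Lambda'=p^L_{L'}(\dag_{A/L,v})\cdot\vartheta_{L/L',v}$. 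Here $\Xi_{L/L',v}$ is the Euler-type factor attached to places ramified in $L$ but not in $L'$, so it is visibly multiplicative in towers. Applying the identity to $(L,L')$, $(L'',L')$ and $(L,L'')$ gives transitivity multiplied by $p^L_{L'}(\dag_{A/L,v})\,p^{L''}_{L'}(\dag_{A/L'',v})$. Only the cases where this product vanishes then need checking by hand (Lemma \ref{l:locspvartheta}). These are split multiplicative $v$ with $\Gamma'_v=0$ and $v\in S_1$ or $v\in S''_1$. The detour pays off because the same identity also yields \eqref{e:beth}, which Proposition \ref{p:spl} needs. Your route is self-contained for this lemma, at the cost of a longer two-level enumeration.

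When you carry out the multiplicative cases, note that parts of your sketch misplace factors.
\begin{itemize}
\item For $v\in S\setminus S''$, the factor $\vartheta_{L''/L',v}$ comes from clause (a) of Definition \ref{d:vartheta}, so it equals $m_v$ when $(\Psi_2)_v\neq0$.
  \begin{itemize}
  \item In the non-split case this $m_v$ must combine with $p^{L''}_{L'}(1+[v]_{L''})=2$ to give the required $2m_v$.
  \item In the split case it is harmless, since $p^{L''}_{L'}(1-[v]_{L''})=0$.
  \item In the good ordinary case it is harmless, since $m_v=1$.
  \end{itemize}
\item In your item (ii), no factor $1-\sigma''_v$ exists when $\Gamma''_v=0$. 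The zero then comes from $\vartheta_{L/L'',v}=0$ by the same clause. The factors $1-\sigma''_v$ or $1-[v]_{L''}$, killed by projection, appear only when $\Gamma''_v\simeq\Z_p$.
\item In your item (i), $\sigma''_v$ maps to a generator of $\Gamma'_v$ only when $\Gamma''_v\to\Gamma'_v$ is an isomorphism. In that case $L''_v=L'_v$, so the conditions $v\in S''$ and $\F_{q_v^2}\subset L''_v$ transfer to $L'$. When the second step kills $\Gamma''_v$, instead $1-\sigma''_v\mapsto0$ and $1+\sigma''_v\mapsto2$.
\item For $p\neq2$ every non-split factor is a unit, so only $p=2$ needs checking there.
\end{itemize}
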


\begin{proof}The second identity follows from Lemma \ref{l:varrho}. The first can be checked locally: $\vartheta_{L / L^{\prime}, v}=\prod_{i=1}^{d-e} p^{L^{(i)}}_{L^{\prime}}(\vartheta_{L^{(i+1)} / L^{(i)}, v})$ by using Definition \ref{d:vartheta}, see Appendix for the details.
\end{proof}

In view of Lemma \ref{l:spvartheta}, for proving Proposition \ref{p:aspf}, we may, and 
we will assume that $d=e+1$. Since our $S$ is allowed to contain non-split multiplicative places,
to follow the proof of [Tan14, Theorem 1], we need to replace \cite[Lemma 1.8, Proposition 3.5, 3.6, 3.7]{tan13} by the following lemma for which the proof is put in \S\ref{su:loc}.
At a place $v$ of $K$, define
$$
\mathcal{W}_{v}^{i}:=(\bigoplus_{x} \mathrm{H}^{i}(\Psi_{x}, A(L_{x})))^{\vee}, i=1,2,
$$
where $\mathrm{H}^{i}(\Psi_{x}, A(L_{x}))$ is viewed as a discrete group, $x$ runs through all places of $L^{\prime}$ sitting over $v$, and $L_{x}$ denote the completion of $L$ at any chosen place $w$ sitting over $x$. Put
$$
\mathcal{W}^{i}(A, L / L^{\prime}):=\bigoplus_{\text {all } v} \mathcal{W}_{v}^{i}.
$$

\begin{lemma}\label{l:calW} We have $\mathrm{CH}_{\Lambda^{\prime}}(\mathcal{W}^{1}(A, L / L^{\prime}))=\vartheta_{L / L^{\prime}}$. The $\Lambda^{\prime}$-modules $\mathcal{W}^{1}(A, L / L^{\prime})$ and $\mathcal{W}^{2}(A, L / L^{\prime})$ are either both torsion or both non-torsion, they are non-torsion if and only if there is a split multiplicative $v \in S$ such that $\Gamma_{v}^{\prime}=0$ and 
$\mathfrak{w}_{v}=0$. Moreover,
$\mathcal{W}^{2}(A, L / L^{\prime})=0$, if it is torsion.
\end{lemma}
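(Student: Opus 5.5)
We are in the case $d=e+1$, so $\Psi=\Gal(L/L')\simeq\Z_p$ and each $\Psi_x$ is either $0$ or $\simeq\Z_p$. The plan is to work place by place. Since $\mathcal W^i(A,L/L')=\bigoplus_v\mathcal W^i_v$, and only finitely many $v$ have $\Psi_v\neq 0$ with nontrivial cohomology, it suffices to compute, for each $v$, the $\Lambda'$-module $\mathcal W^i_v$. It is induced from the decomposition group: as a $\Lambda'$-module, $\mathcal W^i_v\simeq \Lambda'\otimes_{\Z_p[[\Gamma'_v]]}\coh^i(\Psi_x,A(L_x))^\vee$ for one fixed $x\mid v$. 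This reduces everything to local Galois cohomology of the $\Z_p$-extension $L_w/L'_x$, viewed as a module over $\Z_p[[\Gamma'_v]]$. Because $\Psi_x$ is procyclic of cohomological dimension $1$ on torsion, $\coh^2(\Psi_x,A(L_x))$ is the $p$-part of a quotient of $A(L_x)$ by the image of $\sigma-1$. It should vanish whenever $A(L_x)\otimes\Q_p/\Z_p$-type divisibility holds, and this gives the $\mathcal W^2$ assertion in the torsion case.

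Case (a): $v\notin S$. Here $v$ is unramified, so $\Psi_x$ is generated by a power of Frobenius. By Lang/unramified local theory, $\coh^1(\Psi_x,A(L_x))=\coh^1(\Psi_x,\Phi_v)$ with $\Phi_v$ the component group. Then $\mathcal W^1_v$ is finite of order $m_v$-power up to units if $\Psi_v\ne0$; over $\Lambda'$ it is an induced finite module whose characteristic ideal is $m_v\Lambda'$ (when $\Gamma'_v$ is finite this is $0\cdot$, but here $\Gamma'_v$ has rank $\le e$; the induced module computation gives $(m_v)$ as in \cite{tan13}). Also $\coh^2=0$.

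Case (b), good ordinary $v\in S$. Use the formal group and reduction sequence $0\to\hat A(\mathfrak m)\to A\to\bar A\to0$. Universal norms for ordinary formal groups in deeply ramified extensions kill the formal part, leaving $\coh^1$ governed by $\bar A_{p^\infty}$ over the residue field. This yields the factor $(1-\alpha_v^{-1}[v])(1-\alpha_v^{-1}[v]^{-1})$ exactly when $v$ is unramified in $L'$; otherwise the module is pseudo-null over $\Lambda'$. Case (c), split multiplicative: use the Tate uniformization $0\to Q_v^{\Z}\to L_x^*\to A(L_x)\to0$ and Hilbert 90. Then $\coh^1(\Psi_x,A)\simeq \ker(Q_v^{\Z}\otimes\ldots)$, essentially computed via local reciprocity as $\Gamma_v/\overline{\{Q_v\}}$ information. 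This gives the cases listed in Definition~\ref{d:vartheta}(c), including the non-torsion case $\Gamma'_v=0$, $\mathfrak w_v=0$, where both $\mathcal W^1_v$ and $\mathcal W^2_v$ contain a copy of $\Z_p$ of rank one over $\Lambda'=\Z_p$ (here $e=0$). These cases follow \cite[Lemma 1.8, Prop.~3.5--3.7]{tan13}, which I will quote.

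The new work, and the main obstacle, is (d), non-split multiplicative $v\in S$. Here $A$ becomes split over the unramified quadratic extension $K_v(\F_{q_v^2})$, and $A(L_x)$ is the twist of the Tate curve by the quadratic unramified character $\epsilon$. I would compute $\coh^i(\Psi_x,A(L_x))$ by inflation–restriction through $L_x(\F_{q_v^2})$. Since $p$ is odd (else the quadratic twist degenerates and needs a separate check), the $\epsilon$-eigenspace of the cohomology of the split Tate curve gives $\coh^i$ of $A$. In the split case the $\Z$-lattice $Q^\Z$ carries trivial action, but now Frobenius acts on it by $-1$. Consequently, the $\Gamma'_v$-module structure produces factors $(1+[v]_{L'})$ or $(1+\sigma'_v)$ instead of $(1-\cdot)$, and these are units or nonzero divisors. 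When $\Gamma'_v=0$ only finite groups of order $2m_v$ or $m_v$ arise, depending on whether $\F_{q_v^2}\subset L_v$. Also the $\coh^2$ contribution vanishes because the $-1$ twist kills the coinvariants of $Q^\Z\otimes\Z_p$. Assembling the cases gives $\mathrm{CH}_{\Lambda'}(\mathcal W^1_v)=\vartheta_{L/L',v}$ and $\mathcal W^2_v=0$, which proves the lemma.
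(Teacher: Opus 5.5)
Your reduction to individual places, and your appeal to \cite{tan13} for $v\notin S$, good ordinary $v$ and split multiplicative $v$, match the paper (Lemma \ref{l:locold}). The non-split multiplicative case, however, has a genuine gap: you assume $p$ odd and defer $p=2$ as ``a separate check'', but for odd $p$ this case has no content. Since $m_v\in\{1,2\}$, the numbers $2m_v$ and $m_v$ are $p$-adic units. Likewise $1+[v]_{L'}=2+([v]_{L'}-1)$ is a unit of $\Lambda'$. Finally, $\F_{q_v^2}\subset L'_v$ (or $\subset L_v$) is impossible because $L_v/K_v$ is pro-$p$. So for odd $p$ every factor in Definition \ref{d:vartheta}(d) is trivial, and one only needs $\mathcal W^1_v$ pseudo-null and $\mathcal W^2_v=0$. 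Your eigenspace idea does give that: invariants of $\Gal(\tilde L'_w/L'_w)$ on $\coh^i_A$ correspond to elements $x=-x$ of the $p$-primary group $\coh^i_B$, hence vanish. But your statements about finite groups of order $2m_v$ or $m_v$ and factors $1+[v]_{L'}$, $1+\sigma'_v$ merely restate the definition, and they are inconsistent with $p$ odd. At $p=2$ your tools break down. $\Gal(\tilde K_v/K_v)$ then has order $p$, so there is no eigenspace splitting. $\tilde K_v$ can lie inside $L_w$ or even $L'_w$. And the $-1$ twist does not kill coinvariants, since $\Z_2/2\Z_2\neq 0$.

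What is missing is the $2$-adic computation, which the paper splits into cases.
\begin{itemize}
\item If $\F_{q_v^2}\subset L'_w$, then $A$ is split over $L'_w$, and $\mathcal W^2_w=0$ follows from the split case applied to $L_w/L'_w/\tilde K_v$. Moreover $\coh^1(\Psi_w,A(L_w))\simeq\Q_2/\Z_2$ with $\Gamma'_w$ acting through the sign character (Lemma \ref{l:calh}). This yields $(1+\sigma'_v)$, $(1+[v]_{L'})$, or a pseudo-null module.
\item Otherwise, inflation--restriction through $\tilde L_w$, with the bounds \eqref{e:h12or1}, \eqref{e:h21or2} and Lemma \ref{l:hGal}, shows that $\coh^1(\Psi_w,A(L_w))$ and $\coh^2(\Psi_w,A(L_w))$ are finite. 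Since $\Psi_w\simeq\Z_p$ has cohomological dimension one, $\coh^2$ is $p$-divisible, hence zero. This, not a coinvariant computation, is how $\mathcal W^2_v=0$ is obtained. Note also that in finite layers $\coh^2$ is a quotient by norms, not by the image of $\sigma-1$.
\item When in addition $\Gamma'_w=0$ and $\tilde K_v\subset L_w$, the exact order is needed. A Hilbert 90 argument in Tate cohomology $\coh^{-1}$ shows that restriction to $L_w/\tilde K_v$ is zero. Hence $\coh^1(\Psi_w,A(L_w))=\coh^1(\tilde K_v/K_v,A(\tilde K_v))$, which has order $m_v$.
\item When $\Gamma'_w=0$ and $L_w/K_v$ is totally ramified, one shows $|A(K_v)/\Nm A(K_{w,m})|=2m_v$. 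This uses the norm-one subgroup of $\tilde K_{w,m}^*$, a snake lemma, and local class field theory: the induced map on norm cokernels is multiplication by $2$.
\end{itemize}
Your sketch contains none of these steps. A minor point: in case (c), $\Gamma'_v=0$ means $v$ splits completely in $L'$, not that $e=0$.
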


Note that \cite[\S2]{tan13} is irrelevant to $S$, so its results still hold,
\cite[Lemma 4.5]{tan13} remains valid for our $\mathcal{W}^{i}(A, L / L^{\prime})$, $i=1,2$.
Also, other results in \cite[\S4, \S5.1]{tan13} do not involve the nature of $S$, so they all hold in our situation.
Thus, we complete the proof by following the procedure of the proof of \cite[Theorem 1]{tan13},
because the results used in the proof remain valid under our assumption on $S$.

\subsection{Local cohomology}\label{su:loc} In this subsection, we assume that $d=e+1$, or equivalently, $\Psi \simeq \mathbb{Z}_{p}$. Let $x$ be a place of $L'$ sitting over a place $v$ of $K$ and $w$ a place of $L$ sitting over $x$. To have notation consistent with \cite{tan13}, 
we shall write 
$$\mathcal{W}_{w}^{i}:=\mathrm{H}^{i}(\Psi_{w}, A(L_{w}))^\vee=\mathrm{H}^{i}(\Psi_{x}, A(L_{x}))^\vee.$$
Let $\Lambda_{w}^{\prime}$ denote the Iwasawa algebra $\mathbb{Z}_{p}[[\Gamma_{x}^{\prime}]]$. Define $\vartheta_{v}^{(i)}=\mathrm{CH}_{\Lambda^{\prime}} \mathcal{W}_{v}^{i}$ and $\vartheta_{w}^{(i)}=\mathrm{CH}_{\Lambda_{w}^{\prime}} \mathcal{W}_{w}^{i}$. By \cite[Lemma 3.2]{tan13}, which is independent of the reduction of $A$ at $v$, if $\mathcal{W}_{w}^{i}$ is finitely generated over $\Lambda_{w}^{\prime}$, then $\mathcal{W}_{v}^{i}$ is finitely generated over $\Lambda^{\prime}$ and
$$
\mathcal{W}_{v}^{i}=\Lambda^{\prime} \otimes_{\Lambda_{w}^{\prime}} \mathcal{W}_{w}^{i}, \quad \vartheta_{v}^{(i)}=\vartheta_{w}^{(i)} \cdot \Lambda^{\prime}.
$$

\begin{lemma}\label{l:locold} Suppose $\Psi \simeq \mathbb{Z}_{p}$ and $v \notin S$, or $v$ is either good ordinary or split multiplicative place in $S$. Then $\vartheta_{v}^{(1)}=\vartheta_{L / L^{\prime}, v}$. 
Moreover, $\mathcal{W}_{w}^{2}=0$ and $\vartheta_{v}^{(2)}=(1)$, unless $v \in S$ is a split multiplicative place, 
$\Gamma_{v} \simeq \mathbb{Z}_{p}$, $\Gamma_{v}^{\prime}=0$ and $\vartheta_{L / L^{\prime}, v}=0$, in which case $\vartheta_{v}^{(2)}=0$.
\end{lemma}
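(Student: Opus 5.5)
The plan is to reduce Lemma \ref{l:locold} to the corresponding local computations already carried out in \cite{tan13}: \cite[Lemma 1.8 and Propositions 3.5, 3.6, 3.7]{tan13}. Those results treat exactly the places allowed here: places outside $S$, good ordinary places in $S$, and split multiplicative places in $S$. They also never use the hypothesis there that non-split multiplicative places are unramified. The local cohomology $\mathrm H^i(\Psi_w,A(L_w))$ depends only on $A/K_v$ and on the local extension $L_w/L'_x$, not on the global ramification locus. Moreover, by \cite[Lemma 3.2]{tan13} we have $\mathcal W^i_v=\Lambda'\otimes_{\Lambda'_w}\mathcal W^i_w$ and $\vartheta^{(i)}_v=\vartheta^{(i)}_w\Lambda'$. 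So it suffices to compute $\vartheta^{(i)}_w$ over $\Lambda'_w=\Z_p[[\Gamma'_x]]$ and then compare, case by case, the generator obtained with Definition \ref{d:vartheta}(a),(b),(c).

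I will organize the work by the reduction type of $A$ at $v$ and by the structure of $\Psi_v\subset\Psi\simeq\Z_p$, which is either $0$ or $\simeq\Z_p$.

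If $\Psi_v=0$, then $\mathcal W^i_w=0$ for both $i$, and $\vartheta_{L/L',v}=\Lambda'$ in every sub-case of the definition.

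If $v\notin S$ and $\Psi_v\ne0$, then $L_w/L'_x$ is unramified. Lang's theorem gives $\mathrm H^i=\mathrm H^i(\Psi_w,\Phi_v)$, where $\Phi_v$ is the component group. This yields characteristic ideal $(m_v)$ and $\mathrm H^2=0$, which matches (a).

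If $v\in S$ is good ordinary, I will use the formal-group exact sequence $0\to\hat A(\mathfrak m)\to A(L_w)\to\bar A(\mathbb F_w)\to0$. Local class field theory and ordinarity make $\mathrm H^i$ of the formal part vanish when $\Psi_v$ is ramified. The residual part contributes $(1-\alpha_v^{-1}[v]_{L'})(1-\alpha_v^{-1}[v]_{L'}^{-1})$ exactly when $v$ is unramified in $L'$; this is \cite[Proposition 3.6]{tan13}, and it matches (b).

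If $v\in S$ is split multiplicative, I will use the Tate uniformization $0\to Q_v^{\Z}\to L_w^*\to A(L_w)\to0$. Hilbert 90 and local class field theory give $\mathrm H^1$ and $\mathrm H^2$ in terms of the image of $Q_v$ under the reciprocity map, as in \cite[Propositions 3.5, 3.7]{tan13}. The sub-cases $v\in S'_1$, $\Gamma'_v\simeq\Z_p$ with $v\notin S'$, and $\Gamma'_v=0$ reproduce (c). The factor $\mathfrak w_v$, the characteristic of $\Gamma_v/\overline{\{Q_v\}}$, appears when $\Gamma'_v=0$. The exceptional case in the statement is the one where $\Gamma_v\simeq\Z_p$, $\Gamma'_v=0$ and $\mathfrak w_v=0$, i.e.\ $Q_v$ has trivial image. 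There $\mathrm H^2(\Psi_w,A(L_w))\supset\mathrm H^1(\Psi_w,\Z)$-type contributions survive, so $\vartheta^{(2)}_v=0$.

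The main obstacle is bookkeeping rather than new mathematics. I need to check that the footnoted erratum to \cite[Definition 1.3(c)]{tan13} is exactly what the local computation produces in the case $f\ge1$ with $\Gamma'_v$ generated by $\sigma$. I also need to confirm that when $\Gamma'_v\simeq\Z_p$ the module $\mathcal W^2_w$ is truly $0$ and not merely pseudo-null. For this I would first try the long exact sequence of the Tate uniformization together with $\mathrm H^2(\Psi_w,L_w^*)=0$, which holds because $\Psi_w$ has cohomological dimension 1.
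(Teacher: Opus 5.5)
Your core reduction is the paper's proof: the paper proves this lemma only by citing \cite[Propositions 3.5, 3.6, 3.7]{tan13}, justified by the locality of $\mathrm{H}^i(\Psi_w,A(L_w))$ and by \cite[Lemma 3.2]{tan13}. As a citation-based argument your proposal is fine. However, two of the explanatory steps you add are false. They would break the argument if you tried to carry out the verification yourself instead of citing.

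\emph{The formal group does contribute.} In the good ordinary case, the formal-group cohomology does not vanish in the case that matters, $v\in S\setminus S'$, where $\Psi_v$ contains the inertia group.

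Suppose it did vanish. The sequence $\mathrm{H}^1(\Psi_w,\hat A(\mathfrak m))\to\mathrm{H}^1(\Psi_w,A(L_w))\to\mathrm{H}^1(\Psi_w,\bar A(\F_w))$ would then make $\mathrm{H}^1(\Psi_w,A(L_w))$ inject into the residual term. For $\Gamma'_v=0$, inflation--restriction and Lang's theorem show the residual term has order $|\bar A(\F_v)[p^\infty]|$.

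But (b) demands order $|\bar A(\F_v)[p^\infty]|^2$. Indeed $(1-\alpha_v^{-1})\Z_p=(1-\alpha_v)(1-\beta_v)\Z_p=|\bar A(\F_v)|\Z_p$, since $1-\beta_v\in\Z_p^*$. The residual part only sees the corank-one \'etale $p$-divisible group, so it yields just one of the factors $1-\alpha_v^{-1}[v]_{L'}^{\pm1}$. The other factor comes from $\hat A$, an unramified twist of $\hat{\mathbb G}_m$, via the valuation sequence, Hilbert 90 and local class field theory. For ramified $\Psi_v$, the formal part drops out only when $v$ is already ramified in $L'$, so that the value group of $L'_x$ is $p$-divisible; there (b) gives $\Lambda'$.

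\emph{The Brauer group does not vanish.} $\mathrm{H}^2(\Psi_w,L_w^*)=\mathrm{Br}(L_w/L'_x)$ is not killed by $\mathrm{cd}_p\Psi_w=1$, because $L_w^*$ is not torsion. When $\Gamma'_v=0$ it equals $\varinjlim_n\mathrm{Br}(K_{w,n}/K_v)\cong\Q_p/\Z_p$, and this group drives the whole split multiplicative computation. From the Tate sequence, $\mathrm{H}^1(\Psi_w,L_w^*)=0$ and $\mathrm{H}^3(\Psi_w,\Z)\cong\mathrm{H}^2(\Psi_w,\Q/\Z)=0$, one gets
\[
0\to\mathrm{H}^1(\Psi_w,A(L_w))\to\mathrm{Hom}(\Psi_w,\Q_p/\Z_p)\to\mathrm{Br}(L_w/K_v)\to\mathrm{H}^2(\Psi_w,A(L_w))\to 0 .
\]
By local class field theory, the middle map is evaluation at $\bar Q_v$.

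\begin{itemize}
\item Its kernel has Pontryagin dual $\Gamma_v/\overline{\{Q_v\}}$, which is where $\mathfrak w_v$ comes from.
\item Its cokernel is $\Q_p/\Z_p$ exactly when $\bar Q_v=0$, which is the exceptional case.
\item No $\mathrm{H}^1(\Psi_w,\Z)$-term is involved; that group is $0$.
\end{itemize}

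Your stated justification would give $\vartheta^{(1)}_v=0$ and $\mathcal W^2_w=0$ for every split multiplicative $v\in S$ with $\Gamma'_v=0$, contradicting the statement. When $\Gamma'_v\neq0$, your conclusion $\mathrm{H}^2(\Psi_w,L_w^*)=0$ does hold, but for a different reason. There $L'_x$ is an infinite pro-$p$ extension of $K_v$, so the $p$-part of its Brauer group vanishes. That is what gives $\mathcal W^2_w=0$ in that case.
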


\begin{proof} This is due to \cite[Proposition 3.5, 3.6, 3.7]{tan13}\footnote{In \cite{tan13}, our $m_v$ is denoted as $\pi_v$.}.
\end{proof}

\subsubsection{Non-split multiplicative places}\label{ss:nonsp}
Next, we assume that $v$ is a non-split multiplicative place of $A / K$. Let $\tilde{K}_{v} / K_{v}$ be the unique unramified quadratic extension. Consider the Tate curve $B=\mathbb{G}_{m} / Q^\Z$ over $K_{v}$ such that
$A$ equals the twist of $B$ by the non-trivial character of $\Gal(\tilde{K}_{v} / K_{v})$. In general, if $\tilde{\mathcal{K}} / K_{v}$ is a Galois extension containing $\tilde{K}_{v}$, or equivalently containing the quadratic extension $\F_{q_{v}^{2}}$ of $\F_{v}$, then $A(\tilde{\mathcal{K}})=\tilde{\mathcal{K}}^{*} / Q^\Z$ and for $P \in A(\tilde{\mathcal{K}})$, $P=x 
\pmod {Q^\Z}$, $x \in \tilde{\mathcal{K}}^{*}$, the Galois action of $\mathcal{G}:=\Gal(\tilde{\mathcal{K}} / K_{v})$, is given by
\begin{equation}\label{e:gact}
\tensor[^g] P{}=(\tensor[^g] x{})^{\epsilon_{g}} \quad \pmod {Q^\Z},\:g\in\mathcal G,
\end{equation}
where $\epsilon_{g}$ is the image of $g$ under the composition $\mathcal{G} \longrightarrow \Gal(\mathbb{F}_{q_{v}^{2}} / \mathbb{F}_{v}) \stackrel{\sim}{\longrightarrow}\{ \pm 1\}$. 

Let $\bar A_o(\F_v)$ denote the rational points on identity component of the special fibre of the N\'{e}ron model, and let $A_o(K_v)$ be its pre-image under the reduction map. By taking $\tilde{\mathcal K}=\tilde{K}_{v}$ and applying \cite[Appendix C. Theorem 14.1(c)]{sil86}, we see that
$$A_o(K_v)=A(K_v)\cap B_o(\tilde K_v)=\{x\in \O_{\tilde K_v}^*\;\mid\; \Nm_{\tilde K_v/K_v}(x)=1\},$$
so 
\begin{equation}\label{e:cv}
m_v= \begin{cases}2, & \text { if } \ord_{v}(Q) \text { is an even number; } \\ 
1, & \text { otherwise }.
\end{cases}
\end{equation}

For a Galois extension $\mathcal{K} / K_{v}$, let $\tilde{\mathcal{K}}$ denote $\tilde{K}_{v} \mathcal{K}$. If $\tilde{\mathcal{K}} / \mathcal{K}$ is non-trivial, and hence an unramified quadratic extension, then by the twist,
$$
\coh^{1}(\tilde{\mathcal{K}} / \mathcal{K}, A(\tilde{\mathcal{K}}))=\coh^{2}(\tilde{\mathcal{K}} / \mathcal{K}, B(\tilde{\mathcal{K}}))=B(\mathcal{K}) / \Nm_{\tilde{\mathcal{K}} / \mathcal{K}}(B(\tilde{\mathcal{K}}))=\mathcal{K}^{*} / \Nm_{\tilde{\mathcal{K}} / \mathcal{K}}(\tilde{\mathcal{K}}^{*}) Q^{\mathbb{Z}}.
$$
Since all local units in $\mathcal{K}$ are contained in $\Nm_{\tilde{\mathcal{K}} / \mathcal{K}}(\tilde{\mathcal{K}}^{*})$, we see that
\begin{equation}\label{e:h12or1}
|\coh^{1}(\tilde{\mathcal{K}} / \mathcal{K}, A(\tilde{\mathcal{K}}))|=2 \text {, or } 1,
\end{equation}
depending on if the $\mathcal{K}$-valuation of $Q$ is even or not. Therefore,
\begin{equation}\label{e:h1h2}
|\coh^{1}(\tilde{K}_{v} / K_{v}, A(\tilde{K}_{v}))|=m_v.
\end{equation}

Note that if $\mathcal{K} / K_{v}$ is finite, then by Tate's local duality theorem $\coh^{2}(\tilde{\mathcal{K}} / \mathcal{K}, A(\tilde{\mathcal{K}}))$ is the Pontryagin dual of $\coh^{1}(\tilde{\mathcal{K}} / \mathcal{K}, A(\tilde{\mathcal{K}}))$, see \cite[Lemma 2.13]{tan13}. Hence, its order is bounded by 2. In general, $\coh^{2}(\tilde{\mathcal{K}} / \mathcal{K}, A(\tilde{\mathcal{K}}))$ is the dual of $\varinjlim _{\mathcal{F}} \coh^{1}(\tilde{\mathcal{F}} / \mathcal{F}, A(\tilde{\mathcal{F}}))$, where $\mathcal{F}$ runs through all finite intermediate extensions of $\mathcal{K} / K_{v}$, and hence also
\begin{equation}\label{e:h21or2}
|\coh^{2}(\tilde{\mathcal{K}} / \mathcal{K}, A(\tilde{\mathcal{K}}))| \leq 2.
\end{equation}

\subsubsection{The group $\coh_A^i$}\label{ss:coha}
Let notations be as in \S\ref{ss:nonsp}. For an elliptic curve $C / K_{v}$, denote
$$
\coh_{C}^{i}:=\coh^{i}(\tilde{L}_{w} / \tilde{L}_{w}^{\prime}, C(\tilde{L}_{w})),\; i=1,2,
$$
where the Galois group $\Gal (\tilde L'_w / K_v)$ acts on $\coh_{C}^{i}$ in the usual way.

\begin{lemma}\label{l:calh} 
$\Gal (\tilde{L}_w' / K_v)$ acts trivially on $\coh_{B}^{i}$, $i=1,2$.
$\coh_B^1$ is isomorphic to either $\Q_{p} / \Z_{p}$ or a finite cyclic p-group, the latter case occurs only if $\tilde{L}_{w}^{\prime}=\tilde{K}_{v}$ or $\tilde{L}_{w}^{\prime}=\tilde{L}_{w}$.
$\coh_{B}^{2}=0$, unless $\Gamma_{v} \simeq \Z_{p}$, $L_{w}^{\prime}=K_{v}$, and $\coh_{B}^{1} \simeq \Q_{p} / \Z_{p}$, in which case $\coh_{B}^{2} \simeq \Q_{p} / \Z_{p}$.
 \end{lemma}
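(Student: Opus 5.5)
Since $B=\mathbb G_m/Q^\Z$ is a Tate curve over $K_v$, I will compute $\coh_B^i$ directly from the exact sequence of $\Gal(\tilde L_w/\tilde L'_w)$-modules
\[
0\longrightarrow Q^\Z\longrightarrow \tilde L_w^{*}\longrightarrow B(\tilde L_w)\longrightarrow 0 .
\]
Put $G:=\Gal(\tilde L_w/\tilde L'_w)$, a quotient of $\Psi_x\simeq\Z_p$ or trivial; if $G$ is trivial everything vanishes, so assume $G\simeq\Z_p$ and write $\mathcal K:=\tilde L'_w$, $\mathcal L:=\tilde L_w$. Using Hilbert 90 for $\mathcal L^*$ (limit over finite layers), $\coh^1(G,\Z)=\Hom_{cont}(\Z_p,\Z)=0$, and $\coh^2(G,\Z)\simeq\coh^1(G,\Q/\Z)\simeq\Q_p/\Z_p$, the long exact sequence gives
\[
0\to \mathcal K^*/Q^\Z\cdot\Nm(\mathcal L^*)\to \coh_B^1\to \ker\bigl(\coh^2(G,Q^\Z)\to \coh^2(G,\mathcal L^*)\bigr)\to 0,
\]
where $\Nm(\mathcal L^*)=\bigcap_n \Nm_{\mathcal L_n/\mathcal K}\mathcal L_n^*$. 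Similarly, $\coh^2_B$ sits between the cokernel of $\coh^2(G,Q^\Z)\to\coh^2(G,\mathcal L^*)$ and $\coh^3(G,Q^\Z)=0$, the latter because $\mathrm{cd}_p(\Z_p)=1$ on $p$-primary torsion.

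\textbf{Triviality of the action.} The group $\Gal(\tilde L'_w/K_v)$ acts on $\coh^i(G,\cdot)$ through conjugation on $G$ and the action on the coefficients. Since $\Gal(\tilde L_w/K_v)$ is abelian, conjugation on $G$ is trivial. On coefficients, $Q^\Z$ is fixed pointwise because $Q\in K_v$ and $B$ is the untwisted Tate curve. So the action on $\coh^2(G,Q^\Z)\simeq\Q_p/\Z_p$ is trivial. On the norm-residue quotient the action is also trivial, since by local class field theory $\mathcal K^*/\Nm$ is identified with a quotient of $G$ through the reciprocity map, and the reciprocity map is equivariant. This gives triviality on the graded pieces. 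To get it on $\coh^i_B$ itself, I would identify $\coh^1_B$ with a Galois-equivariant quotient: namely $\coh^1_B\simeq \mathcal K^*/Q^\Z\Nm$ when the right-hand kernel vanishes, and $\coh^1_B\simeq\Q_p/\Z_p$ otherwise. A cyclic group with trivial action on its graded pieces could in principle still carry a nontrivial extension action, and ruling this out is the subtle point. Here, however, the automorphism group of $\Q_p/\Z_p$ or of a finite cyclic $p$-group is $\Z_p^*$ or $(\Z/p^n)^*$; an automorphism acting trivially on both a subgroup and the quotient is unipotent, hence of $p$-power order. I would then argue via the pro-$p$ / prime-to-$p$ structure of $\Gal(\tilde L'_w/K_v)$, which is $\Z_p^{r}\times\Z/2$, or argue directly with the description above.

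\textbf{Structure of $\coh^1_B$ and $\coh^2_B$.} By local class field theory, $\mathcal K^*/\Nm(\mathcal L^*)\simeq G\simeq\Z_p$ when $\mathcal K$ is a finite extension of $K_v$, and the map $\coh^2(G,\mathcal L^*)\to \mathrm{Br}$ is injective onto $\Q_p/\Z_p$ for finite $\mathcal K$. There are then two cases.
\begin{enumerate}
\item If $\mathcal K$ is finite over $K_v$, then $\coh^2(G,Q^\Z)\to\coh^2(G,\mathcal L^*)$ is multiplication by the image of $Q$ under reciprocity, i.e.\ by the index of $\overline{\{Q\}}$ in $G$. If that image is nonzero in $G\simeq\Z_p$, then $\coh^1_B=\Z_p/\overline{\{Q\}}$ is finite cyclic and $\coh^2_B=\coh^2(G,\mathcal L^*)/\mathrm{im}=0$, since $\Q_p/\Z_p$ is divisible. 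If the image is zero, then $\coh^1_B\simeq \Z_p\oplus\Q_p/\Z_p$ would contradict torsion; so instead this forces $\coh^1_B\simeq\Q_p/\Z_p$ coming from the kernel and $\coh^2_B\simeq \Q_p/\Z_p$.
\item If $\mathcal K$ is an infinite extension (i.e.\ $\Gamma'_v\ne0$ so $\tilde L'_w\neq \tilde K_v$ and $\neq\tilde L_w$), then $\coh^2(G,\mathcal L^*)=0$ by the vanishing of Brauer groups over deeply ramified $\Z_p$-towers, and $\mathcal K^*/\Nm=0$. Hence $\coh^1_B\simeq\Q_p/\Z_p$ and $\coh^2_B=0$.
\end{enumerate}
Finiteness therefore occurs only when $\mathcal K=\tilde K_v$ (or when $G$ is trivial, i.e.\ $\tilde L'_w=\tilde L_w$), and $\coh^2_B\ne0$ only when $L'_w=K_v$, $\Gamma_v=\Psi_v\simeq\Z_p$ and the image of $Q$ vanishes, which is exactly when $\coh^1_B\simeq\Q_p/\Z_p$. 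The main obstacle I expect is the infinite-$\mathcal K$ vanishing of $\coh^2(G,\mathcal L^*)$ and of the norm quotient; I would handle it by passing to the limit over finite layers $\mathcal F$, as in \eqref{e:h21or2}, using that the transition maps on Brauer groups are multiplication by degrees.
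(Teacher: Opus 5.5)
\textbf{The gap: a spurious term in your exact sequence for $\coh^1_B$.} In the profinite long exact sequence of $0\to Q^\Z\to\mathcal L^*\to B(\mathcal L)\to0$, the term just before $\coh^1_B$ is $\coh^1(G,\mathcal L^*)$. You yourself note this vanishes by Hilbert 90. So the connecting map is an injection
$\coh^1_B\hookrightarrow\coh^2(G,Q^\Z)\simeq\Hom(G,\Q/\Z)\subset\Q_p/\Z_p$,
with image $\ker(\coh^2(G,Q^\Z)\to\coh^2(G,\mathcal L^*))$. There is no sub-term $\mathcal K^*/Q^\Z\Nm(\mathcal L^*)$. At a finite layer that group is $B(\mathcal K)/\Nm B(\mathcal L_n)=\hat{\mathrm H}^0(G_n,B(\mathcal L_n))\simeq\coh^2(G_n,B(\mathcal L_n))$, so it belongs on the $\coh^2_B$ side.

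Your case analysis already shows the symptoms.
\begin{itemize}
\item In case (1), suppose $\mathrm{rec}(Q)$ has index $p^k\neq1$ in $G$. Then the kernel of the map on $\Q_p/\Z_p$ is $\Z/p^k\neq0$. Your sequence would make $\coh^1_B$ an extension of $\Z/p^k$ by the left term, not the cyclic group you assert.
\item When $\mathrm{rec}(Q)=1$, you get $\Z_p\oplus\Q_p/\Z_p$ and simply discard the $\Z_p$.
\end{itemize}
The same spurious term produces your ``subtle point'' about the Galois action, which you leave unresolved. Appealing to the structure $\Z_p^r\times\Z/2$ cannot settle it, because the pro-$p$ part can act through nontrivial unipotent automorphisms.

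With the correct sequence there is nothing to resolve. The injection is $\Gal(\tilde L'_w/K_v)$-equivariant, and the action on $\coh^2(G,Q^\Z)\simeq\Hom(G,\Q/\Z)$ is trivial, since $Q$ is fixed and conjugation on the abelian group $G$ is trivial. This single observation is exactly the paper's proof of the first two assertions: $\coh^1_B$ is a subgroup of $\Q_p/\Z_p$ with trivial action.

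\textbf{After the repair, your treatment of $\coh^2_B$ is correct and differs from the paper's.}
\begin{itemize}
\item The paper argues as follows. It cites \cite{tan13} for the infiniteness of $\coh^1_B$ when $\tilde L_w\neq\tilde L'_w\neq\tilde K_v$. It obtains the vanishing of $\coh^2_B$ from Lemma \ref{l:locold} applied to $\tilde L_w/\tilde L'_w/\tilde K_v$. In the exceptional case, it writes $\coh^2_B$ as a direct limit of duals of finite cyclic $\coh^1$'s.
\item You read everything off the single map $\coh^2(G,Q^\Z)\to\coh^2(G,\mathcal L^*)$. When $\mathcal K=\tilde K_v$, this map is $\chi\mapsto\chi(\mathrm{rec}\,Q)$ on $\Q_p/\Z_p$. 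Its target vanishes whenever $p^\infty\mid[\mathcal K:K_v]$, because restriction on Brauer groups multiplies invariants by the degree; deep ramification plays no role. Then $\coh^1_B$ is the kernel of this map, and, since $\coh^3(G,Q^\Z)=0$, $\coh^2_B$ is its cokernel.
\end{itemize}
This is self-contained and yields the exact dichotomy without the citations. One point is still missing: for triviality of the action on $\coh^2_B$, you must show that $\Gal(\tilde K_v/K_v)$ acts trivially on $\coh^2(G,\mathcal L^*)\subset\mathrm{Br}(\tilde K_v)$. This holds because every class in $\mathrm{Br}(\tilde K_v)$ is restricted from $\mathrm{Br}(K_v)$, as restriction is multiplication by $2$ on $\Q/\Z$.
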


\begin{proof}
Because $\coh^{1}(\tilde{L}_{w} / \tilde{L}_{w}^{\prime}, \tilde{L}_{w}^{*})=0$, the long exact sequence induced by
$$
\xymatrix{0 \ar[r] &  Q^\Z \ar[r] &  \tilde{L}_{w}^{*} \ar[r] &  B(\tilde{L}_{w})) \ar[r] & 0}
$$
gives rise to the inclusion
$$
\xymatrix{\coh_{B}^{1} \ar@{^(->}[r] &  \coh^{2}(\tilde{L}_{w} / \tilde{L}_{w}^{\prime}, Q^\Z),}
$$
while
$$
\xymatrix{\coh^{2}(\tilde{L}_{w} / \tilde{L}_{w}^{\prime}, Q^\Z) \ar[r]^-\sim &  
\coh^{2}(\tilde{L}_{w} / \tilde{L}_{w}^{\prime}, \Z)=\Hom(\Gal(\tilde{L}_{w} / \tilde{L}_{w}^{\prime}), \Q / \Z) \ar@{^(->}[r] & \mathbb{Q}_{p} / \mathbb{Z}_{p},}
$$
as $\Gal(\tilde{L}_{w} / K_{v})$-modules. The action of $\Gal(\tilde{L}_{w} / K_{v})$ on $\Gal(\tilde{L}_{w} / \tilde{L}_{w}^{\prime})$ is via the conjugation in $\Gal (\tilde{L}_{w} / K_{v})$, since $\tilde{L}_{w} / K_{v}$ is abelian, the action is trivial. If $\tilde{L}_{w} \neq \tilde{L}_{w}^{\prime} \neq$ $\tilde{K}_{v}$, it is known that $\coh_{B}^{1}$ is infinite (see \cite[p.1058-1059]{tan13}).

The assertion regarding $\mathrm{H}_{B}^{2}$ is mainly deduced by applying Lemma \ref{l:locold} to the triple $\tilde{L}_{w} / \tilde{L}_{w}^{\prime} / \tilde{K}_{v}$. In the case where $\Gamma_{v} \simeq \mathbb{Z}_{p}, L_{w}^{\prime}=K_{v}$, and $\mathrm{H}_{B}^{1} \simeq \mathbb{Q}_{p} / \mathbb{Z}_{p}$, the lemma says $\mathrm{H}_{B}^{2}$ is infinite. Let $\tilde{L}_{w, n}^{\prime}$ denotes the $n$th layer of $\tilde{L}_{w} / \tilde{L}_{w}^{\prime}$. Now $\mathrm{H}_{B}^{2}$ is the direct limit of $\mathrm{H}^{2}(\tilde{L}_{w, n}^{\prime} / \tilde{L}_{w}^{\prime}, B(\tilde{L}_{w, n}^{\prime}))$, which is the Pontryagin dual of the finite cyclic $p$-group $\mathrm{H}^{1}(\tilde{L}_{w, n}^{\prime} / \tilde{L}_{w}^{\prime}, B(\tilde{L}_{w, n}^{\prime}))$, so $\mathrm{H}_{B}^{2} \simeq \mathbb{Q}_{p} / \mathbb{Z}_{p}$, as $\Gal (\tilde{L}_{w} / K_{v})$-modules.
\end{proof}
The underlying abelian group of $\mathrm{H}_{A}^{i}$ is the same as $\mathrm{H}_{B}^{i}$.
By the twist, if $a\in \mathrm{H}_{A}^{i}$ corresponds to $b\in \mathrm{H}_{B}^{i}$, then for $\sigma\in\Gal (\tilde{L}_{w}^{\prime} / K_{v})$, $\tensor[^\sigma]a{}$ corresponds to $\chi(\sigma)\cdot b$,
where
$$
\chi: \Gal (\tilde{L}_{w}^{\prime} / K_{v}) \longrightarrow \Gal(\tilde{K}_{v} / K_{v}) \stackrel{\sim}{\longrightarrow} \Gal(\mathbb{F}_{q_{v}^{2}} / \mathbb{F}_{v}) \stackrel{\sim}{\longrightarrow}\{ \pm 1\}.
$$

\begin{lemma}\label{l:hGal} 
If $\tilde{L}_{w}^{\prime} / L_{w}^{\prime}$ is non-trivial, then $(\mathrm{H}_{A}^{i})^{\Gal(\tilde{L}_{w}^{\prime} / L_{w}^{\prime})}$ is of order 2 or 1, depending on if $\mathrm{H}_{B}^{i}$ has an element of order 2 or not.
\end{lemma}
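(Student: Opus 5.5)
If $\tilde L'_w\neq L'_w$, then $\Gal(\tilde L'_w/L'_w)$ is a group of order $2$, say $\{1,\tau\}$. Since $\tilde L'_w=\tilde K_v L'_w$, the element $\tau$ restricts nontrivially to $\tilde K_v$, so $\chi(\tau)=-1$.

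By Lemma \ref{l:calh}, $\Gal(\tilde L'_w/K_v)$ acts trivially on $\coh_B^i$. By the twisting description recorded just before the statement, an element $a\in\coh_A^i$ corresponding to $b\in\coh_B^i$ is sent by $\tau$ to the element corresponding to $\chi(\tau)\cdot b=-b$. Hence $\tau$ acts on the abelian group $\coh_A^i=\coh_B^i$ as multiplication by $-1$, and
$$(\coh_A^i)^{\Gal(\tilde L'_w/L'_w)}=\{b\in\coh_B^i:\ 2b=0\}=\coh_B^i[2].$$

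It remains to compute $\coh_B^i[2]$. By Lemma \ref{l:calh}, each $\coh_B^i$ is cyclic: $\coh_B^1$ is either $\Q_p/\Z_p$ or a finite cyclic $p$-group, and $\coh_B^2$ is either $0$ or $\Q_p/\Z_p$. A cyclic group (finite cyclic or $\Q_p/\Z_p$) has at most one element of order $2$. So $\coh_B^i[2]$ has order $2$ when $\coh_B^i$ contains an element of order $2$, and order $1$ otherwise.

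When $p$ is odd, $\coh_B^i[2]=0$, so the invariants are trivial; this agrees with the statement, since no element of order $2$ exists. The only step needing care is the identification of the twisted action of $\tau$ with $-1$, and this is exactly the content of the correspondence $a\leftrightarrow\chi(\sigma)\cdot b$ combined with the triviality part of Lemma \ref{l:calh}.
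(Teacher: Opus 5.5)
Your proof is correct and follows essentially the same route as the paper. Both use the triviality of the Galois action on $\coh_B^i$ from Lemma \ref{l:calh} together with the twist $a\leftrightarrow\chi(\sigma)\cdot b$ to identify the invariants with $\{x\in\coh_B^i : x=-x\}$, and then use the cyclicity in Lemma \ref{l:calh} to see there is at most one element of order $2$. Your explicit check that the generator of $\Gal(\tilde L'_w/L'_w)$ has $\chi=-1$ (because $\tilde L'_w=\tilde K_vL'_w$) is a detail the paper leaves implicit.
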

\begin{proof} By Lemma \ref{l:calh},
an element in $(\mathrm{H}_{A}^{i})^{\Gal(\tilde{L}_{w}^{\prime} / L_{w}^{\prime})}$
corresponds to one $x\in \mathrm{H}_{B}^{i}$ such that $x=-x$. The lemma also says that $\coh_B^i$ has at most one order $2$ element. 
\end{proof}

\subsubsection{The proof of Lemma \ref{l:calW}}\label{ss:calWpf}
Lemma \ref{l:calW} follows from Lemma \ref{l:locold} and the lemma below. Let the notations be as in \S\ref{ss:nonsp}.
\begin{lemma}\label{l:locch} Suppose $\Psi \simeq \mathbb{Z}_{p}$. If $v \in S$ is a non-split multiplicative place, then $\vartheta_{v}^{(1)}=\vartheta_{L / L^{\prime}, v}$ and $\mathcal{W}_{v}^{2}=0$.
\end{lemma}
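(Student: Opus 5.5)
The plan is to compute, for a non-split multiplicative place $v\in S$, the local modules $\mathcal W_w^i=\coh^i(\Psi_w,A(L_w))^\vee$ over $\Lambda'_w=\Z_p[[\Gamma'_x]]$. Then \cite[Lemma 3.2]{tan13}, recalled at the start of \S\ref{su:loc}, gives $\vartheta_v^{(i)}=\vartheta_w^{(i)}\cdot\Lambda'$ and $\mathcal W_v^i=\Lambda'\otimes_{\Lambda'_w}\mathcal W_w^i$. It therefore suffices to show $\vartheta^{(1)}_w\Lambda'=\vartheta_{L/L',v}$ and $\mathcal W^2_w=0$.

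The first step is to pass from $L_w/L'_w$ to the twisted situation $\tilde L_w/\tilde L'_w$, where $A$ becomes the Tate curve $B$ with the Galois action \eqref{e:gact}. There are two cases.
\begin{itemize}
\item If $\F_{q_v^2}\subset L'_w$, then $\tilde L'_w=L'_w$ and $\tilde L_w=L_w$. Hence $\coh^i(\Psi_w,A(L_w))=\coh_A^i$ on the nose, with $\Gamma'_x$ acting through $\chi$.
\item If $\F_{q_v^2}\not\subset L'_w$, then $\tilde L_w/L_w$ is quadratic unramified and linearly disjoint from $\Psi_w$. Inflation–restriction (Hochschild–Serre) for $\Gal(\tilde L_w/L'_w)=\Psi_w\times\Gal(\tilde L'_w/L'_w)$ then relates $\coh^i(\Psi_w,A(L_w))$ to $(\coh^i_A)^{\Gal(\tilde L'_w/L'_w)}$, which is computed by Lemma \ref{l:hGal}. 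The error terms are the cohomology of the order-$2$ group with coefficients in $A(\tilde L_w)$, and these are controlled by \eqref{e:h12or1} and \eqref{e:h21or2}.
\end{itemize}

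Next I would run through the cases of Definition \ref{d:vartheta}(d), using Lemma \ref{l:calh} for the structure of $\coh_B^1$ and $\coh_B^2$.
\begin{itemize}
\item If $\Gamma'_v=0$, then $\Lambda'_w=\Z_p$ and $\mathcal W^1_w$ is finite. Its order should come out as $2m_v$ or $m_v$, according to whether $\F_{q_v^2}\subset L_v$. I would obtain this by combining the component-group contribution $m_v$ from \eqref{e:cv}–\eqref{e:h1h2} with the factor $2$ or $1$ from Lemma \ref{l:hGal}, according to whether $\coh^1_B$ has $2$-torsion; when $\Psi_v\simeq\Z_p^f$ with $f\ge2$, this yields $2m_v$.
\item If $\Gamma'_v\simeq\Z_p$ and $v\notin S'$, then $\coh^1_B\simeq\Q_p/\Z_p$ with trivial action. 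Since $[v]_{L'}$ acts on $\coh^1_A$ by $\chi=-1$, the dual is $\Lambda'_w/(1+[v]_{L'})$.
\item If $v\in S'_1$, the same argument gives $\Lambda'_w/(1+\sigma'_v)$ when $\F_{q_v^2}\subset L'_v$ and $\Psi_v\neq0$. When $\F_{q_v^2}\not\subset L'_v$, or $\Psi_v=0$, one gets a finite group, hence trivial characteristic ideal; this matches the ``otherwise'' entry $\Lambda'$.
\end{itemize}

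For $\mathcal W^2_w$, Lemma \ref{l:calh} gives $\coh^2_B=0$ except when $\Gamma_v\simeq\Z_p$ and $L'_w=K_v$. In that exceptional case $\coh^2_B\simeq\Q_p/\Z_p$, and the nontrivial element of $\Gal(\tilde K_v/K_v)$ acts on $\coh_A^2$ by $-1$. The invariants are therefore the $2$-torsion, and these vanish once we are in a $p$-primary setting with $p$ odd.

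I expect the main obstacle to be this descent through the quadratic extension $\tilde L_w/L_w$, particularly when $p=2$. For $p$ odd, the order-$2$ group has trivial cohomology on $p$-primary modules, so $\coh^i(\Psi_w,A(L_w))=(\coh^i_A)^{\Gal(\tilde L'_w/L'_w)}$ exactly, and everything follows as above. For $p=2$, I would instead use the explicit description $\coh^1(\tilde{\mathcal K}/\mathcal K,A(\tilde{\mathcal K}))=\mathcal K^*/\Nm(\tilde{\mathcal K}^*)Q^\Z$ together with the bound \eqref{e:h21or2}. The aim there is to track the finitely many extra factors of $2$; they should produce exactly the discrepancy between $2m_v$ and $m_v$, and should leave $\mathcal W^2_w=0$ because the potentially nonzero finite piece is killed by the $-1$ twist.
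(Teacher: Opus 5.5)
Your proposal has genuine gaps at $p=2$. For odd $p$ it is essentially fine, but there the lemma is nearly vacuous: the groups $\mathcal D^i=\coh^i(\Psi_w,A(L_w))$ are finite $2$-groups sitting inside $p$-primary groups, hence zero, and every entry of Definition \ref{d:vartheta}(d) is a unit.

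\textbf{First gap: $\mathcal W^2_w=0$.} Your claim that the remaining finite piece is ``killed by the $-1$ twist'' is false for $p=2$. The elements of $\Q_2/\Z_2$ fixed by $x\mapsto -x$ form a group of order $2$, which is exactly what Lemma \ref{l:hGal} records. Moreover, $\ker(\mathsf{inf}_2)$ and $\ker(\mathsf{res}_2)$ are finite $2$-groups that no twist controls. So inflation--restriction bounds alone give only that $\mathcal D^2$ is finite. The missing idea is that $\mathcal D^2$ is $p$-divisible. Since $\Psi_w\simeq\Z_p$ has cohomological dimension $1$, we have $\coh^2(\Psi_w,A_p(L_w))=\coh^2(\Psi_w,A(L_w)/pA(L_w))=0$. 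The sequence $0\to A_p\to A\to A\to A/pA\to 0$ then shows that multiplication by $p$ is surjective on $\mathcal D^2$. A finite $p$-divisible $p$-group is trivial, and this works for every $p$.

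\textbf{Second gap: the exact order of $\mathcal D^1$ when $\Gamma'_v=0$ and $p=2$.} Inflation--restriction gives only the upper bound $|\mathcal D^1|\le m_v\cdot|(\coh^1_A)^{\Gal(\tilde K_v/K_v)}|\le 2m_v$. Your recipe of multiplying $m_v$ by the factor from Lemma \ref{l:hGal} gives the wrong answer in general.
\begin{itemize}
\item Take $\F_{q_v^2}\subset L_v$ and $m_v=2$, so $\ord_v(Q)$ is even. Then $\mathrm{rec}_{\tilde K_v}(Q)=\mathrm{rec}_{K_v}(Q)^2\in\Gamma_v^4$, a proper subgroup of $\Gal(L_w/\tilde K_v)=\Gamma_v^2$. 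Hence $\coh^1_B$ has $2$-torsion, and your recipe predicts $4$, while the true order is $m_v=2$.
\item In this same subcase $\tilde L_w=L_w$, so your decomposition $\Gal(\tilde L_w/L'_w)=\Psi_w\times\Gal(\tilde L'_w/L'_w)$ also fails.
\end{itemize}
The paper handles this subcase by showing that the restriction $\mathcal D^1\to\coh^1(L_w/\tilde K_v,A(L_w))$ is zero. It works with $\coh^{-1}$ at finite layers. By the twist, $P+{}^{\sigma_v}P$ corresponds to the norm-one unit $x\cdot{}^{\sigma_v}x^{-1}$, and Hilbert 90 writes this unit as $y\cdot{}^{\sigma_v^2}y^{-1}$. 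Hence $\mathcal D^1=\coh^1(\tilde K_v/K_v,A(\tilde K_v))$, which has order $m_v$.

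In the totally ramified subcase the bound $2m_v$ is attained, but proving this needs a real computation. By Tate duality it suffices to compute $|A(K_v)/\Nm A(K_{w,m})|$ for large $m$. The paper writes $A(K_{w,m})=\mathrm N_{m,w}^{-1}(Q^\Z)/Q^\Z$ and uses its norm-one subgroup $\mathcal N_m$. This gives $|A(K_v)/\mathcal N_0|\cdot|\mathcal N_0/\Nm\mathcal N_m|=m_v\cdot 2$, where the factor $2$ comes from the snake lemma together with local class field theory. ``Tracking the extra factors of $2$'' is exactly this work, and your proposal supplies no mechanism for it.

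Minor point: since $\Psi\simeq\Z_p$, the case $\Psi_v\simeq\Z_p^f$ with $f\ge2$ that you mention cannot occur here.
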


\begin{proof} We only need to treat the $\Psi_{w} \simeq \mathbb{Z}_{p}$ case.
For simplicity, denote
$$
\mathcal{D}^{i}:=\mathrm{H}^{i}(\Psi_{w}, A(L_{w})),\; i=1,2.
$$
These are $p$-primary groups, since $\Psi_w\simeq\Z_p$.
Note that $\mathcal{D}^2$ is $p$-divisible. Indeed, the fact that $\Z_p$ is of cohomological dimension $1$ implies 
for $i>1$, $$\coh^i(\Psi_{w}, A_p(L_{w}))=\coh^i(\Psi_{w}, A(L_{w})/pA(L_{w}))=0,$$
and hence by the exact sequence
$$\xymatrix{0\ar[r] & A_p(L_w) \ar[r] & A(L_{w}) \ar[r]^-{[p]} & A(L_{w}) \ar[r] & A(L_{w})/pA(L_{w}) \ar[r] & 0,}$$  
we conclude that $p\cdot \mathcal D^2=\mathcal D^2$.


Suppose $\mathbb{F}_{q_{v}^{2}} \subset L_{w}^{\prime}$. Then $\Gamma_{v}^{\prime} \neq 0$ and $A/L_w'$
has split multiplicative reduction. Lemma \ref{l:locold} applied to the triple $L_{w} / L_{w}^{\prime} / \tilde{K}_{v}$ shows that $\mathcal{W}_{w}^{2}=0$. Also, Lemma \ref{l:calh} says $\mathcal{D}^{1} \simeq \mathbb{Q}_{p} / \mathbb{Z}_{p}$ with $\Gamma_{w}^{\prime}=\Gal(\tilde{L}_{w}^{\prime} / K_{v})$ acting via $\chi$. Thus, if the $\mathbb{Z}_{p}$-rank of $\Gamma_{v}^{\prime}$ is greater than 1 , then $\mathcal{W}_{w}^{1}$ is a pseudo-null $\Lambda_{w}^{\prime}$-module; if $v \in S_{1}^{\prime}$, then $\mathrm{CH}_{\Lambda_{w}^{\prime}}(\mathcal{W}_{w}^{1})=(-1-\sigma_{v}^{\prime})$; if $\Gamma_{v}^{\prime} \simeq \mathbb{Z}_{p}$, $v \notin S^{\prime}$, then $\mathrm{CH}_{\Lambda_{w}^{\prime}}(\mathcal{W}_{w}^{1})=(-1-[v]_{L^{\prime}}^{-1})$.

For the rest of the proof, assume that $\F_{q_{v}^{2}} \not \subset L_{w}^{\prime}$, so by Lemma \ref{l:hGal}, 
$(\mathrm{H}_{A}^{i} )^{\Gal (\tilde{L}_{w}^{\prime} / L_{w}^{\prime})}$ has order bounded by $2$. Consider the composition of inflation and restriction:
$$
\mathsf c_{i}: \xymatrix{\mathcal{D}^{i} \ar[r]^-{\mathsf{inf_i}} & \coh^{i}(\tilde{L}_{w} / L_{w}^{\prime}, A(\tilde{L}_{w})) \ar[r]^-{\mathsf{res_i}} & (\coh_{A}^{i})^{\Gal (\tilde{L}_{w}^{\prime} / L_{w}^{\prime})}.}
$$
By the Hochschild-Serre spectral sequences (see [Mil80, p.105]), $\mathsf{inf_1}$ is injective, and
$$
\ker(\mathsf{res_1})=\coh^{1}(\tilde{L}_{w}^{\prime} / L_{w}^{\prime}, A(\tilde{L}_{w}^{\prime})),
$$
which, by \eqref{e:h12or1}, is of order $1$ or $2$, and so is $\ker (\mathsf c_{1})$. This shows that $\mathcal{D}^{1}$ is a finite 2-group. Also, $\ker(\mathsf{inf_2})$ is a quotient of $\coh^{1}(\tilde{L}_{w} / L_{w}, A(\tilde{L}_{w}))^{\Gal (\tilde{L}_{w} / L_{w}^{\prime})}$ which, by \eqref{e:h12or1} again, is finite, and there is an exact sequence
$$
\coh^{2} (\tilde{L}_{w}^{\prime} / L_{w}^{\prime}, A(\tilde{L}_{w}^{\prime})) \longrightarrow \ker(\mathsf{res_2}) \longrightarrow \coh^{1}(\tilde{L}_{w}^{\prime} / L_{w}^{\prime}, \coh_{A}^{1}).
$$
By \eqref{e:h21or2}, $\coh^{2}(\tilde{L}_{w}^{\prime} / L_{w}^{\prime}, A(\tilde{L}_{w}^{\prime}))$ is finite. The group $\coh^{1}(\tilde{L}_{w}^{\prime} / L_{w}^{\prime}, \coh_{A}^{1})$ is finite, because it is isomorphic to
$$
\coh^{2}(\tilde{L}_{w}^{\prime} / L_{w}^{\prime}, \coh_{B}^{1})=\coh_{B}^{1} / 2 \coh_{B}^{1},
$$
which, by Lemma \ref{l:calh}, is finite. Hence, $\ker (\mathsf{res_2})$ is finite. Therefore, $\mathcal{D}^{2}$ is finite, $p$-primary and $p$-divisible, so must be trivial. This shows that $\mathcal{W}_{w}^{1}$ is finite and $\mathcal{W}_{w}^{2}=0$.

If $\Gamma_{w}^{\prime} \neq 0$, then $\mathcal{W}_{w}^{1}$ is pseudo-null. In this case either $p\not=2$ or $p=2$ and $v\in S'$, both imply  $\vartheta_{L/L',v}=\Lambda'$. Hence, the lemma holds. 

It remains to treat the $\Gamma_{w}^{\prime}=0$ case. 
Let $\sigma_{v}$ be a topological generator of $\Gamma_v$. Let $K_{w, m}$ and $\tilde{K}_{w, m}$ respectively denote the $m$th layers of $L_{w} / K_{v}$ and $\tilde{L}_{w} / \tilde{K}_{v}$. Write $\Gamma_{v, m}$ for 
$\Gamma_{v} / \Gamma_{v}^{p^{m}}=\Gal (K_{w, m} / K_{v})$.

Suppose $L_{w}$ contains $\F_{q_v^2}$. This occurs if and only if $p=2$ and $K_{w, 1}=\tilde{K}_{v}$.
The Galois group $\Gamma_{v}^2=\Gal (L_{w} / \tilde{K}_{v})$ is topologically generated by $\sigma_{v}^{2}$. We claim that the restriction
$$
\mathsf{res} :  \mathcal{D}^{1} \longrightarrow \coh^{1} (L_{w} / \tilde{K}_{v}, A(L_{w}))
$$
is a zero map. Then it follows that
$$
\mathcal{D}^{1}=\coh^{1}(\tilde{K}_{v} / K_{v}, A(\tilde{K}_{v})),
$$
and by \eqref{e:cv}, \eqref{e:h1h2}, it is of order $m_v$. Hence, $\mathrm{CH}_{\Lambda_{v}^{\prime}}(\mathcal{W}_{v}^{1})=(\vartheta_{L / L^{\prime}, v})$, as desired.

We prove the claim by showing that, for $m>1$, the restriction map $\mathsf {r_{(m)}}$ below is trivial. By choosing the restrictions of $\sigma_{v}$ and $\sigma_{v}^{2}$ respectively as the generators of $\Gamma_{v, m}$ and $\Gamma_{v, m}^{2}$, we have the commutative diagram
$$\xymatrix{\coh^1(K_{w,m}/K_v, A(K_{w,m})) \ar[r]^-{\mathsf{res_{(m)}}} \ar[d]^-{\simeq} 
& \coh^1(K_{w,m}/K_{w,1}, A(K_{w,m}))\ar[d]^-{\simeq}\\
\coh^{-1} (K_{w,m}/K_v, A(K_{w,m}))  \ar[r]^-{\mathsf{r_{(m)}}} &  \coh^{-1}(K_{w,m}/K_{w,1}, A(K_{w,m})).} 
$$

Let $\eta$ be an element in $\coh^{-1} (K_{w, m} / K_{v}, A(K_{w, m}))$, represented by $P \in A(K_{w, m})$ such that $\Nm_{K_{w, m} / K_{v}}(P)=0$. Then $\mathsf{r_{(m)}}(\eta)$ is represented by $P+\tensor[^{\sigma_{v}}] P{}$. Furthermore, if we write $P=x \pmod {Q^\Z},\; x \in K_{w, m}^{*}$, then, due to the twist,
$$
P+{ }^{\sigma_{v}} P=x \cdot { }^{\sigma_{v}}x^{-1} \pmod {Q^\Z}.
$$
Because $x \cdot { }^{\sigma_{v}} x^{-1} \in \mathcal{O}_{w, m}^{*}$, we must have
$$
\Nm_{K_{w, m} / K_{w, 1}}(x \cdot { }^{\sigma_{v}} x^{-1}) \in \mathcal{O}_{w, 1}^{*} \cap Q^\Z=\{1\} .
$$
By Hilbert's Theorem 90, we can write
$$
x \cdot { }^{\sigma_{v}} x^{-1}=y \cdot { }^{\sigma_{v}^{2}} y^{-1}, \;\text { for some } \;y \in K_{w, m}^{*} .
$$
This actually means $P+{ }^{\sigma_{v}} P$ is a co-boundary. The claim is proved.

Finally, consider the case where $\tilde{K}_{v}$ is not contained in $L_{w}$. This occurs if and only if $p \neq 2$, or $p=2$ and $L_{w} / K_{v}$ is totally ramified. Since $\mathcal{D}^{1}$ is a finite 2 group, if $p \neq 2$, then 
$\mathrm{CH}_{\Z_p} (\mathcal{W}_{w}^{1})=(1)$. Since $\vartheta_{L / L^{\prime}, v}=-2m_v$ is a
$p$-adic unit, the lemma is proved.

It remains to treat the $p=2$ case. By the Tate's local duality theorem, we need to show that $|A(K_{v}) / \mathrm{N}_{K_{w, m} / K_{v}}(A(K_{w, m}))|=2 m_v=|\vartheta_{L/L',v}|$, for sufficiently large $m$.

Let $\tau$ denote the generator of $\Gal (\tilde{L}_{w} / L_{w})$. For each non-negative integer $m$, let
$\mathrm N_{m,w}$ denote the norm map $\tilde K_{w,m}\longrightarrow K_{w,m}$. By the twist, we can write
\begin{equation}\label{e:tw}
A(K_{w,m})=\mathrm N^{-1}_{m,w}(Q^\Z)/Q^Z.
\end{equation}
By Hilbert's Theorem 90,
$$
\mathcal{N}_{m}:=\mathrm N^{-1}_{w,m}(1)=\{{ }^{\tau} x \cdot x^{-1} \mid x \in \tilde{K}_{w, m}^{*}\}.
$$
Since  $\mathcal N_m\subset \mathcal{O}_{\tilde{K}_{w, m}}^{*}$,  $\mathcal N_m\cap Q^\Z=\{1\}$, 
so we can view $\mathcal N_m$  as a subgroup of $A(K_{w,m})$. Because $Q\in \mathrm N^{-1}_{w,m}(Q^2)$, the quotient $A(K_{w,m})/\mathcal N_m$ has order $1$ or $2$, and by \eqref{e:tw},
$$A(K_{w,m})=\mathcal N_m \sqcup \mathrm N_{m,w}^{-1}(Q),$$ 
where $\mathrm N_{m,w}^{-1}(Q)$ is either an empty set or an $\mathcal N_m$-coset.
Since $L_{w} / K_{v}$ is totally ramified, for $m\geq 1$, $\Nm_{\tilde K_{m,w}/\tilde K_v}(\mathrm N_{m,w}^{-1}(Q))\subset\Nm_{\tilde K_{m,w}/\tilde K_v}(\mathcal N_m)\cdot Q^\Z$, so as a subgroup of $A(K_v)$,
$$\Nm_{K_{w,m}/K_v}(A(K_w,m))=\Nm_{\tilde K_{m,w}/\tilde K_v}(\mathcal N_m)\subset \mathcal N_0.$$
Also, since $\tilde K_v/K_v$ is unramified, $\mathrm N^{-1}_{0,w}(Q)$ is non-empty,
if and only if $\ord_v(Q)$ is even, or equivalently, $m_v=2$ by \eqref{e:cv}. Therefore,
$|A(K_v)/\mathcal N_0|=m_v$.
The snake lemma applied to 
$$\xymatrix{0\ar[r] & K_{w,m}^* \ar[r] \ar[d]^-{N_1} & \tilde{K}_{w,m}^*\ar[r]^-{\tau-1}\ar[d]^-{N_2} & \mathcal N_m
\ar[d]^-{\Nm_{\tilde K_{w,m}/\tilde K_v}} \ar[r] & 0\\
0\ar[r] & K_v^* \ar[r] & \tilde K_v^* \ar[r]^-{\tau-1} & \mathcal N_0 \ar[r] & 0,}
$$
where $m \geq 1$, and $N_{1}$, $N_{2}$ are norm maps, gives the exact sequence
$$
\cdots \longrightarrow \coker (N_{1}) \stackrel{j}{\longrightarrow} \coker (N_{2}) \longrightarrow \mathcal{N}_{0} / \mathrm{N}_{\tilde{K}_{w, m} / \tilde{K}_{v}}(\mathcal{N}_{m}) \longrightarrow 0 .
$$
By the local class field theory, both $\coker(N_{1})$ and $\coker(N_{2})$ can be identified with an open dense subgroup of $\Gamma_{v, m}$ and under this identification the map $j$ becomes the multiplication by 2 , so the exact sequence implies $|\mathcal{N}_{0} / \mathrm{N}_{\tilde{K}_{w, m} / \tilde{K}_{v}}(\mathcal{N}_{m})|=2$.
Finally,
$$
|A(K_{v}) / \mathrm{N}_{K_{w, m} / K_{v}}(A(K_{w, m}))|=|A (K_{v}) / \mathcal{N}_{0} | \cdot |\mathcal{N}_{0} / \mathrm{N}_{\tilde{K}_{w, m} / \tilde{K}_{v}}(\mathcal{N}_{m})|=m_v \cdot 2 .
$$

\end{proof}

\section{The $p$-adic $L$-functions}\label{s:lfun} 
In \S\ref{su:pre}, we recall related materials, especially the Theta elements $\Theta_D$ formulated by Mazur \cite{maz87}.
In \S\ref{su:tildeL}, the element $\tilde{\mathscr L}_{A,T}\in \Q_p\tilde\Lambda_T$ is introduced, while
$\hat{\mathscr L}_{A/L}, \mathscr L_{A/L}\in\Q_p\Lambda$ are respectively defined in \S\ref{su:hatL} and \S\ref{su:L}. Finally, in \S\ref{su:mtt}, the Mazur-Tate-Teitelbaum conjecture for $\hat{\mathscr L}_{A/L}$
is discussed.

\subsection{The preliminary}\label{su:pre}
Let $\lambda_v$, $\alpha_v$, and $\beta_v$ be as defined in \S\ref{ss:vartheta}.

\subsubsection{Twisted Hasse-Weil L-functions}\label{ss:thw} 
Let $\omega$ be a quasi-character on $W_{D}$. As usual, for each $v \notin \mathrm{Supp}(D_{\omega})$ and $s \in \mathbb{C}$, define 
$$L_{A, v}(\omega, s)= 
\begin{cases}
(1-\alpha_{v} \cdot \omega([v]_{D_{\omega}}) \cdot q_{v}^{-s})^{-1}(1-\beta_{v} \cdot \omega([v]_{D_{\omega}}) \cdot q_{v}^{-s})^{-1}, & \text { if } v \notin \mathrm{Supp}(N) ; \\ 
(1-\lambda_{v} \cdot \omega([v]_{D_{\omega}}) \cdot q_{v}^{-s})^{-1}, & 
\text { if } v \in \mathrm{Supp}(N),
\end{cases}$$
and define the associated twisted Hasse-Weil $L$-function to be

$$
L_{A}(\omega, s):=\prod_{v \notin \mathrm{Supp}(D_{\omega})} L_{A, v}(\omega, s), \text { for } s \geq 1 .
$$
If it is necessary to emphasize the ground field $K$, we will write $L_{A/K}(\omega, s)$ for $L_A(\omega, s)$.
The $L$-function has an analytical continuation to a polynomial in $q^{-s}, s \in \mathbb{C}$, unless $A / K$ is the constant curve and $D_{\omega}=0$, such that $\omega$ factors through 
$\xymatrix{W_{D} \ar[r]^{\deg} & \mathbb{Z} \ar@{^(->}[r] & \Gal(\overline{\mathbb{F}}_{q} / F_{q})}$, 
in which case $\delta_{\alpha}(q^{-s}) \cdot \delta_{\beta}(q^{-s})\cdot  L_{A}(\omega, s)$ is a polynomial in 
$q^{-s}$. Here
$$
\delta_{t}(q^{-s}):=(1-t \cdot \omega(\Frob_{q}) \cdot q^{-s})(1-t \cdot \omega(\Frob_{q}) \cdot q^{-(s-1)}).
$$

Since $\alpha, \beta$ are conjugate Weil $q$-numbers, 
$\delta_{\alpha}(q^{-1}) \cdot \delta_{\beta}(q^{-1}) \neq 0$. Therefore, in all cases, $L_{A}(\omega, 1)$ is defined.

\subsubsection{The Gauss sum}\label{ss:gauss} Here we define the Gauss sum $\tau_{\omega}$ associated to a quasi-character $\omega$ of $\tilde{W}_{S}$. Our Gauss sum is related to the Gauss Sum 
in \cite[VII, Proposition 14]{we74}, which we denote by $\tau^{W}(\omega)$, see $\eqref{e:compare}$.

Let $\Psi: K \backslash \mathbb{A}_{K} \longrightarrow \mathbb{C}^{*}$ be a non-trivial continuous character. Write $\Psi=\prod_{v} \Psi_{v}$ and let $a=(a_{v})_v$ be a differential idele attached to $\Psi$ (see \cite{tat50}). For a given $\omega$, 
put $b=(b_{v})_v$ with $b_{v}=a_{v} \cdot \pi_{v}^{\ord_{v}(D_{\omega})}$,
set
\begin{equation}\label{e:tau}
\tau_{\omega, v}= 
\begin{cases}
\sum_{x \in \mathcal{O}_{v}^{*} / 1+\pi_{v}^{\ord_v(D_{\omega})} \mathcal{O}_{v}} \omega(b_{v}^{-1} x) \Psi_{v}(b_{v}^{-1} x), & \text { if } \ord_{v}(D_{\omega})>0 \\ 
\omega(b_{v}^{-1}), & \text { if } \ord_{v}(D_{\omega})=0,
\end{cases}
\end{equation}
and define the Gauss Sum:
$$
\tau_{\omega}=\prod_{v} \tau_{\omega, v},
$$
which is independent of the choice of $\Psi$. To compare $\tau_{\omega}$ with $\tau_{\omega}^{W}$, consider the Haar measure $\mu_{v}$ on $K_{v}$ such that $\mu_{v}(\mathcal{O}_{v})=1$. Then
$$
\mu_{v}(1+\pi_{v}^{\ord_{v}(D_{\omega})} \mathcal{O}_{v})=q_{v}^{-\ord_{v}(D_{\omega})} .
$$
Hence, if $\ord_{v}(D_{\omega})>0$, then
\begin{equation}\label{e:haar}
\begin{aligned}
\tau_{\omega, v} & =\omega(b_{v}^{-1}) \cdot \sum_{x \in \mathcal{O}_{v}^{*} / 1+\pi_{v}^{\ord_{v}(D_{\omega})} \mathcal{O}_{v}} \omega(x) \Psi_{v}(b_{v}^{-1} x) \\
& =\omega(b_{v}^{-1}) \cdot q_{v}^{\ord_{v}(D_{\omega})} \cdot \int_{x \in \mathcal{O}_{v}^{*}} \omega(x) \Psi_{v}(b_{v}^{-1} x) d \mu_{v}(x) .
\end{aligned}
\end{equation}
Write $\tilde{\mu}_{v}:=\|a_{v}\|_{v}^{\frac{1}{2}} \mu_{v}$ which is the self-dual Haar measure with respect to $\Psi_{v}$. Then
$$
\tau_{\omega, v}= 
\begin{cases}
\omega(b_{v}^{-1}) q_{v}^{\frac{1}{2} \ord_{v}(D_{\omega})}\|b_{v}\|_{v}^{-\frac{1}{2}} \int_{x \in \mathcal{O}_{v}^{*}} \omega(x) \Psi_{v}(b_{v}^{-1} x) d \tilde{\mu}_{v}(x), & \text { if } \ord_{v}(D_{\omega})>0 \\ 
\omega(b_{v}^{-1}), & \text { if } \ord_{v}(D_{\omega})=0.
\end{cases}
$$
This together with the definition of $\tau^W$ implies
\begin{equation}\label{e:compare}
\tau_{\omega}=\omega(b^{-1}) q^{\frac{1}{2} \operatorname{deg}(D_{\omega})} \tau^{W}(\omega^{-1})
\end{equation}

By \cite[Lemma 3]{tan93}, we have
\begin{equation}\label{e:taus}
\tau_{\omega \omega_{s}}=q^{s \cdot(\deg(D)+2 \kappa-2)} \cdot \tau_{\omega} .
\end{equation}

\subsubsection{Theta elements}\label{ss:theta} Here we recall the theta element $\Theta_{D}=\Theta_{A, D} \in \mathbb{Q}[W_{D}]$ defined in \cite{maz87}, that interpolates special values of HasseWeil L-functions twisted by $\omega\in \hat D$. The main reference is \cite{tan93}.

For $D^{\prime} \succeq D \succeq 0$, let
$$
Z_{D}^{D^{\prime}}: \mathbb{Q}_{p}[W_{D^{\prime}}] \longrightarrow \mathbb{Q}_{p}[W_{D}]
$$
be the ring homomorphism induced by the projection $p_{D}^{D^{\prime}}: W_{D^{\prime}} \longrightarrow W_{D}$. 
Also, let
$$
V_{D^{\prime}}^{D}: \mathbb{Q}_{p}[W_{D}] \longrightarrow \mathbb{Q}_{p}[W_{D^{\prime}}]
$$
denote the homomorphism of $\Q_p$-modules such that for $g \in W_{D}$,
$$
V_{D^{\prime}}^{D}(g):= 
\begin{cases}
\sum_{g^{\prime} \mapsto g} g^{\prime}, & \text { if } D \neq 0, \text { or } D^{\prime}=D=0 ; \\ 
(q-1) \sum_{g^{\prime} \mapsto g} g^{\prime}, & \text { if } D=0 \text { but } D^{\prime} \neq 0,
\end{cases}
$$
where in the summation, $g^{\prime}$ runs through all pre-image of $g$ under
$p_{D}^{D^{\prime}}: W_{D^{\prime}} \longrightarrow W_{D}$.
Then the following equations hold. First, 
\begin{equation}\label{e:vz}
Z_{D}^{D^{\prime}} \circ V_{D^{\prime}}^{D}=b_{D}^{D^{\prime}} \cdot id,
\end{equation}
where
\begin{equation}\label{e:bdd}
b_{D}^{D^{\prime}}=\prod_{\ord_{v}(D^{\prime})>\ord_{v}(D)>0} q_{v}^{\ord_{v}(D^{\prime}-D)} \prod_{\ord_{v}(D^{\prime})>\ord_{v}(D)=0}(q_{v}^{\ord_{v}(D^{\prime})}-q_{v}^{\ord_{v}(D^{\prime})-1}).
\end{equation}
For effective $D_{1}, D_{2}, D_{3}$,
\begin{equation}\label{e:z123}
Z_{D_{1}}^{D_{1}+D_{2}+D_{3}}=Z_{D_{1}}^{D_{1}+D_{2}} \circ Z_{D_{1}+D_{2}}^{D_{1}+D_{2}+D_{3}},
\end{equation}
as well as
\begin{equation}\label{e:v123}
V_{D_{1}+D_{2}+D_{3}}^{D_{1}}=V_{D_{1}+D_{2}+D_{3}}^{D_{1}+D_{2}} \circ V_{D_{1}+D_{2}}^{D_{1}}.
\end{equation}
Finally, if $\mathrm{Supp}(D_{2}) \cap \mathrm{Supp}(D_{3})=\emptyset$, then
\begin{equation}\label{e:vz123}
V_{D_{1}+D_{2}}^{D_{1}} \circ Z_{D_{1}}^{D_{1}+D_{3}}=Z_{D_{1}+D_{2}}^{D_{1}+D_{2}+D_{3}} \circ V_{D_{1}+D_{2}+D_{3}}^{D_{1}+D_{3}}.
\end{equation}

\begin{theorem}\label{t:theta} For each effective divisor $D$ supported on a given finite set $T$ of places of $K$, there exists a unique theta element $\Theta_{D} \in 
\frac{1}{p^\aleph} \mathbb{Z}[W_{D}]$, for some non-negative integer $\aleph$ independent of $D$, satisfying the following:

\begin{enumerate}
\item[(a)] If $v \notin \mathrm{Supp}(N) \cup \mathrm{Supp}(D)$, then
$$
Z_{D}^{v+D}(\Theta_{v+D})=(\lambda_{v}-[v]_{D}-[v]_{D}^{-1}) \Theta_{D} .
$$

\item[(b)] If $v \in \mathrm{Supp}(N)$ and $v \notin \mathrm{Supp}(D)$, then
$$
Z_{D}^{v+D}(\Theta_{v+D})=(\lambda_{v}-[v]_{D}^{-1}) \Theta_{D} .
$$

\item[(c)] If $v \notin \mathrm{Supp}(N)$ and $v \in \mathrm{Supp}(D)$, then
$$
Z_{D}^{v+D}(\Theta_{v+D})=\lambda_{v} \cdot \Theta_{D}-V_{D}^{-v+D}(\Theta_{-v+D}) .
$$

\item[(d)] If $v \in \mathrm{Supp}(N) \cap \mathrm{Supp}(D)$, then
$$
Z_{D}^{v+D}(\Theta_{v+D})=\lambda_{v} \cdot \Theta_{D} .
$$

\item[(e)] If $\omega$ is a primitive quasi-character of $W_{D}$, then
$$
\omega(\Theta_{D})=\tau_{\omega} \cdot q^{\frac{\deg(\Delta)}{12}+\kappa-1} L_{A}(\omega, 1) .
$$
\end{enumerate}
\end{theorem}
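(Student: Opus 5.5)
This theorem is a summary of results in \cite{tan93} on Mazur's modular elements \cite{maz87}, so the plan is to rebuild $\Theta_D$ from the automorphic form attached to $A$ and read off (a)--(e) from Hecke theory and the Jacquet--Langlands/Weil integral representation. Uniqueness is the easy half. A element of $\Q_p[W_D]$ is determined by its values at all characters $\omega$ of the finite quotients, where we work degree by degree, using $\omega_s$ twists for the $\Z$-part. By (e), the primitive characters of $W_D$ are pinned down. Non-primitive characters factor through $W_{D'}$ with $D'\prec D$. Their values are then determined inductively by the transition formulas (a)--(d), read through $Z$, together with the relation \eqref{e:vz}.

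For existence, take the cuspidal automorphic newform $f_A$ on $\mathrm{GL}_2(\mathbb A_K)$ of level $N$ attached to $A$. This uses Drinfeld/Deligne's modularity, replacing $A$ by its quadratic twist in the constant case and handling that case separately via the $\delta$-factors. Define $\Theta_D$ as Mazur's modular symbol:
$$\Theta_D=\sum_{x\in W_D}\Big(\int f_A\begin{pmatrix} y & b\\ 0&1\end{pmatrix}\Big)\,x,$$
suitably normalized, with the integral taken over the idele class $y$ representing $x$ and a fixed unipotent shift $b$ attached to $D$. Because $f_A$ takes values in a lattice $\frac1{M}\Z$, with bounded denominators coming from the Eisenstein/cuspidal congruence number, we get $\Theta_D\in\frac1{p^\aleph}\Z[W_D]$ with $\aleph$ independent of $D$.

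The transition formulas come from unfolding $Z_D^{v+D}$, which sums over fibres, that is, over $\mathcal O_v/\pi_v$ translates of the unipotent variable. This is exactly the action of the Hecke operator $T_v$ (or $U_v$ when $v\mid N$), minus boundary terms. For $v\notin N\cup D$ the missing coset matrices $\mathrm{diag}(\pi_v,1)$ and $\mathrm{diag}(1,\pi_v)$ produce the terms $[v]_D$ and $[v]_D^{-1}$, giving (a). For $v\mid N$ only one degeneracy term survives, giving (b). For $v\in D$ the degenerate coset gives the $V^{-v+D}_D(\Theta_{-v+D})$ term in (c), and this term vanishes in the multiplicative case (d).

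The interpolation (e) is the Hecke--Jacquet--Langlands zeta integral. We evaluate $\sum_x\omega(x)\int f_A(\cdots)$, unfold via the Whittaker expansion, and get a product of local integrals. At $v\notin D_\omega$ these are $L_{A,v}(\omega,1)$. At $v\mid D_\omega$, with $\omega$ primitive, they collapse to the local Gauss sums $\tau_{\omega,v}$ of \eqref{e:tau}. The global constant $q^{\deg(\Delta)/12+\kappa-1}$ arises from the Petersson/Whittaker normalization and the volume of the differential idele; this is where \eqref{e:compare} and \eqref{e:taus} enter.

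I expect the main obstacle to be this last normalization. It requires matching all the constants exactly, including the discriminant term via the Ogg/Szpiro relation between $\deg N$ and $\deg\Delta$. It also requires checking bounded $p$-integrality uniformly in $D$. For the latter I would first try to use that $f_A$ is $\Z$-valued up to a fixed scalar on the Bruhat--Tits tree quotient, as in \cite{tan93}.
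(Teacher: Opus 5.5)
Your overall route is the one behind the paper's proof. The paper sets $\Theta_D:=q^{\deg(\Delta)/12+\kappa-1}\ell_D^{-1}\sum_{w\in W_D}f\begin{pmatrix} dw&w\\0&1\end{pmatrix}w$, with $\ell_D=1$ for $D\neq0$ and $\ell_0=q-1$. It then quotes \cite[Proposition 2]{tan93} (with an erratum in (c)), and \cite[pp.~308--309]{tan93} for constant curves. Your uniqueness argument is fine, and reading (a)--(d) off $T_v$ or $U_v$ minus the missing cosets is the right mechanism. However, the existence sketch goes wrong at exactly the load-bearing points.

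\textbf{The defining element.} As you wrote it, it is not well defined on $W_D$. Left invariance under $\mathrm{diag}(\xi,1)$, $\xi\in K^*$, gives $f\begin{pmatrix}\xi y&b\\0&1\end{pmatrix}=f\begin{pmatrix}y&\xi^{-1}b\\0&1\end{pmatrix}$. So with a fixed corner shift, the coefficient depends on the representative $y$. If $b\in K$, the shift disappears and every ramified $\omega$ gives $0$. Even formally, the Fourier expansion produces $\Psi(\xi b)$, which never combines with $\omega$ into $\tau_\omega$.

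The shift must act on the right: $\mathrm{diag}(w,1)\begin{pmatrix} d&1\\0&1\end{pmatrix}=\begin{pmatrix} dw&w\\0&1\end{pmatrix}$. Right translation by $U_D$ is then absorbed by $K_0(N)$. With $W_f$ the Whittaker function of $f$, the sum $\sum_w\omega(w)\sum_{\xi\in K^*}\Psi(\xi w)W_f(\xi dw)$ unfolds to $\sum_{y\in\mathbb A_K^*/U_D}\omega(y)\Psi(y)W_f(dy)$. At $D=0$ the factor $\ell_0$ absorbs $K^*\cap U_0=\F_q^*$. The local factors of this sum give $\tau_{\omega,v}$ and $L_{A,v}(\omega,1)$.

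Three further corrections on this point:
\begin{itemize}
\item There is no integral over a function field; the degree direction of $W_D$ plays the archimedean role.
\item The same $\ell_D^{-1}$ is what matches the factor $q-1$ in $V^0_v$ in (c) when $D=v$.
\item The factor $q^{\deg(\Delta)/12+\kappa-1}$ is simply inserted by definition, so nothing has to be matched via Ogg/Szpiro.
\end{itemize}

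\textbf{The constant case.} A constant $A$ has no cusp form: its automorphic representation is Eisenstein, which is why the $\delta$-factors appear. The cusp form of a quadratic twist $A^\chi$ gives elements interpolating essentially $L_A(\omega\chi,1)$. These cannot be untwisted inside $\Q[W_D]$ when $\chi$ does not factor through $W_D$ (e.g.\ $D=0$). So this case needs the separate construction the paper cites.

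\textbf{Integrality.} A congruence-number denominator $M$ would not give $\frac1{p^\aleph}$. The $p$-power shape comes from two facts:
\begin{itemize}
\item The coefficients $f\begin{pmatrix} dw&w\\0&1\end{pmatrix}$ are finite Whittaker sums of integral Hecke coefficients, powers of $q$, and sums of $\Psi$ over $\F_q^*$-orbits, hence lie in $\Z[1/p]$.
\item A cusp form over a function field has compact support modulo the centre, so it takes only finitely many values.
\end{itemize}
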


\begin{proof} If $A / K$ is non-constant, then it is modular \cite{de70, we71}. 
Let
$$
f: \mathrm{GL}_{2}(\mathbb{A}_{K}) \longrightarrow \mathbb{C}
$$
be the associated cuspidal modular function of level $N$. As in \cite[(2.2)]{tan93}, define
$$
\Theta_{D, f}:=\ell_{D}^{-1} \cdot \sum_{w \in W_{D}} f((\begin{array}{cc}
d \cdot w & w \\
0 & 1
\end{array})) \cdot w,
$$
where $d$ is an idele representing $D$, each $w \in W_{D}$ is lifted to an element of 
$\mathbb{A}_{K}^{*}$, and
$$
\ell_{D}= 
\begin{cases}1, & \text { if } D \text { is nontrivial } \\ 
q-1, & \text { if } D \text { is trivial }.
\end{cases}
$$
Put
$$
\Theta_{D}:=q^{\deg(\Delta) / 12+\kappa-1} \cdot \Theta_{D, f} .
$$
Basically, the theorem is the same as \cite[Proposition 2]{tan93} 
for which we have to make the following erratum:
\begin{enumerate}
\item $\Theta_{D, E}$ should be $\Theta_{D, f}$, and so on.
\item In (c), $V_{D^{\prime}}$ should be $V_{D-v}$ and in the case where $D=v$, 
$V_{D-v}$ should be multiplied by $q-1$ so that it is the same as our $V_{D}^{D-v}$.
\end{enumerate}

If $A / K$ is a constant elliptic curve, the theorem is proven in \cite[p. 308 $\thicksim$ 309]{tan93}.
\end{proof}

\subsection{The element $\tilde{\mathscr{L}}_{A,T}$}\label{su:tildeL}
We view  $\Theta_D$ as an element of $\frac{1}{p^\aleph}\Z_p[[\tilde W_D]]$ and extend $Z^{D'}_D$ and $V_{D'}^D$ respectively to 
$$\Q_p\Z_p[[\tilde W_{D'}]]\longrightarrow \Q_p\Z_p[[\tilde W_D]]$$ 
and 
$$\Q_p\Z_p[[\tilde W_D]]\longrightarrow \Q_p\Z_p[[\tilde W_{D'}]].$$

Let $T$ be a finite set containing only ordinary places of $K$. By a standard method (see [MTT86]), 
we shall put together all $\Theta_{D}$ for which $D$ has support equal $T$ to form an element $\tilde{\mathscr{L}}_{A, T} \in \mathbb{Q}_{p}\cdot \tilde{\Lambda}_{T}$ that interpolates special values of twisted Hasse-Weil L-functions
as below.

\subsubsection{The elements $\tilde{\mathscr{L}}_{D}$ and $\tilde{\mathscr{L}}_{A, T}$}\label{ss:elts} 
For a non-empty $J \subset T$, denote $D_{J}:=\sum_{v \in J} v$, and put $D_{\emptyset}:=0$. If $J \subset T_{o}$, denote
$$
\alpha^{J}:=\prod_{v \in J} \alpha_{v},
$$
and put $\alpha^{\emptyset}:=1$. For a divisor $D$ supported on $T$, put
$$
\alpha_{D}=\prod_{v \in T_{o}} \alpha_{v}^{\ord_{v}(D)} \cdot \prod_{v \in T_{m} \cap \mathrm{Supp}(D)} \lambda_{v}^{\ord_{v}(D)-1}.
$$
Following \cite{mtt86}, for each $D$ supported on $T$, define

\begin{equation}\label{e:tildeL}
\tilde{\mathscr{L}}_{D}=\sum_{\emptyset \subseteq J \subseteq T_{o} \cap D}(-1)^{|J|} \frac{1}{\alpha_{D} \alpha^{J}} V_{D}^{D-D_{J}}(\Theta_{D-D_{J}})
\end{equation}
as an element of $\frac{1}{p^{\aleph}} \mathbb{Z}_{p}[[\tilde W_{D}]]$.

\begin{lemma}\label{l:thetacomp} For divisors $D_{1} \succeq D_{2} \succeq 0$, supported on $T$,
denote $T^{(i)}=\mathrm{Supp}(D_{i})$, $T_{m}^{(i)}=T_{m} \cap T^{(i)}$, and $T_{o}^{(i)}=T_{o} \cap T^{(i)}$. Then
$$
Z_{D_{2}}^{D_{1}}(\tilde{\mathscr{L}}_{D_{1}})=\prod_{v \in T_{m}^{(1)} \backslash T_{m}^{(2)}}(\lambda_{v}-[v]_{D_{2}}^{-1}) \cdot \prod_{v \in T_{o}^{(1)} \backslash T_{o}^{(2)}}(1-\alpha_{v}^{-1}[v]_{D_{2}})
(1-\alpha_{v}^{-1}[v]_{D_{2}}^{-1}) \cdot \tilde{\mathscr{L}}_{D_{2}}.
$$
\end{lemma}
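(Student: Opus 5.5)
By \eqref{e:z123} the maps $Z$ compose, so it suffices to treat the one-step case $D_1=D_2+v$ for a single place $v\in T$ and then iterate. The product of correcting factors is compatible with this iteration: a factor created at an intermediate level $D'$ maps to the corresponding factor at level $D_2$, since $Z_{D_2}^{D'}([v]_{D'})=[v]_{D_2}$ whenever $v\notin T^{(1)}\setminus T^{(2)}$ stays outside $\mathrm{Supp}(D')$. The iteration should be ordered so that each place of $T^{(1)}\setminus T^{(2)}$ is removed only once its multiplicity has been lowered to $1$.

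The one-step case splits into four situations.

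\textbf{(i) $v\in T_m$, $v\in\mathrm{Supp}(D_2)$.} Here $J$ never involves $v$, and $\alpha_{D_1}=\lambda_v\alpha_{D_2}$. Since $v\notin J$, the place $v$ lies in $\mathrm{Supp}(D_1-D_J)$. Each term $V_{D_1}^{D_1-D_J}(\Theta_{D_1-D_J})$ maps under $Z_{D_2}^{D_1}$ to $V_{D_2}^{D_2-D_J}(Z(\Theta_{D_1-D_J}))$ by \eqref{e:vz123}. Theorem \ref{t:theta}(d) then gives the factor $\lambda_v$, which cancels.

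\textbf{(ii) $v\in T_m$, $v\notin\mathrm{Supp}(D_2)$.} The same commutation applies. Theorem \ref{t:theta}(b) produces $(\lambda_v-[v]^{-1}_{D_2})$, and here $\alpha_{D_1}=\alpha_{D_2}$ because the exponent $\ord_v(D_1)-1$ is $0$.

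\textbf{(iii) $v\in T_o$, $\ord_v(D_2)\ge1$.} Split the sum according to whether $v\in J$. Terms with $v\notin J$ use Theorem \ref{t:theta}(c) and give $\lambda_v\Theta-V(\Theta_{\cdot-v})$. Terms with $v\in J$ give $Z\circ V_{D_1}^{D_1-v-\cdots}$. When $\ord_v(D_1-D_J)>0$ this is handled by \eqref{e:vz123} together with \eqref{e:v123}, and the result is a $V$ term at level $D_2$ with index shifted down.

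Collecting the terms and using $\lambda_v=\alpha_v+\beta_v$ with $\alpha_v\beta_v=q_v$ should make everything telescope: the coefficient $1/(\alpha_{D_1}\alpha^J)$ absorbs one factor of $\alpha_v$, and what remains is exactly $\tilde{\mathscr L}_{D_2}$. The factor $q_v$ needed here comes from \eqref{e:vz}, via $b_{D}^{D'}=q_v$. This is the standard Mazur--Tate--Teitelbaum computation for the $U_p$-eigenvalue. The key identity to check is
\[
\lambda_v-\alpha_v^{-1}q_v=\alpha_v .
\]

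\textbf{(iv) $v\in T_o$, $\ord_v(D_1)=1$, $v\notin\mathrm{Supp}(D_2)$.} This is the main obstacle. Terms with $v\notin J$ use Theorem \ref{t:theta}(a), giving $(\lambda_v-[v]-[v]^{-1})\Theta_{D_2-D_{J}}$. Terms with $v\in J$ give $Z_{D_2}^{D_2+v}V_{D_2+v}^{D_2}$ applied to $\Theta_{D_2-D_{J'}}$. Here \eqref{e:vz} gives the multiplier $b=q_v-1$, but care is needed: the definition of $V$ contains an extra factor $q-1$ when $D=0$. For this reason I would first treat $D_2\neq0$, and handle $D_2=0$, where $[v]_0$ is degree-Frobenius, separately.

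The combination
\[
\alpha_v^{-1}(\lambda_v-[v]-[v]^{-1})+\alpha_v^{-2}(q_v-1)
\]
must be verified to equal $(1-\alpha_v^{-1}[v])(1-\alpha_v^{-1}[v]^{-1})$ up to the correct normalization by $\alpha_{D_1}=\alpha_v\alpha_{D_2}$. This can fail: it gives $\alpha_v^{-1}\lambda_v+\alpha_v^{-2}(q_v-1)-\alpha_v^{-1}([v]+[v]^{-1})$. Since $\alpha_v^{-1}\lambda_v=1+\alpha_v^{-2}q_v$, this equals $1+\alpha_v^{-2}(2q_v-1)-\cdots$, which does not match. So I expect the $V^{D_2}_{D_2+v}$ projection to instead involve the sum over $\mathcal O_v^*/(1+\pi_v)$ in a way yielding the norm-type factor $\alpha_v^{-2}\cdot Z V$ with appropriate $q_v$ versus $1$. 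I would recheck this using $Z\circ V=b$ together with the interaction of $[v]$, and adjust accordingly. Pinning down these normalizations, including the $D_2=0$ case with its $q-1$ factors, is where I expect the real work to be.
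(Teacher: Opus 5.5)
Your reduction to $D_1=D_2+v$ via \eqref{e:z123} and your four-way case split are exactly the paper's, and your cases (i)--(iii) go through as you describe. The gap is case (iv). You leave it unproved and assert that the natural computation fails, but the mismatch you found comes from a dropped sign. For $J\ni v$ put $J'=J\setminus\{v\}$. Then $(-1)^{|J|}=-(-1)^{|J'|}$, $\alpha_{D_1}\alpha^J=\alpha_v^2\alpha_{D_2}\alpha^{J'}$ and $D_1-D_J=D_2-D_{J'}$, so \eqref{e:v123}, \eqref{e:vz} and \eqref{e:bdd} give
\[
Z^{D_1}_{D_2}\Bigl(\frac{(-1)^{|J|}}{\alpha_{D_1}\alpha^J}V_{D_1}^{D_1-D_J}(\Theta_{D_1-D_J})\Bigr)=-\frac{q_v-1}{\alpha_v^2}\cdot\frac{(-1)^{|J'|}}{\alpha_{D_2}\alpha^{J'}}V_{D_2}^{D_2-D_{J'}}(\Theta_{D_2-D_{J'}}).
\]
Hence the coefficient of $\frac{(-1)^{|J'|}}{\alpha_{D_2}\alpha^{J'}}V_{D_2}^{D_2-D_{J'}}(\Theta_{D_2-D_{J'}})$ in $Z^{D_1}_{D_2}(\tilde{\mathscr L}_{D_1})$ is
\[
\alpha_v^{-1}(\lambda_v-[v]_{D_2}-[v]_{D_2}^{-1})-\alpha_v^{-2}(q_v-1).
\]
This carries a minus sign, not the plus sign you used. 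Substitute $\alpha_v^{-1}\lambda_v=1+\alpha_v^{-2}q_v$: the $q_v$-terms cancel and what remains is $1-\alpha_v^{-1}[v]_{D_2}-\alpha_v^{-1}[v]_{D_2}^{-1}+\alpha_v^{-2}=(1-\alpha_v^{-1}[v]_{D_2})(1-\alpha_v^{-1}[v]_{D_2}^{-1})$, which is the required factor. You tracked the same sign correctly in case (iii), where the $v\in J$ terms contribute $-q_v/\alpha_v^2$.

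The repair you propose, re-examining the normalization of $V$ via a different sum over $\mathcal O_v^*/(1+\pi_v\mathcal O_v)$, is therefore unnecessary and points in the wrong direction. A separate treatment of $D_2=0$ is also unnecessary. The extra factor $q-1$ in $V_{D'}^{0}$ is there precisely so that \eqref{e:vz} holds with the value \eqref{e:bdd} in all cases. Because of the constants $\F_q^*\subset K^*$, the kernel of $W_{v}\to W_0$ has order $(q_v-1)/(q-1)$, so $Z^{v}_{0}\circ V^{0}_{v}=q_v-1=b^{v}_{0}$, exactly as when $D_2\neq0$. The displayed computation thus covers $D_2=0$ verbatim. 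As it stands your proposal does not prove the lemma; once the sign is corrected, it becomes the paper's proof.
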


\begin{proof}
By \eqref{e:z123}, it is sufficient to consider the case where $D_{1}=D_{2}+v$. First,
 assume that $\ord_{v}(D_{1}) \geq 2$, or equivalently $v \in \mathrm{Supp}(D_{2})$. 
 If $v \in T_{m}$, then the lemma follows from \eqref{e:vz123} and Theorem \ref{t:theta}(d), because we have
$$
\begin{array}{rcl}
Z_{D_{2}}^{D_{1}}(\frac{(-1)^{|J|}}{\alpha_{D_{1}} \alpha^{J}} V_{D_{1}}^{D_{1}-D_{J}}(\Theta_{D_{1}-D_{J}})) & =& \frac{(-1)^{|J|}}{\lambda_{v} \alpha_{D_{2}} \alpha^{J}} V_{D_{2}}^{D_{2}-D_{J}}(Z_{D_{2}-D_{J}}^{D_{1}-D_{J}}(\Theta_{D_{1}-D_{J}})) \\
& =& \frac{(-1)^{|J|}}{\alpha_{D_{2}} \alpha^{J}} V_{D_{2}}^{D_{2}-D_{J}}(\Theta_{D_{2}-D_{J}}).
\end{array}
$$

Suppose $v \in T_{o}$. For $J$ not containing $v$, write $J^{\prime}=J \cup\{v\}$. Since $J$ is a subset of $\mathrm{Supp}(D_{2})$, by \eqref{e:v123}, \eqref{e:vz123} and Theorem \ref{t:theta}(c)
$$
\begin{array}{rcl}
& & Z_{D_{2}}^{D_{1}}(\frac{(-1)^{|J|}}{\alpha_{D_{1}} \alpha^{J}} V_{D_{1}}^{D_{1}-D_{J}}(\Theta_{D_{1}-D_{J}}))\\
 & =& \frac{(-1)^{|J|}}{\alpha_{D_{1}} \alpha^{J}} V_{D_{2}}^{D_{2}-D_{J}}(Z_{D_{2}-D_{J}}^{D_{1}-D_{J}}(\Theta_{D_{1}-D_{J}})) \\
& =& \frac{\lambda_{v}(-1)^{|J|}}{\alpha_{v} \alpha_{D_{2}} \alpha^{J}} V_{D_{2}}^{D_{2}-D_{J}}(\Theta_{D_{2}-D_{J}})+\frac{(-1)^{|J^{\prime}|}}{\alpha_{D_{2}} \alpha^{J^{\prime}}} V_{D_{2}}^{D_{2}-D_{J^{\prime}}}(\Theta_{D_{2}-D_{J^{\prime}}}) .
\end{array}
$$
For $J$ containing $v$, write $J^{\prime}=J \backslash\{v\}$. Since $v \in \operatorname{Supp}(D_{1}-D_{J})$, by \eqref{e:vz}, \eqref{e:bdd}, and \eqref{e:v123},
$$
\begin{array}{rcl}
Z_{D_{2}}^{D_{1}}(\frac{(-1)^{|J|}}{\alpha_{D_{1}} \alpha^{J}} V_{D_{1}}^{D_{1}-D_{J}}(\Theta_{D_{1}-D_{J}})) & =& Z_{D_{2}}^{D_{1}}(\frac{(-1)^{|J|}}{\alpha_{D_{1}} \alpha^{J}} V_{D_{1}}^{D_{2}}(V_{D_{2}}^{D_{1}-D_{J}}(\Theta_{D_{1}-D_{J}}))) \\
& = & \frac{-q_{v}(-1)^{|J^{\prime}|}}{\alpha_{v}^{2} \alpha_{D_{2}} \alpha^{J^{\prime}}} V_{D_{2}}^{D_{2}-D_{J^{\prime}}}(\Theta_{D_{2}-D_{J^{\prime}}})).
\end{array}
$$
Thus, we can write
$$
Z_{D_{2}}^{D_{1}}(\tilde{\mathscr{L}}_{D_{1}})=\sum_{\emptyset \subseteq J^{\prime} \subseteq T_{o}^{(2)}} F_{J^{\prime}} \cdot V_{D_{2}}^{D_{2}-D_{J^{\prime}}}(\Theta_{D_{2}-D_{J^{\prime}}}).
$$
Here, if $v \in J^{\prime}$, then the coefficient
$$
\mathrm{F}_{J^{\prime}}=\frac{(-1)^{|J^{\prime}|}}{\alpha_{D_{2}} \alpha^{J^{\prime}}},
$$
while if $v \notin J^{\prime}$, then
$$
\mathrm{F}_{J^{\prime}}=(\frac{\lambda_{v}}{\alpha_{v}}-\frac{q_{v}}{\alpha_{v}^{2}}) \cdot
\frac{(-1)^{|J^{\prime}|}}{\alpha_{D_{2}} \alpha^{J^{\prime}}}=\frac{(-1)^{|J^{\prime}|}}{\alpha_{D_{2}} \alpha^{J^{\prime}}}.
$$
This proves the lemma for the $\ord_{v}(D_1) \geq 2$ case.

Now assume that $\ord_{v}(D_{1})=1$, or equivalently $v \notin \mathrm{Supp}(D_{2})$. 
If $v \in T_{m}$, then the lemma follows from \eqref{e:vz123} and Theorem \ref{t:theta}(b), because we have
$$
\begin{array}{rcl}
Z_{D_{2}}^{D_{1}}(\frac{(-1)^{|J|}}{\alpha_{D} \alpha^{J}} V_{D_{1}}^{D_{1}-D_{J}}(\Theta_{D_{1}-D_{J}})) 
& =& \frac{(-1)^{|J|}}{\alpha_{D_{1}} \alpha^{J}} V_{D_{2}}^{D_{2}-D_{J}}(Z_{D_{2}-D_{J}}^{D_{1}-D_{J}}(\Theta_{D_{1}-D_{J}})) \\
& = & \frac{(-1)^{|J|}}{\alpha_{D_{2}} \alpha^{J}} V_{D_{2}}^{D_{2}-D_{J}}((\lambda_{v}-[v]_{D_{2}}^{-1}) \Theta_{D_{2}-D_{J}}).
\end{array}
$$

Suppose $v \in T_{o}$. For $J$ not containing $v$, we have $J \subset \operatorname{Supp}(D_{2})$, and hence by \eqref{e:v123}, \eqref{e:vz123} and Theorem \ref{t:theta}(a),
$$
\begin{array}{rcl}
Z_{D_{2}}^{D_{1}}(\frac{(-1)^{|J|}}{\alpha_{D_{1}} \alpha^{J}} V_{D_{1}}^{D_{1}-D_{J}}(\Theta_{D_{1}-D_{J}})) & =& \frac{(-1)^{|J|}}{\alpha_{D_{1}} \alpha^{J}} V_{D_{2}}^{D_{2}-D_{J}}(Z_{D_{2}-D_{J}}^{D_{1}-D_{J}}(\Theta_{D_{1}-D_{J}})) \\
& =& \frac{(-1)^{|J|}(\lambda_{v}-[v]_{D_{2}}-[v]_{D_{2}}^{-1})}{\alpha_{v} \alpha_{D_{2}} \alpha^{J}} V_{D_{2}}^{D_{2}-D_{J}}(\Theta_{D_{2}-D_{J}}).
\end{array}
$$
For $J$ containing $v$, put $J^{\prime}=J \backslash\{v\}$. By \eqref{e:vz} and \eqref{e:bdd},
$$
\begin{array}{rcl}
Z_{D_{2}}^{D_{1}}(\frac{(-1)^{|J|}}{\alpha_{D_{1}} \alpha^{J}} V_{D_{1}}^{D_{1}-D_{J}}(\Theta_{D_{1}-D_{J}})) 
& = & \frac{(-1)^{|J|}}{\alpha_{D_{1}} \alpha^{J}} Z_{D_{2}}^{D_{1}}(V_{D_{1}}^{D_{2}}(V_{D_{2}}^{D_{2}-D_{J^{\prime}}}(\Theta_{D_{2}-D_{J^{\prime}}}))) \\
& =& \frac{-(q_{v}-1)(-1)^{|J^{\prime}|}}{\alpha_{v}^{2} \alpha_{D_{2}} \alpha^{J^{\prime}}} V_{D_{2}}^{D_{2}-D_{J^{\prime}}}(\Theta_{D_{2}-D_{J^{\prime}}}).
\end{array}
$$

Then we write
$$
Z_{D_{2}}^{D_{1}}(\tilde{\mathscr{L}}_{D_{1}})=\sum_{\emptyset \subseteq J^{\prime} \subseteq T_{o}^{\prime}} \mathrm{F}_{J^{\prime}} V_{D_{2}}^{D_{2}-D_{J^{\prime}}}(\Theta_{D_{2}-D_{J^{\prime}}}),
$$
where
$$
\begin{array}{rcl}
\mathrm{F}_{J^{\prime}} & = & \frac{(-1)^{|J^{\prime}|}}{\alpha_{D_{2}} \alpha^{J^{\prime}}} 
\cdot (\frac{\lambda_{v}-[v]_{D_{2}}-[v]_{D_{2}}^{-1}}{\alpha_{v}}-\frac{q_{v}-1}{\alpha_v^2}) \\
& =& \frac{(-1)^{|J^{\prime}|}}{\alpha_{D_{2}} \alpha^{J^{\prime}}} \cdot(1+\alpha_{v}^{-2} q_{v}-\alpha_{v}^{-1}[v]_{D_{2}}-\alpha_{v}^{-1}[v]_{D_{2}}^{-1}-\alpha_{v}^{-2} q_{v}+\alpha_{v}^{-2}) \\
& = & \frac{(-1)^{|J^{\prime}|}}{\alpha_{D_{2}} \alpha^{J^{\prime}}}(1-\alpha_{v}^{-1}[v]_{D_{2}})(1-\alpha_{v}^{-1}[v]_{D_{2}}^{-1}).
\end{array}
$$
\end{proof} 
By Lemma \ref{l:thetacomp}, if $\mathrm{Supp}(D_{1})=\mathrm{Supp}(D_{2})$, then $Z_{D_{2}}^{D_{1}}(\tilde{\mathscr{L}}_{D_{1}})=\tilde{\mathscr{L}}_{D_{2}}$, so define
\begin{equation}\label{e:tildeLT}
\tilde{\mathscr{L}}_{A, T}:=\varprojlim_{\mathrm{Supp}(D)=T} \tilde{\mathscr{L}}_{D}
\end{equation}
as an element of $\frac{1}{p^\aleph}\tilde{\Lambda}_{T}\subset \Q_{p} \cdot\tilde{\Lambda}_{T}$.

\subsubsection{The interpolation formula for $\tilde{\mathscr L}_{A,T}$}\label{ss:inttildeL}
For a continuous character $\omega: \tilde{W}_{T} \longrightarrow \mu_{p^\infty}$, define
$$
\Xi_{T, \omega}=\prod_{\substack{v \in T_{m} \\ v \notin \mathrm{Supp}(D_{\omega})}}(\lambda_{v}-\omega([v]_{D_{\omega}})^{-1}) \prod_{\substack{v \in T_{o} \\ v \notin \mathrm{Supp}(D_{\omega})}}(1-\alpha_{v}^{-1} \omega([v]_{D_{\omega}}))(1-\alpha_{v}^{-1} \omega([v]_{D_{\omega}})^{-1}).
$$
It follows that if $T=\emptyset$, then $\alpha_{D_{\omega}}=\tau_{\omega}=\Xi_{T, \omega}=1$.

\begin{lemma}\label{l:tildeL}
For each continuous character $\omega: \tilde{W}_{T} \longrightarrow \mu_{p^\infty}$,
$$
\omega(\tilde{\mathscr{L}}_{A, T})=\alpha_{D_{\omega}}^{-1} \cdot \tau_{\omega} \cdot q^{\frac{\operatorname{deg}(\Delta)}{12}+\kappa-1} \cdot \Xi_{T, \omega} \cdot L_{A}(\omega, 1).
$$
\end{lemma}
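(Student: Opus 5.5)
The evaluation $\omega(\tilde{\mathscr L}_{A,T})$ reduces to a single finite level. Choose an effective divisor $D$ with $\mathrm{Supp}(D)=T$ and $D\succeq D_\omega$, for instance $D=D_\omega+D_T$. Since $\omega$ factors through $\tilde W_D$, we have $\omega(\tilde{\mathscr L}_{A,T})=\omega(\tilde{\mathscr L}_D)$. The plan is to pass from $D$ down to the conductor using Lemma \ref{l:thetacomp}, and then evaluate at the conductor using Theorem \ref{t:theta}(e).

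Put $D_2:=D_\omega$, which satisfies $D\succeq D_2\succeq 0$ and has support $T^{(2)}=\mathrm{Supp}(D_\omega)\subset T$. Since $\omega$ factors through $W_{D_\omega}$, we get $\omega(\tilde{\mathscr L}_D)=\omega(Z^{D}_{D_\omega}\tilde{\mathscr L}_D)$. By Lemma \ref{l:thetacomp} this equals $\omega(\tilde{\mathscr L}_{D_\omega})$ times the $\omega$-image of
$$\prod_{v\in T_m\setminus \mathrm{Supp}(D_\omega)}(\lambda_v-[v]_{D_\omega}^{-1})\prod_{v\in T_o\setminus\mathrm{Supp}(D_\omega)}(1-\alpha_v^{-1}[v]_{D_\omega})(1-\alpha_v^{-1}[v]_{D_\omega}^{-1}).$$
This factor is precisely $\Xi_{T,\omega}$. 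It therefore remains to show
$$\omega(\tilde{\mathscr L}_{D_\omega})=\alpha_{D_\omega}^{-1}\tau_\omega q^{\frac{\deg(\Delta)}{12}+\kappa-1}L_A(\omega,1),$$
where $\alpha_{D_\omega}$ is computed relative to the set $\mathrm{Supp}(D_\omega)$; this is consistent with its definition, since $\alpha_D$ only involves places in $\mathrm{Supp}(D)$.

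Expand $\tilde{\mathscr L}_{D_\omega}$ by \eqref{e:tildeL}. The term with $J=\emptyset$ is $\alpha_{D_\omega}^{-1}\Theta_{D_\omega}$. Because $\omega$ is primitive on $W_{D_\omega}$, Theorem \ref{t:theta}(e) gives exactly the desired value for this term. We must then show that every term with $J\neq\emptyset$ is killed by $\omega$. For such $J$, the element $V^{D_\omega-D_J}_{D_\omega}(\Theta_{D_\omega-D_J})$ is a sum of fibre sums over the kernel of $W_{D_\omega}\to W_{D_\omega-D_J}$, up to the scalar $q-1$. Applying $\omega$ multiplies this by $\sum_{k\in\ker}\omega(k)$. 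Since $\omega$ has conductor exactly $D_\omega$, it is nontrivial on the kernel, so the sum vanishes. The case $D_\omega-D_J=0$ is handled the same way, the extra factor $q-1$ being harmless.

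The main obstacle is this last step: checking that $\omega$ is indeed nontrivial on $\ker(W_{D_\omega}\to W_{D_\omega-D_J})$. This is exactly the primitivity (conductor) statement, and one has to verify it carefully when $\ord_v(D_\omega)=1$, where the kernel involves $\O_v^*/(1+\pi_v\O_v)$ modulo global units. When $D_\omega-D_J=0$ the kernel is a quotient of $\prod_v\O_v^*/U$ by $\F_q^*$; $\omega$ is still nontrivial on it because $\omega$ is trivial on $K^*$ but ramified at $v$. A minor secondary point is the case $T=\emptyset$ or $D_\omega=0$, where $\tilde W_T$ corresponds to $D=0$. In that case the formula is exactly Theorem \ref{t:theta}(e) with $\alpha_{D_\omega}=\tau_\omega=\Xi_{T,\omega}=1$; when $\omega$ is unramified but $T\neq\emptyset$, the same descent to $D_2=0$ applies.
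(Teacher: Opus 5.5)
Your proposal is correct and is essentially the paper's proof. Both arguments have the same steps:
\begin{itemize}
\item descend via Lemma \ref{l:thetacomp} to the conductor level; you do this at a finite level $D\to D_\omega$, while the paper applies $Z^T_{\mathrm{Supp}(D_\omega)}$ to the limit;
\item pick up the factor $\Xi_{T,\omega}$ from that descent;
\item use Theorem \ref{t:theta}(e) for the $J=\emptyset$ term;
\item kill each $J\neq\emptyset$ term, because $V^{D_\omega-D_J}_{D_\omega}(\Theta_{D_\omega-D_J})$ is divisible by the sum over $G=\ker(W_{D_\omega}\to W_{D_\omega-D_J})$.
\end{itemize}
The fact that $\omega$ is nontrivial on $G$ just says $\omega$ does not factor through $W_{D_\omega-D_J}$, which is immediate from the minimality of the conductor. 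So your ``main obstacle'' needs no case analysis. One small correction to your aside on $D_\omega=0$: there $\tau_\omega=\omega(b^{-1})=\omega(a^{-1})$, which need not equal $1$ (compare \eqref{e:taus} and the proof of Theorem \ref{t:constantfield}). This is harmless, because Theorem \ref{t:theta}(e) already carries the factor $\tau_\omega$.
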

\begin{proof}
By Lemma \ref{l:thetacomp}, after applying the natural projection $Z^T_{\mathrm{Supp}(D_\omega)}$ to
\(\tilde\Lambda_{\mathrm{Supp}(D_\omega)}\), one has
\[
Z_{\mathrm{Supp}(D_\omega)}^T(\tilde{\mathscr L}_{A,T})
=
\Xi_{T,\omega}\cdot
\tilde{\mathscr L}_{A,\mathrm{Supp}(D_\omega)}.
\]
It is therefore enough to treat the case
\(\mathrm{Supp}(D_\omega)=T\).
Then by Lemma \ref{l:thetacomp} again, $\omega(\tilde{\mathscr{L}}_{A,T})=\omega(\tilde{\mathscr{L}}_{D_\omega})$. In view of Theorem \ref{t:theta}(e) and the definition \eqref{e:tildeL}, we need to show that $\omega(V^{D_\omega}_{D_\omega-D_J}(\Theta_{D_\omega-DJ}))=0$, for all non-empty $J\subset T_o$. Now, let $G$ denote the kernel of the projection $W_{D_\omega}\longrightarrow W_{D_\omega-D_J}$. Then in 
$\frac{1}{p^\aleph}\Z[[W_{D_\omega}]]$, the element $V^{D_\omega}_{D_\omega-D_J}(\Theta_{D_\omega-D_J})$ is divisible by $\mathsf e:=\sum_{g\in G} g$, while since $D_\omega$ is the conductor of $\omega$, there must be some $g$
in the group $G$ such that $\omega(g)\not=1$, hence $\omega(\mathsf e)=0$.
\end{proof}

\subsection{The $p$-adic $L$-function $\hat{\mathscr L}_{A/L}$}\label{su:hatL}
For an intermediate (not necessary multiple $\Z_p$) extension $L^{\prime} / K$ of $L / K$ with ramification locus $S^{\prime}$, define, as an element of $\Q_p\Lambda'$,
$$
\hat{\mathscr{L}}_{A / L'}= \begin{cases}
p_{L'}(\tilde{\mathscr{L}}_{A, S'}), & \text { if } L' \neq K; \\ 
q^{\frac{\deg(\Delta)}{12}+\kappa-1} \cdot L_{A}(\omega_{0}, 1), & \text { if } L'=K.
\end{cases}
$$
Recall that $p_{L'}$ is the homomorphism $\Q_p\tilde\Lambda_{S'}\longrightarrow \Q_p\Lambda'$ defined in \S\ref{su:notation}.

\subsubsection{The specialization and the interpolation formulae}\label{ss:spinthat}
The specialization formula
\begin{equation}\label{e:spechat}
p_{L^{\prime}}^{L}(\hat{\mathscr{L}}_{A / L})=\prod_{v \in S_{m} \backslash S_{m}^{\prime}}(\lambda_{v}-[v]_{L^{\prime}}^{-1}) \cdot \prod_{v \in S_{o} \backslash S_{o}^{\prime}}(1-\alpha_{v}^{-1}[v]_{L^{\prime}})
(1-\alpha_{v}^{-1}[v]_{L^{\prime}}^{-1}) \cdot \hat{\mathscr{L}}_{A / L^{\prime}}.
\end{equation}
is a direct consequence of Theorem \ref{t:theta}(e) and Lemma \ref{l:thetacomp}. The interpolation formula
\begin{equation}\label{e:inthat}
\omega(\hat{\mathscr{L}}_{A / L})=\alpha_{D_{\omega}}^{-1} \cdot \tau_{\omega} \cdot q^{\frac{\operatorname{deg}(\Delta)}{12}+\kappa-1} \cdot \Xi_{S, \omega} \cdot L_{A}(\omega, 1) 
\end{equation}
follows from Lemma \ref{l:tildeL}.

\subsubsection{The factor $\dag_{A / L}$}\label{ss:gimel}
Recall that $m_v$ denote the order of the components group of $A/K_v$,  and by \eqref{e:cv},
at a non-split multiplicative place $m_v=1$ or $2$ .

Let $S_{1} \subset S$ denote the set consisting of $v \in S$ at which $\Gamma_{v} \simeq \mathbb{Z}_{p}$. For such $v$, we fix a topological generator $\sigma_{v}$ of $\Gamma_{v}$.

\begin{definition}\label{d:gimel} For each place $v$ of $K$, define the element 
$\dag_{A/L, v} \in \Lambda$ as follows:

\begin{enumerate}
\item $\dag_{A / L, v}=1$, if $v \notin S$ and $\Gamma_{v}$ is non-trivial.

\item $\dag_{A / L, v}=m_v$, if $v \notin S$ and $\Gamma_{v}$ is trivial.

\item $\dag_{A / L, v}=\lambda_{v}-\sigma_{v}$, if $v \in S_{1}$ and $v$ is a split multiplicative place, or $v$ is a non-split multiplicative place in $S_{1}$  and $\F_{q_v^2}\subset L_v$.

\item $\dag_{A / L, v}=1$, for other $v \in S$.
\end{enumerate}
Then define
$$
\dag_{A / L}=\prod_{\text {all } v} \dag_{A / L, v}.
$$
\end{definition}

The ideal $(\dag_{A / L, v}) \subset \Lambda$ is independent of the choice of $\sigma_{v}$. 
Also, in (3), $\dag_{A / L, v}=-1-\sigma_{v}$ only occurs when $p=2$. We shall write $\dag_{A / L / K}$ for $\dag_{A / L}$, if necessary.

\begin{lemma}\label{l:thetav}
In $\mathbb{Q}_{p} \Lambda$, the element $\hat{\mathscr{L}}_{A / L}$ is divisible by $\dag_{A / L, v}$.
\end{lemma}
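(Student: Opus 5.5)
Only cases (2) and (3) of Definition \ref{d:gimel} need work, since $\dag_{A/L,v}$ is $1$ in cases (1) and (4).

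\textbf{Case (2).} Here $v\notin S$ and $\Gamma_v=0$, so $\dag_{A/L,v}=m_v$ is a nonzero integer. It is a unit in $\Q_p\Lambda$, and divisibility is trivial.

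\textbf{Case (3).} This is the real content. Write $\sigma:=\sigma_v$. The case splits into two situations:
\begin{itemize}
\item $v\in S_1$ is split multiplicative, and $\dag_{A/L,v}=1-\sigma$;
\item $v\in S_1$ is non-split multiplicative with $\F_{q_v^2}\subset L_v$, so $p=2$ and $\dag_{A/L,v}=-1-\sigma$.
\end{itemize}
Since $\Gamma_v\simeq\Z_p$, the element $\lambda_v-\sigma$ is irreducible in a suitable sense, and divisibility can be tested through the quotient map
$$\Q_p\Lambda\longrightarrow \Q_p\Lambda/(\lambda_v-\sigma)\cong \Q_p\Z_p[[\Gamma/\Gamma_v]]\text{-type ring},$$
which sends $\sigma\mapsto\lambda_v$.

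Concretely, I would work at finite level. The element $\tilde{\mathscr L}_{A,S}$ is the inverse limit of $\tilde{\mathscr L}_D$ with $\mathrm{Supp}(D)=S$, so it suffices to show that each image of $\tilde{\mathscr L}_D$ in $\Q_p[\Gamma_n]$ lies in the ideal generated by $\lambda_v-\sigma$, uniformly in $n$. Here $\Gamma_n$ runs over the finite quotients of $\Gamma$. Since $\Q_p\Lambda$ is the limit of the $\Q_p[\Gamma_n]$ with bounded denominators, this closes up to divisibility in $\Q_p\Lambda$.

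At finite level I would check divisibility character by character. The ring $\Q_p[\Gamma_n]\otimes\overline{\Q}_p$ is a product of fields indexed by the characters $\omega$. When $\lambda_v=\pm1$ is a root of unity of order prime to $p$ (or equals $-1$ with $p=2$), the element $\lambda_v-\sigma$ vanishes exactly at those $\omega$ with $\omega(\sigma)=\lambda_v$. Since $\omega$ takes values in $\mu_{p^\infty}$, this means $\omega(\sigma)=1$ in the split case and $\omega(\sigma)=-1$ with $p=2$ in the non-split case. For such $\omega$, the image of $\tilde{\mathscr L}_D$ is given by the interpolation formula \eqref{e:inthat}.

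\begin{itemize}
\item \emph{Split case.} If $\omega(\sigma)=1$, then $\omega$ is trivial on the inertia group at $v$ (as $\sigma$ generates $\Gamma_v\supset$ inertia), so $v\notin\mathrm{Supp}(D_\omega)$. Also $\omega([v]_{D_\omega})=\omega(\sigma)=1$, since $\Gamma_v$ is the decomposition group and is generated by $\sigma$. The factor $\Xi_{S,\omega}$ then contains $\lambda_v-\omega([v])^{-1}=1-1=0$, so $\omega(\hat{\mathscr L}_{A/L})=0$.
\item \emph{Non-split case.} If $\omega(\sigma)=-1$, the factor is $-1-\omega([v])^{-1}=-1+1=0$, again giving vanishing.
\end{itemize}

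\textbf{Main obstacle.} The subtle point is the passage from vanishing at characters to divisibility in $\Q_p\Lambda$. I would argue as follows. The element $\lambda_v-\sigma$ is a non-zero-divisor, and $\Q_p\Lambda/(\lambda_v-\sigma)$ is the $\Q_p$-tensored Iwasawa algebra of $\Gamma/\Gamma_v$ (twisted by $\sigma\mapsto\lambda_v$), whose elements are detected by characters. Hence $\hat{\mathscr L}_{A/L}$ maps to an element of $\Q_p\Z_p[[\Gamma/\Gamma_v]]$ that vanishes at all characters of $\Gamma/\Gamma_v$, so it is zero. In the $p=2$ non-split case, one first twists by the character $\sigma\mapsto -1$ to reduce to the same situation.

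The delicate part is justifying that $[v]_{D_\omega}$ maps to $\sigma$, that is, matching the Frobenius with the decomposition generator when $v$ is unramified in the field cut out by $\omega$. Alternatively, I would derive the vanishing directly from the specialization formula \eqref{e:spechat} applied to $L'=L^{\Gamma_v}$-type quotients, using the factor $\lambda_v-[v]_{L'}^{-1}$ appearing in \eqref{e:spechat}.
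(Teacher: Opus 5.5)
Your argument is correct, but it goes through the interpolation formula \eqref{e:inthat} together with a ``characters detect elements'' step. The paper instead works entirely at the group-ring level with the specialization formula \eqref{e:spechat}, which is the alternative you mention in your last sentence. In the split case the paper takes $L'=L^{\Gamma_v}$. There $v$ splits completely in $L'$, so the factor $\lambda_v-[v]_{L'}^{-1}=1-1$ in \eqref{e:spechat} kills $p^L_{L'}(\hat{\mathscr L}_{A/L})$, and $\ker(p^L_{L'})=(1-\sigma_v)$. In the non-split case it takes $L''$ to be the fixed field of $\sigma_v^2$. There $v$ is unramified with Frobenius the image $\bar\sigma_v$ of $\sigma_v$, of order $2$, so the factor is $-1-\bar\sigma_v^{-1}=-(1+\bar\sigma_v)$. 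Since $\Lambda\to\Lambda/(1+\sigma_v)$ factors through $p^L_{L''}$, the image of $\hat{\mathscr L}_{A/L}$ there is $0$. Both \eqref{e:spechat} and \eqref{e:inthat} come from Lemma \ref{l:thetacomp}, so the two routes carry the same information; the paper's route just avoids the extra bookkeeping below.

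If you keep your route, there are three points to spell out.

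First, in the non-split case you must actually use the hypothesis $\F_{q_v^2}\subset L_v$. Since $\Gal(L_v/K_v)=\Gamma_v\simeq\Z_2$, the unique quadratic subextension of $L_v/K_v$ is the unramified $\tilde K_v$, so the inertia group lies in $\overline{\langle\sigma_v^2\rangle}$. Hence $\omega(\sigma_v)=-1$ forces $v\notin\mathrm{Supp}(D_\omega)$ and $\omega([v]_{D_\omega})=\omega(\sigma_v)=-1$. In the split case, the point is not that $[v]_{D_\omega}=\sigma_v$, but that $[v]_{D_\omega}$ lies in $\Gamma_v$, on which $\omega$ is trivial.

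Second, $\Q_2\Lambda/(1+\sigma_v)$ is not literally a twisted copy of $\Q_2\Z_2[[\Gamma/\Gamma_v]]$ when $\sigma_v=\tau^{2^m}$ with $m\geq1$. For example, with $d=1$ and $m=1$ it is $\Q_2(\sqrt{-1})$ rather than $\Q_2\times\Q_2$. A character $\chi$ with $\chi(\sigma_v)=-1$ then takes values in $\O=\Z_2[\zeta_{2^{m+1}}]$, so your twist must be done in $\O[[\Gamma]]$. You then need to descend back to $\Q_2\Lambda$, which works because $\O[[\Gamma]]$ is free over $\Lambda$ and $1+\sigma_v$ is a non-zero-divisor.

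Third, drop the finite-level paragraph. In $\Q_p[\Gamma_n]$ the element $1-\sigma_v$ is a zero-divisor, so ``uniform'' quotients there are awkward, and your quotient-ring argument makes that step unnecessary anyway.
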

\begin{proof}
We may assume that $v \in S_{1}$ so that $\Gamma_{v}$ is topologically generated by $\sigma_{v}$. Denote $L^{\prime}=L^{\Gamma_{v}}$. Since $v$ splits completely over $L^{\prime}$, we have $1-[v]_{L^{\prime}}=0$. Thus, if $A$ has split multiplicative reduction at $v$, then by \eqref{e:spechat}, $p^L_{L^{\prime}}(\hat{\mathscr{L}}_{A / L})=0$. In $\mathbb{Q}_{p} \Lambda$, the ideal $\ker(p_{L^{\prime}}^{L})=(1-\sigma_{v})=(\dag_{A / L, v})$, so the lemma is proved.

Suppose $A$ has non-split multiplicative reduction at $v$ and $(\dag_{A / L, v})=(-1-\sigma_{v})$. 
Let $L^{\prime \prime}$ denote the fixed field of $\sigma_{v}^{2}$ and write $\Gamma^{\prime \prime}:=\Gal(L^{\prime \prime} / K)$. Definition \ref{d:gimel}(3) says $L^{\prime \prime} / K$ is 
unramified at $v$ and the image of $\sigma_{v}$ under the projection $\Gamma \longrightarrow \Gamma^{\prime \prime}$ is the Frobenius element $[v]_{L^{\prime \prime}}$. 
Identify $p^L_{L''}$ with $\Lambda\longrightarrow \Lambda/(\sigma_v^2-1)$, to see that
$\mathrm q: \Lambda \longrightarrow \Lambda /(\sigma_{v}+1)$ factors through $p_{L^{\prime \prime}}^{L}$.
The specialization formula \eqref{e:spechat} says $\mathrm{q}(\hat{\mathscr{L}}_{A / L})=0$, and hence $\hat{\mathscr{L}}_{A / L} \in(1+\sigma_{v})$.
\end{proof}

\begin{lemma}\label{l:cased}
Suppose $v_{0} \notin S$ is a given place of $K$. There is a $\mathbb{Z}_{p}^{f}$-extension $\tilde{L} / K$, for some $f$, with the Galois group $\tilde{\Gamma}$, satisfying the following:
\begin{enumerate}
\item $\tilde{L} / K$ is unramified outside $S \cup \{v_{0} \}$ and $L \subset \tilde{L}$.
\item For each $v \in S$, $\tilde{\Gamma}_{v} \cap \operatorname{Gal}(\tilde{L} / L)=\{i d\}$, and hence the natural projection $\tilde{\Gamma}_{v} \longrightarrow \Gamma_{v}$ is an isomorphism.
\item The homomorphism $\prod_{v \in S} \tilde{\Gamma}_{v}\longrightarrow \tilde{\Gamma}$ induced by $\tilde{\Gamma}_{v} \longrightarrow \tilde{\Gamma}$ is injective and its image forms a direct summand of $\tilde{\Gamma}$.
\end{enumerate}
\end{lemma}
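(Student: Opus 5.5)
We will construct $\tilde L$ by class field theory, working with the Galois group $\tilde W_{S\cup\{v_0\}}$ of the maximal abelian extension of $K$ unramified outside $S\cup\{v_0\}$. Let $\mathcal G$ denote its maximal $\Z_p$-free pro-$p$ quotient, modulo torsion. For each $v\in S$ the local reciprocity map sends $\O_v^*$, or rather its principal units $U_v^{(1)}=1+\pi_v\O_v$, which are a free $\Z_p$-module of infinite rank, onto the inertia group $I_v\subset\mathcal G$. Since $v$ ramifies in $L$ or is unramified, $\Gamma_v$ is generated by the image of inertia together with a Frobenius.

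The idea is to add the place $v_0$ so that the global units no longer obstruct the local inertia groups. By the idelic description,
\[
\tilde W_{S\cup\{v_0\}}=K^*\backslash\mathbb A_K^*\Big/\prod_{v\notin S\cup\{v_0\}}\O_v^*,
\]
and the image of $\prod_{v\in S}K_v^*$ meets the global group only through the $(S\cup\{v_0\})$-units $\O^*_{S\cup\{v_0\}}$. These are finitely generated, of rank $|S|$, and embed diagonally. Because $K_{v_0}^*$ is now available to absorb the $v_0$-component, the map from $\prod_{v\in S}(K_v^*)^{\wedge p}$ to $\mathcal G$ has kernel equal to the closure of the image of the $S\cup\{v_0\}$-units. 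Choosing the $\Z_p$-span of the images of suitable elements appropriately, we can arrange that the decomposition groups are independent.

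The key steps, in order, are as follows.

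\begin{enumerate}
\item Take $\tilde\Gamma$ to be the maximal $\Z_p$-free quotient of a finitely generated quotient of $\mathcal G$. This quotient is chosen to contain $\Gamma$ as a quotient, so that $L\subset\tilde L$; this gives (1).
\item For each $v\in S$, lift the generators of $\Gamma_v$ (a $\Z_p$-module of rank at most $2$) to elements $g_v\in K_v^*$ or its pro-$p$ completion. This uses the surjection $K_v^*\to\Gamma_v$.
\item Choose $\tilde\Gamma$ so that the subgroup $\tilde\Gamma_v$ is exactly the $\Z_p$-span of the images of $g_v$ and maps isomorphically onto $\Gamma_v$, which gives (2). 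To do this, define $\tilde\Gamma$ as the quotient of $\mathcal G$ by a closed subgroup $H$. Here $H$ contains the image of $\ker(K_v^*\to\Gamma_v)$ for each $v\in S$, the images of the other unramified places' data, and suitable elements, while $H$ still surjects appropriately.
\item Prove (3): the images of $\tilde\Gamma_v$ for $v\in S$ should be independent and should span a direct summand.
\end{enumerate}

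The main obstacle is step (4). The independence of the $\tilde\Gamma_v$ fails over $K$ unramified outside $S$ alone, because global units and the finiteness of the class group tie the local components together. Introducing $v_0$ removes this: the kernel of $\prod_{v\in S}K_v^*\to\mathcal G$ is the closure of the diagonal image of $\O^*_{S\cup\{v_0\}}$. My first attempt would use weak approximation and the freedom at $v_0$ to show that the $\Z_p$-span of the $g_v$ meets this closure trivially, after possibly replacing the $g_v$ by $p$-power multiples adjusted at $v_0$. Saturating (dividing out torsion) then gives a direct summand. If direct saturation changes $\tilde\Gamma_v$, I would instead enlarge $\tilde\Gamma$ by taking the full $\Z_p$-free quotient of $\mathcal G$ modulo $H$. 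Since $\mathcal G$ has infinite rank, I would keep a finite-rank quotient by projecting onto the $\Z_p$-span of the $\tilde\Gamma_v$ plus $\Gamma$, using that $\Z_p$-free finitely generated quotients split.
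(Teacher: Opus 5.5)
There is a genuine gap, and it sits at your step (4), which is the whole content of the lemma. Your description of the kernel is not correct. The diagonal $(S\cup\{v_0\})$-units do not lie in $\prod_{v\in S}K_v^*$. Moreover, an element $(a_v)_{v\in S}$ dies in $K^*\backslash\mathbb A_K^*/\prod_{v\notin S\cup\{v_0\}}\O_v^*$ only if some $\eta\in K^*$ equals $a_v$ at each $v\in S$ and equals $1$ at $v_0$, which forces $\eta=1$. So $v_0$ does not ``absorb'' anything. Its role is the opposite: because nothing at $v_0$ is divided out, any global element witnessing a relation among the $S$-components is pinned down at $v_0$.

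The real issue appears only after pro-$p$ completion, and after dividing by $\N_v=\Nm_{L_v/K_v}(L_v^*)$ at $v\in S$ (which you need for (2)). It is a purity statement. Put $\N_S=\prod_{v\in S}\N_v\times\{1\}_{v_0}\times\prod_{v\notin S\cup\{v_0\}}\O_v^*$ and let $Y$ be the pro-$p$ completion of $K^*\backslash\mathbb A_K^*/\N_S$. The statement is: if $x^{p^m}=\prod_{v\in S}y_v$ with $y_v\in Y_v$, then each $y_v\in Y_v^{p^m}$.

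The paper proves this by writing the relation in ideles modulo $K^*\N_S(\mathbb A_K^*)^{p^n}$, $n\ge m$. The $v_0$-component shows that the global correction term $\eta\in K^*$ is a $p^m$-th power in $K_{v_0}^*$. The characteristic-$p$ fact $K^*\cap(K_{v_0}^*)^{p^m}=(K^*)^{p^m}$ (the paper cites Kisilevsky's local Leopoldt result) then makes $\eta$ a global $p^m$-th power, so $\beta_v\in(K_v^*)^{p^m}\N_v$ at each $v\in S$. This one claim gives both $G_S=\prod_{v\in S}G_v$ and the torsion-freeness of $G/G_S$, where $G=Y/Y_{\mathrm{tors}}$. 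Weak approximation gives no handle on it. The input needed is a rigidity of $p$-power divisibility special to characteristic $p$; over a number field an element can be a $p$-th power in one completion without being a global $p$-th power.

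This purity cannot be bypassed by clever choices, so your proposed fixes fail. Any $\tilde L$ satisfying (1) and (2) has $\tilde\Gamma$ a quotient of $G$, since (2) forces $\N_v$ to die in $\tilde\Gamma_v$. Also, the composite $\Gamma_v\to G_v\to\tilde\Gamma_v\to\Gamma_v$ is the identity, so $G_S$ injects into $\tilde\Gamma$. Consequently a relation among the $G_v$, or an element $y\in G_S\cap pG$ with $y\notin pG_S$, survives in every admissible $\tilde\Gamma$ and breaks (3).

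In particular, replacing the $g_v$ by $p$-power multiples changes nothing. $\tilde\Gamma_v$ is the decomposition group of the field $\tilde L$, not the span of chosen generators, and (2) pins it down as $\Gamma_v$. ``Adjusting at $v_0$'' would in any case take you out of $K_v^*$.

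Two parts of your outline are sound. Killing $\ker(K_v^*\to\Gamma_v)$ at each $v\in S$, and nothing at $v_0$, gives (2) automatically. Your last step, projecting onto the span of the $\tilde\Gamma_v$ together with $\Gamma$, is the paper's $\tilde\Gamma=G/(G'\cap H)$ for a splitting $G=G_S\times G'$. But that splitting exists only once purity is proved.

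A minor slip: in characteristic $p$, $\Gamma_v$ need not have rank at most $2$, since wild ramification gives local $\Z_p$-extensions of infinite rank.
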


\begin{proof} For each $v \in S$, write $\N_{v}=\Nm_{L_{v} / K_{v}}(L_{v}^{*})$ so that  by the class field theory $K_{v}^{*} / \N_{v}$ is identified with a dense open subgroup of $\Gamma_{v}$. Put
$$
\N_{S}:=\prod_{v \in S} \N_{v} \times \prod_{v=v_{0}}\{1\} \times \prod_{v \notin S \cup \{v_{0}\}} \mathcal{O}_{v}^{*} \subset \mathbb{A}_{K}^{*} .
$$

Let $Y$ and $Z_{v_{0}}$ denote respectively the pro-$p$ completion of $K^{*} \backslash \mathbb{A}_{K}^{*} / \N_{S}$ and $\mathcal{O}_{v_{0}}^{*}$. Then $Y / Z_{v_{0}}$ is finitely generated over $\mathbb{Z}_{p}$, while $Z_{v_{0}}$ is isomorphic to $\mathbb{Z}_{p}^{\infty}$, the direct product of countably infinite copies of $\mathbb{Z}_{p}$ \cite{we74}. Therefore, we have $Y \simeq \mathbb{Z}_{p}^{\infty} \times T$, where $T$ is a finite $p$-group. Our $\Gamma$ is a quotient of $G:=Y / T \simeq \mathbb{Z}_{p}^{\infty}$. Let $Y_{v}$ denote the topological closure of the image of the composition $K_{v}^{*} \longrightarrow \mathbb{A}_{K}^{*} \longrightarrow Y$ and let $Y_{S} \subset Y$ be the closed subgroup generated by $Y_{v}, v \in S$. Let $G_{v}$ and $G_{S}$ denote respectively the image of $Y_{v}$ and $Y_{S}$ in $G$. Each $G_v$, $v\in S$ is a free $\Z_p$-module of finite rank. 

We claim that if $x \in Y$ and $y_{v} \in Y_{v}$, $v \in S$, are such that $x^{p^m}=\prod_{v \in S} y_{v}$, for some $m$, then for 
each $v$, there exists $z_{v} \in Y_{v}$ such that $y_{v}=z_v^{p^m}$.
The claim implies that $G_{S}$ is the direct product of $G_{v}, v \in S$, and $G/G_S$ is torsion free.
Choose a $\Z_p$-basis $\mathfrak A
=\{\mathfrak v_j\mid j\in \mathfrak a\}$ of $G_S$ so that $G_S=\oplus_{j\in \mathfrak a} \Z_p\cdot \mathfrak{v}_j$, and choose a direct product $\Z_p$-basis
$\mathfrak B=\{\mathfrak{e}_i\mid  i\in\mathfrak b \}$ of $G$ in the sense that every $g\in G$ can be uniquely expressed as the infinite sum: $\sum_{i\in\mathfrak b} a_i\mathfrak e_i$, $a_i\in\Z_p$. 
By substitution method in Linear Algebra, we can find a subset $\mathfrak B'\subset \mathfrak B$, such that  the union
$\mathfrak A\cup\mathfrak B'$ also form a a direct product $\Z_p$-basis of $G$. Let $G'\subset G$ be the $\Z_p$-submodule spanned by $\mathfrak B'$ in the direct product sense, so that
$G=G_{S} \times G^{\prime}$ and $G^{\prime} \simeq \mathbb{Z}_{p}^{\infty}$.

By the class field theory, $G$ is the Galois group of a pro-$p$ abelian extension $M / K$ unramified outside $S \cup\{v_{0}\}$ and $\Gamma$ can be identified with a quotient $G / H$ by some closed subgroup $H$. Set 
$H_{0}=G^{\prime} \cap H$, $\tilde{\Gamma}:=G / H_{0}=G_{S} \times G^{\prime} / H_{0}$, 
and let $\tilde{L}$ denote the fixed field $M^{H_{0}}$. Since $G^{\prime} / H_{0}$ can be embedded as a subgroup of $G / H=\Gamma$, it is finitely generated over $\mathbb{Z}_{p}$. Hence, $\tilde{L} / K$ is a $\mathbb{Z}_{p}^{f}$-extension for some finite $f$. Then the assertion (1) is proved, since $L \subset \tilde{L} \subset M$. The class filed theory also identifies each $G_{v}$ with the decomposition subgroup $\tilde{\Gamma}_{v}$, and hence (3) holds. The assertion (2) holds, because the composition
$$
\xymatrix{\Gamma_{v} \ar[r]^-\sim & \overline{K_{v}^{*} / \N_{v}} \ar[r]^-\sim & Y_{v} \ar[r] &  G_{v} 
\ar[r]^-\sim &  \tilde{\Gamma}_{v} \ar[r] & \Gamma_{v}}
$$
is the identity map.

Now we prove the claim. Let $\alpha=(\alpha_{v})_{v} \in \mathbb{A}_{K}^{*}$ represent $x$ and $
\beta_{v} \in K_{v}^{*}$, $v \in S$, represents $y_{v}$. Then for each positive integer $n$, there exist 
$\gamma^{(n)}=(\gamma_{v}^{(n)})_{v} \in \N_{S}$, $\eta^{(n)} \in K^{*}$, and $\iota^{(n)}=(\iota_{v}^{(n)})_{v} \in(\mathbb{A}_{K}^{*})^{p^{n}}$ such that in $\mathbb{A}_{K}^{*}$,

\begin{equation}\label{e:abc}
\prod_{v \in S} \beta_{v}=\eta^{(n)} \cdot \alpha^{p^m} \cdot \gamma^{(n)} \cdot \iota^{(n)}.
\end{equation}

If $n\geq m$, at $v_{0}$, we have $\eta^{(n)}=\alpha_{v_{0}}^{-p^m} \cdot (\iota_{v_{0}}^{(n)})^{-1} \in (K_{v_{0}}^{*})^{p^m}$. The local Leopoldt conjecture \cite{kis93} implies $\eta^{(n)} \in (K^{*})^{p^m}$. Then \eqref{e:abc} says $\beta_{v} \in (K_{v}^{*})^{p^m} \cdot N_{v}$, for each $v \in S$. Hence $y_{v} \in Y_{v}^{p^m}$.
\end{proof}

\begin{myproposition}\label{p:additional}
The element $\hat{\mathscr{L}}_{A / L}$ is divisible by $\dag_{A / L}$ in $\mathbb{Q}_{p} \Lambda$.
\end{myproposition}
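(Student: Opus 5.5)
We need to show that $\dag_{A/L}=\prod_v \dag_{A/L,v}$ divides $\hat{\mathscr L}_{A/L}$ in $\Q_p\Lambda$. Lemma \ref{l:thetav} already gives divisibility by each $\dag_{A/L,v}$ separately. The task is to combine these local divisibilities into divisibility by the product. We must also handle the factors of type (2), namely $m_v$ for $v\notin S$ with $\Gamma_v$ trivial. Since $m_v$ is a nonzero integer and hence a unit in $\Q_p\Lambda$, these factors, like those of types (1) and (4), are harmless. So only the factors $\lambda_v-\sigma_v$ for $v\in S_1$ of type (3) matter.

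Let $S_1^\dag\subset S_1$ be the set of places of type (3). The difficulty is that the elements $\lambda_v-\sigma_v$, $v\in S_1^\dag$, need not be pairwise coprime in $\Lambda$. For instance, two places may have the same decomposition group $\Gamma_v=\Gamma_{v'}$, so $(1-\sigma_v)$ divides $\hat{\mathscr L}$ twice over. A naive coprimality argument therefore fails, and this is exactly what Lemma \ref{l:cased} is designed for.

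\textbf{Step 1: enlarge $L$.} Choose an auxiliary place $v_0\notin S$ and let $\tilde L/K$ be the extension provided by Lemma \ref{l:cased}, with $\tilde\Gamma=\Gal(\tilde L/K)$. The ramification locus $\tilde S$ of $\tilde L$ is $S$ or $S\cup\{v_0\}$. We may take $v_0$ to be a good ordinary place, so the construction of \S\ref{su:tildeL} applies to $\tilde S$.

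By Lemma \ref{l:cased}(2), $\tilde\Gamma_v\cong\Gamma_v$ for $v\in S$. Let $\tilde\sigma_v\in\tilde\Gamma_v$ be the lift of $\sigma_v$. The type-(3) conditions, including whether $\F_{q_v^2}\subset\tilde L_v$, agree for $\tilde L$ and $L$ because the local extensions coincide. By Lemma \ref{l:cased}(3), the elements $\tilde\sigma_v$, $v\in S_1^\dag$, form part of a $\Z_p$-basis of $\tilde\Gamma$. Hence
\[
\Lambda_{\tilde\Gamma}\cong \Z_p[[t_v:v\in S_1^\dag]][[\text{other variables}]],\qquad t_v=\tilde\sigma_v-1,
\]
and the elements $\lambda_v-\tilde\sigma_v=(\lambda_v-1)-t_v$ involve distinct variables. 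Each is a prime element of this power series ring: it is a linear polynomial in its own variable, after inverting $p$ if needed when $\lambda_v=-1$ and $p=2$. These elements are pairwise non-associate. Working in $\Q_p\Lambda_{\tilde\Gamma}$, which is a UFD because $\Lambda_{\tilde\Gamma}$ is a regular local ring, we apply Lemma \ref{l:thetav} to $\tilde L$. This gives divisibility of $\hat{\mathscr L}_{A/\tilde L}$ by each $\lambda_v-\tilde\sigma_v$, and hence by their product $\prod_{v\in S_1^\dag}(\lambda_v-\tilde\sigma_v)$.

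\textbf{Step 2: descend to $L$.} Apply $p^{\tilde L}_L$. The specialization formula \eqref{e:spechat}, which relates $\hat{\mathscr L}_{A/\tilde L}$ to $\hat{\mathscr L}_{A/L}$, gives
\[
p^{\tilde L}_L(\hat{\mathscr L}_{A/\tilde L})=E\cdot\hat{\mathscr L}_{A/L},
\]
where $E$ is the Euler-type factor at $v_0$ if $v_0\in\tilde S\setminus S$, and $E=1$ otherwise. Meanwhile $p^{\tilde L}_L(\lambda_v-\tilde\sigma_v)=\lambda_v-\sigma_v$. So $\prod_{v\in S_1^\dag}(\lambda_v-\sigma_v)$ divides $E\cdot\hat{\mathscr L}_{A/L}$.

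\textbf{Step 3: remove $E$, the main obstacle.} Since $v_0$ is good ordinary, $E=(1-\alpha_{v_0}^{-1}[v_0]_L)(1-\alpha_{v_0}^{-1}[v_0]_L^{-1})$. Because $\alpha_{v_0}$ is not a root of unity and $\alpha_{v_0}\in\Z_p^*$, this element is not a zero divisor. It remains to show that $E$ is coprime to each $\lambda_v-\sigma_v$ in the UFD $\Q_p\Lambda$; then $E$ can be cancelled.

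I would choose $v_0$ via Chebotarev so that $[v_0]_L$ is $\Z_p$-independent of the subgroup generated by $\{\sigma_v: v\in S_1^\dag\}$, for example so that it generates a direct summand complementary to them. This is possible when that subgroup is not of finite index. In the remaining degenerate cases I would still argue coprimality by evaluating at characters $\omega$ that are trivial on the relevant $\sigma_v$: there $\omega(\lambda_v-\sigma_v)=0$ while $\omega(E)\neq0$, because $\alpha_{v_0}^{\pm1}$ is not a root of unity. This evaluation argument shows that no prime factor of $\lambda_v-\sigma_v$ divides $E$. If necessary, one can alternatively run Steps 1–2 with two auxiliary places $v_0,v_0'$ whose factors $E,E'$ are coprime, and intersect the resulting divisibilities.
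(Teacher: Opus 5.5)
Your Steps 1 and 2 follow the paper's proof. You enlarge $L$ to the field $\tilde L$ of Lemma \ref{l:cased} using a good ordinary $v_0\notin S$. You then use that the $\tilde\sigma_v$ extend to a $\Z_p$-basis of $\tilde\Gamma$, so the factors $\dag_{A/\tilde L,v}$ are pairwise coprime and Lemma \ref{l:thetav} gives divisibility by their product. Finally you specialize back via Lemma \ref{l:thetacomp}/\eqref{e:spechat}, picking up the factor $E$ at $v_0$. Step 3 also rests on the paper's key point: $\omega(E)\neq 0$ for every $\omega\in\hat\Gamma$, because $\alpha_{v_0}$ is not a root of unity.

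However, the inference you draw from this is incomplete. That $E$ has no zeros on $\hat\Gamma$ while $\lambda_v-\sigma_v$ has some does not show that no prime factor of $\lambda_v-\sigma_v$ divides $E$. For that you need \emph{every} prime factor of $\lambda_v-\sigma_v$ to vanish at some character. This matters here because in $L$, unlike in $\tilde L$, $\sigma_v$ need not be primitive, so $\lambda_v-\sigma_v$ genuinely factors.

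The paper supplies exactly this factorization. Write $\sigma_v=\tau^{p^m}$ with $\tau$ extendable to a $\Z_p$-basis of $\Gamma$. Over $\mathcal O=\Z_p[\zeta]$ one has $1-\sigma_v=\prod_i(1-\zeta^i\tau)$. Each factor is linear in $t=\tau-1$ with unit leading coefficient, hence prime in $\mathcal O[[\Gamma]]$. Each is killed by the character $\omega_i$ with $\omega_i(\tau)=\zeta^{-i}$, so none can divide $E$. For $p=2$ and $\lambda_v=-1$ one uses $-1-\sigma_v=-\prod_j(1-\zeta_j\tau)$, with $\zeta_j$ running over the primitive $2^{m+1}$-th roots of unity.

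Note also a sign slip: when $\lambda_v=-1$, the characters you need satisfy $\omega(\sigma_v)=-1$, not $\omega(\sigma_v)=1$ as you wrote. With the factorization in place, your evaluation argument is complete. The Chebotarev choice of $v_0$ and the two-auxiliary-place variant are then unnecessary.
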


\begin{proof} 
By Definition \ref{d:gimel}, the only non-unit $\dag_{A / L, v}$ are of the form $1-\sigma_{v}$ or $-1-\sigma_{v}$, and the corresponding $v$ are in $S_1$.

Choose a good ordinary place $v_{0}$ of $A / K$ with $v_0\not\in S$ and choose a $\mathbb{Z}_{p}^{f}$-extension $\tilde{L} / K$ satisfying the condition in Lemma \ref{l:cased} so that for each $v \in S$, we can write $\tilde{\Gamma}_{v}=\Gamma_{v}$. Then put $\tilde{\sigma}_{v}=\sigma_{v}$, for $v\in S_1$, and apply Definition \ref{d:gimel} to $\tilde{L} / K$. Because the set $\{\tilde{\sigma}_{v} \mid v \in S_1\}$ is extendable to a $\mathbb{Z}_{p}$-basis of $\tilde{\Gamma}$, for distinct $v, v^{\prime} \in S$, the corresponding $\dag_{A / \tilde{L},v}$ and $\dag_{A / \tilde{L}, v^{\prime}}$ are relatively prime to each other in $\tilde{\Lambda}:=\mathbb{Z}_{p}[[\tilde{\Gamma}]]$. In view of Lemma \ref{l:thetav}, we see that $\hat{\mathscr{L}}_{A / \tilde{L}}$ is divisible by $\dag_{A/\tilde L}$,
and hence $p^{\tilde{L}}_L(\hat{\mathscr{L}}_{A / \tilde{L}})$ is divisible by $\dag_{A / L}$. Now Lemma \ref{l:thetacomp} says
$$
p^{\tilde L}_L(\hat{\mathscr{L}}_{A / \tilde{L}})=(1-\alpha_{v_{0}}^{-1}[v_{0}]_{L / K})^{\epsilon} \cdot(1-\alpha_{v_{0}}^{-1}[v_{0}]_{L / K}^{-1})^{\epsilon} \cdot \hat{\mathscr{L}}_{A / L}
$$
where $\epsilon=1$ or 0 depending on if $\tilde{L} / K$ is ramified at $v_{0}$ or not. To complete the proof, we need to show that each $\dag_{A / L, v}$ is relatively prime to $1-\alpha_{v_{0}}^{-1}[v_{0}]_{L / K}^{ \pm 1}$. The main point is that since $\alpha_{v_{0}}$ is a Weil $q_{v_{0}}$-number, it is not a root of unity, and hence
\begin{equation}\label{e:not0}
\omega(1-\alpha_{v_{0}}^{-1}[v_{0}]_{L / K}^{ \pm 1}) \neq 0, \text { for all continuous character } \omega: \Gamma \longrightarrow \mu_{p^{\infty}}.
\end{equation}
By properly choosing $\sigma_{v}$, we can write $\sigma_{v}=\tau^{p^{m}}$, for some $\tau$ which is extendable to a basis of $\Gamma$. Suppose
$$
\dag_{A / L, v}=1-\sigma_{v}=\prod_{i=0}^{p^{m}-1}(1-\zeta^{i} \tau),
$$
where $\zeta_{i}$ is a generator of $\mu_{p^{m}}$. Write $t=\tau-1$ and $\mathcal{O}=\mathbb{Z}_{p}[\zeta]$. Then in $\mathcal{O}[[t]]$,
$$
1-\zeta^{i} \tau=-\zeta^{i}(t+(1-\zeta^{i}))
$$
is a linear polynomial, and hence irreducible in $\mathcal{O}[[\Gamma]]$. Also, there exists a continuous character $\omega_{i}: \Gamma \longrightarrow \mu_{p^{m}}$ sending $\tau$ to $\zeta^{-i}$ so that 
$\omega_{i}(1-\zeta^{i} \tau)=0$. By \eqref{e:not0}, the element $1-\alpha_{v_{0}}^{-1}[v_{0}]_{L / K}^{ \pm 1}$ is not divisible by any $1-\zeta^{i} \tau$, it is relatively prime to $\dag_{A / L, v}$.

The case $\dag_{A / L, v}=-1-\sigma_{v}$ only occurs when $p=2$, and we have
$$
-1-\sigma_{v}=-\prod_{j}(1-\zeta_{j} \tau)
$$
where $\zeta_{j}$ runs through generators of $\mu_{2^{m+1}}$. A similar argument
also leads to the conclusion that $1-\alpha_{v_{0}}^{-1}[v_{0}]_{L / K}^{ \pm 1}$ is relatively prime to $\dag_{A / L, v}$.
\end{proof}

\subsection{The $p$-adic $L$-function $\mathscr L_{A/L}$}\label{su:L} 
That $\hat{\mathscr L}_{A/L}$ needs to be modified to satisfy the desired specialization formula
results in the following definition of $\mathscr L_{A/L}$.


\begin{definition}\label{d:l}Define $\mathscr{L}_{A / L}=\mathrm{t}_{A / L} \cdot \nabla_{A / L} \cdot \dag_{A / L}^{-1} \cdot \hat{\mathscr{L}}_{A / L}$. If it is needed to refer to $K$ as well, we shall denote $\mathscr{L}_{A / L / K}:=\mathscr{L}_{A / L}$.
\end{definition}

Proposition \ref{p:additional} says that $\hat{\mathscr{L}}_{A / L}$ is divisible by $\dag_{A / L}$ in 
$\mathbb{Q}_{p} \Lambda$. Therefore,

$$
\mathscr{L}_{A / L} \in \mathbb{Q}_{p} \Lambda.
$$
The ideal $(\mathscr L_{A/L})$ is independent of the choices of $\sigma_v$, $v\in S_1$, so is the element
$\dag_{A/L}\cdot \mathscr L_{A/L}$.
 
 \subsection{Mazur-Tate-Teitelbaum conjecture}\label{su:mtt} An $\mathscr H\in\Q_p\Lambda_\Gamma$ is said to have the order of vanishing (at the identity) $r$ if $\mathscr H\in \Q_pI_\Gamma^r$
 and $\mathscr H\not\in \Q_pI_\Gamma^{r+1}$, in this case, the so called leading term means the class of
 $\mathscr H$ in $\Q_pI_\Gamma^{r}/\Q_pI_\Gamma^{r+1}$. The order of vanishing of $0$ is $\infty$.
In this subsection, we discuss the order of vanishing of $\hat{\mathscr L}_{A/L}$ as well as its leading term.   
 
 \subsubsection{The order of vanishing}\label{ss:vanishing}  

 Choose a set of topological generators $\sigma_1,...,\sigma_d$ of $\Gamma$ viewed as a multiplicative group
 and put
 $t_i=\sigma_i-1$, $i=1,...,d$. Then $\Lambda_\Gamma=\Z_p[[t_1,...,t_d]]$ and 
 $I_\Gamma=(t_1,...,t_d)$. 
Every $\mathscr H\in\Q_\cdotp\Lambda_\Gamma$ can be uniquely written as $\sum_{i=0}^\infty \mathscr H_i$, where $\mathscr H_i$ is a $\Q_p$-homogeneous polynomial in $t_1,...,t_d$ of degree $i$, so that the order of vanishing
of $\mathscr H$ equals $r$, if and only if $\mathscr H_i=0$, for all $i<r$ and $\mathscr H_r\not =0$. 
It is clear that $\mathscr H_i=0$ if and only if $\mathscr H_i(a_1,...,a_d)=0$, for all 
 $(a_1,...,a_d)\in \Z_p^d\setminus \{0\}$. 
 
Let $L'/K$ be an intermediate $\Z_p$-extension of $L/K$ and let $\sigma$ be a topological generator of $\Gamma'$ viewed as a multiplicative group. The ring homomorphism $\Q_p\Lambda_\Gamma\longrightarrow\Q_p\Lambda_{\Gamma'}$
sends $\sigma_i$ to some $\sigma^{a_i}$, for some $\in\Z_p$ such that $\mathrm{g.c.d.}(a_1,...,a_d)=1$, and conversely, every such $d$-tuple $(a_1,...,a_d)$ corresponds to a pair $(L',\sigma)$. Furthermore, if $t=\sigma-1$, then 
\begin{equation*}\label{e:hh'll'}
p^L_{L'}(\mathscr H_i)\equiv \mathscr H_i(a_1,...,a_d)\cdot t^i\pmod{I_{\Gamma'}^{i+1}}.
\end{equation*}
Thus, $p^L_{L'}(\mathscr H)\in I_{\Gamma'}^{r}$, if and only if $\mathscr H_i(a_1,...,a_d)=0$, for all $i<r$.
The lemma below is proven by induction on $r$.

 \begin{lemma}\label{l:augment} An $\mathscr H$ belongs to $\Q_p I_\Gamma^{r}$, if and only if
 $p^L_{L'}(\mathscr H)\in \Q_pI_{\Gamma'}^{r}$, for every intermediate $\Z_p$-extension
 $L'/K$ of $L/K$.  
 \end{lemma}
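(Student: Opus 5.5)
The plan is to reduce everything to homogeneous components, using the observation made just before the statement. Write $\mathscr H=\sum_{i\ge 0}\mathscr H_i$ with $\mathscr H_i$ a $\Q_p$-homogeneous polynomial of degree $i$ in $t_1,\dots,t_d$. The ``only if'' direction is immediate: $p^L_{L'}$ is a ring homomorphism sending $I_\Gamma$ into $I_{\Gamma'}$, so it sends $\Q_pI_\Gamma^r$ into $\Q_pI_{\Gamma'}^r$.

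For the converse, I will argue by induction on $r$ and prove that $\mathscr H_i=0$ for every $i<r$. The first thing to settle is the characterization $\Q_pI_\Gamma^r=\{\mathscr H:\mathscr H_i=0 \text{ for } i<r\}$. In $\Z_p[[t_1,\dots,t_d]]$ the ideal $I_\Gamma^r=(t_1,\dots,t_d)^r$ consists exactly of the power series with no monomials of degree $<r$. This holds because the monomials of degree exactly $r$ generate $I_\Gamma^r$, and any series with no terms of degree $<r$ can be split as a finite sum $\sum_{|m|=r}t^m g_m$. Multiplying by powers of $p$ extends the statement to $\Q_p\Lambda_\Gamma$, since an element of $\Q_p\Lambda_\Gamma$ is $p^{-k}$ times an element of $\Lambda_\Gamma$.

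The case $r=0$ is trivial. Suppose the claim holds for $r$, so that $\mathscr H\in\Q_pI_\Gamma^{r}$, and assume $p^L_{L'}(\mathscr H)\in\Q_pI_{\Gamma'}^{r+1}$ for all $L'$. Then $\mathscr H=\mathscr H_r+\mathscr H'$ with $\mathscr H'\in\Q_pI_\Gamma^{r+1}$. By the displayed congruence, $p^L_{L'}(\mathscr H)\equiv\mathscr H_r(a_1,\dots,a_d)\,t^r \pmod{I_{\Gamma'}^{r+1}}$. Since $\Q_p\Lambda_{\Gamma'}=\Q_p\otimes\Z_p[[t]]$ and $t^r\notin\Q_pI_{\Gamma'}^{r+1}$, this forces $\mathscr H_r(a_1,\dots,a_d)=0$ for every primitive tuple $(a_1,\dots,a_d)\in\Z_p^d$ with $\gcd=1$. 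Every such tuple arises from some pair $(L',\sigma)$, as noted before the statement.

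Next I extend this vanishing to all nonzero vectors and conclude $\mathscr H_r=0$. Any nonzero $(a_i)\in\Z_p^d$ equals $p^k$ times a primitive tuple, and $\mathscr H_r$ is homogeneous, so $\mathscr H_r$ vanishes on all of $\Z_p^d$. A polynomial over the infinite field $\Q_p$ that vanishes on the Zariski-dense set $\Z_p^d$ is zero. This gives $\mathscr H\in\Q_pI_\Gamma^{r+1}$ and completes the induction.

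The only step needing care is the identification of $\Q_pI_\Gamma^r$ with the series having vanishing components of degree $<r$. In particular, one must check that an element of $\Z_p[[t]]$ with no monomials of degree $<r$ lies in $(t_1,\dots,t_d)^r$, which follows from the finite decomposition by monomials described above. Everything else is formal.
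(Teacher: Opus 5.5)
Your proof is correct and follows the paper's route. Like the paper, you decompose $\mathscr H=\sum_i\mathscr H_i$ into homogeneous parts, use the congruence $p^L_{L'}(\mathscr H_i)\equiv\mathscr H_i(a_1,\dots,a_d)\,t^i$ with the correspondence between primitive tuples and pairs $(L',\sigma)$, and conclude by induction on $r$ using that a homogeneous form vanishing on $\Z_p^d\setminus\{0\}$ is zero; your step of first obtaining $\mathscr H\in\Q_pI_\Gamma^{r}$ from the induction hypothesis, before reading off the $t^r$-coefficient, is exactly why the induction is needed, since for a single $L'$ the higher-order terms of $p^L_{L'}(\mathscr H_i)$ with $i<r$ would otherwise contribute to that coefficient.
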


 Write $S_m=S_{sm}\sqcup S_{nm}$, where $S_{sm}$ is the subset consisting of split multiplicative places, and denote
 $s_L:=^\# S_{sm}$. 
 
 Let $L'/K$ be an intermediate $\Z_p$-extension of $L/K$. 
By \eqref{e:spechat}, for some $*\in \Q_p\Lambda_{\Gamma'}$,
\begin{equation*}\label{e:sll'}
p^L_{L'}(\hat{\mathscr L}_{A/L})=*\cdot \prod_{v\in S_{sm}\setminus S'}(1-[v]_{L'}^{-1})\cdot
\hat{\mathscr L}_{A/L'}. 
\end{equation*}

Since $\dag_{A/L'}$ belongs to $\Q_pI_{\Gamma'}^{s_{L'}}$ and
$s_{L'}+^\# (S_{sm}\setminus S')=s_{L}$, the above equation together with Definition
\ref{d:l} implies
\begin{equation}\label{e:hatll'}
 \mathrm{t}_{L'}\cdot \nabla_{A/L'}\cdot  p^L_{L'}(\hat{\mathscr L}_{A/L})=*\cdot \prod_{v\in S_{sm}\setminus S'}(1-[v]_{L'}^{-1})\cdot\dag_{A/L'}\cdot\mathscr L_{A/L'}\in \mathscr L_{A/L'}\cdot \Q_pI_{\Gamma'}^{s_{L}}.
\end{equation}



 \begin{lemma}\label{l:vanishing} The order of vanishing of $\hat{\mathscr L}_{A/L}$ is at least $s_{L}$.
 \end{lemma}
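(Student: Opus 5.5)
By Lemma \ref{l:augment}, $\hat{\mathscr L}_{A/L}\in \Q_pI_\Gamma^{s_L}$ holds as soon as $p^L_{L'}(\hat{\mathscr L}_{A/L})\in \Q_pI_{\Gamma'}^{s_L}$ for every intermediate $\Z_p$-extension $L'/K$ of $L/K$. So the plan is to fix such an $L'$, with $\Gamma'\simeq\Z_p$ and $t=\sigma-1$ for a topological generator $\sigma$ of $\Gamma'$, and to establish this membership one $L'$ at a time. The case $d=0$ (i.e.\ $L=K$) is trivial, since then $s_L=0$.

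For the fixed $L'$, the tool is \eqref{e:hatll'}:
$$\mathrm{t}_{A/L'}\cdot \nabla_{A/L'}\cdot p^L_{L'}(\hat{\mathscr L}_{A/L})\in \mathscr L_{A/L'}\cdot \Q_pI_{\Gamma'}^{s_L}.$$
Here $\mathscr L_{A/L'}\in \Q_p\Lambda_{\Gamma'}$ by Proposition \ref{p:additional} and Definition \ref{d:l}. Since $\Q_pI_{\Gamma'}^{s_L}$ is an ideal of $\Q_p\Lambda_{\Gamma'}$, the right-hand side is contained in $\Q_pI_{\Gamma'}^{s_L}$. I would double-check the ingredients of \eqref{e:hatll'}. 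Each factor $1-[v]_{L'}^{-1}$ with $v\in S_{sm}\setminus S'$ lies in $I_{\Gamma'}$. The element $\dag_{A/L'}$ lies in $I_{\Gamma'}^{s_{L'}}$, because its non-unit factors for split multiplicative $v$ are $1-\sigma_v$. One point needs checking: is every split multiplicative $v\in S'$ in $S'_1$? Since $\Gamma'\simeq\Z_p$, any nontrivial decomposition group at a ramified place is open, so yes, $\Gamma'_v\simeq\Z_p$ and $1-\sigma'_v\in I_{\Gamma'}$. The count then works out, since $S_{sm}\cap S'$ together with $S_{sm}\setminus S'$ is all of $S_{sm}$.

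It remains to cancel the factor $\mathrm t_{A/L'}\cdot\nabla_{A/L'}$ on the left. Because $L'\neq K$, we have $\mathrm t_{A/L'}=1$. The factor $\nabla_{A/L'}$ is either $1$ or $(1-\alpha^{-1}\Frob_q)(1-\alpha^{-1}\Frob_q^{-1})$. In the second case, its image in $\Q_p\Lambda_{\Gamma'}/\Q_p I_{\Gamma'}=\Q_p$ is $(1-\alpha^{-1})^2\neq0$, since $\alpha$ is not a root of unity. Hence it is a unit in the local ring $\Q_p\Lambda_{\Gamma'}$ localized at $\Q_pI_{\Gamma'}$.

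The main obstacle is justifying this cancellation. Concretely, I need: if $u\cdot x\in \Q_pI_{\Gamma'}^{r}$ with $u\notin \Q_pI_{\Gamma'}$, then $x\in \Q_pI_{\Gamma'}^{r}$. I would argue via the $t$-adic expansion in $\Q_p\Lambda_{\Gamma'}=\Q_p\otimes\Z_p[[t]]$. Writing $x=\sum x_it^i$ and $u=\sum u_i t^i$ with $u_0\neq0$, the lowest term of $ux$ is $u_0x_k t^k$, where $k$ is the order of vanishing of $x$. This forces $k\geq r$, by the homogeneous-component description preceding Lemma \ref{l:augment}. With the cancellation in hand, $p^L_{L'}(\hat{\mathscr L}_{A/L})\in \Q_pI_{\Gamma'}^{s_L}$ for every $L'$, and Lemma \ref{l:augment} concludes the proof.
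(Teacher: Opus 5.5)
Your proposal is correct and matches the paper's proof. Fix an intermediate $\Z_p$-extension $L'/K$, apply \eqref{e:hatll'}, cancel $\mathrm{t}_{A/L'}\cdot\nabla_{A/L'}$ because it lies outside the prime ideal $\Q_pI_{\Gamma'}$ (the paper cites \eqref{e:varrho} and Lemma \ref{l:varrho} where you compute the augmentation image directly), and conclude with Lemma \ref{l:augment}. Your extra verifications, namely $S'=S'_1$ when $\Gamma'\simeq\Z_p$ and the explicit $t$-adic cancellation, fill in details the paper leaves implicit.
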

\begin{proof} By \eqref{e:varrho} and Lemma \ref{l:varrho},  $\mathrm{t}_{L'}\cdot \nabla_{A/L'}$
does not belongs to the prime ideal $\Q_pI_{\Gamma'}$,
the lemma follows from Lemma \ref{l:augment} and \eqref{e:hatll'}.

\end{proof}   
 
\subsubsection{A consequence of Iwasawa Main Conjecture}\label{ss:conimc} Next, we investigate the residue class
of $\hat{\mathscr L}_{A/L}$ in $\Q_pI_\Gamma^{s_L}/\Q_pI_{\Gamma}^{s_L+1}$. For $v\in S_{sm}$, let $Q_v$
be the local Tate period and let $\bar Q_v$ denote its image under the local reciprocity law map $K_v^*\longrightarrow \Gamma_v\subset \Gamma$. Put 
$$\mathscr Q_L:=\prod_{v\in S_{sm}} (\bar Q_v-1)\in I_\Gamma^{s_L}.$$

\begin{myproposition}\label{p:mtt}
Assume that the Iwasawa main conjecture holds for every $\Z_p$-extension $L'/K$ unramified outside a finite set of ordinary places.
Then there is $c_L\in\Q_p$ such that
\begin{equation}\label{e:mtt}
\hat{\mathscr L}_{A/L}\equiv c_L\cdot \mathscr Q_L \pmod{\Q_p I_\Gamma^{s_L+1}}.
\end{equation}
Moreover, if the analytic rank of $A/K$ is positive, then $c_L=0$; otherwise, 
\begin{equation}\label{e:cl}
c_L=\frac{|\Sha_K|\cdot \prod_{v\in S_{nm}}(-2m_v)\cdot
\prod_{v\not\in S_m}m_v}{|A(K)|^2} \cdot \prod_{v\in S_o}(1-\alpha_v^{-1})(1-\alpha_v^{-1}).
\end{equation}

\end{myproposition}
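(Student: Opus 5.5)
The plan is to reduce \eqref{e:mtt} to a statement about $\Z_p$-extensions, using the same homogeneous-polynomial device as in the proof of Lemma \ref{l:augment}. Write $\hat{\mathscr L}_{A/L}=\sum_i\mathscr H_i$ in the variables $t_i=\sigma_i-1$. By Lemma \ref{l:vanishing}, $\mathscr H_i=0$ for $i<s_L$, so it remains to identify $\mathscr H_{s_L}$.

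For $v\in S_{sm}$, write $\bar Q_v=\prod_i\sigma_i^{b_{v,i}}$. The degree-$s_L$ component of $\mathscr Q_L$ is then the product of linear forms $\prod_{v\in S_{sm}}\ell_v$, where $\ell_v=\sum_i b_{v,i}t_i$. Since a homogeneous polynomial over $\Q_p$ is determined by its values on primitive vectors $(a_1,\dots,a_d)\in\Z_p^d$, it suffices to find one constant $c_L$ such that
$$\mathscr H_{s_L}(a)=c_L\prod_{v\in S_{sm}}\ell_v(a)$$
for every intermediate $\Z_p$-extension $L'/K$, with generator $\sigma$ and $t=\sigma-1$, corresponding to $a$. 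Equivalently, for every such $L'$, the coefficient of $t^{s_L}$ in $p^L_{L'}(\hat{\mathscr L}_{A/L})$ should equal $c_L\prod_v\ell_v(a)$.

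Fix such an $L'$ and start from \eqref{e:hatll'}. The factors $1-[v]_{L'}^{-1}$ for $v\in S_{sm}\setminus S'$ arise as follows. Such a $v$ is unramified in $L'$, and its Frobenius is the image of $\bar Q_v$ up to the exponent $\ord_v(Q_v)$, because a uniformizer and $Q_v$ have the same image modulo the units. So the leading term of $1-[v]_{L'}^{-1}$ is $\ell_v(a)\,t$ divided by that exponent. For $v\in S'_{sm}$ there are two cases:
\begin{itemize}
\item If $\Gamma'_v\simeq\Z_p$, then $\dag_{A/L',v}=1-\sigma'_v$, and $\bar Q_v$ maps to $\sigma_v'^{\,c}$ with $(c)=\mathfrak w_v$. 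So again $\ell_v(a)\,t$ is the leading term up to the factor $\mathfrak w_v$.
\item If $\Gamma'_v=0$, then $\ell_v(a)=0$, and both sides have too high an order.
\end{itemize}
Collecting these, the coefficient of $t^{s_L}$ in $p^L_{L'}(\hat{\mathscr L}_{A/L})$ equals $\prod_v\ell_v(a)$ times explicit local constants times $\omega_0(\mathscr L_{A/L'})$. The local constants are the $\ord_v(Q_v)$, the values at $\omega_0$ of the Euler-type factors $(1-\alpha_v^{-1}[v])(1-\alpha_v^{-1}[v]^{-1})$ and $(\lambda_v-[v]^{-1})$ for $v\in S_o\setminus S'$ and $v\in S_{nm}\setminus S'$, and $\mathrm t_{L'}^{-1}\nabla_{A/L'}^{-1}$ evaluated at $\omega_0$; here $\omega_0$ is the trivial character.

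Next, evaluate $\omega_0(\mathscr L_{A/L'})$. Here the hypothesis enters: by the main conjecture for $L'$, $(\mathscr L_{A/L'})=\mathrm{CH}_{\Lambda'}(X_{L'})$. Proposition \ref{p:aspf}, applied to $L'/K$ with $e=0$, gives
$$\varrho_{L'/K}\,p^{L'}_K(\mathrm{CH}(X_{L'}))=\vartheta_{L'/K}\,\mathrm{CH}_{\Z_p}(X_K).$$
If the analytic rank is positive, the Selmer corank is positive, so $\mathrm{CH}_{\Z_p}(X_K)=0$. Hence $\omega_0(\mathscr L_{A/L'})=0$ for all $L'$, and $c_L=0$. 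If the analytic rank is zero, $X_K$ is finite of order $|\Sha_K|\cdot|A(K)_{p^\infty}|^{-1}$ times Tamagawa-type contributions. Then $\vartheta_{L'/K}$ supplies the local terms from Definition \ref{d:vartheta}:
\begin{itemize}
\item $m_v$ at $v\notin S$;
\item $-2m_v$ (or $m_v$) at non-split places in $S$;
\item the factors $\mathfrak w_v$ at split places that are not ramified in $L'$;
\item $(1-\alpha_v^{-1})^2$ at $v\in S_o\setminus S'$.
\end{itemize}
These factors should cancel the $\mathfrak w_v$ and $\ord_v(Q_v)$ from the first step. What remains is the right-hand side of \eqref{e:cl}, which is independent of $L'$.

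The main obstacle is that the main conjecture and Proposition \ref{p:aspf} give only an equality of ideals, so this computation determines the $t^{s_L}$-coefficient only up to a $\Z_p^\times$ unit that might depend on $L'$. To fix the exact constant, I would first try to compute $\omega_0(\mathscr L_{A/L'})$ directly rather than through ideals. The idea is to evaluate the interpolation formula \eqref{e:inthat} at $\omega_0$ (where $\tau_{\omega_0}=\alpha_{D_{\omega_0}}=1$) on a twist-free part, and then use the known Birch and Swinnerton-Dyer formula for $L_A(1)$ over function fields in the analytic-rank-zero case. This should identify $q^{\deg(\Delta)/12+\kappa-1}L_A(1)$ with $|\Sha_K|\prod m_v/|A(K)|^2$, consistent with the case $L=K$ of \eqref{e:hatL} and with Theorem \ref{t:l=k}. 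The main conjecture is then used only to justify that the order of vanishing along each $L'$ is exactly as predicted, so that the leading coefficient is this value. The remaining check is routine sign bookkeeping: the $-1$ from $1-[v]^{-1}$ versus $\bar Q_v-1$, and the $-2m_v$ at non-split places.
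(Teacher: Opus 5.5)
Your reduction to the degree-$s_L$ form $\mathscr H_{s_L}$ is fine. So is your positive-rank argument: there the main conjecture yields an exact zero, $\omega_0(\mathscr L_{A/L'})=0$ for every $L'$, and \eqref{e:hatll'} with Lemma~\ref{l:augment} gives $c_L=0$, as in the paper.

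\textbf{The gap.} It is in the rank-zero case. There you try to verify $\mathscr H_{s_L}(a)=c_L\prod_v\ell_v(a)$ pointwise for \emph{every} primitive $a$. That needs the exact value of $\omega_0(\mathscr L_{A/L'})$ for every $\Z_p$-extension $L'/K$, and your repair does not supply it.

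Suppose some $v\in S_{sm}$ ramifies in $L'$. Then $\Gamma'_v\simeq\Z_p$ and $\dag_{A/L',v}=1-\sigma'_v$, so $\omega_0(\dag_{A/L'})=0$ and Lemma~\ref{l:intL} says nothing. Equivalently, $\Xi_{S',\omega_0}$ contains the factor $\lambda_v-1=0$, so \eqref{e:inthat} only returns $\omega_0(\hat{\mathscr L}_{A/L'})=0$. In that situation $\omega_0(\mathscr L_{A/L'})$ is an exceptional-zero leading coefficient. Computing it exactly is essentially the content of the proposition for $L'$ itself, so the repair is circular. BSD for $L_A(\omega_0,1)$ does not compute it, and knowing the order of vanishing along $L'$ says nothing about this coefficient.

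You also cannot restrict to the good lines. When $S_{sm}\neq\emptyset$, the $L'$ unramified at a given $v\in S_{sm}$ form a proper Zariski-closed subset of $\mho(1,\Gamma)$ (Lemma~\ref{l:ij}(1), with $\Phi$ the inertia group at $v$). A form of degree $s_L$ is not determined by its values on such a set. Unit-level information along all lines is not enough either: compare the $e=1$ example in \S\ref{su:gen}, where two non-proportional quadratic forms agree up to units along every line.

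\textbf{The missing idea.} Separate the shape of $\mathscr H_{s_L}$ from its constant, and use the main conjecture only through exact zeros.

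First enlarge $L$. By \eqref{e:spechat} you may replace $L$ by $LK_\infty^{(p)}$, and then by a field as in Lemma~\ref{l:cased}. This changes $c_L$ only by an explicit nonzero constant from the auxiliary good ordinary place $v_0$. Now $K_\infty^{(p)}\subset L$ and the $\Gamma_v$, $v\in S$, span a direct summand of $\Gamma$. Hence the $\ell_v$ are nonzero linear forms in pairwise disjoint sets of variables, and so are pairwise coprime.

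Next show $\mathscr H_{s_L}(a)=0$ whenever $\ell_v(a)=0$ for some $v\in S_{sm}$. There are two cases.
\begin{itemize}
\item If $v$ is unramified in $L'$, then $[v]_{L'}^{m_v}=1$ forces $[v]_{L'}=1$, and the factor $\lambda_v-[v]_{L'}^{-1}$ in \eqref{e:spechat} vanishes.
\item If $v$ ramifies in $L'$, then $\Gamma'_v/\overline{\{Q_v\}}\simeq\Z_p$, so $\vartheta_{L'/K,v}=0$. Proposition~\ref{p:aspf} and the main conjecture for $L'$ then give $\mathscr L_{A/L'}\in\Q_pI_{\Gamma'}$. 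By \eqref{e:hatll'}, $p^L_{L'}(\hat{\mathscr L}_{A/L})\in\Q_pI_{\Gamma'}^{s_L+1}$. This conclusion is insensitive to the unit ambiguity of an ideal equality.
\end{itemize}
Thus each $\ell_v$ divides $\mathscr H_{s_L}$, hence so does $\prod_v\ell_v$. Since the degrees agree, $\mathscr H_{s_L}=c_L\prod_v\ell_v$ with $c_L\in\Q_p$.

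Finally, read off $c_L$ along the single line $L'=K_\infty^{(p)}$, where $S'=\emptyset$ and no trivial zero occurs. There \eqref{e:spechat}, \eqref{e:inthat} at $\omega_0$, and the unconditional BSD formula \eqref{e:bsd} give $p^L_{L'}(\hat{\mathscr L}_{A/L})$ exactly modulo $\Q_pI_{\Gamma'}^{s_L+1}$. Meanwhile $p^L_{L'}(\mathscr Q_L)\equiv\prod_{v\in S_{sm}}m_v(1-[v]_{L'}^{-1})$ is nonzero modulo $I_{\Gamma'}^{s_L+1}$. The main conjecture is not needed for the constant.
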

\begin{proof}
In view of \eqref{e:spechat}, we may replace $L$ by $K_\infty^{(p)}L$. 
Also, we can replace $L$ by a field $\tilde L$ satisfying the condition in Lemma \ref{l:cased}, because
if the proposition holds for $\tilde L$, the formula \eqref{e:spechat} together with the fact
that for $\gamma\in\Gamma$,
$$1-a\gamma=1-a+a(1-\gamma)\equiv 1-a\pmod{I_\Gamma}$$ 
implies \eqref{e:mtt} holds for $c_L=c_{\tilde L}\cdot (1-\alpha_{v_0}^{-1})^{-1}(1-\alpha_{v_0}^{-1})^{-1}$,
where $v_0$ is the place described in Lemma \ref{l:cased},
and if the analytic rank of $A/K$ is zero, then \eqref{e:cl} also holds.
Thus, for proving the proposition, we may assume that $K_\infty^{(p)}\subset L$
and $\iota:\prod_{v \in S} {\Gamma}_{v}\longrightarrow {\Gamma}$ injective having image a direct summand of ${\Gamma}$.

 For each $v\in S_{sm}$, choose a $\Z_p$-basis $\sigma_{v,1},...,\sigma_{v,d_v}$ of $\Gamma_v$ and write 
 $\bar Q_v-1\equiv \mathscr Q_v\pmod{I_{\Gamma_v}^2}$, where $\mathscr Q_v$ is a homogeneous linear polynomial in $t_{v,i}:=\sigma_{v,i}-1$, $i=1,...,d_v$. Then 
 $$\mathscr Q_L\equiv \prod_{v\in S_{sm}} \mathscr Q_v\pmod{I_\Gamma^{s_L+1}}.$$
 By the nature of $\iota$, we can extend $\{\sigma_{v,i}\;\mid\; v\in S_{sm},
 i=1,...,d_v\}$ to a $\Z_p$-basis $\sigma_1,...,\sigma_d$ of $\Gamma$. For distinct $v,v'\in S_{sm}$,
$\mathscr Q_v,\mathscr Q_{v'}$ are polynomials in distinct variables, and hence relatively prime as elements in
$\Lambda_\Gamma$. For simplicity, denote $\mathscr H=\hat{\mathscr L}_{A/L}$ and let $\mathscr H_i$ be as in \S\ref{ss:vanishing}. In view of Lemma \ref{l:vanishing}, for \eqref{e:mtt}, we need to show that $\mathscr H_{s_L}$
is a multiple of $\mathscr Q_{s_L}:=\prod_{v\in S_{sm}} \mathscr Q_v$ (because both have degree $s_L$), or equivalently, that if $\mathscr Q_{s_L}(a_1,...,a_d)=0$, $a_i\in\Z_p$, $\mathrm{g.c.d.}(a_1,..a_d)=1$, then $\mathscr H_{s_L}(a_1,...,a_d)=0$. As in \S\ref{ss:vanishing}, let $(L',\sigma)$ be the pair corresponding to
$(a_1,...,a_d)$. If $\mathscr Q_{s_L}(a_1,...,a_d)=0$, then $\mathscr Q_v(a_1,...,a_d)=0$,
for some $v\in S_{sm}$, and hence $p^L_{L'}(\bar Q_v)=1$,
by the isomorphism 
$$\Gamma\longrightarrow I_{\Gamma}/I_{\Gamma}^2, \;\gamma\mapsto \gamma-1.$$
Thus, we show that $p^L_{L'}(\hat{\mathscr L}_{A/L})\in \Q_pI_{\Gamma'}^{s_L+1}$, whenever
$p^L_{L'}(\bar Q_v)=1$, for some $v\in S_{sm}$, distinguishing two cases.

If $v$ is ramified over $L'/K$, then $\Gamma_{v}'=\Gamma'\simeq\Z_p$ and $p^L_{L'}(\bar Q_v)=1$ gives $\mathrm{CH}_{\Z_p}(\Gamma_{v}'/\overline{\{Q_v\}})=\mathrm{CH}_{\Z_p}(\Z_p)=0$,
so by Definition \ref{d:vartheta}(c), $\vartheta_{L'/K,v}=(0)$.
By the specialization formula \eqref{e:chspf} and $\varrho_{L'/K}\not=0$ (see \eqref{e:varrho}), $p^{L'}_K(\mathrm{CH}_{\Lambda_{\Gamma'}}(X_{L'}))=0$, and the Iwasawa main conjecture for $A/L'$ gives $p^{L'}_K(\mathscr L_{A/L'})=0$, that is, $\mathscr L_{A/L'}\in\Q_pI_{\Gamma'}$. Then \eqref{e:hatll'} yields $p^L_{L'}(\hat{\mathscr L}_{A/L})\in\Q_pI_{\Gamma'}^{s_L+1}$.

If $v$ is unramified in $L'/K$, then $p^L_{L'}(\bar Q_v)=[v]_{L'}^{m_v}$, so $p^L_{L'}(\bar Q_v)=1$ and the torsion-freeness of $\Gamma'\simeq\Z_p$ force $[v]_{L'}=1$. Since $v$ is split multiplicative, $\lambda_v=1$, so the factor $\lambda_v-[v]_{L'}^{-1}=0$ occurs in \eqref{e:spechat} and $p^L_{L'}(\hat{\mathscr L}_{A/L})=0$.

In either case $\mathscr H_{s_L}(a_1,\dots,a_d)=0$.

It is known that if the analytic rank of $A/K$ is positive, then $X_K$ is non-torsion and
$\mathrm{CH}_{\Z_p}(X_K)=0$; if the analytic rank of $A/K$ is zero, then $X_K$ is torsion and BSD holds for $A/K$ (\cite{Tat66a,mil75}).

If the analytic rank of $A/K$ is positive, then by the Iwasawa main conjecture,  for every intermediate $\Z_p$-extension $L'/K$ of $L/K$, $p^{L'}_K(\mathscr L_{A/L'})=p^{L'}_K(\mathrm{CH}_{\Lambda_{\Gamma'}}(X_{L'}))$, which, by \eqref{e:varrho} and \eqref{e:chspf}, is zero,
so it follows from \eqref{e:hatll'} that $p^L_{L'}(\hat{\mathscr L}_{A/L})\in \Q_pI_{\Gamma'}^{s_L+1}$. Hence,
by Lemma \ref{l:augment}, $\hat{\mathscr L}_{A/L}\in \Q_pI_{\Gamma}^{s_L+1}$ and $c_L=0$.

Suppose the analytic rank of $A/K$ is zero. The BSD formula says (\cite[(3),(9),(46)]{tan95}),
\begin{equation}\label{e:bsd}
L_A(\omega_0,1)=\frac{|\Sha_K|\cdot \prod_{v}m_v}{|A(K)|^2}\cdot q^{-\frac{\deg \Delta}{12}-\kappa+1},
\end{equation}
for the trivial character $\omega_0$.
 Take $L'=K_\infty^{(p)}$ and apply $p^L_{L'}$. Since $p^L_{L'}(\bar Q_v)=[v]_{L'}^{m_v}$, 
$$p^L_{L'}(\bar Q_v-1)\equiv m_v \cdot (1-[v]_{L'}^{-1}) \pmod{I^2_{\Gamma'}},$$
so
\begin{equation}\label{e:scrQ}
\mathscr Q_L\equiv \prod_{v\in S_{sm}} m_v\cdot (1-[v]_{L'}^{-1}) \pmod{I_{\Gamma'}^{s_L+1} }.
\end{equation}
We have  $\hat{\mathscr L}_{A/L'} \equiv \omega_0(\hat{\mathscr L}_{A/L'})\pmod{\Q_pI_{\Gamma'}}$,
so by \eqref{e:spechat}, \eqref{e:inthat} and \eqref{e:bsd},
$$
\begin{array}{rcl}
p^L_{L'}(\hat{\mathscr L}_{A/L})
& \equiv & e_L\cdot \omega_0(\hat{\mathscr L}_{A/L'})\\
 & \equiv & e_L\cdot q^{\frac{\deg \Delta}{12}+\kappa-1}\cdot L_A(\omega_0,1)\\
& \equiv &  e_L \cdot  \frac{|\Sha_K|\cdot \prod_{v}m_v}{|A(K)|^2}\pmod{\Q_pI_{\Gamma'}^{s_L+1}},
\end{array}$$
where $e_L=\prod_{v\in S_{sm}}(1-[v]_{L'}^{-1})\cdot\prod_{v\in S_{nm}} (-2)\cdot \prod_{v\in S_{o}}(1-\alpha_v^{-1})(1-\alpha_v^{-1})$. Hence, by \eqref{e:scrQ},
$$c_L=\prod_{v\in S_{sm}} m_v^{-1}\cdot \prod_{v\in S_{nm}} (-2)\cdot \prod_{v\in S_{o}}(1-\alpha_v^{-1})(1-\alpha_v^{-1})\cdot \frac{|\Sha_K|\cdot \prod_{v}m_v}{|A(K)|^2},$$
as desired.
\end{proof}

The formulae \eqref{e:mtt} and \eqref{e:cl} form a function field version of Mazur-Tate-Teitelbaum conjecture,
it is proven, without assuming the Iwasawa main conjecture, for the case where 
$K$ is a rational function field and $S=S_{sm}=\{v\}$ \cite{hl06}.

 \section{The basic properties of the $p$-adic $L$-function $\mathscr L_{A/L}$}\label{s:padicL} In this section, we demonstrate the basic properties of $\mathscr L_{A/L}$:
the interpolation formula, the functional equation, and the specialization formula. Then we discuss the cases
in which the Iwasawa main conjecture is known to hold.


\subsection{The interpolation formula}\label{su:intL}
The lemma below is a direct consequence of \eqref{e:inthat}.

\begin{lemma}\label{l:intL}
If $\omega \in \hat{\Gamma}$, $\omega(\dag_{A / L}) \neq 0$, then
\begin{equation}\label{e:intL}
\omega(\mathscr{L}_{A / L})=\mathrm{t}_{A / L} \cdot \omega(\nabla_{A / L}) \cdot \omega(\dag_{A / L})^{-1} \cdot \alpha_{D_{\omega}}^{-1} \cdot \tau_{\omega} \cdot q^{\frac{\deg(\Delta)}{12}+\kappa-1} \cdot \Xi_{S, \omega} \cdot L_{A}(\omega, 1) .
\end{equation}
\end{lemma}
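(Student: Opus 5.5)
The plan is to apply the $\mathbb{Q}_p$-algebra homomorphism $\omega:\mathbb{Q}_p\Lambda\to\mathbb{Q}_p(\mu_{p^m})$ to the defining identity of Definition \ref{d:l}, and then use the interpolation formula \eqref{e:inthat} for $\hat{\mathscr L}_{A/L}$. The only subtlety is that $\dag_{A/L}^{-1}$ is not an element of $\mathbb{Q}_p\Lambda$. So we first rewrite the definition as a genuine identity in $\mathbb{Q}_p\Lambda$:
\[
\dag_{A/L}\cdot\mathscr L_{A/L}=\mathrm{t}_{A/L}\cdot\nabla_{A/L}\cdot\hat{\mathscr L}_{A/L}.
\]
This identity is meaningful because Proposition \ref{p:additional} gives $\hat{\mathscr L}_{A/L}=\dag_{A/L}\cdot\xi$ for some $\xi\in\mathbb{Q}_p\Lambda$, and $\mathscr L_{A/L}$ is by definition $\mathrm{t}_{A/L}\nabla_{A/L}\xi$.

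Next, apply $\omega$, which is multiplicative, and use $\omega(\mathrm{t}_{A/L})=\mathrm{t}_{A/L}$ since $\mathrm{t}_{A/L}$ is a natural number. This gives
\[
\omega(\dag_{A/L})\,\omega(\mathscr L_{A/L})=\mathrm{t}_{A/L}\,\omega(\nabla_{A/L})\,\omega(\hat{\mathscr L}_{A/L}).
\]
Since $\omega(\dag_{A/L})\neq0$ by hypothesis, we may divide by it in the field $\mathbb{Q}_p(\mu_{p^m})$. It then remains to substitute \eqref{e:inthat} for $\omega(\hat{\mathscr L}_{A/L})$.

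One bookkeeping check is needed here: \eqref{e:inthat} must hold for every $\omega\in\hat\Gamma$, viewed as a character of $\tilde W_S$ via $\tilde W_S\to\Gamma$. For $L\neq K$ this is Lemma \ref{l:tildeL} with $T=S$, combined with the compatibility $\omega\circ p_L=\omega$ on $\mathbb{Q}_p\tilde\Lambda_S$. The case $L=K$ also needs a check. There $\hat\Gamma$ contains only the trivial character $\omega_0$, and $S=\emptyset$, so $\alpha_{D_{\omega_0}}=\tau_{\omega_0}=\Xi_{\emptyset,\omega_0}=1$ and $\dag_{A/K}=\prod_v m_v$. The formula then reduces to the definition \eqref{e:hatL} of $\hat{\mathscr L}_{A/K}$, provided $\dag_{A/K}\neq 0$, which holds because each $m_v\geq1$.

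No step is a real obstacle. The only points requiring care are ensuring that the division by $\dag_{A/L}$ is legitimate, which is where Proposition \ref{p:additional} enters, and confirming that the interpolation formula \eqref{e:inthat} covers every $\omega\in\hat\Gamma$.
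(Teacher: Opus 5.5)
Your proposal is correct and follows the paper's route. The paper treats the lemma as a direct consequence of \eqref{e:inthat}: apply $\omega$ to $\dag_{A/L}\cdot\mathscr L_{A/L}=\mathrm t_{A/L}\cdot\nabla_{A/L}\cdot\hat{\mathscr L}_{A/L}$ and divide by $\omega(\dag_{A/L})\neq0$. Your separate verification of the case $L=K$, where $\hat{\mathscr L}_{A/K}$ is defined directly and $\dag_{A/K}=\prod_v m_v$, is extra care that the paper leaves implicit.
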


If $\omega(\dag_{A / L})=0$, the exact value of $\omega(\mathscr{L}_{A / L})$ is unknown to us. 
However, as explained in \S\ref{ss:intpol},  formula \eqref{e:intL} completely determines $\mathscr{L}_{A / L}$.

\subsection{The functional equation}\label{su:afe}  
Write $N=\sum_{v} n_{v} \cdot v$, so that $n_v=0$ or $1$, for $v$ belonging to $ S$.
For an effective divisor $D$ supported on $S$, put 
$$N_D:=\sum_{v \mid D} n_{v} \cdot v, \quad N'_D:=\sum_{v \nmid D} n_{v} \cdot v.$$

The cases of non-constant curves and constant curves will be separately treated, while Proposition \ref{p:fel} below
holds for both cases.
\subsubsection{Non-constant curves}\label{ss:non} 
Suppose $A / K$ is not a constant elliptic curve. By \cite[Proposition 3]{tan93}, there is $\varepsilon_{N_{D}^{\prime}}= \pm 1$ depending on the divisor $N_{D}^{\prime}$, such that
\begin{equation}\label{e:sharptheta}
\Theta_{D}^{\sharp}=\varepsilon_{N_{D}^{\prime}} \cdot \Theta_{D} \cdot [N_D^{\prime}]_D^{-1}. 
\end{equation}

\begin{lemma}\label{1:sign} If $A / K$ is non-constant and $\mathrm{Supp}(D) \subset S$, then
\begin{equation}\label{e:varep}
\varepsilon_{N_{D}^{\prime}}=\varepsilon_{N} \cdot \prod_{v \mid N_D}(-\lambda_{v}).
\end{equation}
\end{lemma}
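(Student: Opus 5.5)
The plan is to prove \eqref{e:varep} by induction on the number of places dividing $N_D$, using the transition formulae of Theorem \ref{t:theta}(b),(d) together with \eqref{e:sharptheta}. The base case is $N_D=0$. Then $N'_D=N$, and \eqref{e:varep} reads $\varepsilon_{N'_D}=\varepsilon_N$, which is a tautology. Since $\varepsilon_{N'_D}$ depends only on the divisor $N'_D$, we may replace $D$ by any divisor with the same set of places dividing $N$. In particular, we are free to enlarge the multiplicities of $D$ whenever this is convenient.

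For the inductive step, fix $v\mid N_D$, so $v\in S$ is multiplicative and $\lambda_v=\pm1$. Write $k=\ord_v(D)\ge 1$ and $D_0=D-kv$. Then $N'_{D_0}=N'_D+v$. Applying Theorem \ref{t:theta}(d) $k-1$ times and then Theorem \ref{t:theta}(b) once gives
$$Z^D_{D_0}(\Theta_D)=\lambda_v^{k-1}(\lambda_v-[v]_{D_0}^{-1})\,\Theta_{D_0}.$$
Next apply $\sharp$ to both sides; it commutes with $Z^D_{D_0}$. On the left, \eqref{e:sharptheta} for $D$ gives
$$\varepsilon_{N'_D}\,\lambda_v^{k-1}(\lambda_v-[v]_{D_0}^{-1})\,\Theta_{D_0}\,[N'_D]_{D_0}^{-1}.$$
On the right, \eqref{e:sharptheta} for $D_0$ gives
$$\lambda_v^{k-1}(\lambda_v-[v]_{D_0})\,\varepsilon_{N'_{D_0}}\,\Theta_{D_0}\,[N'_D]_{D_0}^{-1}[v]_{D_0}^{-1}.$$
Since $\lambda_v^2=1$, we have $(\lambda_v-[v])[v]^{-1}=-\lambda_v(\lambda_v-[v]^{-1})$. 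After cancelling the common invertible factors $\lambda_v^{k-1}$ and $[N'_D]_{D_0}^{-1}$, we obtain
$$\bigl(\varepsilon_{N'_D}+\lambda_v\varepsilon_{N'_{D_0}}\bigr)\,(\lambda_v-[v]_{D_0}^{-1})\,\Theta_{D_0}=0 \quad\text{in } \mathbb Q[W_{D_0}].$$
If the cancellation of $(\lambda_v-[v]_{D_0}^{-1})\Theta_{D_0}$ is justified, this yields $\varepsilon_{N'_D}=-\lambda_v\,\varepsilon_{N'_{D_0}}$. The induction hypothesis for $D_0$, which has one fewer place dividing $N$, then gives \eqref{e:varep}.

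It remains to justify that cancellation. The factor $\lambda_v-[v]_{D_0}^{-1}$ is harmless. Indeed, $W_{D_0}$ is a finite group times $\mathbb Z$ via the degree, and $[v]_{D_0}$ has nonzero degree, hence infinite order. Therefore $\pm1-[v]_{D_0}^{-1}$ is a non-zero-divisor in $\mathbb Q[W_{D_0}]$: it maps to a nonzero polynomial in each component $\mathbb Q(\zeta)[u,u^{-1}]$.

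The main obstacle is the nonvanishing $\Theta_{D_0}\neq 0$. The multiplicities of $D_0$ at the remaining places of $S$ may be increased freely without changing $N'_{D_0}$. So it suffices to find one primitive character $\omega$ of some $W_{D_0}$ with the same support such that $L_A(\omega,1)\ne0$; then Theorem \ref{t:theta}(e) gives $\omega(\Theta_{D_0})\ne0$. I would try to obtain this from a nonvanishing result for twists in a tower (Rohrlich-type). If that is not available, one can twist by characters $\omega\omega_s$ and use \eqref{e:taus}: the values $L_A(\omega\omega_s,1)=L_A(\omega,1+s)$ form a nonzero polynomial in $q^{-s}$, so some unramified twist is nonzero.

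If this route gets stuck, the fallback is to read off $\varepsilon_{N'_D}$ from the proof of \cite[Proposition 3]{tan93} as the Atkin–Lehner eigenvalue of $f_A$ for $W_{N'_D}$. Then use multiplicativity, $\varepsilon_N=\varepsilon_{N'_D}\prod_{v\mid N_D}\varepsilon_v$, together with the standard fact that at a multiplicative place the local Atkin–Lehner eigenvalue is $-\lambda_v$ (this is where \cite{de70, we71} enters). This again gives \eqref{e:varep}, because $(-\lambda_v)^{-1}=-\lambda_v$.
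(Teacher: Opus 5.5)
Your proposal is correct and is essentially the paper's proof. You peel off one multiplicative place using Theorem \ref{t:theta}(b) together with \eqref{e:sharptheta}; your use of (d) for the higher multiplicities makes explicit a step the paper glosses over. You then justify the cancellation by evaluating at twists $\omega\omega_s$ of a primitive character via \eqref{e:taus} and Theorem \ref{t:theta}(e), which is exactly your second option and exactly what the paper does. The Rohrlich-type nonvanishing and the Atkin--Lehner fallback are not needed.
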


\begin{proof}  If $D=0$, then $N_{D}^{\prime}=N$, so $\varepsilon_{N_D^{\prime}}=\varepsilon_{N}$
and hence \eqref{e:varep} holds trivially. Thus, it is sufficient to show that if $D^{\prime}=D+m\cdot v$, $v \in S$, 
$v \notin \mathrm{Supp}(D)$, then $\varepsilon_{N_{D^{\prime}}^{\prime}}=-\lambda_{v} \cdot \varepsilon_{N_D^{\prime}}$. 

For good ordinary $v\in S$, $n_v=0$, so $N_{D'}'=N_D'$ and $\varepsilon_{N_{D'}'}=\varepsilon_{N_D'}$, in agreement with \eqref{e:varep} since $v\nmid N_D$. The argument below treats the multiplicative case, where $\lambda_v=\pm1$.

Theorem \ref{t:theta}(b) together with \eqref{e:sharptheta} implies
$$
\varepsilon_{N_{D^{\prime}}^{\prime}} \cdot Z_{D}^{D^{\prime}}(\Theta_{D^{\prime}} \cdot ([N_{D^{\prime}}]_{D'}^{\prime})^{-1})=Z_{D}^{D^{\prime}}(\Theta_{D^{\prime}})^{\sharp}=(\lambda_{v}-[v]_{D}) \cdot \varepsilon_{N_D^{\prime}} \cdot \Theta_{D} \cdot [N_D^{\prime}]^{-1}_D,
$$
where (because $\lambda_{v}= \pm 1$ ) the right-hand side equals
$$
-\lambda_{v} \cdot \varepsilon_{N_D^{\prime}} \cdot (\lambda_{v}-[v]_D^{-1}) \cdot \Theta_{D} \cdot [N_D^{\prime}]_D^{-1} \cdot[v]_{D},
$$
which is the same as $-\lambda_{v} \cdot \varepsilon_{N_D^{\prime}} \cdot Z_{D}^{D^{\prime}}(\Theta_{D^{\prime}} \cdot [N_{D^{\prime}}^{\prime}]^{-1}_{D'})$. This shows that $\varepsilon_{N_{D^{\prime}}^{\prime}}=-\lambda_{v} \cdot \varepsilon_{N_D^{\prime}}$ holds unless $Z_{D}^{D^{\prime}}(\Theta_{D^{\prime}})=0$. But if $\omega$ is a primitive character of $W_{D}$, then by \eqref{e:taus} and Theorem \ref{t:theta}, for $s$ varies,

$\omega \omega_{s}(Z_{D}^{D^{\prime}}(\Theta_{D^{\prime}}))=(\lambda_{v}-\omega([v]_D)^{-1} \cdot q^{\deg(v)}) \cdot \tau_{\omega} \cdot q^{\frac{\deg(\Delta)}{12}+(\kappa-1)(2 s+1)+s \cdot \deg(D)} \cdot L_{A}(\omega, 1+s)$,
which is a non-zero function of $s$.

\end{proof}

Write $N'_S:=N'_D$, for any $D$ with $\mathrm{Supp}(D)=S$. 
Then we also have

\begin{equation}\label{e:fehatL}
\hat{\mathscr{L}}_{A / L}^{\sharp}=\varepsilon_{N} \cdot(\prod_{v \in N \cap S}(-\lambda_{v})) \cdot [N_{S}^{\prime}]_{L}^{-1} \cdot \hat{\mathscr{L}}_{A / L} .
\end{equation}

\subsubsection{The constant curve case}\label{ss:const} In \cite[(30)]{lltt14a}, if $L\not= K$,
an element $\mathscr{L}_{A, L,S} \in \Lambda$ is defined as the product: $\mathscr{L}_{A, L,S}:=\theta_{A, L, S}^{+} \cdot(\theta_{A, L, S}^{+})^{\sharp}$, where $\theta_{A, L, S}^{+} \in \Lambda$ is a certain modified twisted Stickelberger element. In particular,

\begin{equation}\label{e:lsharp}
\mathscr{L}_{A, L,S}^{\sharp}=\mathscr{L}_{A, L,S}.
\end{equation}

\begin{lemma}\label{l:las}If $A / K$ is a constant elliptic curve and $L\not= K$,
then
$$
\mathscr{L}_{A, L,S}=\alpha^{-2 \kappa+2} \cdot \mathscr{L}_{A / L}.
$$
\end{lemma}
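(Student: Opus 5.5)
Both sides lie in $\mathbb{Q}_p\Lambda$, and by \S\ref{ss:intpol} an element of $\mathbb{Q}_p\Lambda$ is determined by its values $\omega(\cdot)$ on the characters $\omega\in\hat\Gamma$ outside a proper Monsky closed set. So I will compare the interpolation formula of $\mathscr{L}_{A,L,S}$ from \cite{lltt14a} with Lemma \ref{l:intL} character by character.

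\textbf{Structural simplifications.} Since $A/K$ is constant, every place is good ordinary. Hence $S=S_o$, $\mathrm t_{A/L}=1$ (because $L\neq K$), and $\dag_{A/L}$ equals $\prod_{v\notin S,\,\Gamma_v=0} m_v$. Moreover $m_v=1$ at good places, so $\dag_{A/L}=1$. It follows that $\mathscr L_{A/L}=\nabla_{A/L}\cdot\hat{\mathscr L}_{A/L}$, and for every $\omega\in\hat\Gamma$,
$$\omega(\mathscr{L}_{A / L})=\omega(\nabla_{A / L}) \cdot \alpha_{D_{\omega}}^{-1} \cdot \tau_{\omega} \cdot q^{\kappa-1} \cdot \Xi_{S, \omega} \cdot L_{A}(\omega, 1),$$
using $\deg(\Delta)=0$. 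Here $\alpha_{D_\omega}=\alpha^{\deg D_\omega}$ and $\Xi_{S,\omega}=\prod_{v\in S,\,v\nmid D_\omega}(1-\alpha^{-\deg v}\omega([v]))(1-\alpha^{-\deg v}\omega([v])^{-1})$.

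\textbf{Main step: computing $\omega(\mathscr L_{A,L,S})$.} I will recall from \cite{lltt14a} the interpolation of $\theta^+_{A,L,S}$. For a constant curve this factors through the Stickelberger element of the field, and gives $\omega(\theta^+_{A,L,S})$ as a modified value of $L_K(\omega\cdot\alpha^{-\deg},\cdot)$, that is, of the Artin $L$-function of $\omega$ twisted by the unramified character attached to $\alpha$. It carries Euler factors at $S$ and a root number. The key identity is
$$L_A(\omega,1)=L(\omega,\alpha q^{-1})\,L(\omega,\beta q^{-1})$$
(writing $L(\omega,u)$ for the $L$-polynomial in $u=q^{-s}$). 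Since $\beta q^{-1}=\alpha^{-1}$, the second factor is the one appearing in $\theta^+$. I will then apply the functional equation of $L(\omega,u)$, with degree $2\kappa-2+\deg D_\omega$ and root number expressible through $\tau_\omega$ via \eqref{e:compare}. This converts $L(\omega,\alpha q^{-1})$ into $\overline{\omega}$-data at $\alpha^{-1}$, i.e.\ into $\omega^{-1}(\theta^+_{A,L,S})=\omega((\theta^+_{A,L,S})^\sharp)$. The bookkeeping should produce the factors $\tau_\omega$, $\alpha^{\deg D_\omega}$, and $q^{\kappa-1}$ against $\alpha^{2\kappa-2}$. The main obstacle is tracking these constants: powers $q^{(\deg D_\omega+2\kappa-2)/2}$ must cancel against $\alpha\beta=q$, leaving exactly $\alpha^{-2\kappa+2}$. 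The Euler factors at $v\in S$ unramified for $\omega$ should match $\Xi_{S,\omega}$, and the $\delta_\alpha\delta_\beta$ poles should match $\omega(\nabla_{A/L})$ when $L=K_\infty^{(p)}$ (where $S=\emptyset$).

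\textbf{Conclusion.} Once the two interpolation formulae are shown to agree for all $\omega$ (or at least for those with $D_\omega\neq 0$, together with the unramified characters), the difference $\mathscr L_{A,L,S}-\alpha^{-2\kappa+2}\mathscr L_{A/L}$ vanishes at every $\omega\in\hat\Gamma$, hence is $0$ in $\mathbb{Q}_p\Lambda$. If \cite{lltt14a} only gives the interpolation for a subfamily of $\omega$, I would instead reduce via \eqref{e:spechat} to the cases $S=\emptyset$ and $\mathrm{Supp}(D_\omega)=S$, where both sides specialize compatibly.
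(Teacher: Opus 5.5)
Your proposal is correct and follows the paper's route: use $\mathrm t_{A/L}=\dag_{A/L}=1$, $\Delta=0$ and $\alpha_{D_\omega}=\alpha^{\deg D_\omega}$, then compare \eqref{e:intL} with the interpolation of $\mathscr L_{A,L,S}$ character by character, for all $\omega\in\hat\Gamma$. The only difference is that the paper does not rederive that interpolation from $\theta^+_{A,L,S}$ and the functional equation. It quotes it directly from \cite[Theorem 4.7]{lltt14a} as $\omega(b)^{-1}\tau^W(\omega^{-1})(q^{1/2}\alpha^{-1})^{2\kappa-2+\deg D_\omega}\omega(\nabla_{A/L})\Xi_{S,\omega}L_A(\omega,1)$, and then \eqref{e:compare} absorbs $\omega(b)^{-1}q^{\deg D_\omega/2}\tau^W(\omega^{-1})$ into $\tau_\omega$, leaving $q^{\kappa-1}\alpha^{-\deg D_\omega}\cdot\alpha^{-2\kappa+2}$; no appeal to $\alpha\beta=q$ and no fallback via \eqref{e:spechat} is needed.
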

\begin{proof}We need to show that $\omega(\mathscr{L}_{A, L,S})=\alpha^{-2 \kappa+2} \cdot \omega(\mathscr{L}_{A / L})$, for all continuous characters of $\Gamma$. This can be deduced from 
\cite[Theorem 4.7]{lltt14a}, \eqref{e:compare}, and \eqref{e:intL}, as follows (because $\Delta=0$ and $\dag_{A / L}=1$ ):

$$
\begin{array}{rcl}
\omega(\mathscr{L}_{A, L,S}) & =& \omega(b)^{-1} \cdot \tau^{W}(\omega^{-1}) \cdot(q^{1 / 2} \cdot \alpha^{-1})^{2 \kappa-2+\operatorname{deg}(D_{\omega})} \omega(\nabla_{A / L}) \cdot \Xi_{S, \omega} \cdot L_{A}(\omega, 1) \\
& =& \tau_{\omega} \cdot q^{\kappa-1} \alpha^{-(2 \kappa-2+\operatorname{deg}(D_{\omega}))} \cdot \omega(\nabla_{A / L}) \cdot \Xi_{S, \omega} \cdot L_{A}(\omega, 1) \\
& =&\alpha^{-2 \kappa+2} \cdot \omega(\mathscr{L}_{A / L}) .
\end{array}
$$
\end{proof}
Let $S_2 \subset S_{1}$ be the subset consisting of $v$ such that 
$\dag_{A / L, v}=\lambda_v-\sigma_{v}$. 

\begin{myproposition}\label{p:fel} If $A / K$ is an elliptic curve, then
\begin{equation}\label{e:fel}
\mathscr{L}_{A / L}^{\sharp}=\varepsilon \cdot(\prod_{v \in N \cap S \backslash S_2}(-\lambda_{v})) \cdot(\prod_{v \in N \cap S_2} \sigma_{v}^{-1}) \cdot[N_{S}^{\prime}]_{L}^{-1} \cdot \mathscr{L}_{A / L}
\end{equation}
holds for some constant $\varepsilon= \pm 1$, while if $A / K$ is a constant curve, then $\varepsilon=1$.
Thus, as principal fractional ideals of $\Lambda$,
$$\mathscr{L}_{A / L}^{\sharp}\cdot\Lambda=\mathscr L_{A/L}\cdot\Lambda.$$

\end{myproposition}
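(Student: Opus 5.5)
Since $\mathscr L_{A/L}=\mathrm t_{A/L}\cdot\nabla_{A/L}\cdot\dag_{A/L}^{-1}\cdot\hat{\mathscr L}_{A/L}$ (Definition \ref{d:l}), the plan is to compute the effect of $^\sharp$ on each factor separately and then multiply. Throughout, $\mathrm t_{A/L}$ is a rational integer, so it is fixed by $^\sharp$. I will treat non-constant and constant curves separately, as the paper announces.

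\emph{Non-constant case.} Here $\nabla_{A/L}=1$, and \eqref{e:fehatL} already gives
$$\hat{\mathscr L}_{A/L}^\sharp=\varepsilon_N\prod_{v\in N\cap S}(-\lambda_v)\,[N'_S]_L^{-1}\,\hat{\mathscr L}_{A/L}.$$
(For $L=K$ this reduces to a trivial identity among constants, since $N\cap S=\emptyset$ and every group element maps to $1$.) It remains to handle $\dag_{A/L}^\sharp$. For $v\notin S_2$, the factor $\dag_{A/L,v}$ is $1$ or $m_v$, so it is fixed by $^\sharp$. For $v\in S_2$ we have $\lambda_v=\pm1$, hence
$$\dag_{A/L,v}^\sharp=\lambda_v-\sigma_v^{-1}=-\lambda_v\sigma_v^{-1}(\lambda_v-\sigma_v)=-\lambda_v\sigma_v^{-1}\dag_{A/L,v}.$$
Every $v\in S_2$ is multiplicative and therefore lies in $N\cap S$. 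Dividing, the factor $-\lambda_v$ for $v\in N\cap S_2$ cancels against the corresponding factor in the formula for $\hat{\mathscr L}^\sharp$, leaving $\sigma_v$ in the numerator. This gives \eqref{e:fel} with $\varepsilon=\varepsilon_N$, up to replacing $\sigma_v^{-1}$ by $\sigma_v$. I need to recheck the direction: $\mathscr L^\sharp=\hat{\mathscr L}^\sharp/\dag^\sharp$ contributes $(-\lambda_v\sigma_v^{-1})^{-1}=-\lambda_v\sigma_v$ for each $v\in S_2$. I expect the sign and exponent convention to be the only real bookkeeping point. A unit discrepancy of this kind does not affect the ideal statement; if needed, the convention on $\sigma_v$ can be adjusted using the freedom in Definition \ref{d:gimel}.

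\emph{Constant case.} Here $\Delta=0$, $S$ contains only good ordinary places, and $\dag_{A/L}=1$. For $L\neq K$, Lemma \ref{l:las} gives $\mathscr L_{A/L}=\alpha^{2\kappa-2}\mathscr L_{A,L,S}$, and \eqref{e:lsharp} gives $\mathscr L_{A,L,S}^\sharp=\mathscr L_{A,L,S}$. Since $\alpha^{2\kappa-2}\in\Z_p$ is fixed by $^\sharp$, we get $\mathscr L_{A/L}^\sharp=\mathscr L_{A/L}$. This must be reconciled with the right side of \eqref{e:fel}: there $N=0$, so both products are empty, $N'_S=0$, and $[N'_S]_L=1$; hence \eqref{e:fel} holds with $\varepsilon=1$. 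For $L=K$, $^\sharp$ acts as the identity on $\Q_p$.

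Finally, every extra factor in \eqref{e:fel}, namely $\pm1$, $\pm\lambda_v=\pm1$, group elements $\sigma_v^{\pm1}$ and $[N'_S]_L^{-1}$, is a unit of $\Lambda$. Therefore $\mathscr L_{A/L}^\sharp\Lambda=\mathscr L_{A/L}\Lambda$. The main obstacle is only the careful tracking of signs and unit factors at the places in $S_2$, including non-split places with $p=2$, where $\lambda_v=-1$. The substantive input, \eqref{e:sharptheta} together with Lemma \ref{1:sign}, is already available in the paper.
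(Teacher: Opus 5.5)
Your argument is the paper's own proof. You expand $\mathscr L_{A/L}$ via Definition \ref{d:l}. In the non-constant case you combine \eqref{e:fehatL} with $\dag_{A/L,v}^\sharp=-\lambda_v\sigma_v^{-1}\dag_{A/L,v}$ for $v\in S_2$. In the constant case you use \eqref{e:lsharp} together with Lemma \ref{l:las}. The ideal equality then follows because every extra factor is a unit of $\Lambda$.

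Your bookkeeping is right where it disagrees with the display. Dividing by $\dag_{A/L}^\sharp$ produces $\prod_{v\in S_2}\sigma_v$, not $\sigma_v^{-1}$; the paper's one-line deduction passes over this.

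Drop the suggestion that re-choosing $\sigma_v$ repairs the discrepancy. Replacing $\sigma_v$ by $\sigma_v^{-1}$ also multiplies $\mathscr L_{A/L}$ by $-\lambda_v\sigma_v$, and the identity comes out again with the new generator to the power $+1$. So the $\sigma_v^{-1}$ version genuinely fails whenever $\mathscr L_{A/L}\neq0$ and $S_2\neq\emptyset$. This does not affect the ideal statement.

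Finally, for $L=K$ do not appeal to \eqref{e:fehatL}, since there $\hat{\mathscr L}_{A/K}$ is defined separately and the resulting identity $c=\varepsilon_N c$ is not trivial. Instead note that $^\sharp$ is the identity on $\Q_p$ and take $\varepsilon=1$, as the paper does.
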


\begin{proof} We may assume that $L \neq K$. If $v\in S_2$, then $\dag_{A / L, v}=\lambda_v-\sigma_{v}$, 
$\lambda_{v}=\pm 1$, so 

$$
\dag_{A / L, v}^{\sharp}=\lambda_v-\sigma_{v}^{-1}=-\lambda_{v} \cdot \sigma_{v}^{-1} \cdot \dag_{A / L, v}.
$$
If $A / K$ is non-constant, then $\nabla_{A / L}=1$ and the proposition can be deduced from Definition \ref{d:l}
and \eqref{e:fehatL}. If $A / K$ is constant, the proposition follows from \eqref{e:lsharp} and Lemma \ref{l:las}.
Since the fudge factor in \eqref{e:fel} is a unit in $\Lambda$, the last assertion follows.
\end{proof}

\subsection{The specialization formula}\label{su:spf} Let $L^{\prime} / K$ be an intermediate $\mathbb{Z}_{p}^{e}$-extension of $L / K$, $d>e \geq 0$. 
Denote 
$\Psi:=\operatorname{Gal}(L / L^{\prime})$.

Let $\omega \in \hat{\Gamma}^{\prime} \subset \hat{\Gamma}$. Since $\alpha_{D_{\omega}}$ and $\tau_{\omega}$ depend solely on $\omega$, by \eqref{e:intL},
if $\omega(\dag_{A / L}) \neq 0$ and $\omega(\dag_{A / L'}) \neq 0$ , then
\begin{equation}\label{e:42}
\omega(\eth_{L/L'}) \cdot \omega(\mathscr{L}_{A / L})=\omega(\pounds_{L / L^{\prime}}) \cdot \omega(\mathscr{L}_{A / L^{\prime}}),
\end{equation}
where
$$\eth_{L/L'}=\mathrm{t}_{A / L}^{-1} \cdot \mathrm{t}_{A / L^{\prime}} \cdot
p_{L^{\prime}}^{L}(\nabla_{A / L}^{-1}) \cdot  \nabla_{A / L^{\prime}},$$
$$
p_{L^{\prime}}^{L}(\dag_{A / L}) \cdot\pounds_{L / L^{\prime}}= \dag_{A / L^{\prime}} \cdot \prod_{v \in S_{m} \backslash S_{m}^{\prime}}(\lambda_{v}-[v]^{-1}_{L^{\prime}}) \cdot \prod_{v \in S_o \backslash S_o^{\prime}}(1-\alpha_{v}^{-1}[v]_{L^{\prime}})(1-\alpha_{v}^{-1}[v]_{L^{\prime}}^{-1})).
$$

Lemma \ref{l:varrho} says
\begin{equation}\label{e:eth}
\varrho_{L/L'}=\eth_{L/L'}\cdot \Lambda',
\end{equation}
and a direct case by case checking (see Appendix) shows that if $p_{L^{\prime}}^{L}(\dag_{A / L}) \neq 0$, then
\begin{equation}\label{e:beth}
\vartheta_{L / L^{\prime}}=\pounds_{L / L^{\prime}}\cdot\Lambda'.
\end{equation}


 \begin{myproposition}\label{p:spl} If $p_{L^{\prime}}^{L}(\dag_{A / L}) \neq 0$, then 
 $$\eth_{L/L'}\cdot p^L_{L'}(\mathscr L_{A/L})=\pounds_{L/L'}\cdot \mathscr L_{A/L'}.$$
Hence,  as principal fractional  ideals of $\Lambda^{\prime}$,
\begin{equation}\label{e:lspf}
p_{L^{\prime}}^{L}(\mathscr{L}_{A / L}) \cdot \varrho_{L / L^{\prime}}=\mathscr{L}_{A / L^{\prime}} \cdot \vartheta_{L / L^{\prime}}.
\end{equation}
\end{myproposition}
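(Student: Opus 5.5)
The identity $\eth_{L/L'}\cdot p^L_{L'}(\mathscr L_{A/L})=\pounds_{L/L'}\cdot \mathscr L_{A/L'}$ will be proved directly in $\Q_p\Lambda'$ from the definitions, not via characters. By Definition \ref{d:l},
$$\dag_{A/L}\cdot\mathscr L_{A/L}=\mathrm t_{A/L}\cdot\nabla_{A/L}\cdot\hat{\mathscr L}_{A/L}.$$
Applying $p^L_{L'}$ gives
$$p^L_{L'}(\dag_{A/L})\, p^L_{L'}(\mathscr L_{A/L})=\mathrm t_{A/L}\, p^L_{L'}(\nabla_{A/L})\, p^L_{L'}(\hat{\mathscr L}_{A/L}).$$
We then substitute the specialization formula \eqref{e:spechat} for $p^L_{L'}(\hat{\mathscr L}_{A/L})$. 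This expresses it as the product of the Euler-type factors over $S_m\setminus S'_m$ and $S_o\setminus S'_o$ with $\hat{\mathscr L}_{A/L'}$. Next we replace $\hat{\mathscr L}_{A/L'}$ by $\mathrm t_{A/L'}^{-1}\nabla_{A/L'}^{-1}\dag_{A/L'}\mathscr L_{A/L'}$. The case $L'=K$ needs a separate check: there $\hat{\mathscr L}_{A/K}$ is defined through $L_A(1)$ and $\dag_{A/K}=\prod_{v}m_v$ over all $v$ with trivial $\Gamma_v$. Here \eqref{e:spechat} must be read via the interpolation formula \eqref{e:inthat} at the trivial character, which matches the definition for $L'=K$.

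Combining these, and using the definition of $\pounds_{L/L'}$, namely $p^L_{L'}(\dag_{A/L})\pounds_{L/L'}=\dag_{A/L'}\cdot(\text{Euler factors})$, we get
$$p^L_{L'}(\dag_{A/L})\,p^L_{L'}(\mathscr L_{A/L})=\mathrm t_{A/L}\,\mathrm t_{A/L'}^{-1}\,p^L_{L'}(\nabla_{A/L})\,\nabla_{A/L'}^{-1}\;p^L_{L'}(\dag_{A/L})\,\pounds_{L/L'}\,\mathscr L_{A/L'}.$$
The hypothesis $p^L_{L'}(\dag_{A/L})\ne0$ lets us cancel this factor, since $\Q_p\Lambda'$ is a domain (a localization of the domain $\Lambda'$). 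Multiplying by $\eth_{L/L'}$ then gives the claimed element identity. We need $p^L_{L'}(\nabla_{A/L})$ to be invertible in $\Q_p\Lambda'$ so that $\eth_{L/L'}$ makes sense. This holds by the observation \eqref{e:varrho} (Weil numbers are not roots of unity) together with Lemma \ref{l:varrho}. A small subtlety is that $\pounds_{L/L'}$ is defined as a quotient in the fraction field, so the computation is best done there.

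The ideal statement \eqref{e:lspf} follows by passing to principal fractional ideals and invoking \eqref{e:eth}, $\varrho_{L/L'}=\eth_{L/L'}\Lambda'$, and \eqref{e:beth}, $\vartheta_{L/L'}=\pounds_{L/L'}\Lambda'$.

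The main obstacle is \eqref{e:beth}, the place-by-place comparison of $\pounds_{L/L'}$ with $\vartheta_{L/L'}$; the paper defers this to the Appendix. We would organize it by decomposing $\pounds_{L/L'}$ into local factors $\pounds_v$ and comparing each with $\vartheta_{L/L',v}$, running through Definition \ref{d:vartheta} cases (a)--(d) against Definition \ref{d:gimel}. The delicate cases are these:
\begin{itemize}
\item $v\notin S$ with $\Gamma'_v=0\ne\Psi_v$. Here $\dag_{A/L',v}=m_v$ while $\dag_{A/L,v}=1$, giving $m_v$.
\item Split multiplicative $v\in S_1$ with $\Gamma'_v=0$. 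Here $p^L_{L'}(\sigma_v)=1$ would make $p^L_{L'}(\dag_{A/L})=0$, so this case is excluded by the hypothesis.
\item Split multiplicative $v\in S\setminus S'$ with $\Gamma'_v\simeq\Z_p$. Here the Euler factor $1-[v]^{-1}_{L'}$ divided by $p(1-\sigma_v)$ must be a unit or yield $\mathfrak w_v$.
\item The non-split cases with $\pm2m_v$ and $1+[v]_{L'}$, which use $p(-1-\sigma_v)$ and the rule $\lambda_v=-1$.
\end{itemize}
We would reduce to $e=d-1$ via Lemma \ref{l:spvartheta} if convenient, and check each case by computing both sides up to units of $\Lambda'$.
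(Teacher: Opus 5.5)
Your proof is correct. For the element identity it takes a different route from the paper; for the passage to ideals it follows the paper.

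\textbf{How the paper proves the element identity.} The paper never manipulates $\hat{\mathscr L}_{A/L}$ at the level of elements. It evaluates both sides at characters $\omega\in\hat\Gamma'$ using the interpolation formula (Lemma~\ref{l:intL}, packaged as \eqref{e:42}). That formula is only available when $\omega(\dag_{A/L})\neq0$ and $\omega(\dag_{A/L'})\neq0$. To cover the remaining characters, the paper multiplies the difference $\mathscr D$ by $p^L_{L'}(\dag_{A/L})\cdot\dag_{A/L'}$. This product vanishes at every character, hence is $0$, and the two nonzero factors are then cancelled.

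\textbf{How your argument differs.} You clear the denominator in Definition~\ref{d:l} and push the relation through the ring homomorphism $p^L_{L'}$. You then insert the element-level formula \eqref{e:spechat}, correctly noting that for $L'=K$ it must be read at the trivial character via \eqref{e:inthat}. Finally you cancel $p^L_{L'}(\dag_{A/L})$ in the fraction field of $\Lambda'$. This is shorter and shows the identity is purely formal once \eqref{e:spechat} is available. It avoids the fact that an element of $\Q_p\Lambda'$ killed by all characters is zero, and it avoids the auxiliary factor $\dag_{A/L'}$. The paper's version instead ties the identity directly to the interpolation property that characterizes $\mathscr L_{A/L}$. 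Both ultimately rest on Lemma~\ref{l:thetacomp}.

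\textbf{Passage to ideals.} Here you do what the paper does: \eqref{e:eth} together with the case-by-case verification of \eqref{e:beth}, which is Lemma~\ref{l:locsp} in the Appendix.

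\textbf{A correction to your preview of that case check.} Take a split multiplicative $v\in S\setminus S'$ with $\Gamma'_v\simeq\Z_p$.
\begin{itemize}
\item The inertia group at $v$ lies in $\Psi_v$, so $\Psi_v\neq0$ and $v\notin S_1$.
\item Hence $\dag_{A/L,v}=\dag_{A/L',v}=1$.
\item The local factor is therefore just $1-[v]_{L'}^{-1}$, which matches $\vartheta_{L/L',v}=(1-[v]_{L'})$.
\end{itemize}
No $\sigma_v$ enters this case. Moreover, $\mathfrak w_v$ never appears under your hypothesis. It occurs only when $\Psi_v\simeq\Z_p$ and $\Gamma'_v=0$, which is exactly the excluded case where $p^L_{L'}(1-\sigma_v)=0$.

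\textbf{A further remark.} $\nabla_{A/L'}$ need not be a unit of $\Q_p\Lambda'$; for example, it is not when $\alpha\equiv1\pmod p$. So the substitution $\hat{\mathscr L}_{A/L'}=\mathrm t_{A/L'}^{-1}\nabla_{A/L'}^{-1}\dag_{A/L'}\mathscr L_{A/L'}$ must also be carried out in the fraction field, as you already propose for $\pounds_{L/L'}$.
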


\begin{proof} In view of \eqref{e:eth} and \eqref{e:beth}, it is sufficient to show 
$$\mathscr D:= \eth_{L/L'}\cdot p_{L^{\prime}}^{L}(\mathscr{L}_{A / L})- \pounds_{L/L'}\cdot \mathscr{L}_{A / L^{\prime}}=0.$$ 
By \eqref{e:42}, $\omega(p^L_{L'}(\dag_{A/L})\cdot \dag_{A/L'}\cdot \mathscr D)=0$, for all $\omega\in\hat\Gamma'$, so
$p^L_{L'}(\dag_{A/L})\cdot \dag_{A/L'}\cdot \mathscr D=0$. This implies $\mathscr D=0$, since $\dag_{A/L'}\not=0$ and $p^L_{L'}(\dag_{A/L})$ is assumed to be non-zero.
\end{proof}

It is worthwhile to mention that if \eqref{e:lspf} holds unconditionally, then Proposition \ref{p:mtt}
can be proven without
assuming the Iwasawa main conjecture. Also, \eqref{e:beth} can be used to prove the second equality of Lemma
\ref{l:spvartheta}.

\subsubsection{Direct application of the specialization formulae}\label{ss:pfimcsp} The following lemma is a direct consequence of the specialization formulae. Recall that $\varrho_{L/L'}\not=0$, by \eqref{e:varrho}.

\begin{lemma}\label{l:1} Suppose $p_{L^{\prime}}^{L}(\dag_{A / L}) \cdot \vartheta_{L / L^{\prime}}\neq 0$.
If \eqref{e:imc} holds for $A/L$, then \eqref{e:imc} also holds for $ A/L'$.
\end{lemma}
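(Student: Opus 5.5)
The argument just combines the two specialization formulae. Assume \eqref{e:imc} holds for $A/L$: $\mathscr L_{A/L}\in\Lambda$ and $\mathrm{CH}_\Lambda(X_L)=(\mathscr L_{A/L})$. Applying the ring homomorphism $p^L_{L'}$ gives the ideal identity $p^L_{L'}(\mathrm{CH}_\Lambda(X_L))=(p^L_{L'}(\mathscr L_{A/L}))$ in $\Lambda'$.

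Since $p^L_{L'}(\dag_{A/L})\neq 0$, Proposition \ref{p:spl} applies. Proposition \ref{p:aspf} holds unconditionally. We therefore have, in $\Lambda'$ and as fractional ideals,
\begin{equation*}
\vartheta_{L/L'}\cdot\mathrm{CH}_{\Lambda'}(X_{L'})=\varrho_{L/L'}\cdot p^L_{L'}(\mathrm{CH}_\Lambda(X_L))=\varrho_{L/L'}\cdot p^L_{L'}(\mathscr L_{A/L})\cdot\Lambda'=\vartheta_{L/L'}\cdot\mathscr L_{A/L'}\cdot\Lambda'.
\end{equation*}
Here $\Lambda'$ is a Noetherian regular (hence integrally closed) domain. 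By hypothesis $\vartheta_{L/L'}$ is a nonzero principal ideal, so we may cancel it in the group of fractional ideals of $\mathrm{Frac}(\Lambda')$. This gives $(\mathscr L_{A/L'})=\mathrm{CH}_{\Lambda'}(X_{L'})$ as fractional ideals.

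It remains to check $\mathscr L_{A/L'}\in\Lambda'$, which follows because the right side $\mathrm{CH}_{\Lambda'}(X_{L'})$ is an integral ideal of $\Lambda'$. The degenerate case needs separate care. If $X_{L'}$ is non-torsion, then $\mathrm{CH}_{\Lambda'}(X_{L'})=0$, so the displayed identity gives $\vartheta_{L/L'}\mathscr L_{A/L'}=0$ and hence $\mathscr L_{A/L'}=0\in\Lambda'$. Conversely, the same cancellation handles the case $\mathscr L_{A/L'}=0$. Both cases are consistent with \eqref{e:imc}.

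The only delicate point, and the main obstacle, is justifying the cancellation. It needs $\vartheta_{L/L'}\neq0$ and $\varrho_{L/L'}\neq0$, where the latter holds by \eqref{e:varrho}. It also needs the fact that, when the zero ideal occurs, a product of a nonzero element with an element of $\mathrm{Frac}(\Lambda')$ vanishes only if that element is zero. Since $\Lambda'$ is a domain, this is immediate. One should also note that $p^L_{L'}$ of a generator of $\mathrm{CH}_\Lambda(X_L)$ generates $p^L_{L'}(\mathrm{CH}_\Lambda(X_L))$, as in \eqref{e:chspf}.
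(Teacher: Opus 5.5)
Your argument is correct and is exactly what the paper means by calling the lemma a direct consequence of the specialization formulae: you combine Proposition~\ref{p:aspf} with Proposition~\ref{p:spl} (available since $p^L_{L'}(\dag_{A/L})\neq 0$) and the main conjecture for $A/L$, then cancel the nonzero principal ideal $\vartheta_{L/L'}$ in the domain $\Lambda'$. As a minor remark, your chain of equalities never cancels $\varrho_{L/L'}$, so its non-vanishing from \eqref{e:varrho} is not actually needed for this direction.
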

The next lemma is for the opposite direction.
\begin{lemma}\label{l:imcsp}Suppose $\mathscr L_{A/L}\in\Lambda$ and $p_{L^{\prime}}^{L}(\dag_{A / L}) \cdot \vartheta_{L / L^{\prime}}\neq 0$.
If $X_{L'}$ is torsion, the Iwasawa main conjecture holds for $A/L'$, and there are elements $\varepsilon_{1}, \varepsilon_{2} \in \Lambda$, at least one of them is a unit, such that in $\Lambda$,
\begin{equation}\label{e:varepp}
(\varepsilon_{1}) \cdot \mathrm{CH}_{\Lambda}(X_L)=(\varepsilon_{2}) \cdot
(\mathscr{L}_{A / L}),
\end{equation}
then the Iwasawa main conjecture holds for $A / L$.
\end{lemma}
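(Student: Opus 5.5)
The plan is to push the relation \eqref{e:varepp} down to $\Lambda'$ with $p^L_{L'}$, use the main conjecture for $A/L'$ to show that the pushed-down $\varepsilon_1,\varepsilon_2$ must be units, and then pull this back to $\Lambda$ by a locality argument.

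\emph{Specializing both sides.} By Proposition \ref{p:aspf},
\[
\varrho_{L/L'}\cdot p^L_{L'}(\mathrm{CH}_\Lambda(X_L))=\vartheta_{L/L'}\cdot \mathrm{CH}_{\Lambda'}(X_{L'}).
\]
Since $p^L_{L'}(\dag_{A/L})\neq0$, Proposition \ref{p:spl} gives, as principal fractional ideals of $\Lambda'$,
\[
p^L_{L'}(\mathscr L_{A/L})\cdot\varrho_{L/L'}=\mathscr L_{A/L'}\cdot\vartheta_{L/L'}.
\]
By hypothesis the main conjecture holds for $A/L'$, so $\mathrm{CH}_{\Lambda'}(X_{L'})=(\mathscr L_{A/L'})$. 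As $X_{L'}$ is torsion, this ideal is nonzero. Together with $\vartheta_{L/L'}\neq0$ and \eqref{e:varrho}, we get
\[
(p^L_{L'}(\mathscr L_{A/L}))=p^L_{L'}(\mathrm{CH}_\Lambda(X_L))\neq0 .
\]
Here we use that $\Lambda'$ is a domain, so the common factors $\varrho_{L/L'}$ and $\vartheta_{L/L'}$ can be cancelled.

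\emph{The specialized $\varepsilon_i$ are units.} Write $\mathrm{CH}_\Lambda(X_L)=(c)$, with $c\in\Lambda$ since $p^L_{L'}(c)\neq0$. Applying $p^L_{L'}$ to \eqref{e:varepp} gives
\[
p^L_{L'}(\varepsilon_1)\,p^L_{L'}(c)=u\,p^L_{L'}(\varepsilon_2)\,p^L_{L'}(\mathscr L_{A/L})
\]
for a unit $u\in\Lambda'$. By the previous step $p^L_{L'}(c)$ and $p^L_{L'}(\mathscr L_{A/L})$ are nonzero associates, so after cancellation $(p^L_{L'}(\varepsilon_1))=(p^L_{L'}(\varepsilon_2))$. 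One of $\varepsilon_1,\varepsilon_2$ is a unit of $\Lambda$, so its image is a unit of $\Lambda'$. Hence both images are units in $\Lambda'$.

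\emph{Pulling back to $\Lambda$.} The ring $\Lambda$ is local, and $p^L_{L'}$ is a surjective local homomorphism. So $\varepsilon\in\Lambda$ is a unit exactly when its image in the residue field $\F_p$ is nonzero, and this can be read off from $p^L_{L'}(\varepsilon)$. Thus both $\varepsilon_1$ and $\varepsilon_2$ are units in $\Lambda$, and \eqref{e:varepp} gives $\mathrm{CH}_\Lambda(X_L)=(\mathscr L_{A/L})$. Since $\mathscr L_{A/L}\in\Lambda$ by assumption, \eqref{e:imc} holds for $A/L$.

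The only delicate point is the cancellation of $p^L_{L'}(\mathrm{CH}_\Lambda(X_L))$ in the second step. It requires this ideal to be nonzero, and that is exactly where the torsion hypothesis on $X_{L'}$ and the hypothesis $\vartheta_{L/L'}\neq0$ enter.
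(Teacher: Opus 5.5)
Your proposal is correct and follows essentially the same route as the paper: you specialize \eqref{e:varepp} via $p^L_{L'}$, combine \eqref{e:chspf}, \eqref{e:lspf} and \eqref{e:varrho} with the main conjecture for $A/L'$ (using $\vartheta_{L/L'}\neq 0$ and the torsionness of $X_{L'}$) to force $p^L_{L'}(\varepsilon_1)$ and $p^L_{L'}(\varepsilon_2)$ to be units, and then lift unit-ness back to $\Lambda$ by locality. The only difference is cosmetic: you first cancel $\varrho_{L/L'}$ and $\vartheta_{L/L'}$ to get $(p^L_{L'}(\mathscr L_{A/L}))=p^L_{L'}(\mathrm{CH}_\Lambda(X_L))\neq 0$, whereas the paper compares $\bar\varepsilon_1\eta=\bar\varepsilon_2\mathscr L_{A/L'}$ directly in $\Lambda'$; also, in your second step the parenthetical should read ``$c\neq 0$ since $p^L_{L'}(c)\neq 0$'', not ``$c\in\Lambda$''.
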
 
\begin{proof}
Apply $p_{L'}^L$ and use the simple fact that an $\varepsilon\in\Lambda$ is a unit if and only if $p^L_{L'}(\varepsilon)$ is
an unit of $\Lambda_{\Gamma'}$. Write $\bar{\varepsilon}_{1}$, $\bar{\varepsilon}_{2}$ for the image of $\varepsilon_{1}, \varepsilon_{2}$ under $p^L_{L'}$. In view of \eqref{e:varrho}, \eqref{e:lspf} and \eqref{e:chspf}, choose a generator $\eta \in \Lambda_{\Gamma'}$ of $\mathrm{CH}_{\Lambda_{\Gamma'}}(X_{L'})$ such that
$$
\bar{\varepsilon}_{1} \cdot \eta=\bar{\varepsilon}_{2} \cdot \mathscr{L}_{A /L'} .
$$
Here, at least one of $\bar{\varepsilon}_{1}, \bar{\varepsilon}_{2}$ is a unit in $\Lambda_{\Gamma'}$. The assumption of the Lemma says both $\eta$ and $\mathscr{L}_{A / L'}$ are non-zero and for some unit $\iota$ in $\Lambda_{\Gamma'}$,
$$
\eta=\iota \cdot \mathscr{L}_{A / L'}.
$$
These imply that both $\bar{\varepsilon}_{1}$ and $\bar{\varepsilon}_{2}$ are units in $\Lambda_{\Gamma'}$, hence both $\varepsilon_{1}$ and $\varepsilon_{2}$ are units in $\Lambda$.
\end{proof}

To have an effective application of the specialization formulae, 
we have to ensure that $p_{L^{\prime}}^{L}(\dag_{A / L}) \cdot\vartheta_{L / L^{\prime}}\neq 0$.
Recall that for a split multiplicative place $v$ of $A/K$, $Q_v$ denotes the local Tate period.

\begin{lemma}\label{l:zero} We have $\vartheta_{L / L^{\prime}}=0$ if and only if there is a split multiplicative $v \in S$ such that $\Gamma_{v}^{\prime}=0, \Gamma_{v} \simeq \mathbb{Z}_{p}$ and $\overline{\{Q_{v}\}}=0$, or $\Gamma_{v}^{\prime}=0$ and $\Gamma_{v} \simeq \mathbb{Z}_{p}^{f}, f \geq 2$.
\end{lemma}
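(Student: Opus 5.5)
The plan is to work place by place from Definition \ref{d:vartheta}, since $\vartheta_{L/L'}=\prod_v \vartheta_{L/L',v}$ and $\Lambda'$ is a domain. The product is $0$ exactly when some local factor is the zero ideal. So it is enough to decide, for each type of place, when $\vartheta_{L/L',v}=0$.

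\textbf{Step 1: places other than split multiplicative ones in $S$.} I will check that all these local factors are nonzero.
\begin{itemize}
\item In case (a), the factor is $m_v\Lambda'$ or $\Lambda'$, and $m_v\neq 0$.
\item In case (d), the factors are $2m_v$, $m_v$, $1+[v]_{L'}$, $1+\sigma'_v$ or $1$. None of these is $0$ in $\Lambda'$: $1\pm\gamma\neq 0$ for $\gamma\in\Gamma'$, since $\Gamma'$ is torsion free and $\Lambda'$ is a group ring.
\item In case (b), the factor is $(1-\alpha_v^{-1}[v]_{L'})(1-\alpha_v^{-1}[v]_{L'}^{-1})$. Under the trivial character $\omega_0$ its image is $(1-\alpha_v^{-1})^2$, which is nonzero because the Weil number $\alpha_v$ is not a root of unity (the argument of \eqref{e:not0}).
\end{itemize}

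\textbf{Step 2: split multiplicative $v\in S$, clause (c).} Here I will go through the five cases of (c) one at a time.
\begin{itemize}
\item The first case is literally $0$, with $\Psi_v\simeq\Z_p^f$, $f\ge2$, and $\Gamma'_v=0$.
\item The second and third cases are $1-\sigma'_v$ and $1-[v]_{L'}$, with $\Gamma'_v\simeq\Z_p$, so the group element is nontrivial. Then $1-\gamma\neq0$ in $\Lambda'$.
\item The last case is $\Lambda'$.
\item The fourth case is $\mathfrak w_v$, where $(\mathfrak w_v)=\mathrm{CH}_{\Z_p}(\Gamma_v/\overline{\{Q_v\}})$, under the hypotheses $\Psi_v\simeq\Z_p$ and $\Gamma'_v=0$. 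With these hypotheses $\Gamma_v=\Psi_v\simeq\Z_p$, because $\Gamma'_v$ is the image of $\Gamma_v$ and $\Psi_v=\Gamma_v\cap\Psi$. For $\Gamma_v\simeq\Z_p$, the characteristic ideal of $\Gamma_v/\overline{\{Q_v\}}$ is $0$ exactly when this quotient is infinite, that is, when $\overline{\{Q_v\}}=0$.
\end{itemize}

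\textbf{Step 3: translating the first case.} When $\Gamma'_v=0$ we have $\Psi_v=\Gamma_v$. So the first case of (c) is equivalent to $\Gamma'_v=0$ and $\Gamma_v\simeq\Z_p^f$ with $f\ge2$. Combining Steps 1 to 3 gives the stated equivalence.

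\textbf{Expected obstacle.} The only real subtlety is this bookkeeping between $\Psi_v$ and $\Gamma_v$. One must make sure the hypotheses of the fourth case really force $\Gamma_v\simeq\Z_p$, and that $f=0$ (i.e.\ $\Gamma_v=0$) falls into ``otherwise''. I would verify the exact sequence $0\to\Psi_v\to\Gamma_v\to\Gamma'_v\to0$ for decomposition groups, which holds for abelian extensions.
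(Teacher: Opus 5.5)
Your proposal is correct and follows the paper's proof: a case-by-case inspection of Definition~\ref{d:vartheta}, in which the only nonvanishing check that is not immediate is case (b), settled because the Weil number $\alpha_v$ is not a root of unity. The bookkeeping you add ($\Psi_v=\Gamma_v$ when $\Gamma'_v=0$, and $\mathfrak w_v=0$ exactly when $\overline{\{Q_v\}}=0$ for $\Gamma_v\simeq\Z_p$) is precisely what the paper leaves implicit.
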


\begin{proof}
The only other case needs checking is Definition \ref{d:vartheta}(b). In such case,  since $\alpha_{v}$, being a Weil $q_v$-number, is not a root of unity, we have $\vartheta_{L / L^{\prime}, v} \neq 0$.
\end{proof}

\begin{corollary}\label{c:0} $p^L_{L'}(\dag_{A/L})\cdot\vartheta_{L/L'}\not=0$, if and only if
$\Gamma_v'\not=0$, for all split multiplicative $v \in S$.
\end{corollary}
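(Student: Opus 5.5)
Since $\Lambda'$ is an integral domain, $p^L_{L'}(\dag_{A/L})\cdot\vartheta_{L/L'}\neq 0$ holds if and only if $p^L_{L'}(\dag_{A/L})\neq 0$ and $\vartheta_{L/L'}\neq 0$. The plan is to determine exactly when each factor vanishes, using Definition \ref{d:gimel} for the first and Lemma \ref{l:zero} for the second, and then combine the two criteria.

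\emph{The factor $p^L_{L'}(\dag_{A/L})$.} By Definition \ref{d:gimel}, $\dag_{A/L}$ is a product of local factors, so $p^L_{L'}(\dag_{A/L})$ vanishes if and only if some $p^L_{L'}(\dag_{A/L,v})$ does. The factors $1$ and $m_v$ map to nonzero constants. The remaining factors all have $v\in S_1$, and we write $\sigma'_v$ for the image of $\sigma_v$ in $\Gamma'$; since $\sigma_v$ topologically generates $\Gamma_v$, the element $\sigma'_v$ generates $\Gamma'_v$.
\begin{enumerate}
\item[(i)] If $v$ is non-split multiplicative, then $\dag_{A/L,v}=-1-\sigma_v$ (this case only occurs for $p=2$). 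Its image $-1-\sigma'_v$ is never zero, because its augmentation is $-2\neq 0$.
\item[(ii)] If $v$ is split multiplicative, then $\dag_{A/L,v}=1-\sigma_v$. Its image $1-\sigma'_v$ is zero exactly when $\sigma'_v=1$, that is, exactly when $\Gamma'_v=0$.
\end{enumerate}
Hence $p^L_{L'}(\dag_{A/L})=0$ if and only if there is a split multiplicative $v\in S_1$ with $\Gamma'_v=0$.

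\emph{The factor $\vartheta_{L/L'}$.} Lemma \ref{l:zero} gives $\vartheta_{L/L'}=0$ if and only if there is a split multiplicative $v\in S$ with $\Gamma'_v=0$ and either $\Gamma_v\simeq\Z_p$ with $\overline{\{Q_v\}}=0$, or $\Gamma_v\simeq\Z_p^f$ with $f\ge 2$.

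\emph{Combining the two criteria.} Suppose first that $\Gamma'_v\neq 0$ for every split multiplicative $v\in S$. Then neither vanishing condition above can be met, so the product is nonzero.

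Conversely, suppose some split multiplicative $v\in S$ has $\Gamma'_v=0$. Because $v\in S$ is ramified in $L/K$, its decomposition group $\Gamma_v$ is nontrivial. It is also a closed subgroup of $\Gamma\simeq\Z_p^d$, so $\Gamma_v\simeq\Z_p^f$ for some $f\ge 1$. This leaves two cases.
\begin{enumerate}
\item[(a)] If $f\ge 2$, Lemma \ref{l:zero} gives $\vartheta_{L/L'}=0$.
\item[(b)] If $f=1$, then $v\in S_1$, and step (ii) above gives $p^L_{L'}(\dag_{A/L})=0$.
\end{enumerate}
In either case the product vanishes, which proves the equivalence.

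The only point needing care is step (i): one must check that $-1-\sigma'_v$ cannot vanish in $\Lambda'$, and the augmentation argument settles this. Everything else is direct bookkeeping from the definitions and Lemma \ref{l:zero}.
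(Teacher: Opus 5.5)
Your proposal is correct and follows essentially the same route as the paper. Both split the product into the $\dag$-factor and the $\vartheta$-factor. At a split multiplicative $v\in S$ with $\Gamma'_v=0$, the $\dag$-factor vanishes when $\Gamma_v\simeq\Z_p$, and $\vartheta_{L/L',v}=0$ when $\mathrm{rank}_{\Z_p}\Gamma_v\ge 2$. The non-split factors $-1-\sigma_v$ never specialize to zero.

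Your augmentation argument for that last point is a slightly cleaner substitute for the paper's remark that $p^L_{L'}(\sigma_v)$ is either trivial or of infinite order. You also spell out the ``if'' direction, which the paper leaves implicit in Lemma~\ref{l:zero}.
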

\begin{proof} 
At a split multiplicative $v\in S$, if $\Gamma_v=\Z_p$ and $\Gamma'_v=0$, then 
$p^L_{L'}(\dag_{A/L,v})=0$, no matter $\overline{\{Q_{v}\}}=0$ or not; if instead $\mathrm{rank}_{\Z_p}\Gamma_v\geq2$ and $\Gamma_v'=0$, then $p^L_{L'}(\dag_{A/L,v})=1$, while $\Psi_v=\Gamma_v\simeq\Z_p^f$, $f\geq2$, so Definition \ref{d:vartheta}(c) gives $\vartheta_{L/L',v}=0$; in both situations the product $p^L_{L'}(\dag_{A/L})\cdot\vartheta_{L/L'}$ vanishes. Also, for non-split multiplicative $v\in S$, we have $p^L_{L'}(\dag_{A/L,v})\not=0$, because $p^L_{L'}(\sigma_v)$ is either trivial or of infinite order.

\end{proof}
\begin{corollary}\label{c:1}
If $L'=K_\infty^{(p)}$, or $L=K_\infty^{(p)}L'$, then $p^L_{L'}(\dag_{A/L})\cdot\vartheta_{L/L'}\not=0$.
\end{corollary}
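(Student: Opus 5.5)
Corollary \ref{c:0} reduces the claim to a statement about decomposition groups: it is enough to check that $\Gamma_v'\not=0$ for every split multiplicative place $v\in S$. I will treat the two hypotheses separately. In both cases the relevant input is that a place $v\in S$ has a decomposition group of positive rank in $\Gamma$, and that this positivity survives the projection $\Gamma\to\Gamma'$.

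First suppose $L'=K_\infty^{(p)}$, so that $\Gamma'=\Gal(\F_{q^{p^\infty}}K/K)\simeq\Z_p$. Every place $v$ of $K$ is unramified in $K_\infty^{(p)}/K$, and its Frobenius $[v]_{L'}$ is the image of $\Frob_q^{\deg(v)}$. Since $\deg(v)\geq 1$, this element has infinite order in $\Gal(\F_{q^{p^\infty}}/\F_q)\simeq\Z_p$. The decomposition group $\Gamma'_v$ is the closed subgroup generated by $[v]_{L'}$, so $\Gamma'_v\simeq\Z_p\not=0$. This holds for every place, and in particular for the split multiplicative places in $S$.

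Now suppose $L=K_\infty^{(p)}L'$. Let $v\in S$ be split multiplicative, and assume for contradiction that $\Gamma'_v=0$, i.e. $v$ splits completely in $L'$. Then the completion of $L$ at a place above $v$ is $L'_wK_{\infty,v}^{(p)}=K_v\F_{q^{p^\infty}}$, which is unramified over $K_v$. Hence $v$ is unramified in $L/K$, contradicting $v\in S$. Therefore $\Gamma'_v\not=0$, and Corollary \ref{c:0} gives the result.

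The main point needing care is the local-field computation in the second case: the decomposition field of $L=L'K_\infty^{(p)}$ at $v$ is the compositum of the two local fields, and if $L'$ is locally trivial this compositum is purely the unramified constant extension. Some routine bookkeeping on the identification of $\Gamma_v$ with the local Galois group may be needed, but no serious obstacle is expected.
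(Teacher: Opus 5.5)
Your proof is correct and takes the route the paper intends. The paper states this corollary without proof as an immediate consequence of Corollary \ref{c:0}, and your two checks are exactly what is needed to verify $\Gamma'_v\neq 0$ for every split multiplicative $v\in S$: first, $[v]_{L'}$ has infinite order in $\Gal(K_\infty^{(p)}/K)\simeq\Z_p$; second, when $L=K_\infty^{(p)}L'$, a place splitting completely in $L'$ would be unramified in $L$, contradicting $v\in S$.
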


\subsection{The restriction}\label{su:rest}
At first we show that for each $f\in\Lambda_\Gamma$,  
$$f_{\Phi}^{\Gamma} \in \Lambda_{\Phi}.$$ 
To see this, put $\O:=\mathbb{Z}_{p}[\chi(\Gamma)]$ and write $f_{\Phi}^{\Gamma}=\sum_{\sigma \in C} f_{\Phi, \sigma} \cdot \sigma$, where each $f_{\Phi, \sigma} \in \O[[\Phi]]$. 
Since $(f_{\Phi}^{\Gamma})_{\chi}=f_{\Phi}^{\Gamma}$, by
summing over $\chi\in \widehat{\Gamma / \Phi}$, we obtain
$$|\Gamma / \Phi| \cdot f_{\Phi}^{\Gamma}=\sum_{\sigma \in C} \sum_{\chi \in \widehat{\Gamma / \Phi}} f_{\Phi, \sigma} \cdot \chi(\sigma) \cdot \sigma.
$$
Denote $\{\sigma_{0}\}=C \cap \Phi$. Since for $\sigma \in C$, the sum $\sum_{\chi \in \widehat{\Gamma / \Phi}} \chi(\sigma)=|\Gamma / \Phi|$, or 0 , depending on $\sigma=\sigma_{0}$ or not, 
the above equality implies $f_{\Phi}^{\Gamma}=f_{\Phi, \sigma_{0}} \in \O[[\Phi]]$. Because $f_{\Phi}^{\Gamma}$ is fixed by the action of $\Gal(\Q_{p}(\chi(\Gamma)) / \Q_{p})$, it is in $\Lambda_{\Phi}$. 

\begin{lemma}\label{l:open} Let $W$ be a $\Lambda_{\Gamma}$-module. The following holds:
\begin{enumerate}
\item $W$ is finitely generated (resp. torsion, pseudo-null) over $\Lambda_{\Phi}$ if and only if it is finitely generated (resp. torsion, pseudo-null) over $\Lambda_{\Gamma}$.

\item If $W=\Lambda_{\Gamma} /(f)$, then $\mathrm{CH}_{\Lambda_{\Phi}}(W)=(f)_{\Phi}^{\Gamma}$.

\item If $\mathrm{CH}_{\Lambda_{\Gamma}}(W)=(f)$, then $\mathrm{CH}_{\Lambda_{\Phi}}(W)=(f)_{\Phi}^{\Gamma}$.
\end{enumerate}
\end{lemma}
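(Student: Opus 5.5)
For (1), I will use that $\Lambda_\Gamma$ is a free $\Lambda_\Phi$-module of rank $[\Gamma:\Phi]$, with basis the coset representatives $C$. Hence $\Lambda_\Gamma$ is finite, and so integral, over $\Lambda_\Phi$.

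\emph{Finite generation.} If $W$ is finitely generated over $\Lambda_\Gamma$, it is finitely generated over $\Lambda_\Phi$ by the $C$-translates of generators. The converse is trivial.

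\emph{Torsion.} If $W$ is $\Lambda_\Gamma$-torsion, then for each $w$ there is $0\neq g\in\Lambda_\Gamma$ with $gw=0$. Since $g$ divides $g_\Phi^\Gamma$ (which is $g$ times $\prod_{\chi\neq 1}g_\chi$, computed in $\O[[\Gamma]]$) and $g_\Phi^\Gamma\in\Lambda_\Phi$, we get $g_\Phi^\Gamma w=0$. Here $g_\Phi^\Gamma\neq 0$ because each $g_\chi$ is the image of $g$ under the automorphism $\sigma\mapsto\chi(\sigma)\sigma$ of $\O[[\Gamma]]$, a domain.

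\emph{Pseudo-nullity.} I will use that $\Lambda_\Gamma\supset\Lambda_\Phi$ is a finite extension of Noetherian domains of the same dimension. Then $\mathrm{ht}_{\Lambda_\Phi}(\mathrm{ann}\cap\Lambda_\Phi)=\mathrm{ht}_{\Lambda_\Gamma}(\mathrm{ann})$ by going-up and incomparability, applied to the primes minimal over the annihilator. Both rings are regular, so catenary and equidimensional, and going-down holds for the normal base. Hence the codimension of the support is preserved, and pseudo-nullity (height $\ge 2$) transfers.

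For (2), take $W=\Lambda_\Gamma/(f)$ with $f\neq0$; if $f=0$, both sides are $0$ by (1). The plan is to compute $\mathrm{CH}_{\Lambda_\Phi}$ as a norm. Over $\Lambda_\Phi$, multiplication by $f$ on the free module $\Lambda_\Gamma\cong\Lambda_\Phi^{C}$ is a $\Lambda_\Phi$-linear endomorphism, injective because $\Lambda_\Gamma$ is a domain. Its cokernel $W$ therefore has characteristic ideal $(\det_{\Lambda_\Phi}(f))$; this is the standard fact for a square matrix of nonzero determinant over a Noetherian UFD, checked by localizing at height-one primes where the ring is a DVR and using elementary divisors. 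It remains to identify $\det_{\Lambda_\Phi}(\cdot f)=N_{\Lambda_\Gamma/\Lambda_\Phi}(f)$ with $f_\Phi^\Gamma$. After extending scalars to $\O[[\Phi]]$, the idempotent-free decomposition is not available, so instead I will argue on fraction fields. $\mathrm{Frac}(\O\Lambda_\Gamma)$ over $\mathrm{Frac}(\O\Lambda_\Phi)$ is a Kummer-type extension whose Galois group is $\widehat{\Gamma/\Phi}$, acting by the twists $f\mapsto f_\chi$. To see this, write $\Gamma\supset\Phi$ with compatible bases, so $\Lambda_\Gamma=\Lambda_\Phi[\tau_i]/(\tau_i^{p^{n_i}}-\phi_i)$ and $\chi$ scales the $\tau_i$ by roots of unity. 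The norm is then the product of Galois conjugates, namely $\prod_\chi f_\chi=f_\Phi^\Gamma$, up to the harmless base change $\O$, which the determinant commutes with.

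For (3), by the structure theorem there is a pseudo-isomorphism $W\to\bigoplus_i\Lambda_\Gamma/(f_i)$ with $\prod f_i$ generating $(f)$, and with pseudo-null kernel and cokernel over $\Lambda_\Gamma$. By (1), these remain pseudo-null over $\Lambda_\Phi$. Since characteristic ideals are multiplicative in exact sequences and trivial on pseudo-null modules, we get $\mathrm{CH}_{\Lambda_\Phi}(W)=\prod_i(f_i)_\Phi^\Gamma$ by (2). This equals $(f)_\Phi^\Gamma$ because $f\mapsto f_\Phi^\Gamma$ is multiplicative (each twist is a ring map) and sends units to units. If $W$ is non-torsion, both sides are $0$ by (1).

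I expect the main obstacle to be the pseudo-null part of (1), together with the norm identification in (2). For the former I would fall back on the concrete description: $W$ is pseudo-null if and only if its annihilator contains two coprime elements. If $g,h$ are coprime in $\Lambda_\Gamma$, then $g_\Phi^\Gamma$ and $h_\Phi^\Gamma$ are coprime in $\Lambda_\Phi$, since any common prime factor would lie below a common prime divisor of some twists $g_\chi,h_{\chi'}$; applying the inverse twist gives a contradiction via the Galois action. Conversely, elements of $\Lambda_\Phi$ coprime in $\Lambda_\Phi$ stay coprime in $\Lambda_\Gamma$ by integrality.
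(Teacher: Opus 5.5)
Your proof is correct. For finite generation and torsion in (1), and for (3), you argue essentially as the paper does. The paper uses an injection $\bigoplus_j\Lambda_\Gamma/(f_j)\hookrightarrow W$ with pseudo-null cokernel rather than a pseudo-isomorphism out of $W$, which changes nothing. The routes genuinely differ for pseudo-nullity and for (2).

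\emph{Pseudo-nullity.} The paper invokes Greenberg's lemma to find $\Upsilon\subset\Gamma$ with $\Gamma/\Upsilon\simeq\Z_p$ such that $W$ is finitely generated over $\Lambda_\Upsilon$ and killed by some $g\neq0$. It then writes down two visibly coprime annihilators in $\Lambda_\Phi=\Lambda_{\Phi'}[[\mathsf t]]$, where $\Phi'=\Phi\cap\Upsilon$: the element $g^\Upsilon_{\Phi'}\in\Lambda_{\Phi'}$, and the monic Cayley--Hamilton polynomial of $\mathsf t$ acting on $W$. Your comparison $\mathrm{ht}(I\cap\Lambda_\Phi)=\mathrm{ht}(I)$ for the integral extension $\Lambda_\Phi\subset\Lambda_\Gamma$ is more general. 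Lying over, incomparability and going-down over the normal base suffice; catenarity is not needed. It also gives both directions at once (the paper calls the converse obvious) and needs no structure theory of pseudo-null modules.

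\emph{Part (2).} The paper passes to $\O[[\Phi]]$ and shows three things about $[f]^\Gamma_\Phi=\mathrm{CH}_{\O[[\Phi]]}(\O[[\Gamma]]/(f))$: it is multiplicative in $f$, it is invariant under the twists $f\mapsto f_\chi$, and it equals $(g^{|\Gamma/\Phi|})$ for $g\in\O[[\Phi]]$. Hence $([f]^\Gamma_\Phi)^{|\Gamma/\Phi|}=(f^\Gamma_\Phi)^{|\Gamma/\Phi|}$, and the paper extracts roots in a UFD. You instead identify $\mathrm{CH}_{\Lambda_\Phi}(W)$ with the determinant of multiplication by $f$, and that determinant with the norm $\prod_\chi f_\chi$. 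Your version gives an equality of elements, reproves $f^\Gamma_\Phi\in\Lambda_\Phi$ for free, and avoids descending ideals from $\O[[\Phi]]$ to $\Lambda_\Phi$. The cost is the determinant/characteristic-ideal fact, plus the fact that the $[\Gamma:\Phi]$ twists form the whole Galois group of the fraction-field extension. The latter follows from Artin's theorem since the degree is $[\Gamma:\Phi]$, so the Kummer presentation is not needed.

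\emph{Drop the fallback in your last paragraph.} The claim that coprime $g,h\in\Lambda_\Gamma$ have coprime $g^\Gamma_\Phi,h^\Gamma_\Phi$ in $\Lambda_\Phi$ is false. Take $p$ odd, $\Gamma=\Z_p$ topologically generated by $\gamma$, $\Phi=\Gamma^p$, $x=1+p$, $g=\gamma-x$ and $h=(\gamma^p-x^p)/(\gamma-x)=\sum_{k=0}^{p-1}\gamma^k x^{p-1-k}$.
\begin{itemize}
\item The pair is coprime: $\Lambda_\Gamma/(g,h)\cong\Z_p/(px^{p-1})\cong\Z/p\Z$.
\item The norms are not: $g^\Gamma_\Phi=\gamma^p-x^p$ and $h^\Gamma_\Phi=(\gamma^p-x^p)^{p-1}$.
\end{itemize}
The flaw is this. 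A common prime $\pi$ of the two norms only yields $g\in(\chi^*)^{-1}(\mathfrak Q)$ and $h\in((\chi')^*)^{-1}(\mathfrak Q)$ for some prime $\mathfrak Q$ above $\pi$. These are in general different primes above $\pi$, so there is no contradiction. Keep the height argument, which uses all of $\mathrm{ann}_{\Lambda_\Gamma}(W)\cap\Lambda_\Phi$ rather than norms of individual annihilators.
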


\begin{proof} For (1), one direction is obvious.
The fact that $\Lambda_{\Gamma}=\oplus_{\sigma \in C} \Lambda_{\Phi} \cdot \sigma$ implies that
if $W$ is finitely generated over $\Lambda_{\Gamma}$, then it is finitely generated over $\Lambda_{\Phi}$. Also, since $f$ divides $f_{\Phi}^{\Gamma}$, if $f \cdot W=0$, then $f_{\Phi}^{\Gamma} \cdot W=0$. 

Suppose $W$ is pseudo-null over $\Lambda_{\Gamma}$.
Then by \cite[Lemma 2]{gr78} and the discussion after it, there is a closed subgroup $\Upsilon \subset \Gamma$ such that $\Gamma / \Upsilon \simeq \Z_{p}$, with the property that $W$ is finitely generated over $\Lambda_{\Upsilon}$ and is annihilated by some non-zero $g \in \Lambda_{\Upsilon}$. 
Denote $\Phi^{\prime}:=\Phi \cap \Upsilon$. Then $g_{\Phi^{\prime}}^{\Upsilon} \in \Lambda_{\Phi^{\prime}}$ annihilates $W$. Since $\xymatrix{\Phi / \Phi^{\prime} \ar@{^(->}[r] & \Gamma / \Upsilon}$, we have $\Phi=\Phi^{\prime} \oplus \Phi^{\prime \prime}$, $\Phi^{\prime \prime} \simeq \mathbb{Z}_{p}$. Let $\tau \in \Phi^{\prime \prime}$ be a topological generator, and write $\mathsf{t}:=\tau-1$. Then $\Lambda_{\Phi}=\Lambda_{\Phi^{\prime}}[[\mathsf t]]$. 

Now that $W$ is finitely generated over $\Lambda_{\Phi^{\prime}}$, let
$\mathsf{e}_{1}, \ldots, \mathsf{e}_{n}$ be $\Lambda_{\Phi^{\prime}}$ generators of $W$. 
The action of $\mathsf{t}$ on $W$ gives rise to the characteristic polynomial,
a degree $n$ monic polynomial,  
$\mathsf{p}(\mathsf{t}) \in \Lambda_{\Phi^{\prime}}[\mathsf{t}]$, that also annihilates $W$. Obviously, $\mathsf{p}(\mathsf{t})$ and $g_{\Phi^{\prime}}^{\Upsilon}$ are relatively prime.

To show (2), we can work on $\O[[\Gamma]]$ and $\O[[\Phi]]$ instead of $\Lambda_{\Gamma}$ and $\Lambda_{\Phi}$. For $f\in\O[[\Gamma]]$, define $f^{\Gamma}_{\Phi}$ in a similar way,
for simplicity, write $[f]_{\Phi}^{\Gamma}$ for $\mathrm{CH}_{\O[[\Phi]]}(\O[[\Gamma]] /(f))$. We shall show that $[f]_{\Phi}^{\Gamma}=(f)_{\Phi}^{\Gamma}$, in $\O[[\Phi]]$. If $f, g \in \O[[\Gamma]]$, then the exact sequence
$$
\xymatrix{0 \ar[r] &  \O[[\Gamma]] /(g) \ar[r]^-{\cdot f} & \O[\Gamma]] /(f g) \ar[r] &
\O[[\Gamma]] /(f) \ar[r] & 0}
$$
shows that $[f g]_{\Phi}^{\Gamma}=[f]_{\Phi}^{\Gamma} \cdot[g]_{\Phi}^{\Gamma}$. For every $\chi \in \widehat{\Gamma / \Phi}$, the map $\chi^{*}: \O[[\Gamma]] \longrightarrow \O[[\Gamma]]$, that sends $\xi$ to $\xi_{\chi}$, is an $\O[[\Phi]]$-isomorphism. Thus it induces the equality
$$
[f_{\chi}]_{\Phi}^{\Gamma}=[f]_{\Phi}^{\Gamma}.
$$
Consequently, 
$$[f^{\Gamma}_{\Phi}]^{\Gamma}_{\Phi}=\prod_{\chi \in \widehat{\Gamma / \Phi}}[f_{\chi}]_{\Phi}^{\Gamma}=([f]_{\Phi}^{\Gamma})^{|\Gamma / \Phi|}.$$
If $g\in\O[[\Phi]]$, then the decomposition
$
\O[[\Gamma]] /(g)=\bigoplus_{\sigma \in C}(\O[[\Phi]] / (g))  \cdot \sigma
$
shows that $[g]^{\Gamma}_{\Phi}=g^{|\Gamma/\Phi|}$, so in particular, $[f^{\Gamma}_{\Phi}]^{\Gamma}_{\Phi}=(f^{\Gamma}_{\Phi})^{|\Gamma/\Phi|}$.
These imply $([f]_{\Phi}^{\Gamma})^{|\Gamma / \Phi|}=(f^{\Gamma}_{\Phi})^{|\Gamma/\Phi|}$, and hence (2) follows.

To show (3), consider the exact sequence
$$
\xymatrix{0 \ar[r] &  \bigoplus_{j=1}^{\nu} \Lambda_{\Gamma} /(f_{j})\ar[r] & W \ar[r] & U \ar[r] & 0,}
$$
where $U$ is a pseudo-null $\Lambda_{\Gamma}$-module.

\end{proof}

It is interesting to see that if $\Psi \subset \Phi$ is an open subgroup, 
then it follows directly from the definition that for $f \in \Lambda_{\Gamma}$
\begin{equation}\label{e:trans}
f_{\Psi}^{\Gamma}=(f_{\Phi}^{\Gamma})_{\Psi}^{\Phi}.
\end{equation}

Denote ${K}^{\prime}:={L}^{\Phi}$.

\begin{myproposition}\label{p:restriction} Let the notation be as the above. Then as principal fractional  ideals 
\begin{equation}\label{e:xlphi}
\mathrm{CH}_{\Lambda_{\Phi}}(X_{ L})=\mathrm{CH}_{\Lambda_{\Gamma}}(X_{L})_{\Phi}^{\Gamma},
\end{equation}
and
\begin{equation}\label{e:lphi}
(\mathscr{L}_{A / {L} / {K}^{\prime}})=(\mathscr{L}_{A / {L} / {K}})_{\Phi}^{\Gamma}.
\end{equation}
\end{myproposition}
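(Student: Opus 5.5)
The first identity \eqref{e:xlphi} is pure algebra. The Selmer group of $A/L$ does not depend on whether we regard $L$ as an extension of $K$ or of $K'$. So $X_L$ is one module, viewed over $\Lambda_\Gamma$ or, by restriction of scalars, over $\Lambda_\Phi$. Lemma \ref{l:open}(3) then gives $\mathrm{CH}_{\Lambda_\Phi}(X_L)=(f)^\Gamma_\Phi$ whenever $\mathrm{CH}_{\Lambda_\Gamma}(X_L)=(f)$. This covers the non-torsion case too: by Lemma \ref{l:open}(1), $X_L$ is torsion over $\Lambda_\Phi$ if and only if it is torsion over $\Lambda_\Gamma$, and $0^\Gamma_\Phi=0$. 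The only remaining point is that Lemma \ref{l:open}(3) is stated with an unfinished proof. I would complete it from the displayed exact sequence: by (1), $U$ is pseudo-null over $\Lambda_\Phi$, so multiplicativity of characteristic ideals together with (2) gives $\mathrm{CH}_{\Lambda_\Phi}(W)=\prod_j (f_j)^\Gamma_\Phi=(\prod_j f_j)^\Gamma_\Phi$. Here I use that $f\mapsto f^\Gamma_\Phi$ is multiplicative, which is immediate since each $f\mapsto f_\chi$ is a ring automorphism.

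For \eqref{e:lphi} I would argue by interpolation, in the style of the proof of Proposition \ref{p:spl}. Characters of $\Phi=\Gal(L/K')$ are restrictions of characters of $\Gamma$. For $\psi\in\hat\Phi$ with an extension $\omega\in\hat\Gamma$, the set of extensions is $\{\omega\chi:\chi\in\widehat{\Gamma/\Phi}\}$. Directly from the definition,
\[
\psi(f^\Gamma_\Phi)=\prod_{\chi\in\widehat{\Gamma/\Phi}}(\omega\chi)(f).
\]
On the arithmetic side, inductivity of $L$-functions under induction from $K'$ to $K$ gives
\[
L_{A/K'}(\psi,s)=\prod_\chi L_{A/K}(\omega\chi,s).
\]
So I would compare Lemma \ref{l:intL} for $K'$ with the product over $\chi$ of Lemma \ref{l:intL} for $K$. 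This requires matching the fudge factors $\mathrm{t}$, $\nabla$, $\dag$, $\alpha_{D_\omega}$, $\Xi_{S,\omega}$, $\tau_\omega$ and the constant $q^{\deg(\Delta)/12+\kappa-1}$ up to a unit of $\Lambda_\Phi$.

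The Euler-type factors should match place by place, since a place $v$ of $K$ splits in $K'$ according to $\Gamma/\Phi$. The Gauss sums, by the Hasse–Davenport style inductivity of epsilon factors (using \eqref{e:compare} and \cite{we74}), agree up to root numbers and a power of $q$ involving the conductor–discriminant formula. The power of $q$ together with $\alpha_{D_\omega}$ should assemble into something of the form $c\cdot\psi(g)$, with $c$ a $p$-adic unit constant and $g\in\Phi$. The aim is an identity
\[
\psi(\mathscr L_{A/L/K'})=u\cdot\psi(g)\cdot\psi\big((\mathscr L_{A/L/K})^\Gamma_\Phi\big)
\]
for all $\psi$ outside the zero set of $\dag_{A/L/K'}$ and of $(\dag_{A/L/K})^\Gamma_\Phi$, with $u\in\Z_p^\times$ (or $\bar\Z_p^\times$) and $g\in\Phi$. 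Multiplying both sides by these nonzero elements and using that an element of $\Q_p\Lambda_\Phi$ vanishing at all characters is zero then yields \eqref{e:lphi}.

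Two cases need care. When $K'=L$ is impossible the situation is fine, and $\mathrm{t}$ enters only when $L=K$, which cannot happen here since $\Phi$ is open in $\Gamma\neq0$. For constant curves, $\nabla$ appears only if $L=K_\infty^{(p)}$. In that case $\Phi$ is generated by $\Frob_q^{[K':K]}$, $\alpha$ becomes $\alpha^{[K':K]}$ over $K'$, and
\[
\prod_\chi(1-\alpha^{-1}\chi(\Frob_q)\Frob_q)=1-\alpha^{-n}\Frob_q^{\,n},
\]
so this matches.

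The main obstacle I expect is the bookkeeping of the $\dag$ factors and of the constants. Over $K'$, the decomposition groups of places $v'\mid v$ are $\Phi_{v'}=\Gamma_v\cap\Phi$, and the reduction type can change: non-split multiplicative at $v$ may become split at $v'$ when $\F_{q_v^2}\subset K'_{v'}$. I would check, case by case as in the Appendix, that $\prod_{v'\mid v}\dag_{A/L/K',v'}$ and $(\dag_{A/L/K,v})^\Gamma_\Phi$ generate the same ideal of $\Lambda_\Phi$; for example, $\prod_\chi(1-\chi(\sigma_v)\sigma_v)=1-\sigma_v^{\,n}$ up to sign. For the global constants and Gauss-sum root numbers, I would try first to avoid explicit computation. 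Both sides satisfy the functional equation of Proposition \ref{p:fel}, and the ratio of the two interpolations is a function of $\psi$ of the form $c\cdot\psi(g)$. So it suffices to show $c$ is a $p$-adic unit, which can be checked at the trivial character using the $d=0$ case (Theorem \ref{t:l=k}) together with BSD-type formulas over $K$ and $K'$.
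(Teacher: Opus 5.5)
\textbf{Gap: the constant in the interpolation ratio.} Your proof of \eqref{e:xlphi}, including your completion of Lemma \ref{l:open}(3), is correct. Your overall plan for \eqref{e:lphi} is the paper's: compare interpolations via $\psi(f^\Gamma_\Phi)=\prod_\chi(\omega\chi)(f)$, match the Euler factors, $\nabla$, $\Xi$ and $\dag$ place by place, then clear the difference by multiplying by a nonzero $\dag$.

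The gap is in the constants, and that is where the content of \eqref{e:lphi} as an identity of \emph{ideals} lies. You assert without proof that the ratio of the two interpolations has the form $c\cdot\psi(g)$. Your proposed way of showing $c\in\Z_p^\times$ does not work.
\begin{itemize}
\item The functional equation of Proposition \ref{p:fel} cannot see $c$. Write $\mathscr M=(\mathscr L_{A/L/K})^\Gamma_\Phi$ and suppose $\mathscr L_{A/L/K'}=c\,g\,\mathscr M$. Applying $^\sharp$ and substituting the two functional equations, $c$ cancels and you only get a relation among units and $g$.
\item The trivial character does not help either. The value $\psi_0(\mathscr M)=\prod_\chi\chi(\mathscr L_{A/L/K})$ involves the nontrivial characters of $\Gal(K'/K)$. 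Since $L_{A/K'}(\psi_0,1)=\prod_\chi L_{A/K}(\chi,1)$, the $L$-values cancel from the ratio, so BSD contributes nothing. What remains is exactly the ratio of Gauss sums, discriminant powers and $\alpha_D$'s that you wanted to avoid computing.
\item Moreover, $\psi_0(\mathscr L_{A/L/K'})$ can vanish. This happens if $A/K'$ has positive analytic rank, or if some split multiplicative $v'$ in the ramification locus $T$ of $L/K'$ has $\Phi_{v'}$ of rank $\ge 2$, so that $\Xi_{T,\psi_0}=0$ while $\dag_{A/L/K',v'}=1$.
\item If $\psi_0(\dag_{A/L/K'})=0$, the interpolation formula is not even available at $\psi_0$.
\end{itemize}

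\textbf{Why this step is essential, and what replaces it.} Because $q$ is a power of $p$, any leftover power of $q$ in $c$ would make $c$ a non-unit, and the two ideals would differ. So you must show that all powers of $q$ cancel exactly. The paper does this explicitly, after reducing to $[\Gamma:\Phi]=p$ via \eqref{e:trans}.
\begin{enumerate}
\item From the functional equations of the Hecke $L$-functions over $K$ and $K'$, together with $L_{K'}(\omega,s)=\prod_\chi L_K(\tilde\omega\chi,s)$, it derives the exact identity \eqref{e:taudisc}: $\tau_\omega q_{K'}^{\kappa_{K'}-1}=\prod_\chi\tau_{\tilde\omega\chi}q_K^{\kappa_K-1}$. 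No residual root numbers or $q$-powers survive globally.
\item It uses \eqref{e:qk'k} for the factor $q^{\deg(\Delta)/12}$. This holds because $K'/K$ is ramified only at ordinary places.
\item From the conductor--discriminant relation \eqref{e:ddiscd} it proves \eqref{e:alphad}: $\alpha_{D_\omega}=\varepsilon\,\alpha^{-1}_{\mathrm{Disc}_{K'/K}}\prod_\chi\alpha_{D_{\tilde\omega\chi}}$. This includes the sign bookkeeping at non-split multiplicative places.
\end{enumerate}
The outcome is that the ratio is the constant $\varepsilon\,\alpha^{-1}_{\mathrm{Disc}_{K'/K}}$; no group element $g$ appears. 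It is a unit because $\mathrm{Disc}_{K'/K}$ is supported on $S$, where $\alpha_v\in\Z_p^\times$ and $\lambda_v=\pm1$. Your shortcut via the functional equation and BSD at the trivial character must be replaced by these computations.
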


\begin{proof} The formula \eqref{e:xlphi} is a direct consequence of Lemma \ref{l:open}(3), while \eqref{e:lphi} follows from various product formulae. To show it, by finding a sequence ${K}_{1}={K} \subset {K}_{1} \subset \cdots \subset {K}_{n}={K}^{\prime}$ of fields such that each ${K}_{i+1} / {K}_{i}$ is a cyclic extension of degree $p$ and by applying \eqref{e:trans}, we may assume that $\Gamma / \Phi$ is actually cyclic of degree $p$.

Let $\omega \in \hat{\Phi}$ and let $\tilde{\omega} \in \hat{\Gamma}$ be an extension of $\omega$ in the sense that $\tilde{\omega}\mid_{\Phi}=\omega$. Every character of the form $\tilde{\omega} \cdot \chi$, $\chi \in \widehat{\Gamma / \Phi}$, is an extension of $\omega$ and vice versa. Write $K'_\omega$ for $L^{\ker(\omega)}$. The first product formula is
\begin{equation}\label{e:decomplas}
L_{A/ K'}(\omega, s)=\prod_{\chi \in \widehat{\Gamma / \Phi}} L_{A/K}(\tilde{\omega} \cdot \chi, s).
\end{equation}
As usual, this can be checked locally:
\begin{equation}\label{e:locallr}
\prod_{v',\,v'\mid_{ K}=v} L_{A/K', v'}(\omega, s)=\prod_{\chi \in \widehat{\Gamma / \Phi}} L_{A/K, v}(\tilde{\omega} \cdot \chi, s).
\end{equation}

If $\omega$ is ramified at a place $v'$ sitting over $v$, then it is ramified at all places sitting over $v$, 
so the left-hand side equals 1. In this case, $\tilde{\omega} \cdot \chi$ is ramified at $v$, for every $\chi$, and hence the right-hand side also equals 1. 

Suppose $\omega$ is unramified at $v'$ and $K'/K$ is unramified at $v$. Then
all extensions $\tilde{\omega}$ are unramified at $v$. If $v$ splits completely over ${K}^{\prime}$, then every factor on both side of \eqref{e:locallr} equals 
$L_{A/K, v}(\tilde{\omega}, s)$, so the formula holds. If there is only one $v'$ sitting over $v$, then $q_{v'}=q_{v}^{p}$, $\omega([v']_{D_{\omega}})=\tilde{\omega}([v]_{D_{\tilde{\omega}}})^{p}$.
In this situation, if $v$ is good ordinary, then $\alpha_{v'}=\alpha_{v}^{p}$; if $v$ is multiplicative but $p\not= 2$, then $\lambda_{v'}=\lambda_v^p$, so  in both cases $L_{A/K', v'}(\omega, s)$ equals the product on the right-hand side of \eqref{e:locallr}. If $p=2$ and $v$ is multiplicative, then $v'$ is split multiplicative, and \eqref{e:locallr} reads
$$
1-\omega([v']_{D_{\omega}} q_{v'}^{-s})=(1+\tilde{\omega}([v]_{D_{\tilde{\omega}}}) q_{v}^{-s})(1-\tilde{\omega}([v]_{D_{\tilde{\omega}}}) q_{v}^{-s}).
$$

Suppose $\omega$ is unramified at $v'$ while $K'/K$ is ramified at $v$.
Then $\Gal(K'_\omega/K)$ is the direct product of $\Gal(K'/K) $ and $\Gal(K'_\omega/K')$. There is a unique unramified extension $\tilde\omega$ of $\omega$ and there is a unique place $v'$ of $K'$ sitting over $v$.
Since $[v]_{D_{\tilde{\omega}}}=[v']_{D_{\omega}}$, $q_{v'}=q_{v}$, \eqref{e:locallr} reads $L_{A/K', v'}(\omega, s)=L_{A/K, v}(\tilde{\omega}, s)$. This ends the proof of \eqref{e:decomplas}.

Arguments similar to the above lead to the following three product formulae:
\begin{equation}\label{e:nablakk'}
\omega(\nabla_{A / {L} / {K}^{\prime}})=\prod_{\chi \in \widehat{\Gamma / \Phi}}(\tilde{\omega} \cdot \chi)(\nabla_{A / {L} / {K}});
\end{equation}

\begin{equation}\label{e:xikk'}
\Xi_{T, \omega}=\prod_{\chi \in \widehat{\Gamma / \Phi}} \Xi_{S, \tilde{\omega} \cdot \chi};
\end{equation}
\begin{equation}\label{e:gimelkk'}
\omega(\dag_{A / {L} / {K}^{\prime}})=\prod_{\chi \in \widehat{\Gamma / \Phi}}(\tilde{\omega} \cdot \chi)(\dag_{A /{L} / {K}}).
\end{equation}
Note that if $\Phi_{v'}\simeq \Z_p$ then $\Gamma_v\simeq \Z_p$, and if furthermore, the inertia subgroup $\Phi^1_{v'}\subsetneq \Phi_{v'}$, then $K'/K$ is unramified at $v$. In particular, if $L={K'}_\infty^{(p)}$, then $L=K_\infty^{(p)}$; if $v'\in T_1$, where $T$ denotes the ramification locus of ${L} /{K'}$, then $v\in S_1$.
Also, if $\Phi_{v'}=0$, then $\Gamma_v=0$.
In such case, $m_{v'}=m_v$ occurs in of both sides of \eqref{e:gimelkk'} with multiplicity $p$.

Let $q_{K}$ and $\kappa_{K}$ (resp. $q_{K'}$ and $\kappa_{K'}$) denote the order of the constant field and the genus of $K$ (resp. $K'$). 
The next product formula is about the Gauss sums, namely,
\begin{equation}\label{e:taudisc}
\tau_{\omega} \cdot q_{{K}^{\prime}}^{\kappa_{{K}^{\prime}}-1}=\prod_{\chi \in \widehat{\Gamma / \Phi}}(\tau_{\tilde{\omega} \cdot \chi} \cdot q_{{K}}^{\kappa_{{K}}-1}).
\end{equation}
Recall the classical $L$-function associated to each $\psi\in\hat\Gamma$ :
$$
L_{{K}}(\psi, s):=\prod_{v \notin \mathrm{Supp}(D_\psi)} \frac{1}{1-\psi([v]_{D_{\psi}}) q_{v}^{-s}},
$$
as well as the complete Dedekind zeta function
$$\zeta_K(s)=\prod_{\text{all } v } \frac{1}{1-q_{v}^{-s}}.$$
By \cite[VII, \S 6, Theorem 4]{we74}, we have the functional equation
$$\zeta_K(s)=q_K^{\kappa_K-1}\cdot q_K^{-s(2\kappa_K-2)}\cdot\zeta_K(1-s).$$
If $\psi$ is obtained from a principal quasi-character in the sense of \cite[VII, \S 3]{we74},
then $\psi$ is unramified everywhere and there is an $s_0\in\C$, such that $\psi([v]_{D_{\psi}})=q_v^{-s_0}$, at every place $v$. Moreover, in this case,
$$D_{\psi}=0,\;\;\tau_\psi=q_K^{s_0(2\kappa_K-2)},\;\;\text{and}\;\; L_K(\psi^{-1},s)=\zeta_K(s-s_0),$$
so the above functional equation of $\zeta_K$ implies
\begin{equation}\label{e:funeqclass}
L_K(\psi^{-1}, s)=\tau_{\psi} \cdot q_K^{\kappa_{K}-1} \cdot q_K^{-s(2 \kappa_K-2+\deg D_\psi)} \cdot L_K(\psi, 1-s).
\end{equation}
Actually, in view of \eqref{e:compare} and \cite[VII, \S 8,Theorem 6]{we74}, Equation \eqref{e:funeqclass} also
holds for all non-principal $\psi$. In particular, we have
$$L_{{K}^{\prime}}(\omega^{-1}, s)=\tau_{\omega} \cdot q_{K'}^{\kappa_{{K}^{\prime}}-1} \cdot q_{K'}^{-s(2 \kappa_{{K}^{\prime}}-2+\deg D_{\omega})} \cdot L_{{K}^{\prime}}(\omega, 1-s),
$$
and for each $\chi \in \widehat{\Gamma /\Phi}$,
$$
L_{{K}}((\tilde{\omega} \cdot \chi)^{-1}, s)=\tau_{\omega \cdot \chi} \cdot q_{K}^{\kappa_{{K}}-1} \cdot q_{K}^{-s(2 \kappa_{{K}}-2+\deg D_{\tilde{\omega}} \cdot \chi)} \cdot L_{{K}}(\tilde{\omega} \cdot \chi, 1-s).
$$
These together with the classical product formulae
$$L_{K'}(\omega^{-1}, s)=\prod_{\chi \in \widehat{\Gamma / \Phi}} L_{{K}}(\tilde{\omega}^{-1} \cdot \chi, s),\quad L_{K'}(\omega, 1-s)=\prod_{\chi \in \widehat{\Gamma / \Phi}} L_{{K}}(\tilde{\omega} \cdot \chi,1-s)$$
imply \eqref{e:taudisc}.
Also, since $ K'/ K$ is ramified only at ordinary places, 
\begin{equation}\label{e:qk'k}
q_{ K'}^{\deg \Delta_{A/ K'}}=q_{ K}^{p\cdot \deg \Delta_{A/ K}}=\prod_{\chi \in\widehat{ \Gamma /\Phi}}q_{ K}^{\deg\Delta_{A/ K}}.
\end{equation}

Finally, we prove the last product formula:
\begin{equation}\label{e:alphad}
\alpha_{D_{\omega}}=\varepsilon \cdot \alpha^{-1}_{\mathrm{Disc }_{ K^{\prime} /K}} \cdot \prod_{\chi \in \widehat{\Gamma /\Phi}} \alpha_{D_{\tilde{\omega}} \cdot \chi}.
\end{equation}
Here $\mathrm{Disc}_{ F^{\prime} /F}$ denotes the discriminant (as a divisor of $ F$) of the field extension $F^{\prime} / F$ and
$$
\varepsilon=(-1)^{p \cdot \#\{v\mid \mathrm{Disc}_{ K^{\prime} /K} \text { and is non-split multiplicative}\}}.
$$
For this purpose, we first recall that locally at each place $v$ of ${K}$,
\begin{equation}\label{e:ddiscd}
\sum_{v',v'\mid_{{K}=v}} \log _{q_{v}} q_{v'} \cdot \ord_{v'} D_{\omega}=-\ord_{v} \mathrm{Disc}_{{K}^{\prime} / {K}}+\sum_{\chi \in \widehat{\Gamma / \Phi}} \ord_{v} D_{\tilde{\omega} \cdot \chi}.
\end{equation}
To see this, we may assume that $\omega$ is of order $p^n$, $n>0$, because for trivial $\omega$, \eqref{e:ddiscd} is nothing but the conductor discriminant formula.


Since $D_{\omega^{i}}=D_{\omega}$, for $i$ prime to $p$, by the conductor discriminant formula, one deduces
\begin{equation}\label{e:disck'omegak'}
\mathrm{Disc}_{ K_{\omega}^{\prime} /K^{\prime}}=\mathrm{Disc}_{K_{\omega^{p}}^{\prime} / K^{\prime}}+p^{n-1}(p-1) D_{\omega}.
\end{equation}
 The conductor discriminant
formula also leads to
\begin{equation}\label{e:disck'omegak}
\mathrm{Disc}_{ K_{\omega}^{\prime} /K}=  
\mathrm{Disc}_{ K_{\omega^{p}}^{\prime} / K}+
p^{n-1}(p-1) \sum_{\chi \in \widehat{\Gamma /\Phi}} D_{\tilde{\omega} \chi},
\end{equation}
since if $\tilde\omega$ is also of order $p^n$, then $K_{\tilde{\omega}}:=L^{\ker \tilde{\omega}}$ and $K'$ are disjoint over $K$, so 
$$\mathrm{Disc}_{ K_{\omega}^{\prime} /K}=  
\sum_{i=0}^{p^{n}-1} \sum_{\chi \in \widehat{\Gamma / \Phi}} D_{\tilde{\omega}^{i} \cdot \chi}=
\sum_{j=0}^{p^{n}-2} \sum_{\chi \in \widehat{\Gamma / \Phi}} D_{\tilde{\omega}^{pj} \cdot \chi}+
p^{n-1}(p-1) \sum_{\chi \in \widehat{\Gamma /\Phi}} D_{\tilde{\omega} \chi};$$
if $\tilde\omega$ is of order $p^{n+1}$, then $K'_\omega/K$ is a cyclic extension, 
each $\chi=\tilde\omega^{p^n j}$, $j=0,...,p-1$, so
$$\mathrm{Disc}_{ K_{\omega}^{\prime} /K}=\sum_{i=0}^{p^{n+1}-1} D_{\tilde{\omega}^{i}}=\sum_{i=0}^{p^n-1} D_{\tilde{\omega}^{pi}}+p^n(p-1) D_{\tilde\omega}. $$

By the transitivity of discriminants, for the trace map $\Nm_{{K}^{\prime} / {K}}$, one has
\begin{equation}\label{e:omegak'k}
\mathrm{Disc}_{ K_{\omega}^{\prime} /K}=\Nm_{{K}^{\prime} / {K}}(\mathrm{Disc}_{{K}_{\omega}^{\prime} / {K}^{\prime}})+p^{n} \cdot \mathrm{Disc}_{{K}^{\prime} / {K}}, 
\end{equation}
\begin{equation}\label{e:omegapk'k}
\mathrm{Disc}_{{K}_{\omega^p}^{\prime} / {K}}=\Nm_{{K}^{\prime} / {K}}(\mathrm{Disc}_{{K}_{\omega^p}^{\prime} / {K}^{\prime}})+p^{n-1} \cdot \mathrm{Disc}_{{K}^{\prime} / {K}}.
\end{equation}

Then \eqref{e:disck'omegak}, \eqref{e:disck'omegak'}, \eqref{e:omegak'k}, and \eqref{e:omegapk'k} together lead to the global version of \eqref{e:ddiscd}:
\begin{equation}\label{e:nk'k}
\Nm_{{K}^{\prime} / {K}}(D_{\omega})=-\mathrm{Disc}_{{K}^{\prime} / {K}}
+\sum_{\chi \in \widehat{\Gamma /\Phi}} D_{\tilde{\omega} \cdot \chi}.
\end{equation}

If $v$ is a good ordinary place, then in view of \eqref{e:ddiscd} and the equality $\alpha_{v'}=\alpha_{v}^{\log _{q_{v}} q_{v'}}$, 
$$\prod_{v'\mid v} \alpha_{v'}=\alpha_v^{-\ord_v \mathrm{Disc}_{K'/K}}\cdot \prod_{\chi\in\widehat{\Gamma/\Phi}} \alpha_v^{\ord_v D_{\tilde\omega\chi}} .$$
Thus, for verifyng \eqref{e:alphad}, it remains to show that for each non-split multiplicative place $v$,
$$\mathsf A_v=\mathsf B_v,$$
where 
$$\mathsf A_v:=\prod_{v'\mid v, \; v'\mid D_\omega}\lambda_{v'}^{\ord_v D_\omega -1},$$
and
$$\mathsf B_v:=
\begin{cases}(-1)^{\ord_v \mathrm{Disc}_{K'/K}-1}\cdot (-1)^p\cdot \prod_{\chi\in\widehat{\Gamma/\Phi}, \;v\mid D_{\tilde\omega\chi}} (-1)^{\ord_v D_{\tilde\omega\chi}-1},  & \text{ if}\; v\mid \mathrm{Disc}_{K'/ K};\\
\prod_{\chi\in\widehat{\Gamma/\Phi}, \;v\mid D_{\tilde\omega\chi}} (-1)^{\ord_v D_{\tilde\omega\chi}-1}, & \text{ otherwise}.
\end{cases}
$$

We first treat the case where $\omega$ is unramified at one (and hence at every) place sitting over $v$.
In this case, $\mathsf A_v=1$. If $K'/K$ is unramified at $v$, then $\mathsf B_v=1$; while if 
$K'/K$ is ramified at $v$, then there is a unique $\tilde\omega$ unramified at $v$, so by \eqref{e:ddiscd},
$\mathsf B_v=(-1)^{1-p+(p-1)}=1$, too.
Thus, for the rest of the proof, we assume that $\ord_{v'}D_\omega>0$.

In general, $\lambda_{v'}=-1$, unless $p=2$ and $K'/K$ is inert at $v$, in that case, there is a unique $v'$ sitting over $v$ with $\lambda_{v'}=1$, so $\mathsf A_v=1$. By \eqref{e:ddiscd}, $\mathsf B_v=1$ as well.

If $p>2$ and $K'/K$ is inert at $v$, then by \eqref{e:ddiscd}, $\mathsf A_v/\mathsf B_v=(-1)^{-1+p}=1$.

If $v$ splits over $K'/K$, then $\ord_{v'}D_\omega=\ord_v D_{\tilde\omega}$, so \eqref{e:ddiscd} says $\mathsf A_v=\mathsf B_v$.

If $K'/K$ is ramified at $v$, then again, by \eqref{e:ddiscd}, $\mathsf A_v/\mathsf B_v=(-1)^{-1+1-p+p}=1$. 
 Therefore, \eqref{e:alphad} holds.

Write $\xi$ for $\mathscr{L}_{A / L / K}$. 
All the above product formulae together with \eqref{e:intL} imply that for all $\omega \in \hat{\Phi}$, 
$\omega(\dag_{A / {L} / {K}^{\prime}} )\not=0$, we have
$\omega (\mathscr{L}_{A / {L} / {K}^{\prime}}-\varepsilon \cdot \alpha^{-1}_{\mathrm{Disc }_{K^{\prime} /K}} \cdot \xi_{\Phi}^{\Gamma})=0$. Hence
$$
\dag_{A / L / K^{\prime}}(\mathscr{L}_{A / {L} / {K}^{\prime}}-\varepsilon \cdot \alpha^{-1}_{\mathrm{Disc }_{K^{\prime} / K}} \cdot \xi_{\Phi}^{\Gamma})=0 .
$$
But since $\dag_{A /  L /K^{\prime}} \neq 0$, it follows that
$
\mathscr{L}_{A / {L} / {K}^{\prime}}=\varepsilon \cdot \alpha^{-1}_{\mathrm{Disc }_{K^{\prime} / K}} \cdot \xi_{\Phi}^{\Gamma}$, while $\varepsilon, \alpha^{-1}_{\mathrm{Disc }_{K^{\prime} / K}}\in \Lambda_\Phi^*$.

\end{proof}

\begin{corollary}\label{c:restriction}
If the Iwasawa main conjecture holds for $A/L/K$, then it also holds for $A/L/K'$.
\end{corollary}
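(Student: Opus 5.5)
The plan is to combine the two identities of Proposition \ref{p:restriction} with the algebraic fact that the restriction map $f\mapsto f_\Phi^\Gamma$ is multiplicative, sends units to units, and sends $\Lambda_\Gamma$ into $\Lambda_\Phi$ (shown at the start of \S\ref{su:rest}).

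Assume \eqref{e:imc} holds for $A/L/K$. Then $\mathscr L_{A/L/K}\in\Lambda_\Gamma$ and $\mathrm{CH}_{\Lambda_\Gamma}(X_L)=(\mathscr L_{A/L/K})$.

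First, integrality. By the argument at the start of \S\ref{su:rest}, $(\mathscr L_{A/L/K})_\Phi^\Gamma\in\Lambda_\Phi$. The proof of Proposition \ref{p:restriction} gives the precise identity
$$\mathscr L_{A/L/K'}=\varepsilon\cdot\alpha^{-1}_{\mathrm{Disc}_{K'/K}}\cdot(\mathscr L_{A/L/K})_\Phi^\Gamma,$$
with $\varepsilon\,\alpha^{-1}_{\mathrm{Disc}_{K'/K}}\in\Lambda_\Phi^*$. Hence $\mathscr L_{A/L/K'}\in\Lambda_\Phi$. If one prefers to use only the ideal statement \eqref{e:lphi}, it still suffices: a fractional principal ideal equal to an integral principal ideal has a generator in $\Lambda_\Phi$ times a unit. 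The only point to watch is that the paper defines $\mathscr L$ as an element, so membership in $\Lambda_\Phi$ has to hold for the element itself. The explicit unit relation above gives exactly that, and I expect this to be the one step requiring care.

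Second, equality of ideals. Apply $(\cdot)_\Phi^\Gamma$ to $\mathrm{CH}_{\Lambda_\Gamma}(X_L)=(\mathscr L_{A/L/K})$. The operation on principal ideals is well defined: if $f'=uf$ with $u\in\Lambda_\Gamma^*$, then $f'^{\Gamma}_\Phi=u^\Gamma_\Phi f^\Gamma_\Phi$. Moreover $u^\Gamma_\Phi$ is a unit, since $(u^{-1})^\Gamma_\Phi$ is its inverse. So \eqref{e:xlphi} and \eqref{e:lphi} give
$$\mathrm{CH}_{\Lambda_\Phi}(X_L)=\mathrm{CH}_{\Lambda_\Gamma}(X_L)^\Gamma_\Phi=(\mathscr L_{A/L/K})^\Gamma_\Phi=(\mathscr L_{A/L/K'})$$
in $\Lambda_\Phi$. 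Note that $X_L$, as the dual Selmer group of $A/L$, is the same module whether $L$ is viewed over $K$ or over $K'$, so the left side is indeed the algebraic side of \eqref{e:imc} for $A/L/K'$.

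Third, the degenerate case. If $X_L$ is non-torsion, both characteristic ideals are $0$. Then $\mathscr L_{A/L/K}=0$, so its restriction is $0$, and $\mathscr L_{A/L/K'}=0$, consistent with the above. Together these establish \eqref{e:imc} for $A/L/K'$.
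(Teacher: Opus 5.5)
Your argument is correct and is essentially the paper's: the corollary follows directly from \eqref{e:xlphi}, \eqref{e:lphi}, and the fact that $f\mapsto f^\Gamma_\Phi$ maps $\Lambda_\Gamma$ into $\Lambda_\Phi$ and units to units, with integrality coming from the explicit relation $\mathscr L_{A/L/K'}=\varepsilon\cdot\alpha^{-1}_{\mathrm{Disc}_{K'/K}}\cdot(\mathscr L_{A/L/K})^\Gamma_\Phi$ at the end of the proof of Proposition \ref{p:restriction} (obtained there step by step along a tower of degree-$p$ extensions, with the unit factors composing via \eqref{e:trans}). The step you flag as delicate is in fact automatic, since $\mathscr L_{A/L/K'}\in\mathscr L_{A/L/K'}\Lambda_\Phi=(\mathscr L_{A/L/K})^\Gamma_\Phi\Lambda_\Phi\subset\Lambda_\Phi$.
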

\subsection{Iwasawa main conjecture}\label{su:imc} At first, we consider the simple but non-trivial $L=K$
case.
 
\subsubsection{The $L=K$ case}\label{ss:l=k} Basically, this is due to BSD conjecture.
 
 \begin{theorem}\label{t:l=k}
If $L=K$, then the Iwasawa main conjecture \eqref{e:imc} holds.
\end{theorem}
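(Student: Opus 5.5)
The plan is to reduce both sides of \eqref{e:imc} to explicit rational numbers and then compare them using the Birch and Swinnerton-Dyer results of Tate and Milne \cite{Tat66a,mil75}, in the form \eqref{e:bsd} already used in the proof of Proposition \ref{p:mtt}. When $L=K$ we have $\Gamma=0$ and $\Lambda=\Z_p$, so the characteristic ideal $\mathrm{CH}_{\Z_p}(X_K)$ is simply $0$ if $X_K$ has positive $\Z_p$-rank, and otherwise the ideal generated by $|X_K|=|\mathrm{Sel}_{p^\infty}(A/K)|$.

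\emph{The analytic side.} First I will make $\mathscr L_{A/K}$ explicit from Definition \ref{d:l}. By \eqref{e:t}, $\mathrm{t}_{A/K}=|A_{p^\infty}(K)|^2$, and $\nabla_{A/K}=1$ by Definition \ref{d:nabla}. Since $S=\emptyset$ and every $\Gamma_v$ is trivial, Definition \ref{d:gimel}(2) gives $\dag_{A/K}=\prod_v m_v$. By \eqref{e:hatL}, $\hat{\mathscr L}_{A/K}=q^{\frac{\deg\Delta}{12}+\kappa-1}L_A(1)$. Hence
$$\mathscr L_{A/K}=\frac{|A_{p^\infty}(K)|^2}{\prod_v m_v}\, q^{\frac{\deg\Delta}{12}+\kappa-1} L_A(1).$$

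\emph{Case of analytic rank zero.} Here the theorem of Tate–Milne says that $\Sha_K$ is finite, $A(K)$ is finite, and \eqref{e:bsd} holds. Substituting \eqref{e:bsd} gives
$$\mathscr L_{A/K}=|\Sha_K|\cdot\frac{|A_{p^\infty}(K)|^2}{|A(K)|^2}.$$
This lies in $\Z_p$ and generates the same ideal as $|\Sha_K[p^\infty]|$, because the prime-to-$p$ parts of $|\Sha_K|$ and $|A(K)|^2/|A_{p^\infty}(K)|^2$ are $p$-adic units. On the algebraic side, $A(K)\otimes\Q_p/\Z_p=0$, so $\mathrm{Sel}_{p^\infty}(A/K)=\Sha_K[p^\infty]$ is finite, and therefore $\mathrm{CH}_{\Z_p}(X_K)=(|\Sha_K[p^\infty]|)$. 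The two ideals agree.

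\emph{Case of positive analytic rank.} Now $L_A(1)=0$, so $\mathscr L_{A/K}=0\in\Lambda$, and it remains to show that $X_K$ is non-torsion, i.e.\ that $\mathrm{Sel}_{p^\infty}(A/K)$ has positive corank. This corank equals $\mathrm{rank}\,A(K)$ plus the corank of $\Sha_K[p^\infty]$. By Tate–Milne, the rank is at most the analytic rank, with equality if and only if $\Sha_K[p^\infty]$ is finite. So either $\mathrm{rank}\,A(K)>0$, or the rank is strictly smaller than the analytic rank, in which case $\Sha_K[p^\infty]$ is infinite. In both situations the Selmer corank is positive, hence $\mathrm{CH}_{\Z_p}(X_K)=0=(\mathscr L_{A/K})$. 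This is the step I expect to need the most care: it depends on citing the $p$-part equivalence in the Tate–Milne theorem, including the case where $p$ is the characteristic, which is exactly what \cite{mil75} supplies.
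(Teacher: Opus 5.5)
Your proposal is correct and follows the paper's argument: split on whether $L_A(1)$ vanishes, and in the nonvanishing case use the Tate–Milne BSD formula to get $\mathscr L_{A/K}=|\Sha_K|\,|A_{p^\infty}(K)|^2/|A(K)|^2$, which generates the same ideal as $|\Sha_K[p^\infty]|=|X_K|$. Your explicit computation of $\mathrm{t}_{A/K}$, $\nabla_{A/K}$, $\dag_{A/K}$ and your Selmer-corank argument in the vanishing case just spell out what the paper compresses into a reference to \eqref{e:intL} and \cite{tan95}; note that the paper also cites Kato--Trihan \cite{kt03}, the cleanest source for the $\ell=p$ finiteness equivalence you rely on.
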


\begin{proof} Basically, the theorem follows from the results on the conjecture of Birch and Swinnerton-Dyer (see \cite{Tat66a, mil75, kt03}). If $\mathscr{L}_{A / L}=0$, then $L_{A}(\omega_{0}, 1)=0$, it follows that $X_{L}$ is non-torsion over $\Lambda=\mathbb{Z}_{p}$, and hence $\mathrm{CH}_{\Lambda}(X_{L})=0$. If $\mathscr{L}_{A / L} \neq 0$, then BSD holds. Hence $X_{L}$ is torsion and equals $\Sha_{p^{\infty}}(A / L)^\vee=\Sha_{p^{\infty}}(A / L)$, the $p$-primary part of the Tate-Shafarevich group of $A / L$. By \eqref{e:intL} and \cite[(3),(9),(46)]{tan95},
$$
\mathscr{L}_{A / L}=\star \cdot |\Sha_{p^{\infty}}(A / L) |, \;\text {for some } \star \in \mathbb{Z}_{p}^{*} .
$$
 \end{proof}
 
\subsubsection{The constant curve case}\label{ss:ccc}
\begin{theorem}\label{t:constant}If $A / K$ is an ordinary constant elliptic curve, then the Iwasawa main conjecture
\eqref{e:imc} holds.
\end{theorem}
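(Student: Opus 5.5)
The natural route is to reduce Theorem \ref{t:constant} to the main conjecture already established for constant ordinary elliptic curves in \cite{lltt14a}, and then transport it via Lemma \ref{l:las} and Theorem \ref{t:l=k}.

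First, if $L=K$ there is nothing to prove, by Theorem \ref{t:l=k}. So assume $L\neq K$. Since $A/K$ is constant, it has good ordinary reduction everywhere. Hence $S=S_o$, $\dag_{A/L}=1$ (no multiplicative places, and $m_v=1$ at good places), and $\mathrm t_{A/L}=1$.

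In \cite{lltt14a}, for a constant ordinary elliptic curve and a $\Z_p^d$-extension $L/K$ unramified outside $S$, the authors prove
$$\mathrm{CH}_\Lambda(X_L)=(\mathscr L_{A,L,S}),\qquad \mathscr L_{A,L,S}=\theta^+_{A,L,S}\cdot(\theta^+_{A,L,S})^\sharp\in\Lambda.$$
I would cite this result, checking that its ramification hypothesis (finitely many places, all good ordinary) matches ours. Lemma \ref{l:las} then gives $\mathscr L_{A/L}=\alpha^{2\kappa-2}\cdot\mathscr L_{A,L,S}$. Since $\alpha\in\Z_p^*$, the factor $\alpha^{2\kappa-2}$ is a unit of $\Lambda$. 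Therefore $\mathscr L_{A/L}\in\Lambda$, and $(\mathscr L_{A/L})=\mathrm{CH}_\Lambda(X_L)$, which is \eqref{e:imc}.

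The main obstacle is matching normalizations. The Selmer group in \cite{lltt14a} must be the same $X_L$, and their main conjecture is stated there only up to possibly the $\nabla$ factor in the unramified case $L=K_\infty^{(p)}$. To handle this, I would check that the interpolation formula of \cite[Theorem 4.7]{lltt14a}, as used in the proof of Lemma \ref{l:las}, already includes $\omega(\nabla_{A/L})$. That same lemma is what fixes the correct normalization.

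If \cite{lltt14a} only treats the case with $K_\infty^{(p)}\subset L$, I would enlarge $L$ to $LK_\infty^{(p)}$ and descend using Lemma \ref{l:1}. Corollary \ref{c:1} guarantees that $p^L_{L'}(\dag_{A/L})\cdot\vartheta_{L/L'}\neq0$ in this setting, so the descent is legitimate.
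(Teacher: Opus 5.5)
Your proposal is correct and follows the paper's proof exactly. The case $L=K$ is Theorem \ref{t:l=k}, and for $L\neq K$ the paper cites \cite[Theorem 4.7]{lltt14a} for $\mathrm{CH}_\Lambda(X_L)=(\mathscr L_{A,L,S})$ and then applies Lemma \ref{l:las}, with $\alpha^{2\kappa-2}$ a unit of $\Lambda$. Your fallback of enlarging to $LK_\infty^{(p)}$ and descending via Lemma \ref{l:1} and Corollary \ref{c:1} is valid but unnecessary, since the cited result already covers every $L\neq K$.
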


\begin{proof}If $L=K$, this is Theorem \ref{t:l=k}. Suppose $L\not=K$. By \cite[Theorem 4.7]{lltt14a}, we have $\mathrm{CH}_{\Lambda}(X_{L})=(\mathscr{L}_{A, L})$. Then apply Lemma \ref{l:las}.
\end{proof}

\subsubsection{The case of constant field extension}\label{ss:cfc} 
Next we consider the case $L=K_{\infty}^{(p)}$, or equivalently, $S=\emptyset$. Assume that $A$ has semistable reduction at every place of $K$. Let $Z$ be the support of the conductor $N$. For each character $\omega: \Gamma \longrightarrow \mu_{p^{\infty}}$ define
$$
L_{Z}(A, \omega, s)=\prod_{v \in Z}(1-\lambda_{v} \omega([v]) q_{v}^{-s}) \cdot L_{A}(\omega, s).
$$

\begin{theorem}\label{t:constantfield} If $A / K$ has semistable reduction at all places of $K$ and $L / K$ is the constant $\mathbb{Z}_{p}$-extension, then the Iwasawa main conjecture \eqref{e:imc} holds.
\end{theorem}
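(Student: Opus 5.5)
We need to prove: $A/K$ semistable everywhere, $L = K_\infty^{(p)}$ (so $S=\emptyset$, $\Gamma\simeq\Z_p$ generated by $\Frob_q$). The claim is that $\mathscr L_{A/L}\in\Lambda$ and that it generates $\mathrm{CH}_\Lambda(X_L)$.

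The plan is to identify $\mathscr L_{A/L}$ with the known characteristic power series coming from the cohomological (Grothendieck trace formula / crystalline) description of the $L$-function. This is the function-field analogue of Iwasawa theory of the constant $\Z_p$-extension, as in the work of Kato–Trihan \cite{kt03} and Ochiai–Trihan, or Lai–Longhi–Tan–Trihan.

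\textbf{Step 1: explicit $\mathscr L_{A/L}$.} Since $S=\emptyset$, we have $\tilde W_S=\tilde W_0$. The element $\tilde{\mathscr L}_{A,\emptyset}$ is $\Theta_0$, and its characters are all unramified, $\omega=\omega_s$-type, with $D_\omega=0$, $\alpha_{D_\omega}=\Xi=1$, and $\tau_\omega=\omega(b^{-1})$, which is a unit times $\omega(\Frob_q)^{2\kappa-2}$. Moreover $\dag_{A/L}=\prod_{v}m_v$ over places where $\Gamma_v$ is trivial; but $\Gamma_v\ne0$ always for the unramified $\Z_p$-extension, so $\dag_{A/L}=1$, and $\nabla_{A/L}=1$ unless $A$ is constant (that case is already Theorem~\ref{t:constant}). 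So by Lemma~\ref{l:intL}, for each $\omega\in\hat\Gamma$,
\[
\omega(\mathscr L_{A/L})=\text{unit}\cdot\omega(\Frob_q)^{2\kappa-2}\, q^{\frac{\deg\Delta}{12}+\kappa-1}L_A(\omega,1).
\]
Since $\omega(\Frob_q)=\zeta$, the value $L_A(\omega,1)$ is the Hasse–Weil polynomial $P(T)$ evaluated at $T=\zeta q^{-1}$, where $L_A(s)=P(q^{-s})$ has degree $\deg N+4\kappa-4$. Hence $\mathscr L_{A/L}$ equals (up to a unit and a power of $\Frob_q$) $q^{\frac{\deg\Delta}{12}+\kappa-1}P(q^{-1}\Frob_q)$ viewed in $\Q_p\Lambda$. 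This is determined by interpolation since $\dag=1$.

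\textbf{Step 2: integrality and cohomological factorization.} Write $P(T)=\prod_i(1-\gamma_iT)$ with $\gamma_i$ the Frobenius eigenvalues on $H^1$ of the $\ell$-adic sheaf (equivalently, on rigid/crystalline cohomology). Semistability and ordinarity give a slope decomposition: the eigenvalues of slope $0$ and slope $1$ (plus the multiplicative-place contributions $\lambda_v=\pm1$ factors, of slope $0$ after the $q^{-1}$ normalization). I will write $q^{-1}\gamma_i=u_i$ with $\mathsf v_p(u_i)\ge -1$, and pair slope-1 eigenvalues $\gamma_i$ (so $u_i$ a unit) against slope-0 ones, using the functional equation $\gamma\mapsto q/\gamma$ (cf.\ Proposition~\ref{p:fel}). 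The factor $q^{\frac{\deg\Delta}{12}+\kappa-1}$ (equal to $q^{\deg(N)/2+ \kappa -1}$-type via Ogg's formula for semistable curves) should exactly cancel the denominators from slope-0 eigenvalues $q^{-1}\gamma_i$. Then $\mathscr L_{A/L}$ becomes a unit times $\prod_{\text{unit-root }\beta}(\Frob_q-\beta)$, in $\Lambda$.

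\textbf{Step 3: algebraic side.} I will invoke the known structure of $X_L$ for $L=K_\infty^{(p)}$ in the semistable case (Kato–Trihan via syntomic/flat cohomology, as used in \cite{lltt14a}-type arguments): $\mathrm{CH}_\Lambda(X_L)$ is generated by the characteristic polynomial of $\Frob_q$ acting on the unit-root (slope-$0$) part of $H^1_{\mathrm{crys}}$ of the Néron model / log-crystal, corrected at multiplicative places. Comparing with Step 2 gives the equality of ideals.

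The main obstacle is Step 3 together with the precise bookkeeping of the $p$-adic units and the multiplicative-place factors $(1-\lambda_v\omega([v])q_v^{-s})$. This is presumably why $L_Z(A,\omega,s)$ is introduced: one first handles $L_Z$, which is the $L$-function whose cohomological interpretation is cleanest. My first attempt would be to cite the Kato–Trihan/Trihan–Vauclair result expressing $\mathrm{CH}(X_L)$ by $L_Z$ at $s=1$ interpolated along $\Gamma$, and then check that the removed Euler factors at $v\in Z$ are units in $\Lambda$ (they are $1\mp q_v^{-1}[v]$, units since $q_v^{-1}$ has positive valuation inverse — more precisely $q_v\,(1\mp q_v^{-1}[v])=q_v\mp[v]$, a unit times $[v]$), absorbed into the power of $q$.
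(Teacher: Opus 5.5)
Your last paragraph is essentially the paper's proof, and Step 1 is exactly the computation the paper uses. With $S=\emptyset$ one has $\dag_{A/L}=1$, $D_\omega=0$ and $\tau_\omega=\omega([a])^{-1}$. The paper then quotes \cite{lltt16} (Theorem 1.1, Proposition 2.2.4, Theorem 3.1.5) for a generator $c_{A/L}$ of $\mathrm{CH}_\Lambda(X_L)$ such that $f_{A/L}=q^{-(\deg(Z)+\kappa-1+\deg(\Delta)/12)}c_{A/L}$ satisfies $\omega(f_{A/L})=L_Z(A,\omega^{-1},1)$. Comparing with Lemma \ref{l:intL} gives $\mathscr L_{A/L}^\sharp=[a]\prod_{v\in Z}(q_v-\lambda_v[v]_{L/K})^{-1}c_{A/L}$, and every factor is a unit of $\Lambda$.

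Two points you leave implicit are where the real content lies.
\begin{itemize}
\item The cited result must carry exactly the normalising power $q^{\deg(Z)+\kappa-1+\deg(\Delta)/12}$. The $q^{\deg(Z)}=\prod_{v\in Z}q_v$ is what absorbs the $q_v^{-1}$ in the removed Euler factors. The paper checks that the extra correction terms in \cite[Proposition 2.2.4]{lltt16} vanish here: $l=0$ because $A(L)[p^\infty]$ is finite for non-constant $A$, and $g(v)=0$ at multiplicative $v$. Any other power of $q$ would change the $\mu$-invariant, so ``absorbed into the power of $q$'' is the heart of the matter, not bookkeeping.
\item The algebraic side interpolates at $\omega^{-1}$, so the comparison produces $\mathscr L_{A/L}^\sharp$. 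You need the functional equation, Proposition \ref{p:fel} (or \eqref{e:afel}), to pass from $(\mathscr L_{A/L}^\sharp)$ to $(\mathscr L_{A/L})=\mathrm{CH}_\Lambda(X_L)$.
\end{itemize}

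You should drop Step 2 and the unit-root version of Step 3, because as stated they are false in general.
\begin{itemize}
\item Saying that $q^{\deg(\Delta)/12+\kappa-1}$ ``exactly cancels'' the denominators, leaving a unit times $\prod_\beta(\Frob_q-\beta)$ over unit roots, is the assertion $\mu(\mathscr L_{A/L})=0$.
\item A generator of $\mathrm{CH}_\Lambda(X_L)$ built from the unit-root part alone would likewise force $\mu(X_L)=0$.
\item In fact $\mu(X_L)$ over the constant $\Z_p$-extension can be positive; it reflects how far the Newton polygon of Frobenius lies above the Hodge polygon. So a slope analysis can give at most integrality, and only via a Newton-above-Hodge bound that you do not supply.
\item For semistable reduction, $\ord_v(\Delta)$ is the number of components of the special fibre, not $6$, so $\deg(\Delta)/12$ is not $\deg(N)/2$.
\end{itemize}
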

\begin{proof}
We may assume that $A/K$ is not a constant curve.
By \cite[Theorem 1.1, Proposition 2.2.4, Theorem 3.1.5]{lltt16}, there is a generator $c_{A / L}$ of $\mathrm{CH}_{\Lambda}(X_{L})$ such that the element
\begin{equation}\label{e:fc}
f_{A / L}:=q^{-(\deg(Z)+\kappa-1+\frac{\deg(\Delta)}{12})} 
\cdot c_{A / L}
\end{equation}
satisfies that for each character $\omega$,
\begin{equation}\label{e:omegaf}
\omega(f_{A / L})=L_{Z}(A, \omega^{-1}, 1).
\end{equation}
Note that in the formula of \cite[Proposition 2.2.4]{lltt16}, $l=0$ and $g(v)=0$ for all $v\in Z$, since $A(L)[p^\infty]$ 
is finite ($A/K$ is non-constant) and the reduction at $v$ is multiplicative.

Because $S=\emptyset$, we have $\dag_{A / L}=1$ and $D_{\omega}=0$, for all $\omega$, hence $\tau_\omega=\omega([a]_{D_\omega}^{-1})$. Here by the abuse of notation, we let $a$ denote the divisor determined by the differential idele $a$, also, since $D_\omega=0$, we write $[a]\in\Gamma$ for the image of 
$[a]_{D_\omega}$ under the homomorphism $W_{D_\omega}\longrightarrow \Gamma$. 
Since $q^{\deg(Z)}=\prod_{v \in Z} q_{v}$, by comparing the interpolation formula \eqref{e:intL} with \eqref{e:omegaf}, we find that

$$
\begin{array}{rcl}
\mathscr{L}_{A / L}^\sharp & = & [a] \cdot \prod_{v \in Z} \frac{q_{v}}{q_{v}-\lambda_{v}[v]_{L / K}} \cdot q^{\kappa-1+\frac{\deg(\Delta)}{12}} \cdot f_{A / L} \\
& = & [a] \cdot \prod_{v \in Z}(q_{v}-\lambda_{v}[v]_{L / K})^{-1} \cdot c_{A / L}.
\end{array}
$$
Since each $q_{v}-\lambda_{v}[v]_{L / K}=(q_{v} \pm 1)+\lambda_{v}([v]_{L / K}-1)\in\Lambda^*$, and so is
$[a]$, in view of \eqref{e:fel} we see that $\mathscr L_{A/L}\in\Lambda$ and
$$
\mathrm{CH}_{\Lambda}(X_{L})=(\mathscr{L}_{A / L}^\sharp)=(\mathscr{L}_{A / L}).
$$
\end{proof}

\section{Valuations associated to characters}\label{s:app}
Let $L^{\prime} / K$ be a $\mathbb{Z}_{p}^{e}$-subextension of $L / K$ and let
$\omega \in \hat{\Gamma}^{\prime} \subset \hat{\Gamma}$. Then
$$\mathsf{v}_{\omega,L}=\mathsf{v}_{\omega,L'}\circ p^L_{L'}.$$
So, the specialization formula enable us to compare the equalities
\begin{equation}\label{e:vv}
\mathsf{v}_{\omega, L}(\mathrm{CH}_{\Lambda}(X_{L}))=\mathsf{v}_{\omega, L}(\mathscr{L}_{A / L})
\end{equation}
and
\begin{equation}\label{e:v'v'}
\mathsf{v}_{\omega, L^{\prime}}(\mathrm{CH}_{\Lambda'}(X_{L^{\prime}}))=\mathsf{v}_{\omega, L^{\prime}}(\mathscr{L}_{A / L^{\prime}}).
\end{equation}

\begin{lemma}\label{l:red} Suppose $\omega\in\hat\Gamma'$ and 
$p^L_{L'}(\dag_{A/L})\not=0$. Then the following holds:
\begin{enumerate}
\item $\eqref{e:v'v'} \Longrightarrow \eqref{e:vv}$.
\item $\omega(\vartheta_{L/L'}) = 0 \Longrightarrow \eqref{e:vv}$.
\item If $\omega(\vartheta_{L/L'}) \neq 0$, then $ \eqref{e:vv} \Longrightarrow \eqref{e:v'v'}$.
\end{enumerate}
\end{lemma}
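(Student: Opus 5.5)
The plan is to apply $\omega$ to both specialization formulae and compare $p$-adic valuations. Since $\omega\in\hat\Gamma'$, we have $\mathsf v_{\omega,L}=\mathsf v_{\omega,L'}\circ p^L_{L'}$. By Proposition \ref{p:spl} (valid because $p^L_{L'}(\dag_{A/L})\neq0$) we get, as principal fractional ideals of $\Lambda'$,
$$p^L_{L'}(\mathscr L_{A/L})\cdot\varrho_{L/L'}=\mathscr L_{A/L'}\cdot\vartheta_{L/L'}.$$
By Proposition \ref{p:aspf} we get
$$\varrho_{L/L'}\cdot p^L_{L'}(\mathrm{CH}_\Lambda(X_L))=\vartheta_{L/L'}\cdot\mathrm{CH}_{\Lambda'}(X_{L'}).$$
The valuation $\mathsf v_{\omega,L'}$ is additive on products and well defined on principal fractional ideals, since units go to $p$-adic units. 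Taking $\mathsf v_{\omega,L'}$ of both identities therefore gives, in $\mathbb R\cup\{+\infty\}$,
\begin{align*}
\mathsf v_{\omega,L}(\mathscr L_{A/L})+\mathsf v_{\omega,L'}(\varrho_{L/L'})&=\mathsf v_{\omega,L'}(\mathscr L_{A/L'})+\mathsf v_{\omega,L'}(\vartheta_{L/L'}),\\
\mathsf v_{\omega,L}(\mathrm{CH}_\Lambda(X_L))+\mathsf v_{\omega,L'}(\varrho_{L/L'})&=\mathsf v_{\omega,L'}(\mathrm{CH}_{\Lambda'}(X_{L'}))+\mathsf v_{\omega,L'}(\vartheta_{L/L'}).
\end{align*}
By \eqref{e:varrho}, $\omega(\varrho_{L/L'})\neq0$, so $\mathsf v_{\omega,L'}(\varrho_{L/L'})$ is a finite real number.

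For (1), assume \eqref{e:v'v'}. The right-hand sides of the two displayed identities then coincide, possibly both equal to $+\infty$. Subtracting the finite quantity $\mathsf v_{\omega,L'}(\varrho_{L/L'})$ gives \eqref{e:vv}.

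For (2), if $\omega(\vartheta_{L/L'})=0$, then both right-hand sides are $+\infty$. Since the $\varrho$-term is finite, both $\mathsf v_{\omega,L}(\mathscr L_{A/L})$ and $\mathsf v_{\omega,L}(\mathrm{CH}_\Lambda(X_L))$ equal $+\infty$, which gives \eqref{e:vv}.

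For (3), $\mathsf v_{\omega,L'}(\vartheta_{L/L'})$ is now finite. Assuming \eqref{e:vv}, the left-hand sides agree, so the right-hand sides agree, and cancelling the finite $\vartheta$-term yields \eqref{e:v'v'}.

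The only delicate point is bookkeeping with $+\infty$: we must cancel only finite terms, and this is exactly why \eqref{e:varrho} and the hypothesis $\omega(\vartheta_{L/L'})\neq0$ in (3) are needed. One also has to check that $\mathsf v$ is well defined on fractional ideals. This holds because $\mathscr L$ lies in $\mathbb Q_p\Lambda$ and scaling by a power of $p$ shifts both sides by the same amount.
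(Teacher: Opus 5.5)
Your proof is correct and is essentially the paper's argument. The paper also applies $\omega$ to the specialization formulae \eqref{e:lspf} and \eqref{e:chspf}, uses $\omega(\varrho_{L/L'})\neq 0$ from \eqref{e:varrho}, and reads off (1)--(3) from the two resulting valuation identities; your explicit treatment of the $+\infty$ cases (cancelling only the finite $\varrho$-term, and in (3) the finite $\vartheta$-term) spells out what the paper leaves implicit.
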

\begin{proof}Note that $\omega(\varrho_{L / L^{\prime}} )\not=0$.
Since $p^L_{L'}(\dag_{A/L})\not=0$, the the specialization formulae imply:

$$
\mathsf v_{\omega,L}(\varrho_{L / L^{\prime}} )+ \mathsf v_{\omega,L}(\mathscr{L}_{A / L})
=\mathsf v_{\omega,L}(\vartheta_{L / L^{\prime}})+ \mathsf v_{\omega,L'}(\mathscr{L}_{A / L^{\prime}}),
$$
$$\mathsf v_{\omega,L}(\varrho_{L / L^{\prime}})+ \mathsf v_{\omega,L}(\mathrm{CH}_{\Lambda_\Gamma}(X_L))=\mathsf v_{\omega,L}(\vartheta_{L / L^{\prime}})+ \mathsf v_{\omega,L'}(\mathrm{CH}_{\Lambda_{\Gamma'}}(X_{L'})).
$$
\end{proof}

\subsection{The product formula}\label{su:prod}
For a given $\omega\in\hat\Gamma$, put 
$\Phi^{\prime}:=\ker(\omega)$, $\Phi^{\prime \prime}:=\ker (\omega^{p})$, and denote $K^{\prime}:={L}^{\Phi^{\prime}}$, $K^{\prime \prime}:={L}^{\Phi^{\prime \prime}}$. For each $f\in\Q_p\Lambda$, write $f^{L/K}_{L/K'}$ for $f^\Gamma_{\Phi'}$, and $f^{L/K}_{L/K''}$ for $f^\Gamma_{\Phi''}$.

\begin{lemma}\label{l:prod} Suppose $f\in\Q_p\Lambda$ is non-zero and $\omega$ is a character in $\hat\Gamma$ of order $p^n$. Then
\begin{equation}\label{e:prod}
\begin{array}{rcl}
\mathsf{v}_{\omega, L}(f)&=&\frac{1}{p^{n}-p^{n-1}} \cdot \mathsf{v}_{p}(\prod_{i \in (\Z / p^{n} \Z)^{*}} p_{K}^{L}(f_{\omega^{i}})).\\
{}&=& \frac{1}{p^{n}-p^{n-1}} \cdot \mathsf{v}_{p}\circ p^L_K(f_{L/K'}^{L/K}/ f_{L/K''}^{L/K})\\
{}&=& \frac{1}{p^{n}-p^{n-1}} \cdot \mathsf{v}_{p}\circ p^L_{K''}(f_{L/K'}^{L/K}/ f_{L/K''}^{L/K}).
\end{array}
\end{equation}
Note that if $n=0$, we take $p^{n-1}=0$ in the above formulae.
\end{lemma}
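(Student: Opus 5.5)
The plan is to identify every quantity in \eqref{e:prod} with a value of $f$ at a character, and then use Galois conjugation over $\Q_p$.

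\textbf{Step 1.} Let $\chi$ be a character of $\Gamma$ trivial on a subgroup $\Phi$ of finite index. Write $f=\sum_{\sigma\in C}f_\sigma\sigma$ with $f_\sigma\in\Q_p\Lambda_\Phi$. The augmentation $p^L_K$ kills $\sigma-1$ for every $\sigma\in\Gamma$, so
$$p^L_K(f_\chi)=\sum_{\sigma\in C}p^L_K(f_\sigma)\chi(\sigma).$$
On the other hand, $\chi(f)=\sum_\sigma \chi(f_\sigma)\chi(\sigma)$, and $\chi(f_\sigma)=p^L_K(f_\sigma)$ because $\chi$ is trivial on $\Phi$. Hence $p^L_K(f_\chi)=\chi(f)$.

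Now take $\Phi=\Phi'=\ker\omega$. Then $\Gamma/\Phi'$ is cyclic of order $p^n$, generated by the image of some $\gamma$ with $\omega(\gamma)=\zeta$ a primitive $p^n$-th root of unity. Its characters are exactly the $\omega^i$, $i\in\Z/p^n\Z$. Similarly, the characters of $\Gamma/\Phi''$ are the $\omega^{i}$ with $p\mid i$.

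\textbf{Step 2 (first equality).} The coefficients of $f$ lie in $\Q_p$, and $\omega$ is continuous. So for $\tau_i\in\Gal(\Q_p(\mu_{p^n})/\Q_p)$ with $\tau_i(\zeta)=\zeta^i$, $i\in(\Z/p^n\Z)^*$, we get $\tau_i(\omega(f))=\omega^i(f)$. One checks this first on group elements, then on $\Lambda$ by continuity, using that $\tau_i$ is an isometry of $\Q_p(\mu_{p^n})$.

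Since $\mathsf v_p$ is Galois invariant, all the $\omega^i(f)$ with $i$ a unit have the same valuation $\mathsf v_{\omega,L}(f)$. Their product is the norm $\Nm_{\Q_p(\mu_{p^n})/\Q_p}(\omega(f))$, which has $\varphi(p^n)=p^n-p^{n-1}$ factors. By Step 1 the product equals $\prod_{i}p^L_K(f_{\omega^i})$, and taking $\mathsf v_p$ gives the first line. When $n=0$ the product is just $\omega_0(f)=p^L_K(f)$. If $\omega(f)=0$, both sides are $+\infty$.

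\textbf{Step 3 (second equality).} By definition, $f^{L/K}_{L/K'}=\prod_{i\in\Z/p^n}f_{\omega^i}$ and $f^{L/K}_{L/K''}=\prod_{p\mid i}f_{\omega^{i}}$, where the latter twists are formed relative to $\Phi''$. Applying the ring homomorphism $p^L_K$ together with Step 1 (valid for both $\Phi'$ and $\Phi''$) gives
$$p^L_K(f^{L/K}_{L/K'})=\prod_{i}\omega^i(f),\qquad p^L_K(f^{L/K}_{L/K''})=\prod_{p\mid i}\omega^i(f).$$
Their quotient is the product over units $i$, which is the second line.

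This step is the delicate point. The quotient $f^{L/K}_{L/K'}/f^{L/K}_{L/K''}$ has to be read in the total ring of fractions, and $p^L_K$ can only be applied to it when $p^L_K(f^{L/K}_{L/K''})=\prod_{p\mid i}\omega^i(f)\neq0$. I would state this explicitly. When that product vanishes, the second and third lines should be read through their numerators and denominators, with the first line serving as the definition. A cleaner way to handle it is to note that the ratio equals $\prod_{i\in(\Z/p^n)^*}f_{\omega^i}$ computed inside $\O[[\Gamma]]$, so no division is actually needed.

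\textbf{Step 4 (third equality).} Both restrictions lie in $\Q_p\Lambda_{\Phi''}$, since $\Phi'\subset\Phi''$ and we use \eqref{e:trans}. The map $p^L_{K''}\colon\Q_p\Lambda\to\Q_p\Z_p[[\Gamma/\Phi'']]$ sends $\Lambda_{\Phi''}$ into the constants $\Q_p\cdot 1$, and on this subring it agrees with the augmentation $p^L_K$. So the third line coincides with the second.

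I expect the only real subtlety to be the bookkeeping in Step 3: making sure the twists relative to $\Phi'$ and to $\Phi''$ both specialize under the augmentation to the plain values $\omega^i(f)$, and handling possible vanishing denominators.
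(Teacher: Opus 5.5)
Your proposal is correct and is essentially the paper's own proof. Like the paper, you average $\mathsf v_p$ over the Galois conjugates $\omega^i$, $i\in(\Z/p^n\Z)^*$, use $\omega(f)=p^L_K(f_\omega)$, and identify $\prod_{i\ \mathrm{unit}}f_{\omega^i}$ with $f^{L/K}_{L/K'}/f^{L/K}_{L/K''}$ (this is your ``cleaner way''; the ratio is an honest element of $\Q_p\Lambda_{\Phi''}$, which removes the vanishing-denominator worry), then note that $p^L_{K''}$ of it is already a constant. The one caveat, which the paper shares, is that for $n=0$ the last two lines degenerate, since then $K'=K''=K$ and the ratio is $1$; this is harmless because the lemma is only applied to $\omega\neq\omega_0$.
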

\begin{proof}
For each $g\in\Gal(\bar\Q_p/\Q_p)$, define $\tensor[^g]\omega{}$ to be the character such that $\tensor[^g]\omega{}(\gamma)=\tensor[^g]{(\omega(\gamma))}{}$. 
Then $\tensor[^g]\omega{}=\omega^i$, for some $i \in  (\Z/p^n\Z)^*$, and vice versa.
Since $\mathsf v_{\omega,L}(f)=\mathsf v_{\tensor[^g]\omega{},L}(f)$,

\begin{equation*}\label{e:val}
\mathsf v_{\omega,L}(f) = \frac{1}{p^n-p^{n-1}}\cdot  \sum_{i\in (\Z/p^n\Z)^*} \mathsf v_{\omega^i,L}(f).
\end{equation*}
Also, $\omega(f)=p_{K}^{L}(f_{\omega})$, so
$
\mathsf{v}_{\omega, L}(f)=\mathsf{v}_{p}\left(p_{K}^{L}\left(f_{\omega}\right)\right)
$.
Hence
\begin{equation*}\label{e:partial}
\mathsf{v}_{\omega, L}(f)=\frac{1}{p^{n}-p^{n-1}} \cdot \mathsf{v}_{p}\circ p^L_K (\prod_{i \in (\Z / p^{n} \Z)^{*}} f_{\omega^{i}}),
\end{equation*}
while
 $$\prod_{i \in(\Z / p^{n} \Z)^{*}} f_{\omega^{i}}=f_{L/K'}^{L/K}/ f_{L/K''}^{L/K}.$$
 The last equality in \eqref{e:prod} is due to the fact that $p^L_{K''}(f_{L/K'}^{L/K}/ f_{L/K''}^{L/K})$ is already a constant.
 \end{proof}

\subsection{Valuations associated to characters}\label{su:val}
 In this subsection, we prove Proposition \ref{p:val} and Proposition \ref{p:int}. 
 


Recall that in Monsky's topology, if $Y_1\subsetneq\hat\Gamma$ and $Y_2\subsetneq\hat\Gamma$ are two proper closed subsets, then (see \cite[Theorem 1.8]{monsky})
\begin{equation}\label{e:union}
Y_1\cup Y_2\subsetneq\hat\Gamma.
\end{equation}

If $\eta\in\Lambda$ is non-zero, then its zero set
$$\diamondsuit_\eta:=\{\omega\in\hat\Gamma\;\mid\; \omega(\eta)=0\}$$
is a proper closed subset \cite[Theorem 2.6]{monsky}.


\subsubsection{The proof of Proposition \ref{p:val}}\label{ss:pfpval}

If $\omega=\omega_0$,
the trivial character, then $\omega$ is the same as $p^L_K$, so the proposition is a consequence of Lemma \ref{l:red}(1) and Theorem \ref{t:l=k}.

Suppose $\omega\not=\omega_0$ and is of order $p^n$. We first treat the $K_\infty^{(p)}\subset L$ case,  under such condition $\dag_{A/L}$ is a natural number.
Let $\Phi'$, $\Phi''$, $K'$ and $K''$ be as in \S\ref{su:prod}.
Choose an isomorphism $\xymatrix{\mu_{p^{n}} \ar[r]^-\iota_-\sim & \Z_{p} / p^{n} \Z_{p}}$ as well as a surjective homomorphism $\tilde{\omega}: \Gamma \longrightarrow \Z_{p}$, such that $\iota \circ \omega=q \circ \tilde{\omega}$, where $q: \Z_{p} \longrightarrow \Z_p / p^{n} \Z_{p}$ is the natural map. 
Then $\Upsilon:=\ker(\tilde{\omega}) \subset \Phi'$ is of $\Z_{p}$-rank $d-1$ and $L^{\prime}:=L^{\Upsilon}$ is a $\Z_p$-extension of $K$ containing $K'$. Note that if $K^{\prime} \subset K_{\infty}^{(p)}$, then the above $\tilde\omega$ can be chosen to have $L'=K_{\infty}^{(p)}$. Since the Iwasawa conjecture holds for $A/L'$, we can apply Lemma \ref{l:red}(1) to complete the proof.

Consider the $K'\nsubseteq K_\infty^{(p)}$ case. By Lemma \ref{l:red} again, we replace $L$ by $L'K_\infty^{(p)}$. 
Since $L' \cap K_{\infty}^{(p)} \subseteq$ $K^{\prime \prime}$, in $L/K''$, the two $\Z_p$-extensions $L' / K^{\prime \prime}$ and $K_{\infty}^{\prime \prime}{ }^{(p)} / K^{\prime \prime}$ are disjoint. Thus, when applying Lemma \ref{l:prod}, since $f^{L/K}_{L/K'}\in \Q_p \Lambda_{\Phi'}$, we can write
$$p^L_{K''^{(p)}_\infty}(f^{L/K}_{L/K'})=p^L_{K'^{(p)}_\infty}(f^{L/K}_{L/K'}),$$
by identifying
$\Lambda_{\Gal(K_{\infty}^{\prime (p)} / K^{\prime})}$ with $\Lambda_{\Gal(K_{\infty}^{\prime \prime (p)} / K^{\prime \prime})}$ via the restriction of Galois actions 
$$\Gal(K_{\infty}^{\prime (p)} / K^{\prime})\longrightarrow \Gal(K_{\infty}^{\prime \prime (p)} / K^{\prime \prime}).$$

By Theorem \ref{t:constantfield}, the Iwasawa main conjecture holds for $A$ over $K'^{(p)}_\infty/K'$, so by 
the specialization formulae and Proposition \ref{p:restriction}, we can write 
$$p^L_{K'^{(p)}_\infty}((\mathscr L_{A/L})^{L/K}_{L/K'})=p^L_{K'^{(p)}_\infty}(\mathrm{CH}(X_L)^{L/K}_{L/K'}),$$
and similarly,
$$p^L_{K''^{(p)}_\infty}((\mathscr L_{A/L})^{L/K}_{L/K''})=p^L_{K''^{(p)}_\infty}(\mathrm{CH}(X_L)^{L/K}_{L/K''}).$$
Then apply Lemma \ref{l:prod}, noting that $p^L_{K''}=p^{K''^{(p)}_\infty}_{K''}\circ p^L_{K''^{(p)}_\infty}$.
This proves the $K^{(p)}_\infty\subset L$ case with $\mathsf Z=\emptyset$.

In general, put $\tilde L:=LK_\infty^{(p)}$, 
and take 
$$\mathsf Z:=\{\omega\in\hat\Gamma\;\mid\;\omega(\vartheta_{\tilde L/L})=0\}.$$ 
In this case, $\dag_{A/\tilde L}$ is a natural number, so by the Lemma \ref{l:red}(1)(3), if 
$\omega\not\in\mathsf Z$, then the desired equality \eqref{e:vv} holds. 
Let $S_{1,m}$ (resp. $S_{1,n}$) denote the subset of $S_1$ consisting of split-multiplicative places
(resp. non-split multiplicative places $v$ with $\F_{q_v^2}\subset L_v$).
By Definition \ref{d:vartheta}, the set
$$\mathsf Z=\bigcup_{v\in S_{1,m}}\diamondsuit_{1-\sigma_v}\cup \bigcup_{v\in S_{1,n}}\diamondsuit_{1+\sigma_v},$$
so it is a proper closed Monsky subset of $\hat\Gamma$.

\subsubsection{The proof of Proposition \ref{p:int}}\label{ss:int}
\begin{proof}
Recall that if $f\in\Lambda$ is not divisible by $p$, then
$$
\Omega_f:=\{\omega \in \hat{\Gamma} \mid \mathsf{v}_{\omega, L}(f) \geq 1\}
$$
is contained in a proper closed subset \cite[Theorem 2.3]{monsky}.

First consider the
$\mathscr{L}_{A / L}\not=0$ case and denote
$\mathscr{L}_{A / L}=p^{m} \cdot \xi_0$, where $\xi_0 \in \Lambda$, not divisible by $p$. 
Let $\mu$ be the $\mu$-invariant of $X_L$. 
If $\mu> m$, then
by Proposition \ref{p:val}, for $\omega\not\in \mathsf Z$, 
$$\mathsf{v}_{\omega, L}(\xi_0)= \mathsf{v}_{\omega, L}(p^{-m}\cdot \mathrm{CH}_{\Lambda}(X_L))\geq \mu-m\geq 1.$$
This means $\Omega_{\xi_0} \cup \mathsf Z=\hat{\Gamma}$, a contradiction to \eqref{e:union}.
Thus, $m\geq\mu$. A similar argument shows $\mu\geq m$.
Note that an $f\in\Lambda$ is zero if and only if $\diamondsuit_f=\hat\Gamma$, so

$$\begin{array}{rcl}
\mathrm{CH}_{\Lambda}(X_L)=0 
 &\iff& \diamondsuit_{\mathrm{CH}_{\Lambda}(X_L)}
 =\hat\Gamma\\
{} &\iff& \diamondsuit_{\xi_0}\cup \mathsf Z =\hat\Gamma\\
{} &\iff& \mathscr L_{A/L}=0.
\end{array}
$$
The proposition holds for the $\mathscr L_{A/L}=0$ case.

\end{proof}

\section{More applications}\label{s:gen} 
The proof of Proposition \ref{p:e} is divided into three steps, the first two are purely Iwasawa-Algebra theoretic. 

\subsection{The first step}\label{su:conjsp}
Consider the setting that there is given a $\Z_p^d$-extension $L/K$ of a field $K$ and there are elements $\xi_{L}\in \Q_p\Lambda$ and $\eta_{L}\in \Lambda$, where $\Gamma:=\Gal(L/K)$, $\Lambda:=\Lambda_{\Gamma}$.
In this setting, the main conjecture means
\begin{equation}\label{e:mc}
\xi_L\in\Lambda,\;\text{and in } \Lambda\; \text{the ideals }\;(\xi_L)=(\eta_{L}).
\end{equation}

Suppose $e<d$ and $L'/K$ is an intermediate $\Z_p^e$-extension of $L/K$.
As before, denote
$\Gamma':=\Gal(L'/K)$, $\Lambda'=\Lambda_{\Gamma'}$. 
It is obvious that if the main conjecture holds, then
\begin{equation}\label{e:conjsp}
p^L_{L'}(\xi_L)\in\Lambda',\;\text{and in } \Lambda' \text{ the ideals } (p^L_{L'}(\xi_L))=p^L_{L'}(\eta_L).
\end{equation}
The following proposition says that for $e\geq 2$, \eqref{e:conjsp} collectively can be used to deduce the main conjecture \eqref{e:mc}. Its proof is given in \S\ref{su:pfgen}.

Recall that $\mho(e,\Gamma)$ denotes the set of all intermediate $\Z_p^e$-extensions $L'/K$.
It is identified with the Grassmanninan $\mathrm{Gr}(d-i,d)(\Z_p)$, and hence endowed with
the Zariski topology.

\begin{myproposition}\label{p:gen} Suppose $d>e\geq 2$.
If there is a non-empty Zariski open subset $O\subset \mho(e,\Gamma)$ such that \eqref{e:conjsp} holds
for every $L'/K\in O$, then \eqref{e:mc} holds for $L/K$.
\end{myproposition}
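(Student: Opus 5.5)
The plan is to reduce \eqref{e:mc} to two statements about generic specializations in the UFD $\Lambda\simeq\Z_p[[t_1,\dots,t_d]]$. The first says that coprimality survives a generic specialization to a $\Z_p^e$-quotient when $e\geq 2$. The second says that non-divisibility by $p$ survives a generic specialization. Throughout I use that $\mho(e,\Gamma)=\mathrm{Gr}(d-e,d)(\Z_p)$ is irreducible and its $\Z_p$-points are Zariski dense. Hence a non-empty Zariski open $O$ meets every non-empty Zariski open subset, and also every non-empty $p$-adically open subset.

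\textbf{Step 1 (the $p$-part and integrality).} Write $\xi_L=p^{-m}\xi_0$ and $\eta_L=p^{n}\eta_0$, where $\xi_0,\eta_0\in\Lambda$ are not divisible by $p$, and assume $\eta_L\not=0$. I claim there is a non-empty $p$-adically open set of $L'$ on which neither $p^L_{L'}(\xi_0)$ nor $p^L_{L'}(\eta_0)$ is divisible by $p$. Taking such an $L'$ in $O$, \eqref{e:conjsp} then forces $-m=n$, so $m\le 0$ and $\xi_L\in\Lambda$.

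To prove the claim, I look at $\bar\xi_0\in\F_p[[t_1,\dots,t_d]]$ and its lowest-degree nonzero homogeneous part $\bar H$. Choose coordinates so that $p^L_{L'}$ sends $t_i\mapsto\sum_j a_{ij}t'_j+(\text{higher order})$. Then $p^L_{L'}(\bar\xi_0)$ is nonzero as soon as $\bar H(\bar A t')\not=0$, where $\bar A$ is the reduction of $(a_{ij})$. This is the main obstacle: $\bar H$ might vanish at every $\F_p$-point. If that happens, I would first try the exponent trick: the specialization map is not linear in $t$, and one can use higher-order terms, or choose $A$ with entries divisible by suitable powers of $p$, so that the image is measured by a different graded piece. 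Failing that, I would enlarge the coefficients, replacing $\Z_p$ by $\O$ unramified of large residue degree. The statement descends, since divisibility by $p$ is insensitive to unramified base change.

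\textbf{Step 2 (the prime-to-$p$ part).} Factor $\xi_0=u\prod\pi_i^{a_i}$ and $\eta_0=u'\prod\pi_i^{b_i}$ in the UFD $\Lambda$, with irreducible $\pi_i$ not associate to $p$ and units $u,u'$. It suffices to show $a_i=b_i$ for all $i$. For this I need a Bertini/Hilbert-irreducibility type lemma, valid for $e\geq2$. It says that for distinct (non-associate) primes $\pi,\pi'$, the specializations $p^L_{L'}(\pi)$ and $p^L_{L'}(\pi')$ are coprime and squarefree-compatible for $L'$ in a non-empty Zariski open set. This fails for $e=1$, as the counterexample after Proposition \ref{p:e} shows. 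I would prove it by reducing to the geometric statement that for $e\geq2$ a generic linear section of codimension $d-e$ of the divisors $V(\pi),V(\pi')$ in the formal polydisc has proper intersection. I would realize this via the Weierstrass preparation theorem, making $\pi$ and $\pi'$ distinguished polynomials in one variable. Their resultant is then a nonzero element of $\Z_p[[t_2,\dots,t_d]]$, and I would require it to remain nonzero after the specialization; this is the open condition. A similar count of multiplicities then applies. Intersecting finitely many such open sets with $O$ gives one $L'$ on which the multiplicities of the distinct prime factors can be read off. Comparing both sides of $(p^L_{L'}(\xi_0))=(p^L_{L'}(\eta_0))$ then yields $a_i=b_i$.

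\textbf{Step 3 (the degenerate case).} If $\eta_L=0$, then $p^L_{L'}(\xi_L)=0$ for all $L'\in O$. A nonzero $\xi_L$ would have nonzero specialization on a nonempty Zariski open set, by the same lemma applied to a single element; that open set meets $O$, giving a contradiction. Hence $\xi_L=0$ and \eqref{e:mc} holds trivially.
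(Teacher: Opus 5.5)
Your route differs from the paper's, and as written it has two genuine holes. The paper converts the hypothesis into $\mathsf v_{\omega,L}(\xi_L)=\mathsf v_{\omega,L}(\eta_L)$ for all $\omega\in\hat\Gamma$ (Lemma \ref{l:countp}) and gets equal $\mu$-invariants from Monsky's theorem (Corollary \ref{c:countp}). It then reduces to $d=e+1$ and shows the Weierstrass polynomials of $\xi_0,\eta_0$ in $t_1$ agree modulo infinitely many coprime $s_d$, with no factorization needed.

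\textbf{First hole: Step~1 is left open.} That $p$-indivisibility survives $p^L_{L'}$ for a non-empty open set of $L'$ is exactly the non-trivial input, and you do not establish it.
\begin{itemize}
\item The linear test $\bar H(\bar A t')\neq 0$ can fail for every $L'$: the product of all $\F_p$-rational linear forms vanishes on every $\F_p$-rational proper subspace.
\item Passing to an unramified $\O$ cannot help. The maps $p^L_{L'}$ are indexed by $\Z_p$-rational quotients of $\Gamma$, so extending scalars creates no new specializations, and $\bar A$ stays $\F_p$-rational.
\item The paper gets this input from Monsky's theorem (Lemma \ref{l:normalb}). By hand, you could send $\sigma_i\mapsto\sigma^{p^{k_i}}$ with $k_1=0$. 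Take the $k_i$ increasing fast enough that, after $t_i\mapsto t^{p^{k_i}}$ modulo $p$, the lexicographically minimal monomial of $\bar\xi_0$ (last variable most significant) is the only one of minimal $t$-degree. Then note that plainness is $p$-adically open: matrices congruent mod $p^N$ give the same specialization modulo $(p,t_1'^{p^N},\dots,t_e'^{p^N})$.
\item The same input is needed in Step~2. Weierstrass preparation in $t_1$ requires every $\pi_i$ to be plain for the $\Z_p$-extension $L_1$ cut out by $t_2=\cdots=t_d=0$.
\end{itemize}

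\textbf{Second hole: the resultant argument does not give an open subset of $\mho(e,\Gamma)$.} Distinguishedness of $p^L_{L'}(\pi_i)$ and the identity $\mathrm{Res}(p^L_{L'}(\pi_i),p^L_{L'}(\pi_j))=p^L_{L'}(\mathrm{Res}(\pi_i,\pi_j))$ hold only for $L'\supset L_1$. That is, they hold only on the Zariski \emph{closed} family $\mho(L_1/K,e,\Gamma)$. So ``intersecting with $O$'' is unjustified, since $O$ may miss this family entirely.

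To repair it, do what the paper does:
\begin{itemize}
\item Choose $L_1$ plain for $\xi_0\eta_0$ and lying in the set $U_1$ of Lemma \ref{l:ij}(3) attached to $O$. Then $O\cap\mho(L_1/K,e,\Gamma)$ is a non-empty open subset of this irreducible family.
\item The element $\mathrm{Res}(\pi_i,\pi_j)\in\Z_p[[\Gal(L/L_1)]]$ is non-zero. Its image in a quotient of rank $e-1\geq 1$ is non-zero on a non-empty Zariski open set there, by the lowest-degree-form argument over $\Q_p$ as in \S\ref{ss:vanishing}. This is exactly where $e\geq 2$ enters.
\end{itemize}

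With these repairs your argument closes. Pairwise coprimality of the non-zero non-units $p^L_{L'}(\pi_i)$ already forces $a_i=b_i$, so the ``squarefree-compatibility'' you ask for is not needed.
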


\subsection{Preliminary results}\label{su:prim}
\subsubsection{Partial flag varieties}\label{su:flag} For $0< d_1<d_2<\cdots<d_{n-1}<d$, let $\mathsf{F} (d_1,d_2,..,d_{n-1},d)$ denote the partial flag variety over $\Q_p$ representing the functor that assigns to
each algebraic extension $k$ of $\Q_p$ the set of all sequences of $k$-subspaces
$$V_1\subset V_2\subset V_{n-1}\subset k^d,\;\dim_k V_i=d_i.$$
We identify $\mho(i,\Gamma)$ with $\mathsf F(i,d)(\Q_p)$. 
A partial flag varieties can be viewed as the a homogeneous space $\mathrm{GL}(d,\Q_p)/P$, where $P$ is certain parabolic subgroup,
so that its Zariski topology coincides with the quotient (Zariski) topology. Thus, if $E':=\{d'_1,...,d'_{m-1}\}$ is an ordered subset of $E:=\{d_1,...,d_{n-1}\}$, the projection
$$\pi_{\scriptscriptstyle{E},{E'}}:\mathsf F(d_1,...,d_{n-1},d)\longrightarrow \mathrm F(d'_1,...,d'_{m-1},d)$$
is both open and closed. 

\begin{lemma}\label{l:ij}The following holds.
\begin{enumerate}
\item For a given a non-trivial closed subgroup $\Phi\subset \Gamma$ and a natural number $i$, 
$$C_\Phi:=\{L'/K\in \mho(i,\Gamma)\;\mid\; \Gal(L/L')\supset \Phi \}$$
is a Zariski closed subset of $\mho(i,\Gamma)$ .
\item Suppose $0<i<j\leq d$. For a non-empty Zariski open subset $O_i\subset \mho(i,\Gamma)$,
$$O_j:=\{ L'/K\in\mho(j,\Gamma)\;\mid\; L'\supset L'',\;\text{for some } L''/K\in O_i\}$$
is a non-empty Zariski open subset of $\mho(j,\Gamma)$. 
\item Suppose $0<i<j\leq d$. For a non-empty Zariski open subset $U_j\subset \mho(j,\Gamma)$, 
$$U_i:=\{ L'/K\in\mho(i,\Gamma)\;\mid\; L'\subset L'',\;\text{for some } L''/K\in U_j\}$$
is a non-empty Zariski open subset of $\mho(i,\Gamma)$.

\end{enumerate}

\end{lemma}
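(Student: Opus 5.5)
We argue in linear-algebraic terms, identifying $\mho(i,\Gamma)$ with $\mathsf F(d-i,d)(\Q_p)$ through the subspace $V_{L'}:=\Q_p\Gal(L/L')\subset\Q_p\Gamma$. This is a bijection onto the $\Q_p$-rational subspaces, since $V\cap\Gamma$ is a saturated sublattice. The inclusions we need then become incidence relations: $\Gal(L/L')\supset\Phi$ means $V_{L'}\supset \Q_p\Phi$, and $L'\supset L''$ means $V_{L'}\subset V_{L''}$. Each of the three assertions will be checked on the $\Q_p$-points of the flag varieties of \S\ref{su:flag}, using that projections between partial flag varieties are open and closed.

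For (1), put $W:=\Q_p\Phi$, a nonzero subspace of dimension $r$. If $d-i<r$ the set $C_\Phi$ is empty, hence closed. Otherwise $\{V\in \mathsf F(d-i,d)\mid V\supset W\}$ is a Schubert-type closed subvariety, since it is cut out by the vanishing of the minors of the matrix $[W\,|\,V]$ of size greater than $d-i$ (equivalently, by the vanishing of $W\to k^d/V$). Its intersection with the $\Q_p$-points is exactly $C_\Phi$, because $V_{L'}\supset W$ holds if and only if $\Gal(L/L')=V_{L'}\cap\Gamma\supset\Phi$, the subgroup $\Gal(L/L')$ being saturated in $\Gamma$ (for the reverse implication use that $\Phi\subset W\cap \Gamma$).

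For (2) and (3) we use the incidence variety $\mathsf F(d-j,d-i,d)$ with its two projections $\pi_1$ to $\mathsf F(d-i,d)$ and $\pi_2$ to $\mathsf F(d-j,d)$. In (2) the subspace $V_{L''}$ has dimension $d-i$, and $L'\supset L''$ means $V_{L'}\subset V_{L''}$ with $\dim V_{L'}=d-j$. Hence $O_j=\pi_2(\pi_1^{-1}(O_i))$ on $\Q_p$-points, which is open because $\pi_1$ is continuous and $\pi_2$ is open. Assertion (3) is the same argument with the roles of the two projections exchanged: $U_i=\pi_1(\pi_2^{-1}(U_j))$. Nonemptiness is immediate, since every $(d-i)$-dimensional rational subspace contains a rational $(d-j)$-dimensional one, and dually every rational subspace extends to a larger rational one.

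The main obstacle is the passage between the variety and its $\Q_p$-points. The open-map property of $\pi_2$ is stated for the Zariski topology of the variety, and we must confirm that images of $\Q_p$-points are compatible with this. Here the projections are locally trivial in the Zariski topology: the fibers are Grassmannians, trivialized over the standard affine charts. A point of $\pi_2(\pi_1^{-1}(O_i))(\Q_p)$ therefore lifts to a $\Q_p$-point, and the image of the rational points equals the rational points of the open image. Applied to the rational points of $\pi_1^{-1}(O)$, this gives openness and also shows that the images computed on points agree with the sets $O_j$ and $U_i$ defined in terms of fields. This step settles both (2) and (3).
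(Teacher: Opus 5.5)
Your argument is correct and is essentially the paper's. For (2) and (3) the paper also writes $O_j=\pi_{E,E''}(\pi_{E,E'}^{-1}(O_i))$ and $U_i=\pi_{E,E'}(\pi_{E,E''}^{-1}(U_j))$ on the incidence flag variety and uses openness of the projections. For (1) the paper realizes $C_\Phi$ as the image, under a closed projection, of the fiber over the point $x$ attached to the saturation $\Q_p\Phi\cap\Gamma$; this is the same Schubert-type locus $\{V\supset \Q_p\Phi\}$ that you cut out directly by minors.

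Your paragraph on $\Q_p$-points fills in a step the paper leaves implicit. The lift exists because a nonempty Zariski open subset of the Grassmannian fiber has a $\Q_p$-point, since $\Q_p$ is infinite and the fiber contains a dense open affine space; local triviality alone does not give this, so you should say it. You should also set aside the degenerate case $j=d$, where $\mho(d,\Gamma)$ is a single point, as the paper does.
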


\begin{proof}Let $d-j$ denote the $\Q_p$-dimension of $\Q_p\Phi\subset \Q_p\Gamma$. Put $\tilde\Phi=\Q_p \Phi\cap \Gamma$, $L''=L^{\tilde\Phi}$ and denote
$x:=L''/K$ as an element of $\mho(j,\Gamma)(\Q_p)=\mathsf F(j,d)(\Q_p)$. If $i>j$, then $C_\Phi=\emptyset$;, if $i=j$, then $C_\Phi=\{x\}$.
For $j>i$, take $E=\{i,j\}$, $E'=\{i\}$, and $E''=\{j\}$. Then $C_\Phi$ is nothing but $\pi_{\scriptscriptstyle{E},{E'}}((\pi_{\scriptscriptstyle{E},{E''}}^{-1}(x)))$, so is closed in $\mho(i,\Gamma)$. This proves (1).

For the rest, we may assume that $j<d$. Take $E$, $E'$ and $E''$ as above. Then use
$O_j=\pi_{\scriptscriptstyle{E},{E''}}(\pi_{\scriptscriptstyle{E},{E'}}^{-1}(O_{i}))$  
and $U_i=\pi_{\scriptscriptstyle{E},{E'}}(\pi_{\scriptscriptstyle{E},{E''}}^{-1}(U_{j}))$
to complete the proof.
\end{proof}



Suppose $0< i<j\leq d$. Let $L''/K\in\mho(j,\Gamma)$ and denote $\Gal(L''/K)=\Gamma''$.
Then the assignment $L'/K\mapsto L'/K$ gives rise to an injective morphism 
\begin{equation}\label{e:l''gamma}
\digamma_{i,L''}:\mho(i,\Gamma'') \longrightarrow \mho(i,\Gamma)
\end{equation}
whose image is the Zariski closed subset
$$\mho(i,L''/K,\Gamma):=\{L'/K\in\mho(i,\Gamma)\;\mid\;L'\subset L''\}\subset \mho(i,\Gamma).$$

\begin{lemma}\label{l:digamma}Suppose $O$ is a Zariski open set of $\mho(i,\Gamma)$ and $L''/K\in\mho(j,\Gamma)$, $i\leq j$. Then those $L'/K\in \mho(i,\Gamma'')$, which is also in $O$, form a Zariski open subset of $\mho(i,\Gamma'')$.

\end{lemma}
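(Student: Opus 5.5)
The plan is to show that $\digamma_{i,L''}^{-1}(O)$ is Zariski open in $\mho(i,\Gamma'')$. This follows once we know that $\digamma_{i,L''}$ is a morphism of varieties, since then it is continuous for the Zariski topologies.

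First we make $\digamma_{i,L''}$ explicit in terms of subspaces. Write $\Psi'':=\Gal(L/L'')$, so that $\Gamma''=\Gamma/\Psi''$. Then $\Q_p\Gamma''=\Q_p\Gamma/\Q_p\Psi''$, where $\Q_p\Psi''$ has dimension $d-j$. An element $L'/K\in\mho(i,\Gamma'')$ corresponds to the $(j-i)$-dimensional subspace $W=\Q_p\Gal(L''/L')\subset\Q_p\Gamma''$. Its image under $\digamma_{i,L''}$ corresponds to the $(d-i)$-dimensional subspace $\Q_p\Gal(L/L')$, which is exactly the preimage $\pi^{-1}(W)$ under the quotient map $\pi:\Q_p\Gamma\to\Q_p\Gamma''$.

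To check this identification, note that $\Gal(L/L')$ is the preimage of $\Gal(L''/L')$ in $\Gamma$. Tensoring with $\Q_p$ commutes with taking this preimage, because $\Psi''$ is a direct summand of $\Gamma$ (the quotient $\Gamma''$ is torsion free). This also shows that the assignment is compatible with the identifications $\mho(\cdot,\cdot)=\mathrm{Gr}(\cdot,\cdot)(\Z_p)$ fixed in the paper.

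The main step is to verify that
$$\mathrm{Gr}(j-i,\Q_p\Gamma'')\longrightarrow\mathrm{Gr}(d-i,\Q_p\Gamma),\qquad W\mapsto \pi^{-1}(W),$$
is a morphism. We do this in Plücker coordinates. Choose a splitting $\Q_p\Gamma=\Q_p\Psi''\oplus M$ with $M\simeq\Q_p\Gamma''$ via $\pi$, and fix a basis $u_1,\dots,u_{d-j}$ of $\Q_p\Psi''$. Then $\pi^{-1}(W)=\Q_p\Psi''\oplus \tilde W$, where $\tilde W\subset M$ is the lift of $W$. Given a basis $w_1,\dots,w_{j-i}$ of $\tilde W$, the Plücker vector of $\pi^{-1}(W)$ is
$$u_1\wedge\cdots\wedge u_{d-j}\wedge w_1\wedge\cdots\wedge w_{j-i}.$$
Wedging with the fixed nonzero vector $u_1\wedge\cdots\wedge u_{d-j}$ is a linear map
$$\textstyle\bigwedge^{j-i}M\to\bigwedge^{d-i}\Q_p\Gamma,$$
and it is injective on $\bigwedge^{j-i}M$ because $M\cap\Q_p\Psi''=0$. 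It therefore induces a linear embedding of projective spaces. This embedding carries the Plücker image of $\mathrm{Gr}(j-i,M)$ onto a closed subvariety of the Plücker image of $\mathrm{Gr}(d-i,\Q_p\Gamma)$, and that subvariety is exactly $\mho(i,L''/K,\Gamma)$.

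Since the map is the restriction of a linear embedding, it is a morphism and a homeomorphism onto its closed image. Consequently the preimage of the open set $O$ is $O\cap\mho(i,L''/K,\Gamma)$ pulled back, which is open in $\mho(i,\Gamma'')$; this set is precisely the set of those $L'/K\in\mho(i,\Gamma'')$ lying in $O$. We expect no real obstacle; the only point needing care is the compatibility of the $\Q_p$-span with taking preimages, which we settle as above using torsion-freeness of $\Gamma''$.
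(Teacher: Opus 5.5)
Your proof is correct and takes the same route as the paper: the paper also writes the set as $\digamma_{i,L''}^{-1}(O\cap\mho(i,L''/K,\Gamma))$ and uses that $\digamma_{i,L''}$ is a morphism, hence Zariski continuous. The only difference is that you actually verify, via Plücker coordinates and wedging with $u_1\wedge\cdots\wedge u_{d-j}$, that $\digamma_{i,L''}$ is a morphism (indeed a closed immersion onto $\mho(i,L''/K,\Gamma)$), whereas the paper asserts this just before the lemma without proof.
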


\begin{proof}
Via \eqref{e:l''gamma}, the set in question can be expressed as $\digamma^{-1}_{i,L''}(O\cap \mho(i,L''/K,\Gamma))$.
\end{proof}
\subsubsection{Valuations}\label{su:morev} In our setting, the counter parts of Proposition \ref{p:val} and Proposition \ref{p:int}
hold as follows. 
\begin{lemma}\label{l:countp}Suppose $1\leq e <d$ and there is a non-empty open set $O\subset\mho(e,\Gamma)$ such that \eqref{e:conjsp} holds for every $L'/K\in O$. Then  
 \begin{equation}\label{e:lx}
\mathsf v_{\omega,L}(\xi_L)=\mathsf v_{\omega,L}(\eta_L),\:\text{for all } \omega\in\hat\Gamma.
\end{equation}
\end{lemma}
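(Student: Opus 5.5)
We fix $\omega\in\hat\Gamma$ and produce an intermediate $\Z_p^e$-extension $L'/K$ with $L'\in O$ and $\omega\in\hat\Gamma'$. Since $\mathsf v_{\omega,L}=\mathsf v_{\omega,L'}\circ p^L_{L'}$, the identity \eqref{e:conjsp} for $L'$ gives
$$\mathsf v_{\omega,L}(\xi_L)=\mathsf v_{\omega,L'}(p^L_{L'}(\xi_L))=\mathsf v_{\omega,L'}(p^L_{L'}(\eta_L))=\mathsf v_{\omega,L}(\eta_L).$$
Here the middle equality holds because the two principal ideals coincide in $\Lambda'$, so the generators differ by a unit of $\Lambda'$, and $\omega$ sends that unit to a $p$-adic unit. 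It is also valid when both sides are $0$, with value $+\infty$. Thus the whole proof reduces to a geometric existence statement: for every $\omega$, the set of $L'/K\in\mho(e,\Gamma)$ through which $\omega$ factors meets $O$.

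The key step is to identify that set. The character $\omega$ factors through $\Gamma'=\Gal(L'/K)$ exactly when $\Psi=\Gal(L/L')\subset\ker(\omega)$. The kernel $\ker(\omega)$ is an open subgroup of $\Gamma$, so $\Q_p\ker(\omega)=\Q_p\Gamma$. We therefore cannot express the condition as a Zariski closed condition on the subspace $\Q_p\Psi$. Instead, we apply a change of base field.

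Set $K_\omega:=L^{\ker\omega}$. Then $L/K_\omega$ is a $\Z_p^d$-extension with group $\Phi:=\ker(\omega)$, and $\Q_p\Phi=\Q_p\Gamma$. The map $\Psi\mapsto\Psi$ identifies $\mho(e,\Phi)$ with $\mho(e,\Gamma)$ compatibly with the Grassmannians, since both are given by the same $d-e$ dimensional subspaces of $\Q_p\Gamma$. What remains is to pick a point whose saturated subgroup $\Psi=V\cap\Gamma$ actually lies in $\Phi$. For this, choose a $\Z_p$-basis $\sigma_1,\dots,\sigma_d$ of $\Gamma$ adapted to $\Phi$, namely with $\Phi$ spanned by $p^{a_1}\sigma_1,\dots,p^{a_d}\sigma_d$; in fact only one $a_i$ is nonzero, since $\Gamma/\Phi$ is cyclic. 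Suppose $a_1=n$ and $a_i=0$ for $i>1$. Then $\Psi=V\cap\Gamma$ lies in $\Phi$ whenever $V$ is contained in the hyperplane $H=\{x_1=0\}$.

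The main obstacle is therefore the following. We know that $O$ is a nonempty Zariski open subset of $\mathrm{Gr}(d-e,d)(\Q_p)$. We must show that $O$ meets the closed subvariety $\mathrm{Gr}(d-e,H)$ of subspaces lying in $H$. But $H$ is special, and a general open set can miss it. For instance, $O$ could be the complement of that subvariety, and then the argument above fails.

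To get around this, we avoid using a single hyperplane. Note that $\omega$ factors through $\Gamma'$ iff $\omega|_\Psi=1$, and $\omega$ kills every saturated $\Psi\subset\Phi$. The saturated $\Psi$ of rank $d-e$ inside $\Phi$ are exactly the $\Psi$ contained in $\ker(\omega)$. The subspaces $V$ with $V\cap\Gamma\subset\ker\omega$ form a $p$-adically open, nonempty set in $\mathrm{Gr}(d-e,d)(\Q_p)$: take $\Psi$ spanned by vectors congruent mod $p^n$ to vectors of $H\cap\Gamma$. Any nonempty Zariski open subset of the $p$-adic points of an irreducible smooth variety is $p$-adically dense, so $O$ meets this $p$-adic open set.

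This density claim is the step to verify carefully. The plan is to use the affine chart of the Grassmannian, in which $O$ becomes the nonvanishing locus of a nonzero polynomial. A nonzero polynomial cannot vanish on a nonempty $p$-adic open subset of $\Q_p^{N}$, which gives the required intersection point.
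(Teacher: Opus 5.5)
Your proof is correct, but it is organized differently from the paper's.

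\textbf{The paper's route.} The paper never works in $\mho(e,\Gamma)$ directly.
\begin{enumerate}
\item It uses Lemma~\ref{l:ij}(3) to push $O$ down to a non-empty Zariski open set $U_1\subset\mho(1,\Gamma)$, namely the $\Z_p$-extensions lying inside some member of $O$.
\item It handles at once those $\omega$ that factor through some $M\in U_1$.
\item For a general $\omega$, it picks a $\Z_p$-extension $L_1$ through which $\omega$ factors and some $L'\in U_1$. It then works on the line $\mho(1,L'L_1/K,\Gamma)\cong\mathbb P^1$. There a non-empty Zariski open set is cofinite, while the $\Z_p$-quotients of $\Gal(L'L_1/K)$ through which $\omega$ factors are infinitely many.
\end{enumerate}

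\textbf{Your route.} You stay in $\mho(e,\Gamma)$ and compare the Zariski and $p$-adic topologies.
\begin{enumerate}
\item Since $p^n\Gamma\subset\ker\omega$, the condition $V\cap\Gamma\subset\ker\omega$ depends only on $V\cap\Gamma$ modulo $p^n$. So it cuts out a $p$-adically open set. The cleanest check is in a chart where a basis matrix of $V\cap\Gamma$ has the form $[I_k\mid A]$ with $A$ integral.
\item This set is non-empty because $e\geq 1$ lets $V$ lie in the hyperplane $H$.
\item A non-empty Zariski open subset of the irreducible Grassmannian contains, in the affine chart through a point of that $p$-adic open set, the non-vanishing locus of a non-zero polynomial. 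Such a polynomial cannot vanish on a non-empty $p$-adic open set, so the two sets meet.
\end{enumerate}

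\textbf{What each buys.} Your argument treats all $\omega$ uniformly and avoids the openness of the flag-variety projections, at the cost of this easy density fact and the openness check. The paper's argument stays purely Zariski-theoretic, needing only cofiniteness of open sets in $\mathbb P^1$. It also reuses Lemma~\ref{l:ij}, which the paper needs anyway for Proposition~\ref{p:gen}.

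The detour through $K_\omega$ and $\mho(e,\Phi)$ in your write-up is never used and can be dropped.
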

\begin{proof}
Put $U_e:=O$ and let $U_1\subset \mho(1,\Gamma)$ be the corresponding open subset described in Lemma \ref{l:ij}(3). We first consider those $\omega$ belonging to $\hat\Gamma'\subset\hat\Gamma$ 
for some $L'/K\in U_1$.
Suppose $L'/K$ is a sub-extension of some $L''/K\in U_e$. Since $p^L_{L''}(\xi_L)=p^{L}_{L''}(\eta_L)$,
by applying $\omega$ to both sides and then taking the valuation, we obtain \eqref{e:lx}. 

In general, $\omega\in \widehat{\Gal(L_1/K)}$ for some $L_1/K\in\mho(1,\Gamma)$. 
Choose $L'/K\in U_1$ and put $L''=L'L_1'$. The Zariski closed subset $\mho(1,L''/K,\Gamma)\subset \mho(1,\Gamma)$ defined in \ref{su:flag} is isomorphic $\mho(1,\Gamma'')=\mathbb P^1$.
Since $U_1\cap \mho(1,L''/K,\Gamma)$, being containing $L'/K$, is a non-empty Zariski open subset of $\mho(1,L''/K,\Gamma)$, its complement is a finite subset. On the other hand, the given $\omega$ belongs to the dual group
of $\Gal(M/K)$, for infinitely many $M/K$ in $ \mho(1,L''/K,\Gamma)$, so we can arrange to have 
$M/K\in U_1$ and hence deduce \eqref{e:lx}.
 \end{proof}

\begin{corollary}\label{c:countp}
 $\xi_L$ has the same $\mu$-invariant as that of $\eta_L$ and in particular, $\xi_L\in\Lambda_\Gamma$.
\end{corollary}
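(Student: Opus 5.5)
The plan is to mimic the proof of Proposition \ref{p:int}, with Lemma \ref{l:countp} playing the role of Proposition \ref{p:val}. The difference is that \eqref{e:lx} holds for \emph{all} $\omega\in\hat\Gamma$, so no exceptional Monsky closed set is needed.

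\textbf{Step 1: $\eta_L=0$ if and only if $\xi_L=0$.} If $\xi_L=0$, then by \eqref{e:lx} we have $\mathsf v_{\omega,L}(\eta_L)=+\infty$ for every $\omega$. Hence $\diamondsuit_{\eta_L}=\hat\Gamma$, and by \cite[Theorem 2.6]{monsky} this forces $\eta_L=0$. The converse is handled the same way. Write $\xi_L=p^{m}\xi_0$ with $\xi_0\in\Lambda$ not divisible by $p$ (here $m\in\Z$ may be negative), and apply the same argument to $p^{N}\xi_L\in\Lambda$ for large $N$. Then $\omega(\xi_L)=0$ for all $\omega$ gives $\diamondsuit_{\xi_0}=\hat\Gamma$, so $\xi_0=0$. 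So in the rest we assume both $\xi_L$ and $\eta_L$ are nonzero.

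\textbf{Step 2: equality of $\mu$-invariants.} Write $\eta_L=p^{\mu}\eta_0$ with $\mu\ge0$ and $p\nmid\eta_0$, and keep $\xi_L=p^m\xi_0$ as above.

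Suppose first that $\mu>m$. For every $\omega$, \eqref{e:lx} gives
$$\mathsf v_{\omega,L}(\xi_0)=\mu-m+\mathsf v_{\omega,L}(\eta_0)\ge 1.$$
Thus $\Omega_{\xi_0}=\hat\Gamma$. But by \cite[Theorem 2.3]{monsky}, $\Omega_{\xi_0}$ lies in a proper closed subset, which is a contradiction.

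Symmetrically, if $m>\mu$, then $\mathsf v_{\omega,L}(\eta_0)\ge1$ for all $\omega$. This contradicts the same theorem applied to $\eta_0$.

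Hence $m=\mu\ge0$, and therefore $\xi_L=p^{\mu}\xi_0\in\Lambda_\Gamma$.

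\textbf{Main obstacle.} Step 2 is essentially routine once the right input is available, so the only delicate point is that input: Monsky's theorem that $\Omega_f$ is contained in a proper closed subset whenever $p\nmid f$. That theorem is stated for $f\in\Lambda$. We therefore have to apply it to $\xi_0$, which does lie in $\Lambda$ because the power of $p$ has been normalized out. This explains why the factorization $\xi_L=p^m\xi_0$ must be set up with $m$ allowed to be negative.
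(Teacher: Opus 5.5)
Your proposal is correct and is essentially the paper's argument. The paper proves the corollary by rerunning the proof of Proposition \ref{p:int} with Lemma \ref{l:countp} in place of Proposition \ref{p:val}, using Monsky's results on $\diamondsuit_f$ and $\Omega_f$ exactly as you do; because \eqref{e:lx} holds for every $\omega$, you rightly need no exceptional set $\mathsf Z$.
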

\begin{proof}By the argument in the proof of Proposition \ref{p:int}.
\end{proof}

\subsubsection{Plain extensions}\label{su:normalb}
Suppose a non-zero $f\in \Q_p\Lambda_\Gamma$ has $\mu$-invariant $m$ and $f=p^m\cdot f_0$. We say that a
$\Z_p$-extension $L'/K$  in $\mho(1,\Gamma)$ is $f$-plain if $p^L_{L'}(f_0)\in\Lambda'$ is not divisible by $p$.

\begin{lemma}\label{l:normalb}
For a given non-zero $f\in \Q_p\Lambda_\Gamma$, there exists a non-empty Zariski open subset $V_f$ of $ \mho(1,\Gamma)$ such that every $L'/K$ belonging to $V_f$ is $f$-plain.
\end{lemma}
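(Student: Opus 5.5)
We reduce to the following claim: for $f_0\in\Lambda_\Gamma$ not divisible by $p$, the set of $L'/K\in\mho(1,\Gamma)$ with $p^L_{L'}(f_0)\notin p\Lambda'$ contains a non-empty Zariski open set. Since $f=p^m f_0$, $L'$ is $f$-plain exactly when this holds for $f_0$. Work modulo $p$. Fix a $\Z_p$-basis $\sigma_1,\dots,\sigma_d$ of $\Gamma$ and put $t_i=\sigma_i-1$, so $\bar f_0\in\F_p[[t_1,\dots,t_d]]$ is non-zero. An $L'/K\in\mho(1,\Gamma)$ together with a generator $\sigma$ of $\Gamma'$ corresponds to a primitive tuple $(a_1,\dots,a_d)\in\Z_p^d$ with $\sigma_i\mapsto\sigma^{a_i}$. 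Writing $t=\sigma-1$, we get $t_i\mapsto(1+t)^{a_i}-1\equiv a_i t \pmod{t^2}$, where the coefficient is $\bar a_i\in\F_p$.

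First expand $\bar f_0=\sum_{r\ge r_0}F_r$ into homogeneous pieces, with $F_{r_0}\neq0$ the lowest-degree form, a polynomial over $\F_p$. Modulo $t^{r_0+1}$, the image of $\bar f_0$ in $\F_p[[t]]$ is $F_{r_0}(\bar a_1,\dots,\bar a_d)\,t^{r_0}$, because higher forms contribute only in degree $>r_0$. Hence if $F_{r_0}(\bar a)\neq0$, then $p^L_{L'}(f_0)\not\equiv0\pmod p$. The issue is that $F_{r_0}$ might vanish on all of $\F_p^d$ (e.g.\ $t_1^pt_2-t_1t_2^p$), so this alone is not enough.

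To handle this I will use a change of variables that captures more information. The map $a\mapsto$ image is continuous in $a\in\Z_p^d$, so I consider a primitive $a$ and ask whether the image is non-zero mod $p$. A cleaner route is to view $p^L_{L'}(\bar f_0)$ as $\bar f_0((1+t)^{a_1}-1,\dots)$. Now $(1+t)^{a}$ with $a=\sum_k c_kp^k$ equals $\prod_k(1+t^{p^k})^{c_k}$, so $(1+t)^{a_i}-1$ depends on the $p$-adic digits. Choosing the digits $a_i=p^{k_i}u_i$ generically yields substitutions $t_i\mapsto u_it^{p^{k_i}}+\dots$. For a non-zero power series one can choose weights $w_i=p^{k_i}$ so that a unique monomial of $\bar f_0$ minimizes the weighted degree, giving a non-zero image. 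This shows existence of at least one plain $L'$.

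Next I upgrade existence to a Zariski open set. The condition ``$p^L_{L'}(f_0)\in p\Lambda'$'' should be Zariski closed in $\mathrm{Gr}(d-1,d)(\Q_p)=\mathbb P^{d-1}(\Q_p)$. I will express the coefficient of $t^n$ in $p^L_{L'}(f_0)$, as a function of $a$, by the binomial series $\binom{a}{k}$. These are locally analytic in $a$ but not polynomial, so Zariski closedness is not automatic. This is the main obstacle. To handle it, I expect to exploit $\Q_p\Lambda$-level information: write $f_0=\sum_{\sigma\in C}(\dots)$ over a finite quotient, or use Lemma~\ref{l:augment}-style homogeneous expansion in $\Q_p$. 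In that expansion $p^L_{L'}(f_0)$ has $t^i$-coefficient equal to a polynomial $P_i(a)$ with $\Z_p$-coefficients, namely $\sum_{j\le i}$ of terms from $F_j$ evaluated via Stirling-type expansions of $\log$. Non-divisibility by $p$ then means some $P_i(a)\in\Z_p^*$. I would choose $i$ minimal such that $P_i$ is not identically $\equiv0\pmod p$ on the primitive vectors found above, and take $V_f$ to be the complement of the zero locus of $P_i$. I must check that $P_i(a)\neq0$ implies plainness, possibly after restricting to the residue-disc containing the existence point. This is the step I am least sure of, and may require replacing ``unit'' by a Monsky-style argument combined with Lemma~\ref{l:ij}.
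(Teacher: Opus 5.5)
\textbf{There is a genuine gap: the step from one plain extension to a non-empty Zariski open set of them is missing, and that step is the whole content of the lemma.}

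Your reduction to $f_0$ and to $\bar f_0\in\F_p[[t_1,\dots,t_d]]$ is fine. So is the existence of at least one plain $L'$. The weight claim can be justified, e.g.\ with $w_1=1$ (which keeps $a$ primitive) and $w_2\ll\dots\ll w_d$ powers of $p$.

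One plain point only gives a $p$-adically open set of plain extensions. Nothing you say prevents the non-plain locus from containing a residue disc. Residue discs are Zariski dense in $\mho(1,\Gamma)\cong\mathbb P^{d-1}(\Q_p)$, and then no non-empty Zariski open set could consist of plain extensions. What must be shown is that the non-plain locus lies in a \emph{proper Zariski closed} subset.

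Your candidate $V_f=\{P_i\neq 0\}$ does not achieve this, for three reasons.
\begin{enumerate}
\item Analyticity is not the obstruction. The $t^n$-coefficient of $p^L_{L'}(f_0)$ is in fact a polynomial in $a$, a finite combination of the $\binom{a_j}{k}$ with $k\le n$.
\item Plainness means some coefficient is a $p$-adic \emph{unit}. The condition $P_i(a)\neq 0$ does not give that, and restricting to a residue disc destroys Zariski openness.
\item These coefficients change when the generator $\sigma$ of $\Gamma'$ is replaced by $\sigma^u$, i.e.\ $a\mapsto u^{-1}a$. They are not homogeneous, so $\{P_i=0\}$ is not even a well-defined subset of $\mathbb P^{d-1}(\Q_p)$.
\end{enumerate}

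The missing idea is a structural bound on the non-plain locus, which the paper gets from Monsky.
\begin{enumerate}
\item The set $\Omega=\{\omega\in\hat\Gamma : \mathsf v_{\omega,L}(f_0)\ge 1\}$ lies in a proper Monsky-closed set $\mathsf T$. This $\mathsf T$ is a finite union of $\Z_p$-flats $\Upsilon_i=\{\omega : \omega(\tau_{i,j})=\zeta_{i,j},\ 1\le j\le n_i\}$.
\item If $L'$ is not plain, then $p^L_{L'}(f_0)\in p\Lambda'$. So $\mathsf v_{\omega,L}(f_0)\ge 1$ for every $\omega\in\hat\Gamma'$, i.e.\ $\hat\Gamma'\subset\mathsf T$.
\item $\hat\Gamma'$ is itself a $\Z_p$-flat, hence irreducible, so it lies in a single $\Upsilon_i$.
\item $\hat\Gamma'$ contains the trivial character, so all $\zeta_{i,j}=1$. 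This forces $\Phi_i:=\overline{\langle\tau_{i,1},\dots,\tau_{i,n_i}\rangle}\subset\Gal(L/L')$.
\item Hence the non-plain locus is contained in the finite union of the proper Zariski closed sets $Z_i=\{L'/K : \Gal(L/L')\supset\Phi_i\}$ (Lemma~\ref{l:ij}(1)). The complement of $\bigcup_i Z_i$ is a non-empty Zariski open set and serves as $V_f$.
\end{enumerate}

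For $d=2$ you could avoid Monsky. Distinct $L'$ give distinct height-one primes $\ker(\F_p[[\Gamma]]\to\F_p[[\Gamma']])$, and $\bar f_0\neq 0$ lies in only finitely many of them. For $d\ge 3$ these kernels have height $d-1\ge 2$, the counting fails, and an input of Monsky's type is genuinely needed.
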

\begin{proof} 
By \cite[Lemma 1.5,Theorem 2.3]{monsky}, in Monsky's topology the set
$$\Omega:=\{\omega \in \hat{\Gamma} \mid \mathsf{v}_{\omega, L}(f_0) \geq 1\}$$ 
is contained in a proper closed subset $\mathsf T\subset \hat\Gamma$ and $\mathsf T$ is a finite union of
$\Z_p$-flats $\Upsilon_i$.

Let $Z_f(1,\Gamma)\subset \mho(1,\Gamma)$ be the subset consisting of intermediate $\Z_p$-extension $L'/K$
of $L/K$ such that in $\Lambda_{\Gamma'}$, $p^L_{L'}(f_0)$ is divisible by $p$ . 
If $L'/K$ belongs to $Z_f(1,\Gamma)$, then every character in $\hat\Gamma'$ belongs to
$\Omega$, so $\hat\Gamma'\subset \mathsf T$. By \cite[Definition 1.4, Theorem 1.8]{monsky}, the group
$\hat\Gamma'$ is actually a $\Z_p$-flat, and hence an irreducible closed subset in the Noetherian topology of Monsky, so $\hat\Gamma'\subset \Upsilon_i$, for some $i$. There exist $n_i\in \mathbb N$, $\tau_{i,j}\in\Gamma$, $j=1,..., n_i$, extendable to a $\Z_p$-basis of $\Gamma$, and  $\zeta_{i,j}\in\mu_\infty$, such that
$$\Upsilon_i=\{ \omega\in\hat\Gamma\; \mid\;  \omega(\tau_{i,j})=\zeta_{i,j}, j=1,...,n_i\}.$$
Since $\hat\Gamma'$ is a $p$-divisible group, we must have $\zeta_{i,j}=1$ for all $j$, so $\Upsilon_i=\hat\Gamma_i''$,
where $\Gamma_i''$ is the quotient of $\Gamma$ by the group $\Phi_i$ topologically generated by $\tau_{i,1},...,.\tau_{i,n_i}$. Let $Z_i\subset \mho(1,\Gamma)$ denote the proper Zariski closed subset consisting of $L'/K$ such that $\Gal(L/L')\supset \Phi_i$. The above discussion implies that $Z_f(1,\Gamma)$ is contained in the finite union of $Z_i$,
so its complement $Z^c_f(1,\Gamma)$ contains a non-empty Zariski open subset, which we take to be $V_f$.
\end{proof}

\subsection{The proof of Proposition \ref{p:gen}}\label{su:pfgen}
We prove by the induction on $e$.
Denote $O_e:=O$, the open set in the proposition and for $j>e$, let $O_{j}$ be the open subset described in Lemma \ref{l:ij}(2). In the induction process, take 
$O_{e+1}$ to be the required open subset of $\mho(e+1,\Gamma)$.  If $L''/K\in O_{e+1}$, then $O_e\cap\mho(e,L''/K,\Gamma)$ is a non-empty
Zariski open subset of $\mho(e,L''/K,\Gamma)$ which can be identified with $\mho(e,\Gamma'')$ (Lemma \ref{l:digamma}). Thus, it remains to prove the proposition for $L=L''$ and $d=e+1$.

By Corollary \ref{c:countp}, we can write $\xi_L=p^\mu\cdot \xi_0$ and $\eta_L=p^\mu\cdot \eta_0$, where
$\xi_0$ and $\eta_0$ are in $\Lambda$, not divisible by $p$.
Take $U_e=O$ and let $U_1\subset \mho(1,\Gamma)$ be the open subset in Lemma \ref{l:ij}(3). 
Choose a $\Z_p$-extension $L'/K$ in $V_{\xi_L}(1,\Gamma)\cap V_{\eta_L}(1,\Gamma)\cap U_1$, which is
a non-empty Zariski open subset of $\mho(1,\Gamma)$, to have $L'/K$ both $\xi_L$-plain and $\eta_L$-plain (Lemma \ref{l:normalb}).
 Lift a topological generator $\bar\sigma_1$ of $\Gamma'$ to $\sigma_1\in\Gamma$, and choose a $\Z_p$-basis $\sigma_2,...,\sigma_d$ of $\Gal(L/L')$. Then $\sigma_1,...,\sigma_d$ form a $\Z_p$-basis of $\Gamma$. Put $t_i=\sigma_i-1$ and write 
 $$\xi_0(t_1,...,t_d)=\sum_{j=0}^\infty \xi_0^{(j)}\cdot t_1^j, \quad \eta_0(t_1,...,t_d)=\sum_{i=0}^\infty \eta_0^{(i)}\cdot t_1^i,$$ 
 where $\xi_0^{(j)}$ and $\eta_0^{(i)}$ are formal power series in $t_2,...,t_d$ over $\Z_p$. 
 Since $\xi_0(t_1,0,...,0)$ and $\eta_0(t_1,0,...,0)$ can be respectively identified with $p^L_{L'}(\xi_0)$ and
 $p^L_{L'}(\eta_0)$, they are both not divisible by $p$, so
there are non-negative integers $r_1$ and $r_2$ such that the constant terms of the formal
power series $\xi_0^{(r_1)}$ and $ \eta_0^{(r_2)}$ are not divisible by $p$,
while $p$ divides the constant terms of $\xi_0^{(j)}$ and $\eta_0^{(i)}$, for $j<r_1$, $i<r_2$. 
The equality \eqref{e:lx} actually implies that $r_1=r_2$ (denoted as $r$ later), since if the order of $\omega\in\Gamma'$ is large enough, then
$$\mathsf v_{\omega,L}(\xi_0)=\mathsf v_{\omega,L}(\sum_{j=0}^\infty \xi_0^{(j)}(0,...,0)\cdot\omega(t_1)^j)=
r_1\cdot \mathsf v_{\omega,L}(\omega(\sigma_1)-1),$$
and also 
$$\mathsf v_{\omega,L}(\eta_0)=r_2\cdot \mathsf v_{\omega,L}(\omega(\sigma_1)-1).$$

By Weierstrass Preparation Theorem  \cite[VII, \S 8, Proposition 6]{bou}, there exist
$$\xi'_0=t_1^r+\sum_{j=0}^{r-1} a_j\cdot t_1^j=u_1\cdot \xi_0,\; a_j\in \Z_p[[t_2,...,t_d]],\; 
u_1\in\Lambda_\Gamma^*,$$
$$\eta'_0=t_1^r+\sum_{j=0}^{r-1} b_j\cdot t_1^j=u_2\cdot \eta_0,\; b_j\in \Z_p[[t_2,...,t_d]],\; 
u_2\in\Lambda_\Gamma^*.$$
The subset 
$\mho(L'/K,e,\Gamma):=\{L''/K\in \mho(e,\Gamma)\;\mid\; L''\supset L'\}\subset \mho(e,\Gamma)$
is Zariski closed, since it can be identified with the fibre at $L'/K$ of the composition 
$$\mho(e,\Gamma)=\mathsf F(e,d)\longrightarrow \mathsf F(1,e,d)\longrightarrow \mathsf F(1,d)=\mho(1,\Gamma),$$
so its non-empty open subset $V:=O\cap  \mho(L'/K,e,\Gamma)$ is of infinite cardinality. 
The hypothesis of the proposition ensures that for each $L''/K\in V$,
$$p^L_{L''}(\xi_0)=u_{L,L''}\cdot p^L_{L''}(\eta_0), \; \text{for some}\; u_{L,L''}\in \Lambda_{\Gamma''}^*,$$
where $\Gamma''=\Gal(L''/K)$. Hence, 
\begin{equation}\label{e:pll''}
p^L_{L''}(\xi_0')=p^L_{L''}(u_1^{-1}u_2)\cdot u_{L,L''}\cdot p^L_{L''}(\eta_0').
\end{equation}

Choose a topological generator $\tau_d$ of $\Gal(L/L'')$, extend it to a $\Z_p$-basis $\tau_2,...,\tau_d$
of $\Gal(L/L')$, and put $s_i=\tau_i-1$. Then $\Z_p[[t_2,...,t_d]]=\Z_p[[s_2,...,s_d]]$ and $\sigma_1,\tau_2,...,\tau_d$ form a $\Z_p$-basis of $\Gamma$, while under
$p^L_{L''}$, $s_i\mapsto s_i$, for $2\leq i\leq d-1$, $s_d\mapsto 0$, and $t_1\mapsto t_1$. In particular,
$p^L_{L''}(\xi_0')$ and $p^L_{L''}(\eta_0')$ are distinguished polynomials in $\Lambda_{\Gamma''}$ viewed
as the ring of formal power series in $t_1$ over $\Z_p[[s_2,...,s_{d-1}]]$.
Thus, in view of \eqref{e:pll''}, we deduce that $p^L_{L''}(\xi_0'-\eta_0')=0$, which means $\xi_0'-\eta_0'$ is divisible by $s_d$, since $s_d$ generates $\ker(p^L_{L''})$. 
The variable $s_d$ is associated to $L''$.
If $L''$ varies, the associated $s_d$ are relatively prime in $\Lambda_\Gamma$.
We must have $\xi_0'-\eta_0'=0$, since it is divisible by infinitely many relatively prime $s_d$.

\subsection{The second step}\label{su:2nd}  In the second step, we add $\xi_{L'}$, $\eta_{L'}$, and the specialization formulae for $L/L'$, into the setting.

\begin{myassumption}\label{assump} Assume that
 \begin{enumerate}
\item For each $L'/K\in\mho(i,\Gamma)$, $2\leq i< d$, there are also given elements $\xi_{L'}\in \Q_p\Lambda'$ and $\eta_{L'}\in \Lambda'$.

\item For each $2\leq i < d$, there exists a non-empty Zariski open subset $U(i)\subset \mho(i,\Gamma)$ such that 
if $L'/K\in U(i)$, then there are principal ideals $\mathsf R_{L/L'}\not=0$ and $\smallint_{L/L'}$ in $\Lambda'$ 
satisfying that in $\Q_p\Lambda'$, the principal fractional ideas 
\begin{equation}\label{e:spxi}
p^L_{L'}(\xi_L)\cdot \mathsf R_{L/L'}=\xi_{L'}\cdot {\smallint}_{L/L'},
\end{equation}
and 
\begin{equation}\label{e:speta}
p^L_{L'}(\eta_L)\cdot \mathsf R_{L/L'}=\eta_{L'}\cdot {\smallint}_{L/L'}.
\end{equation}

\end{enumerate}
\end{myassumption}
We call \eqref{e:spxi} and \eqref{e:speta} the specialization formulae. 



In this setting, by the main conjecture for $L'/K$, we mean 
\begin{equation}\label{e:mc'}
\xi_{L'}\in\Lambda',\;\text{and in } \Lambda'\; \text{the ideals }\;(\xi_{L'})=(\eta_{L'}).
\end{equation}

\begin{myproposition}\label{p:ge} Suppose Assumption holds.
The main conjecture \eqref{e:mc} holds for $L/K$, if for some $e$, $2\leq e<d$,
there is a non-empty Zariski open subset $O_e\subset\mho(e,\Gamma)$
such that for every $L'/K\in O_e$, the main conjecture \eqref{e:mc'} holds.
\end{myproposition}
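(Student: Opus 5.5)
We reduce Proposition \ref{p:ge} to Proposition \ref{p:gen}. It suffices to find a non-empty Zariski open subset $O\subset\mho(e,\Gamma)$ such that \eqref{e:conjsp} holds for every $L'/K\in O$. The natural candidate is
$$O:=O_e\cap U(e).$$
This is non-empty because $\mho(e,\Gamma)=\mathrm{Gr}(d-e,d)(\Q_p)$ is irreducible: it is the $\Q_p$-points of an irreducible variety, dense in the Zariski topology. Hence two non-empty opens intersect, in the same way as \eqref{e:union} is used in Monsky's topology.

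Fix $L'/K\in O$. Since the main conjecture \eqref{e:mc'} holds for $L'$, we have $\xi_{L'}\in\Lambda'$ and $(\xi_{L'})=(\eta_{L'})$. Comparing the specialization formulae \eqref{e:spxi} and \eqref{e:speta}, which have the same correcting factors $\mathsf R_{L/L'}$ and ${\smallint}_{L/L'}$, gives
$$p^L_{L'}(\xi_L)\cdot \mathsf R_{L/L'}=p^L_{L'}(\eta_L)\cdot\mathsf R_{L/L'}$$
as principal fractional ideals of $\Q_p\Lambda'$. Because $\mathsf R_{L/L'}\neq 0$ and $\Lambda'$ is a domain, we may cancel $\mathsf R_{L/L'}$ and obtain $(p^L_{L'}(\xi_L))=(p^L_{L'}(\eta_L))$ as fractional ideals.

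Since $\eta_L\in\Lambda$, the right-hand side is an integral ideal of $\Lambda'$. Hence $p^L_{L'}(\xi_L)$ equals a unit of $\Lambda'$ times an element of $\Lambda'$, so it lies in $\Lambda'$. This is exactly \eqref{e:conjsp}. Proposition \ref{p:gen} then applies, since $e\geq2$, and yields \eqref{e:mc} for $L/K$.

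The main point to check is the cancellation step. It requires that the principal fractional ideal identities are read inside $\Q_p\Lambda'$, where multiplication by a nonzero element is injective; this holds because $\Lambda'$ is a Noetherian domain and $\Q_p\Lambda'$ is its localization at $p$. Without care, a subtle issue is that ${\smallint}_{L/L'}$ may be $0$. If it is, both sides of each formula vanish and $p^L_{L'}(\xi_L)=p^L_{L'}(\eta_L)=0$, so \eqref{e:conjsp} still holds trivially.

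A second point to check is the irreducibility claim used for $O\neq\emptyset$. It follows from $\mathrm{GL}(d,\Q_p)$ being Zariski dense in $\mathrm{GL}_d$ and the surjection onto the Grassmannian, as in \S\ref{su:flag}.
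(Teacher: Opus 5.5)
Your proposal is correct and follows the paper's proof: you take $O=O_e\cap U(e)$, combine \eqref{e:spxi}, \eqref{e:speta} and \eqref{e:mc'} (cancelling the nonzero $\mathsf R_{L/L'}$) to obtain \eqref{e:conjsp} on $O$, and then invoke Proposition \ref{p:gen}. You also justify two points the paper leaves implicit, namely that $O$ is non-empty and that $p^L_{L'}(\xi_L)\in\Lambda'$; for the latter, note that the fractional ideals are $\Lambda'$-submodules of $\Q_p\Lambda'=\Lambda'[1/p]$ (obtained by inverting $p$), not ideals of $\Q_p\Lambda'$, and that is what makes your integrality step valid.
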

\begin{proof} Let $U(e)$ be the non-empty Zariski open subset of $\mho(e,\Gamma)$ in Assumption
and put $O=O_e\cap U(e)$, which is non-empty.
Then \eqref{e:spxi}, \eqref{e:speta} and \eqref{e:mc'} together imply that \eqref{e:conjsp} holds for $L'/K\in O$.
Thus the Proposition follows from Proposition \ref{p:gen}.
\end{proof}
\begin{remark}\label{r:kl}Our results in \S\ref{su:conjsp} to \S\ref{su:2nd} are solely on the group ring
$\Lambda$, or equivalently the corresponding formal power series ring, they have nothing to do with 
the nature of the ground field $K$ or the arithmetic meaning of $\xi_{L}$ and $\eta_L$. 
\end{remark}

\subsection{The proof of Proposition \ref{p:e}}\label{su:specialcase} Basically, the proof is a direct consequence of
Proposition \ref{p:ge}. Take $\xi_{L'}=\mathscr L_{A/L'}$,
$\eta_{L'}$ a generator of $\mathrm{CH}_{\Lambda'}(X_{L'})$, and  put $\mathsf R_{L/L'}=\varrho_{L/L'}$, $\smallint_{L/L'}=\vartheta_{L/L'}$. In view of \eqref{e:varrho}, Proposition \ref{p:aspf}, and Proposition \ref{p:spl},
we let $U(i)$ to be the set
$$U(i,\Gamma):=\{L'/K\;\mid\; p^L_{L'}(\dag_{A/L})\cdot \vartheta_{L/L'}\not=0\}\subset \mho(i, \Gamma),$$ 
so that the specialization formulae \eqref{e:spxi} and \eqref{e:speta} hold for $L'/K\in U(i)$.
The following lemma says the Assumption is satisfied in our setting.
\begin{lemma}\label{l:ui} $U(i,\Gamma)$ is a non-empty Zariski open subset of $\mho(i,\Gamma)$.
\end{lemma}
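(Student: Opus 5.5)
By Corollary \ref{c:0}, $U(i,\Gamma)$ is the set of $L'/K\in\mho(i,\Gamma)$ with $\Gamma'_v\neq0$ for every split multiplicative $v\in S$. Here $\Gamma'_v$ is the image of $\Gamma_v$ in $\Gamma'=\Gamma/\Gal(L/L')$, so $\Gamma'_v=0$ exactly when $\Gamma_v\subset\Gal(L/L')$. The complement of $U(i,\Gamma)$ is therefore
$$\bigcup_{v\in S_{sm}} C_{\Gamma_v},\qquad C_{\Gamma_v}=\{L'/K\in\mho(i,\Gamma)\mid \Gal(L/L')\supset\Gamma_v\},$$
where $S_{sm}$ denotes the split multiplicative places of $S$.

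For each $v\in S_{sm}$ we have $v\in S$, so $v$ ramifies in $L/K$ and $\Gamma_v$ is a nontrivial closed subgroup of $\Gamma$. Lemma \ref{l:ij}(1) then makes each $C_{\Gamma_v}$ Zariski closed. Since $S_{sm}$ is finite, the union is closed and $U(i,\Gamma)$ is open.

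It remains to see that $U(i,\Gamma)$ is non-empty, and since $\mho(i,\Gamma)\cong\mathrm{Gr}(d-i,d)$ is irreducible, it suffices that each $C_{\Gamma_v}$ is a proper subset. In the subspace language, $C_{\Gamma_v}$ consists of the $(d-i)$-dimensional subspaces $V\subset\Q_p\Gamma$ containing the nonzero subspace $\Q_p\Gamma_v$. Because $i\geq 2>0$, we have $d-i<d$, so a $(d-i)$-dimensional subspace avoiding a fixed nonzero vector of $\Q_p\Gamma_v$ exists. Hence $C_{\Gamma_v}\neq\mho(i,\Gamma)$, and a finite union of proper closed subsets of an irreducible space is proper.

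The step needing most care is the translation through Corollary \ref{c:0}. One must check that "$\Gamma'_v=0$" coincides exactly with the condition defining $C_{\Gamma_v}$, including when $\mathrm{rank}\,\Gamma_v\geq2$. One must also confirm that Lemma \ref{l:ij}(1) applies after passing from $\Gamma_v$ to its saturation $\Q_p\Gamma_v\cap\Gamma$, which is harmless because $\Gal(L/L')$ is saturated in $\Gamma$.
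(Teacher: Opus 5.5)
Your proposal is correct and follows essentially the same route as the paper: via Corollary~\ref{c:0}, the complement of $U(i,\Gamma)$ is the finite union of the sets $C_{\Gamma_v}$ over split multiplicative $v\in S$. Each of these is Zariski closed by Lemma~\ref{l:ij}(1) and proper because $\Gamma_v\neq 0$. Your appeal to the irreducibility of the Grassmannian spells out the non-emptiness step, which the paper leaves implicit.
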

\begin{proof} By Corollary \ref{c:0}, the complement $V$ of $U(i,\Gamma)$ consists of $L'/K \in\mho(i,\Gamma)$ such that $\Gal(L/L')$ contains the decomposition subgroup $\Gamma_v\subset \Gamma$, at some split multiplicative $v\in S$ (so that $\Gamma_v\not=0$, and hence $V$ is a proper Zariski closed subset of 
$\mho(i,\Gamma)$). By Lemma \ref{l:ij}, $V$ is a Zariski closed subset of $\mho(i,\Gamma)$.
\end{proof}
\appendix
\section{The omitted details}
We shall recover the details omitted in the proof of Lemma \ref{l:spvartheta} and \eqref{e:beth}. 
Assume that
$K\subset L'\subset L$ where $L'$ is a $\Z_p^e$ extension of $K$. 
Define
$$\Xi_{L/L'}:=\prod_v \Xi_{L/L',v},$$
where
$$\Xi_{L/L',v}:=
\begin{cases}
\lambda_v-[v]_{L/K}, & \text{if } v\in S_m, v\notin S_m';\\
(1-\alpha_{v}^{-1}[v]_{L^{\prime}})
(1-\alpha_{v}^{-1}[v]_{L^{\prime}}^{-1}), & \text{if } v\in S_o, v\notin S_o';\\
1, &\text{otherwise}.
\end{cases}
$$
\begin{lemma}\label{l:locsp}For every $v$,
\begin{equation}\label{e:question}
\Xi_{L/L',v}\cdot \dag_{A/L',v}\cdot\Lambda'=p^L_{L'}(\dag_{A/L,v})\cdot \vartheta_{L/L',v}.
\end{equation}
\end{lemma}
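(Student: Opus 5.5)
This is a purely local, case-by-case identity of principal ideals in $\Lambda'$. I would prove it by running through the cases of Definition \ref{d:vartheta} and Definition \ref{d:gimel} according to the reduction type of $A$ at $v$ and the shapes of $\Gamma_v$, $\Gamma'_v$, $\Psi_v$. Throughout, two facts are used repeatedly. First, $p^L_{L'}$ sends $\sigma_v$ to a generator of $\Gamma'_v$ when $\Gamma'_v\neq 0$ and $v\in S'$. Second, when $v\notin S'$ but $\Gamma'_v\simeq\Z_p$, $p^L_{L'}(\sigma_v)$ is a power $[v]_{L'}^{p^k}$ of the Frobenius. (Here I read $[v]_{L/K}$ in $\Xi_{L/L',v}$ as its image $[v]_{L'}$.) One also uses that elements like $1\pm\tau$ with $\tau$ non-identity of infinite order, or $q_v\pm1+\ldots$, generate the ideals they appear to. Units and the constants $m_v$, $2$ may be compared freely, since only ideals are claimed.

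\textbf{Good ordinary places and places outside $S$.} For $v\notin S$ we have $\Xi=1$, and $\dag_{A/L,v}$, $\dag_{A/L',v}$ equal $1$ or $m_v$ according to whether $\Gamma_v$, resp. $\Gamma'_v$, is trivial. Since $\Gamma_v$ is generated by Frobenius, $\Gamma_v$ nontrivial with $\Gamma'_v=0$ means exactly $\Psi_v\neq 0$, and this is precisely when $\vartheta_{L/L',v}=m_v$ by Definition \ref{d:vartheta}(a); the other cases give $1=1$ or $m_v=m_v$. For good ordinary $v\in S$, both $\dag$'s are $1$, and $\Xi$ equals $\vartheta_{L/L',v}$ verbatim from (b).

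\textbf{Split multiplicative $v\in S$.} I would split by the rank of $\Gamma_v$ and the status of $v$ in $L'$.

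\emph{Case $v\in S'_1$.} Then $\Gamma_v\simeq\Z_p$ forces $\Psi_v=0$, so $\vartheta=1$, $\Xi=1$, and $p^L_{L'}(1-\sigma_v)$ generates $(1-\sigma'_v)$.

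\emph{Case $\Gamma_v\simeq\Z_p^{f+1}$, $v\in S'_1$, $\Psi_v\simeq\Z_p^f$.} Here $\dag_{A/L,v}=1$ and $\dag_{A/L',v}=1-\sigma'_v=\vartheta$.

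\emph{Case $v\notin S'$, $\Gamma'_v\simeq\Z_p$.} Here $\Xi=1-[v]^{-1}_{L'}$ and $\dag_{A/L',v}=1$. If $\Gamma_v\simeq\Z_p$ then $\Psi_v=0$, so $\vartheta=1$, and $p^L_{L'}(1-\sigma_v)=1-[v]^{p^k}_{L'}$. This requires showing $k=0$: since the inertia group is the whole image, the projection is onto $\Gamma'_v$, so $\sigma_v$ maps to a generator, which up to unit is $[v]_{L'}$. If instead $\Gamma_v$ has rank at least $2$, use (c) third line.

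\emph{Case $\Gamma'_v=0$.} Here $\Xi=1-1=0$. For rank $\geq2$ the right side is also $0$. For $\Gamma_v\simeq\Z_p$, $p(\dag)=0$.

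\emph{Case $v\in S'$ with rank-$\geq2$ $\Gamma'_v$.} Everything is $1$.

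\textbf{Non-split multiplicative places.} I expect this to be the main obstacle, since the conditions $\F_{q_v^2}\subset L_v$ or $\subset L'_v$ interact with $p=2$. I would follow (d) line by line. $\Xi=-1-[v]_{L'}$ appears when $v\notin S'$. The $\dag$ factors are $-1-\sigma_v$ exactly when $v\in S_1$ and $\F_{q_v^2}\subset L_v$, and $m_v$ when $\Gamma'_v=0$ and $v\notin S'$. The delicate matches are:
\begin{enumerate}
\item[(i)] $\Gamma'_v=0$ with $\F_{q_v^2}\subset L_v$. Then $p(-1-\sigma_v)=-2$, and together with $\Xi\cdot m_v=-2m_v$ this must give $m_v$ versus $2m_v$ in the right way. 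This requires carefully tracking which of $\dag_{A/L',v}$, $\Xi$ and $p(\dag)$ contribute the factor $2$.
\item[(ii)] $v\in S'_1$ with $\F_{q_v^2}\subset L'_v$, where $1+\sigma'_v$ must match.
\end{enumerate}
When $p$ is odd, all of $2$, $-2$, $1+\tau$ with $\tau$ of $p$-power order are units, and only the $m_v$ bookkeeping remains.
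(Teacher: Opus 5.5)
Your strategy is the paper's: a direct case check against Definitions \ref{d:vartheta} and \ref{d:gimel}. Your conclusions for $v\notin S$, for good ordinary $v\in S$, and for split multiplicative $v\in S$ are correct. But you do not carry out the non-split multiplicative case; you flag (i) and (ii) as delicate and stop. For $p=2$ this case contains the only non-trivial matchings.

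You have also missed a structural fact that is already visible in your split case. If $v\in S$ and $\Psi_v=0$, then $\Gamma_v\to\Gamma'_v$ is injective, so the non-trivial inertia group at $v$ injects into $\Gamma'$ and $v\in S'$. Consequently $v\in S\setminus S'$ with $\Gamma'_v\simeq\Z_p$ forces $\Psi_v\neq 0$, hence $\mathrm{rank}\,\Gamma_v\geq 2$ and $v\notin S_1$. So your subcase ``$v\notin S'$, $\Gamma'_v\simeq\Z_p$, $\Gamma_v\simeq\Z_p$'', and your second ``fact'' $p^L_{L'}(\sigma_v)=[v]_{L'}^{p^k}$, describe an empty situation.

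For split $v$ this is harmless. In the non-split line $\Gamma'_v\simeq\Z_p$, $v\notin S'$, it is exactly what makes the identity true. There $\vartheta_{L/L',v}=(1+[v]_{L'})$, $\Xi_{L/L',v}$ generates the same ideal, and $\dag_{A/L',v}=1$. So you need $p^L_{L'}(\dag_{A/L,v})$ to be a unit. If you allowed $v\in S_1$ with $\F_{q_v^2}\subset L_v$, the right side would become $(1+[v]_{L'})^2$, which is a strictly smaller ideal when $p=2$.

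With this fact, the non-split check closes as in the paper:
\begin{enumerate}
\item[(a)] $\Gamma'_v=0$. Here $\Xi_{L/L',v}=\lambda_v-1=-2$ and $\dag_{A/L',v}=m_v$, so the left side is always $(2m_v)$. On the right, the factor $2$ comes from one of two places. If $\mathrm{rank}\,\Gamma_v\geq 2$ or $\F_{q_v^2}\not\subset L_v$, then $\dag_{A/L,v}=1$ and $\vartheta_{L/L',v}=2m_v$. If $\Gamma_v\simeq\Z_p$ and $\F_{q_v^2}\subset L_v$, then $p^L_{L'}(-1-\sigma_v)=-2$ and $\vartheta_{L/L',v}=m_v$.
\item[(b)] $v\in S'_1$ and $\Psi_v\neq0$. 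Here $\Xi_{L/L',v}=\dag_{A/L,v}=1$. According to whether $\F_{q_v^2}\subset L'_v$, $\dag_{A/L',v}$ is $-1-\sigma'_v$ or $1$, exactly as $\vartheta_{L/L',v}$ is $(1+\sigma'_v)$ or $\Lambda'$.
\item[(c)] $v\in S'_1$ and $\Psi_v=0$. Here $L_v=L'_v$, $v\in S_1$ and $\vartheta_{L/L',v}=\Lambda'$. Then $p^L_{L'}(\dag_{A/L,v})$ and $\dag_{A/L',v}$ agree up to a unit.
\item[(d)] $\mathrm{rank}\,\Gamma'_v\geq 2$. Every factor is $1$.
\end{enumerate}
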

\begin{proof} We prove by checking.
\begin{enumerate}
\item[(A)] Suppose $v\not\in S$. Then $\Xi_{L/L',v}=1$. If $\Psi_v\not=0$, then $\Gamma_v=\Psi_v\simeq \Z_p$ and $\Gamma_v'=0$, so $\dag_{A/L,v}=1$, $\dag_{A/L',v}=m_v$, hence \eqref{e:question} reads $m_v\cdot\Lambda'=m_v\cdot\Lambda'$;
if $\Psi_v=0$, then $\Gamma_v=\Gamma'_v$, so $\Lambda'=\Lambda'$.
\item[(B)] Suppose $v$ is a good ordinary place in $S$. then $\dag_{A/L,v}=1$ and $\dag_{A/L',v}=1$. If $v\not\in S'$, then \eqref{e:question} reads $\Xi_{L/L',v}\cdot\Lambda'=\vartheta_{L/L',v}$; if $v\in S'$, then $\Lambda'=\Lambda'$.

\item[(C)] Suppose $v$ is a split multiplicative place in $S$. 
\begin{enumerate}
\item[(C1)] If $\Psi_v\simeq \Z_p^f$, $f\geq 2$, $\Gamma_v'=0$, then $\dag_{A/L,v}=1$, $\dag_{A/L',v}=1$,
$\Xi_{L/L',v}=0$ and $\vartheta_{L/L',v}=0$, so $0=0$.
\item[(C2)] If $\Psi_v\simeq\Z_p^f$, $f\geq 1$, $v\in S_1'$, then $\Xi_{L/L',v}=1$, $\dag_{A/L,v}=1$, $\dag_{A/L',v}=1-\sigma_v$, $\vartheta_{L/L',v}=(1-\sigma_v)\cdot\Lambda'$, so 
$(1-\sigma_v)\cdot\Lambda'=(1-\sigma_v)\cdot\Lambda'$.
\item[(C3)] If $\Psi_v\simeq\Z_p^f$, $f\geq 1$, $\Gamma_v'\simeq\Z_p$, $v\not\in S'$, then
$\Xi_{L/L',v}=1-[v]_{L'}$, $\dag_{A/L,v}=1$, $\dag_{A/L',v}=1$, $\vartheta_{L/L',v}=(1-[v]_{L'})\cdot\Lambda'$, so 
$(1-[v]_{L'})\cdot\Lambda'=(1-[v]_{L'})\cdot\Lambda'$.
\item[(C4)] If $\Psi_v\simeq\Z_p$, $\Gamma_v'=0$, then $\Xi_{L/L',v}=0$, $\dag_{A/L,v}=1-\sigma_v$, $\dag_{A/L',v}=m_v$, $\vartheta_{L/L',v}=\mathfrak w_v\cdot\Lambda'$,
so $0=0$.
\item[(C5)] If $\Gamma_v'\simeq\Z_p$ and $\Psi_v=0$, then $\Xi_{L/L',v}=1$, $\dag_{A/L,v}=1-\sigma_v$,
$\dag_{A/L',v}=1-\sigma_v$, $\vartheta_{L/L',v}=\Lambda'$, so $(1-\sigma_v)\cdot\Lambda'=(1-\sigma_v)\cdot\Lambda'$; if $\Gamma_v'\simeq\Z_p^g$, $g\geq 2$, then
$\Xi_{L/L',v}=1$, $\dag_{A/L,v}=1$,
$\dag_{A/L',v}=1$, $\vartheta_{L/L',v}=\Lambda'$, so $\Lambda'=\Lambda'$.
\end{enumerate}
\item[(D)] Suppose $v$ is a non-split multiplicative place in $S$.
\begin{enumerate}
\item[(D1)] If $\Gamma'_v=0$ and $\Psi_v\simeq \Z_p^f$, $f\geq 2$, then $\Xi_{L/L',v}=-2$, $\dag_{A/L,v}=1$, $\dag_{A/L',v}=m_v$, so $-2m_v\cdot\Lambda'=2m_v\cdot\Lambda'$.
\item[(D2)] If $\Gamma_v'=0$, $\Psi_v\simeq \Z_p$ and $\F_{Q_v^2}\nsubseteq L_v$, then $\Xi_{L/L',v}=-2$,
$\dag_{A/L, v}=1$, $\dag_{A/L',v}=m_v$, so $-2\cdot m_v\cdot\Lambda'=2m_v\cdot\Lambda'$.
\item[(D3)] If $\Gamma_v'=0$, $\Psi_v\simeq \Z_p$ and $\F_{Q_v^2}\subset L_v$, then $\Xi_{L/L',v}=-2$,
$p^L_{L'}(\dag_{A/L, v})=-2$, $\dag_{A/L',v}=m_v$, so $-2\cdot m_v\cdot\Lambda'=-2\cdot m_v\cdot\Lambda'$.
\item[(D4)] If $\Gamma'_v\simeq \Z_p$, $v\not\in S'$, then $\Psi_v\not=0$, $\Xi_{L/L',v}=-1-[v]_{L'}$, $\dag_{A/L, v}=1$, $\dag_{A/L', v}=1$, so $(-1-[v]_{L'})\cdot\Lambda'=(1+[v]_{L'})\cdot\Lambda'$.
\item[(D5)] If $v\in S_1'$, $\Psi_v\not=0$ and $\F_{Q_v^2}\subset L'_v$, then
$\Xi_{L/L',v}=1$, $\dag_{A/L, v}=1$, $\dag_{A/L', v}=-1-\sigma'_v$, so $(-1-\sigma'_v)\cdot\Lambda'=(1+\sigma'_v)\cdot\Lambda'$.
\item[(D6)] If $v\in S_1'$, $\Psi_v\not=0$ and $\F_{Q_v^2}\nsubseteq L'_v$, then
$\Xi_{L/L',v}=1$, $\dag_{A/L, v}=1$, $\dag_{A/L', v}=1$, so $\Lambda'=\Lambda'$.
\item[(D7)] If $v\in S_1'$, $\Psi_v=0$, then $\Xi_{L/L',v}=1$, $p^L_{L'}(\dag_{A/L, v})\cdot\Lambda'=\dag_{A/L', v}\cdot\Lambda'$ (since $\Gamma_v=\Gamma'_v$), so $\dag_{A/L', v}\cdot \Lambda'=\dag_{A/L', v}\cdot \Lambda'$.
\item[(D8)] If $\Gamma_v'\simeq \Z_p^g$, $g\geq 2$, then $\Xi_{L/L',v}=1$, $\dag_{A/L, v}=1$, $\dag_{A/L', v}=1$, so $\Lambda'=\Lambda'$.
\end{enumerate}
\end{enumerate}

\end{proof}

 Suppose $L''/K$ is a $\Z_p^f$-extension, with $L'\subset L''\subset L$. By the Lemma, 
$$
\begin{array}{rcl}
\Xi_{L/L',v}\cdot \dag_{A/L',v}\cdot\Lambda'&=&p^L_{L'}(\dag_{A/L,v})\cdot \vartheta_{L/L',v},\\
\Xi_{L''/L',v}\cdot \dag_{A/L',v}\cdot\Lambda'&=&p^{L''}_{L'}(\dag_{A/L'',v})\cdot \vartheta_{L''/L',v},\\
p^{L''}_{L'}(\Xi_{L/L'',v})\cdot p^{L''}_{L'}(\dag_{A/L'',v})\cdot\Lambda'
&=&p^{L}_{L'}(\dag_{A/L,v})\cdot p^{L''}_{L'}(\vartheta_{L/L'',v}).
\end{array}
$$
Multiplying $p^{L''}_{L'}(\dag_{A/L'',v})$ to the first formula, then comparing the resulting formula with the product of the second and the third formulae, and using the equality
$$ \Xi_{L/L',v}=\Xi_{L''/L',v}\cdot p^{L''}_{L'}(\Xi_{L/L'',v}),$$
we obtain
\begin{equation}\label{e:loc}
p^L_{L'}(\dag_{A/L,v})\cdot p^{L''}_{L'}(\dag_{A/L'',v})\cdot \vartheta_{L/L',v}=
p^L_{L'}(\dag_{A/L,v})\cdot p^{L''}_{L'}(\dag_{A/L'',v})\cdot \vartheta_{L''/L',v}\cdot p^{L''}_{L'}(\vartheta_{L/L'',v}).
\end{equation}

\begin{lemma}\label{l:locspvartheta} If $L'\subset L''\subset L$, then for every $v$,
$$\vartheta_{L/L',v}=\vartheta_{L''/L',v}\cdot p^{L''}_{L'}(\vartheta_{L/L'',v}).$$
\end{lemma}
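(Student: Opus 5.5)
The plan is to use the identity \eqref{e:loc} just derived and cancel the common factor $p^L_{L'}(\dag_{A/L,v})\cdot p^{L''}_{L'}(\dag_{A/L'',v})$ from both sides. The cancellation is legitimate whenever this factor is non-zero, because $\Lambda'$ is an integral domain and the ideals involved are principal. So the proof splits into the generic case, where the factor is non-zero, and a degenerate case, where it vanishes.

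\textbf{Generic case.} By Definition \ref{d:gimel}, $\dag_{A/L,v}$ can only fail to be a unit or a non-zero integer when $v\in S_1$ and $\dag_{A/L,v}=\lambda_v-\sigma_v$. The image $p^L_{L'}(\lambda_v-\sigma_v)$ vanishes only if $\lambda_v=1$ (so $v$ is split multiplicative) and $\sigma_v$ maps to $1$, that is, $\Gamma_v'=0$. Indeed, for $\lambda_v=-1$ the image $-1-p^L_{L'}(\sigma_v)$ is never zero, since $p^L_{L'}(\sigma_v)$ is either trivial or of infinite order. The same analysis applies to $p^{L''}_{L'}(\dag_{A/L'',v})$. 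Hence the common factor is non-zero, and cancellation yields the lemma, unless $v$ is a split multiplicative place in $S$ with $\Gamma_v'=0$ and either $\Gamma_v\simeq\Z_p$ or $\Gamma''_v\simeq\Z_p$.

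\textbf{Degenerate case.} Here $v\in S$ is split multiplicative and $\Gamma_v'=0$. I would check the identity directly from Definition \ref{d:vartheta}(c), splitting on the ranks $f=\mathrm{rank}\,\Gamma_v$ and $g=\mathrm{rank}\,\Gamma''_v$, with $f\ge g$.
\begin{itemize}
\item If $g\geq 2$: then $\vartheta_{L''/L',v}=0$, and also $\vartheta_{L/L',v}=0$ since $f\ge 2$. Both sides vanish.
\item If $g=1$ and $f\geq 2$: then $\vartheta_{L/L',v}=0$. On the right, $\vartheta_{L/L'',v}$ falls in the case ``$v\in S_1''$'' and equals $(1-\sigma''_v)$, whose image in $\Lambda'$ is $0$. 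Both sides vanish.
\item If $g=1$ and $f=1$: then $\Psi''_v:=\Gal(L/L'')_v=0$, so $\vartheta_{L/L'',v}=\Lambda''$. The identity becomes $\mathfrak w_v$ computed for $L/L'$ against $\mathfrak w_v$ computed for $L''/L'$. Both are $\mathrm{CH}_{\Z_p}(\Gamma_v/\overline{\{Q_v\}})$, and the projection $\Gamma\to\Gamma''$ restricts to an isomorphism $\Gamma_v\simeq\Gamma''_v$ compatible with the reciprocity image of $Q_v$, so the two agree.
\item If $g=0$: then $\vartheta_{L''/L',v}=\Lambda'$, and $p^{L''}_{L'}(\vartheta_{L/L'',v})$ is literally $\vartheta_{L/L',v}$, because the local data ($\Gamma_v$, $\Psi_v$, $\overline{\{Q_v\}}$) coincide.
\end{itemize}

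\textbf{Expected obstacle.} The bookkeeping in the degenerate split multiplicative subcases is where care is needed, in particular confirming that the images of the $\mathfrak w_v$ and $1-\sigma''_v$ terms behave as claimed under $p^{L''}_{L'}$. I would also double-check that in the generic case no non-split factor $-1-\sigma_v$ can specialize to zero, which follows from the torsion-freeness of $\Gamma'$.
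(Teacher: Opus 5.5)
Your proposal is correct and follows the paper's proof. Both arguments cancel the common factor in \eqref{e:loc} when it is non-zero, then check the split multiplicative places with $\Gamma'_v=0$ directly from Definition \ref{d:vartheta}(c); your split by the ranks $(f,g)$ reproduces the paper's two cases ($v\in S_1$, resp.\ $v\in S''_1$ with $f\ge 2$).

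One small slip: because your degenerate set only requires $\Gamma''_v\simeq\Z_p$, in the subcase $g=1$, $f\ge 2$ the place $v$ may be unramified in $L''$. There $\vartheta_{L/L'',v}=(1-[v]_{L''})$ rather than $(1-\sigma''_v)$. This still specializes to $1-[v]_{L'}=0$, and that subcase is in fact already covered by cancellation, so your conclusion stands.
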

\begin{proof} In view of \eqref{e:loc}, we just treat the $p^L_{L'}(\dag_{A/L,v})\cdot p^{L''}_{L'}(\dag_{A/L'',v})=0$
case.

Suppose $p^L_{L'}(\dag_{A/L,v})=0$. Then $\Gamma_v'=0$, $v\in S_1$, and $\vartheta_{L/L',v}=\mathfrak w_v\cdot\Lambda'$. If $\Gamma_v''=0$, then $\vartheta_{L''/L',v}=\Lambda'$ and $\vartheta_{L/L'',v}=\mathfrak w_v\cdot\Lambda''$; if $\Gamma_v''\not=0$, then $L_v=L''_v$, $v\in S_1''$, $\vartheta_{L/L'',v}=\Lambda''$, $\vartheta_{L''/L',v}=\mathfrak w_v\cdot\Lambda'$, so in both cases, 
$$\mathfrak w_v\cdot\Lambda'=\mathfrak w_v\cdot\Lambda'.$$

Suppose $p^L_{L'}(\dag_{A/L,v})\not=0$, but $p^{L''}_{L'}(\dag_{A/L'',v})=0$. Then $\Gamma_v'=0$, $v\in S_1''$ and $\Psi_v\simeq\Z_p^f$, $f\geq 2$, thus, $\vartheta_{L/L',v}=0$, $\vartheta_{L''/L'}=\mathfrak w_v\cdot\Lambda'$, $\vartheta_{L/L'',v}=(1-\sigma_v'')\cdot\Lambda''$, so we get $0=0$. 

\end{proof}
Lemma \ref{l:locspvartheta} implies its global version: 
$$\vartheta_{L/L'}=p^{L''}_{L'}(\vartheta_{L/L''})\cdot \vartheta_{L''/L'},$$
and hence the first formula in Lemma \ref{l:spvartheta} follows. Also, by
Lemma \ref{l:locsp},
$$p^L_{L'}(\dag_{A/L})\cdot \vartheta_{L/L'}=\Xi_{L/L'}\cdot \dag_{A/L'}=p^L_{L'}(\dag_{A/L})\cdot \pounds_{L/L'}.$$
Thus, if $p^L_{L'}(\dag_{A/L})\not=0$, then \eqref{e:beth} holds.

\end{document}